\renewcommand{\thesection}{\arabic{section}}
\renewcommand{\thesubsection}{\thesection.\arabic{subsection}}
\renewcommand{\thesubsubsection}{\thesection.\arabic{subsubsection}}
\numberwithin{subsubsection}{section}
\renewcommand{\theequation}{*}
\theoremstyle{plain}
\newtheorem{thm}[subsubsection]{Theorem}
\newtheorem{lemm}[subsubsection]{Lemma}
\newtheorem{prop}[subsubsection]{Proposition}
\newtheorem*{claim}{Claim}
\newtheorem*{fact}{Fact}
\newtheorem{obsv}[subsubsection]{Observation}
\DeclareMathOperator{\Mor}{Mor}
\DeclareMathOperator{\End}{\mathit{End}}
\DeclareMathOperator{\Hom}{\mathit{Hom}}
\DeclareMathOperator{\Tw}{Tw}
\DeclareMathOperator{\NN}{\mathbb{N}}
\DeclareMathOperator{\ZZ}{\mathbb{Z}}
\DeclareMathOperator{\QQ}{\mathbb{Q}}
\DeclareMathOperator{\kk}{\Bbbk}
\DeclareMathOperator{\AOp}{\mathsf{A}}
\DeclareMathOperator{\COp}{\mathsf{C}}
\DeclareMathOperator{\DOp}{\mathsf{D}}
\DeclareMathOperator{\FOp}{\mathsf{F}}
\DeclareMathOperator{\IOp}{\mathsf{I}}
\DeclareMathOperator{\LOp}{\mathsf{L}}
\DeclareMathOperator{\POp}{\mathsf{P}}
\DeclareMathOperator{\QOp}{\mathsf{Q}}
\DeclareMathOperator{\ROp}{\mathsf{R}}
\DeclareMathOperator{\A}{\mathcal{A}}
\DeclareMathOperator{\C}{\mathcal{C}}
\DeclareMathOperator{\F}{\mathcal{F}}
\DeclareMathOperator{\I}{\mathcal{I}}
\DeclareMathOperator{\J}{\mathcal{J}}
\DeclareMathOperator{\K}{\mathcal{K}}
\DeclareMathOperator{\M}{\mathcal{M}}
\DeclareMathOperator{\Op}{\mathcal{O}}
\DeclareMathOperator{\X}{\mathcal{X}}
\DeclareMathOperator{\Bij}{\mathcal{B}\mathit{ij}}
\DeclareMathOperator{\Lev}{\Xi}
\DeclareMathOperator{\Ho}{Ho}
\DeclareMathOperator{\Cyl}{\mathit{Cyl}}
\DeclareMathOperator{\Id}{Id}
\DeclareMathOperator{\id}{id}
\DeclareMathOperator*{\colim}{colim}
\title[Operadic Cobar Construction and Homotopy Morphisms]{Operadic Cobar Constructions,\\
Cylinder Objects\\
and\\Homotopy Morphisms of Algebras over Operads}
\author{Benoit Fresse}
\date{1 February, 2009 (revised 17 June, 2009)}
\address{UMR 8524 de l'Universit\'e des Sciences et Technologies de Lille et du CNRS\\
Cit\'e Scientifique -- B\^atiment M2\\
F-59655 Villeneuve d'Ascq C\'edex (France)}
\email{Benoit.Fresse@math.univ-lille1.fr}
\urladdr{http://math.univ-lille1.fr/\~{ }fresse}
\subjclass[2000]{Primary: 18D50; Secondary: 18G55, 55U35, 55U15}
\thanks{Research supported in part by ANR grant JCJC06 OBTH}
\begin{document}

\begin{abstract}
The purpose of this paper is twofold.
First,
we review applications of the bar duality of operads to the construction of explicit cofibrant replacements
in categories of algebras over an operad.
In view toward applications,
we check that the constructions of the bar duality work properly for algebras over operads
in unbounded differential graded modules over a ring.

In a second part,
we use the operadic cobar construction to define explicit cyclinder objects in the category of operads.
Then
we apply this construction to prove that certain homotopy morphisms of algebras over operads
are equivalent to left homotopies
in the model category of operads.
\end{abstract}

\maketitle


\section*{Contents}{\parindent=0cm

\medskip\textbf{Introduction}

\medskip\textbf{The homotopy of operads and of algebras over operads}

{\leftskip=1cm

\textit{\S\ref{Background}. The background of dg-modules}

\textit{\S\ref{Operads}. Operads in dg-modules}

\textit{\S\ref{OperadAlgebras}. The category of algebras associated to an operad}

}

\medskip\textbf{Applications of the operadic cobar construction}

{\leftskip=1cm

\textit{\S\ref{CobarConstruction}. The operadic cobar construction}

\textit{\S\ref{CoalgebraModels}. The cobar construction and cofibrant models of algebras over operads}

\textit{\S\ref{CylinderHomotopyMorphisms}. Cylinder objects and homotopy morphisms}

}

\medskip\textbf{Bibliography}

}

\medskip

\part*{Introduction}

The notion of an operad makes sense in the general setting of symmetric monoidal categories.
To do homotopy theory,
we consider operads in a symmetric monoidal category
equipped with a model structure.
In good cases,
the category of algebras associated to an operad (and the category of operads itself) inherits a model structure
with as weak-equivalences the morphisms
which form weak-equivalences
in the underlying symmetric monoidal model category\footnote{If we take mild assumptions,
then the axioms of model categories are not fully satisfied by the category of algebras associated to an operad,
but we still have a semi-model structure in the sense of~\cite{HoveySemiModel}
which is sufficient for most applications of homotopical algebra.
Therefore,
we use abusively the word ``model structure'' to refer to the structure of a semi-model category,
which is taken as a good background for the applications of homotopical algebra
to categories of algebras over an operad.}.

The homotopy category of a model category can be defined naively as a fraction category
in which the weak-equivalences
are formally inverted to yield actual isomorphisms.
The existence of a model structure gives a manageable representation
of this category:
each object can be replaced by an equivalent cofibrant-fibrant model
and the morphisms of the homotopy category
are represented by equivalence classes
of actual morphisms between these cofibrant-fibrant replacements.

\medskip
The first purpose of this paper is to review a definition of explicit cofibrant replacements
for algebras over operads in differential graded modules (for short, we say dg-modules).
These cofibrant replacements are determined from the structure of operadic cobar constructions,
which themselves define cofibrant objects of a particular form in the category of operads.

In many references of the literature on the operadic cobar construction,
authors deal with operads in $\NN$-graded dg-modules
defined over a field of characteristic zero.
The bar duality of operads has been introduced in~\cite{GetzlerJones,GinzburgKapranov}
in that setting.

But certain applications of operads hold in the context of modules over a ring
and require the use of unbounded chain complexes (see for instance~\cite{HinichSchechtman,KrizMay,Mandell}).
For that reason,
we check carefully that usual applications of the operadic cobar construction
can be generalized to the setting of $\ZZ$-graded dg-modules over a ring.
To summarize,
we explain how to address two sources of technical difficulties:
\begin{itemize}
\item
the existence of $\ZZ$-torsion in the ground ring or in modules over the ground ring,
\item
the structure of cofibrant dg-modules and the convergence of spectral sequences in the $\ZZ$-graded context.
\end{itemize}

In a previous paper~\cite{FresseKoszulDuality},
we study the homotopy of cobar constructions within the category of operads.
In this paper,
we focus on applications of operadic cobar constructions to the homotopy categories of algebras over operads.

Recall that the operadic cobar construction is an operad $B^c(\DOp)$
naturally associated to a cooperad $\DOp$.
Any operad $\POp$
has a cofibrant replacement of the form $\QOp = B^c(\DOp)$
for some cooperad $\DOp$.
The usual bar duality associates a $\DOp$-coalgebra $\Gamma = \Gamma_{\POp}(A)$ to any $\POp$-algebra $A$
and a $\POp$-algebra $R_{\POp}(\Gamma)$
to any $\DOp$-coalgebra $\Gamma$.
The classical duality of rational homotopy between cocommutative coalgebras and Lie coalgebras
can be viewed as an instance of this construction
where we take a desuspension of the cooperad of cocommutative coalgebras $\DOp = \Lambda^{-1}\COp^{\vee}$
and the operad of Lie algebras $\POp = \LOp$.
The bar duality of associative algebras
can be identified with another application of this construction
where we take the cooperad of coassociative coalgebras $\DOp = \AOp^{\vee}$
and the operad of associative algebras $\POp = \AOp$.

We review the construction of the coalgebra $\Gamma = \Gamma_{\POp}(A)$, of the $\POp$-algebra $R_{\POp}(\Gamma)$,
and we check that the composite construction $R_A = R_{\POp}(\Gamma_{\POp}(A))$ defines a cofibrant replacement of~$A$
in the category of $\POp$-algebras
under mild assumptions on the unbounded dg-module underlying $A$
when the ground ring is not a field.
We also study morphisms between cofibrant $\POp$-algebras of this form $R_A = R_{\POp}(\Gamma_{\POp}(A))$.

Recall that the structure of a $\POp$-algebra $A$
is determined by an operad morphism~$\phi: \POp\rightarrow\End_A$,
where $\End_A$ is a certain universal operad acting on~$A$,
the endomorphism operad of~$A$.
In a second part of the paper,
we study homotopy morphisms between pairs of $\POp$-algebras
with the same underlying dg-module.
We prove that these homotopy morphisms
are equivalent to left homotopies between the morphisms $\phi^0,\phi^1: \POp\rightarrow\End_A$
which determine the $\POp$-algebra structures.
For this aim,
we use an extension of the cobar construction
to define explicit cylinder objects in the category of operads.
In the example of the Lie operad $\POp = \LOp$,
we obtain that the $L_{\infty}$-morphisms which reduce to the identity on objects
are equivalent to left-homotopies in the category of operads.
In the example of the associative operad $\POp = \AOp$,
we obtain the same result for the standard notion of an $A_{\infty}$-morphism.

In~\cite{FressePropHomotopy},
we prove that the existence of a left homotopy in the category of operads implies the existence of a morphism
in the homotopy category of algebras.
The construction of this paper gives a more precise result
in the sense that we have a characterization of the homotopy morphisms
associated to left homotopies in the category of operads.

\medskip
In this paper,
we use the standard homotopy category of the theory of model categories.
But another notion of homotopy morphism for algebras over operads
has been introduced by Boardman-Vogt in~\cite{BoardmanVogt}.
These homotopy morphisms are associated to cofibrant replacements of colored operads
which model morphisms of algebras over operads.
We plan to prove in a follow up that the standard homotopy category of algebras over an operad
is equivalent to the homotopy category of Boardman-Vogt' homotopy morphisms.

This paper should serve as a preparation
for this future work,
because the correspondence between operad homotopies and homotopy morphisms of algebras over operads
is a part of this equivalence.

\medskip
In the first part of the paper, 
we review the overall definitions of the homotopy theory of operads in dg-modules:
we devote a preliminary section to setting up the structure of the base category of dg-modules~(\S\ref{Background});
we recall the definition of an operad
and we review the definition of the model structure of the category of operads next~(\S\ref{Operads});
lastly, we recall the definition of the categories of algebras associated to operads
and their model structures~(\S\ref{OperadAlgebras}).

In the second part of the paper, 
we address applications of the operadic cobar construction
to the homotopy of algebras over operads.
First of all, in~\S\ref{CobarConstruction},
we review the definition of the cobar construction of a cooperad.
The applications of the cobar construction to the definition of cofibrant replacements of algebras over operads
are addressed in~\S\ref{CoalgebraModels}.
The correspondence between homotopy morphisms of algebras over operads
and left homotopies of operad morphisms
is established in~\S\ref{CylinderHomotopyMorphisms}.

\part*{The homotopy of operads and of algebras over operads}

\section{The background of dg-modules}\label{Background}
\renewcommand{\thesubsubsection}{\thesection.\arabic{subsubsection}}

The first purpose of this section is to fix conventions on the category of dg-modules, which give the background of our constructions.
Then we review the definition of the symmetric monoidal model structure of that category.

\subsubsection{The category of dg-modules}\label{Background:DGModules}
For us,
a dg-module $C$ refers to a module over a fixed ground ring $\kk$
equipped with a $\ZZ$-grading $C = \bigoplus_{*\in\ZZ} C_*$
and a differential $\delta: C\rightarrow C$
which decreases degree by $1$.
The degree of a homogeneous element $x\in C$
is denoted by $\deg(x)$.
Usually,
we refer to a dg-module by the notation of the underlying module $C$
which is supposed to be equipped with a natural differential
for which we use the notation~$\delta$.

The category of dg-modules is denoted by the letter $\C$.
Naturally,
a morphism of dg-modules is a morphism of $\kk$-modules $f: C\rightarrow D$
which preserves gradings and commutes with differentials.
The notation $\Mor_{\C}(C,D)$
refers to the set of dg-module morphisms $f: C\rightarrow D$.

In the paper,
we also use dg-module homomorphisms
which are morphisms of $\kk$-modules $f: C\rightarrow D$
such that $f(C_*)\subset D_{*+d}$
for a fixed homogeneity degree $d = \deg(f)$.
The notation $\Hom_{\C}(C,D)$ refers to the $\kk$-module
spanned by homogeneous dg-module homomorphisms.
The object $\Hom_{\C}(C,D)$ forms itself a dg-module
because we have a natural differential $\delta: \Hom_{\C}(C,D)\rightarrow\Hom_{\C}(C,D)$
defined for each $f\in\Hom_{\C}(C,D)$
by the graded commutator $\delta(f) = \delta f - (-1)^{\deg(f)} f\delta$,
where $\delta$ refer to the internal differential of~$C$ and~$D$.

\subsubsection{Twisted dg-modules}\label{Background:TwistedObjects}
In certain constructions,
a twisting homomorphism of degree $-1$
\begin{equation*}
\partial\in\Hom_{\C}(C,C)
\end{equation*}
is added to the internal differential of a dg-module $\delta: C\rightarrow C$
to produce a new dg-module structure.
This new dg-module, to which we refer by the pair $(C,\partial)$,
has the same underlying graded module as $C$
but a differential defined by the sum $\delta+\partial: C\rightarrow C$.
The identity of differentials $(\delta+\partial)^2 = 0$
is equivalent to the equation
\begin{equation}
\delta(\partial) + \partial^2 = 0
\end{equation}
in the dg-hom $\Hom_{\C}(C,C)$.
In the sequel,
we refer to (*) as the equation of twisting homomorphisms.

\subsubsection{The symmetric monoidal category of dg-modules}\label{Background:DGModuleTensorProduct}
The tensor product of dg-modules $C,D\in\C$
is the dg-module $C\otimes D\in\C$
such that
\begin{equation*}
(C\otimes D)_n = \bigoplus_{p+q = n} C_p\otimes D_q
\end{equation*}
together with the differential $\delta: C\otimes D\rightarrow C\otimes D$
defined by the formula
\begin{equation*}
\delta(x\otimes y) = \delta(x)\otimes y + (-1)^{\deg(x)} x\otimes\delta(y),
\end{equation*}
for any homogeneous tensor $x\otimes y\in C\otimes D$.

This tensor product $\otimes: \C\times\C\rightarrow\C$
provides the category of dg-modules $\C$
with the structure of a symmetric monoidal category
since:
\begin{itemize}
\item
the ground ring~$\kk$, identified with a dg-module concentrated in degree~$0$,
defines a unit for the tensor product of dg-modules;
\item
the tensor product of dg-modules is obviously associative;
\item
we have a symmetry isomorphism $\tau: C\otimes D\rightarrow D\otimes C$
defined by
\begin{equation*}
\tau(x\otimes y) = (-1)^{\deg(x)\cdot\deg(y)} y\otimes x,
\end{equation*}
for any homogeneous tensor $x\otimes y\in C\otimes D$.
\end{itemize}
The isomorphisms that give the unit, associativity and symmetry relations of a symmetric monoidal structure
are required to satisfy natural coherence axioms.
The verification of these axioms is easy in the context of dg-modules.

The dg-modules of homomorphisms $\Hom_{\C}(C,D)$
fit the adjunction relation
\begin{equation*}
\Mor_{\C}(K\otimes C,D)\simeq\Mor_{\C}(K,\Hom_{\C}(C,D))
\end{equation*}
with respect to the tensor product $\otimes: \C\times\C\rightarrow\C$.
Accordingly,
the category of dg-modules forms a closed symmetric monoidal category
with the dg-modules $\Hom_{\C}(C,D)$ as internal hom-objects.

\subsubsection{Signs}\label{Background:Signs}
In our work
we adopt the notation $\pm$ to refer to a sign which arises from a tensor permutation.
In principle
we do not make these signs explicit
since they are fully determined by the definition of the symmetry isomorphism of dg-modules.
Globally,
any permutation of dg-symbols $x y\mapsto y x$ produces a sign $\pm = (-1)^{\deg(x)\cdot\deg(y)}$.
The notion of a dg-symbol includes not only the elements of a dg-module $C$,
but also the dg-module homomorphisms $f: C\rightarrow D$,
which are nothing but homogeneous elements of the dg-hom $\Hom_{\C}(C,D)$,
and the differentials $\delta: C\rightarrow C$,
which are nothing but homomorphisms of degree $-1$.

\subsubsection{Model categories}\label{Background:ModelCategories}
Recall briefly that a model structure on a category $\A$
is defined by three classes of morphisms,
called weak-equivalences, cofibrations, and fibrations,
satisfying axioms modeled on properties of the usual weak-equivalences, cofibrations, and fibrations of topology.
The purpose of this structure is to handle the morphisms of the homotopy category of~$\A$,
usually denoted by $\Ho\A$.
In the general setting of a category with weak-equivalences,
the homotopy category can only be defined naively as a localization $\Ho\A = \A[W^{-1}]$
with respect to the class of weak-equivalences $W\subset\Mor\A$.
In the context of a model category,
we have a natural homotopy relation on morphisms
and the morphisms of the homotopy category are represented by homotopy classes of morphisms of~$\A$.
The notion of a cofibration and of a fibration is used to characterize the morphisms and objects
which behave properly with respect to this homotopy relation.

The reader is supposed to be familiar with this background,
for which we can refer to~\cite{Hirschhorn,Hovey}.
We only recall the definition of the model structures used in the paper:
the model structure of the category of dg-modules below,
the model structure of the category of operads in dg-modules in~\S\ref{OperadHomotopy}
and the model structure of the categories of algebras over an operad in~\S\ref{OperadAlgebras}.

\begin{fact}[{see for instance~\cite[Theorem 2.3.11]{Hovey}}]\label{Background:DGModuleModelStructure}
The category of dg-modules is equipped with a model structure
such that:
\begin{itemize}
\item
the weak-equivalences are the morphisms which induce an isomorphism in homology;
\item
the fibrations are the morphisms of dg-modules which are degreewise surjective;
\item
the cofibrations are the morphisms which have the right lifting property with respect to acyclic fibrations.
\end{itemize}
\end{fact}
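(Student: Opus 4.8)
The plan is to establish the model structure by verifying the hypotheses of the recognition theorem for cofibrantly generated model categories (see~\cite{Hirschhorn,Hovey}), using explicit generators; the argument below is insensitive to boundedness and so applies verbatim to $\ZZ$-graded dg-modules over an arbitrary ground ring. For each $n\in\ZZ$, let $S^n$ be the dg-module freely generated by one element in degree $n$ with trivial differential, and let $D^n$ be the dg-module freely generated by an element $e$ in degree $n$ together with $\delta e$ in degree $n-1$, so that $D^n$ is acyclic. Take $I = \{\,S^{n-1}\hookrightarrow D^n : n\in\ZZ\,\}$ as the set of generating cofibrations and $J = \{\,0\hookrightarrow D^n : n\in\ZZ\,\}$ as the set of generating acyclic cofibrations. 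First I would record the formal prerequisites: the category $\C$ is bicomplete, with limits and colimits formed degreewise in $\kk$-modules, and the class of homology isomorphisms satisfies the two-out-of-three and retract axioms --- both immediate from the corresponding properties of isomorphisms of graded $\kk$-modules together with the fact that a retract of an acyclic dg-module is acyclic.

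Next I would compute the two relevant lifting properties. A morphism $p\colon X\rightarrow Y$ has the right lifting property with respect to $J$ if and only if it is surjective in every degree: lifting $0\rightarrow D^n$ against $p$ over a map $D^n\rightarrow Y$ amounts precisely to choosing a preimage in $X_n$ of the image of the generator $e$. Likewise, $p$ has the right lifting property with respect to $I$ if and only if $p$ is degreewise surjective and, for every cycle $z\in X_{n-1}$ and every $y\in Y_n$ with $\delta y = p(z)$, there exists $x\in X_n$ with $\delta x = z$ and $p(x) = y$; a short diagram chase with the long exact homology sequence of the kernel shows that this last condition holds exactly when $p$ is a degreewise surjective homology isomorphism. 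Hence the $J$-injective morphisms are exactly the claimed fibrations, and the $I$-injective morphisms are exactly the degreewise surjective homology isomorphisms, which will be the acyclic fibrations.

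It remains to check that every relative $J$-cell complex is both a cofibration and a weak-equivalence, and that $I$ and $J$ permit the small object argument. A single pushout of $0\rightarrow D^n$ along $0\rightarrow X$ produces the inclusion $X\rightarrow X\oplus D^n$, which is degreewise split injective with acyclic cokernel, hence a homology isomorphism; since homology commutes with the filtered colimits computing transfinite compositions, every relative $J$-cell complex is a homology isomorphism. Moreover each map $0\rightarrow D^n$ has the left lifting property against the $I$-injectives --- an immediate consequence of degreewise surjectivity --- so each lies in the class of cofibrations, and that class is stable under cobase change and transfinite composition; thus every relative $J$-cell complex is a cofibration. Finally, the domains of the maps in $I$ and $J$ are finitely generated, hence small relative to all morphisms, so both sets permit the small object argument. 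The recognition theorem then delivers the model structure with the $I$-cofibrations as cofibrations, the $J$-injectives as fibrations, and the homology isomorphisms as weak-equivalences; the characterization of the cofibrations as the morphisms with the left lifting property against acyclic fibrations is part of its conclusion. The main obstacle is exactly the point where boundedness would ordinarily be used: one must confirm that the transfinite composites of cell attachments remain acyclic and remain well-behaved (degreewise injective) monomorphisms, which rests on the exactness of filtered colimits of $\kk$-modules and must therefore be carried out directly in the unbounded $\ZZ$-graded setting rather than inferred from a truncation.
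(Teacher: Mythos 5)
Your proposal is correct and follows essentially the same route as the paper, which states this as a Fact with a citation to Hovey's Theorem 2.3.11; the proof given there is precisely your recognition-theorem argument with the generating sets $\{S^{n-1}\hookrightarrow D^n\}$ and $\{0\hookrightarrow D^n\}$, and the paper itself records these generators (as $i_d\colon B[d]\rightarrow E[d]$ and $j_d\colon 0\rightarrow E[d]$) in \S\ref{Background:DGModuleCellComplexes}. One small remark: the statement as printed says the cofibrations have the \emph{right} lifting property with respect to acyclic fibrations, which is a typo for the \emph{left} lifting property, and your proof correctly supplies the latter.
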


This model structure is symmetric monoidal in a strong sense:

\begin{fact}[{see~\cite[Proposition 4.2.13]{Hovey}}]\hspace*{2mm}
\begin{itemize}
\item
The ground ring $\kk$, which defines the unit of the tensor product of~$\C$,
forms a cofibrant object in~$\C$.
\item
The pushout-product axiom (see~\cite[\S 4.2.1]{Hovey})
holds for the tensor product of dg-modules
and we have a dual pullback-hom axiom for the internal hom of the category of dg-modules.
\end{itemize}
\end{fact}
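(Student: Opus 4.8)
The plan is to prove the two assertions separately. For the cofibrancy of $\kk$, I would check directly that the unit map $0\to\kk$ has the left lifting property with respect to acyclic fibrations. Given an acyclic fibration $p\colon X\to Y$, that is, a degreewise surjective homology isomorphism, a morphism $\kk\to Y$ amounts to a $0$-cycle $y\in Y_0$; I would pick any $x\in X_0$ with $p(x)=y$, observe that $\delta(x)$ lies in $K=\ker(p)$, which is acyclic (its long exact homology sequence identifies $H_*(K)$ with $0$, since $p$ is degreewise surjective and a homology isomorphism), deduce $\delta(x)=\delta(k)$ for some $k\in K_0$, and take $x-k$ as the desired lift. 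An equivalent option is to remark that $\kk$ is a bounded-below complex of projective $\kk$-modules and that such dg-modules are cofibrant.

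For the pushout-product axiom I would use the classical reduction to generators. Recall that $\C$ is cofibrantly generated with generating cofibrations $I=\{i_n\colon S^{n-1}\hookrightarrow D^n\}_{n\in\ZZ}$ and generating acyclic cofibrations $J=\{j_n\colon 0\to D^n\}_{n\in\ZZ}$, where $S^n$ denotes $\kk$ placed in degree $n$ with zero differential, $D^n$ denotes $\kk$ placed in degrees $n$ and $n-1$ with the identity differential, and $i_n$ is the bottom-cell inclusion. Since the pushout-product bifunctor $(i,j)\mapsto i\square j$ commutes with colimits in each variable, and since every cofibration (resp. acyclic cofibration) is a retract of a transfinite composite of pushouts of morphisms of $I$ (resp. of $J$), the standard argument reduces the axiom to checking that $i_m\square i_n$ is a cofibration and that $i_m\square j_n$ and $j_m\square i_n$ are acyclic cofibrations. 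These verifications become small computations with the elementary dg-modules: one has $S^m\otimes S^n\cong S^{m+n}$, $S^m\otimes D^n\cong D^{m+n}$, and $D^m\otimes D^n\cong D^{m+n}\oplus D^{m+n-1}$, the last isomorphism coming from a chain-level splitting of the short exact sequence $S^{m-1}\otimes D^n\hookrightarrow D^m\otimes D^n\twoheadrightarrow S^m\otimes D^n$ (which exists because $S^m\otimes D^n\cong D^{m+n}$ is cofibrant, so that its identity lifts along the acyclic fibration $D^m\otimes D^n\to D^{m+n}$). Tracking the generators through these identifications, I expect $i_m\square i_n$ to be identified with the coproduct of arrows $i_{m+n}\amalg\id_{D^{m+n-1}}$ and $i_m\square j_n$ with $j_{m+n}\amalg\id_{D^{m+n-1}}$; since coproducts, pushouts and transfinite composites of (acyclic) cofibrations are again (acyclic) cofibrations and $\id_{D^{m+n-1}}$ is an isomorphism, this yields what is needed, the case of $j_m\square i_n$ being symmetric through the symmetry isomorphism of the tensor product.

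The pullback-hom axiom for the internal hom $\Hom_\C$ will then follow formally, as the adjoint form of the pushout-product axiom: for a cofibration $i\colon A\to B$ and a fibration $p\colon X\to Y$, the induced morphism $\Hom_\C(B,X)\to\Hom_\C(A,X)\times_{\Hom_\C(A,Y)}\Hom_\C(B,Y)$ has the right lifting property with respect to a generating cofibration $i'$ if and only if $i'\square i$ has the left lifting property with respect to $p$, by two applications of the tensor--hom adjunction; hence the pullback-hom axiom is a formal consequence of the pushout-product axiom already established.

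The step I expect to cost the most effort is the reduction to the generating sets — making precise that the box-product is compatible with the cellular presentation of cofibrations — together with keeping track of the Koszul signs in the identification of $D^m\otimes D^n$ and of the maps $i_m\square i_n$ and $i_m\square j_n$. Once these identifications are in hand, everything else is formal.
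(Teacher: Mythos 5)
Your proposal is correct; the paper itself offers no proof of this Fact, merely citing \cite[Proposition 4.2.13]{Hovey}, and your argument is essentially the standard one carried out there: direct verification of the lifting property for $0\to\kk$ (or the bounded-below-projective criterion), reduction of the pushout-product axiom to the generating sets, explicit identification of $B[m]\otimes B[n]$, $B[m]\otimes E[n]$ and $E[m]\otimes E[n]$ with spheres and disks, and the formal adjunction argument for the pullback-hom axiom. The identifications $i_m\square i_n\cong i_{m+n}\amalg\id_{E[m+n-1]}$ and $i_m\square j_n\cong j_{m+n}\amalg\id_{E[m+n-1]}$ check out (after the change of basis replacing $b_{m-1}\otimes e_n$ by $\delta(e_m\otimes e_n)$), so no gap remains.
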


\subsubsection{Relative cell complexes in cofibrantly generated model categories}\label{Background:CellComplexes}
Let $\K$ be a set of maps in a category~$\A$.
Recall briefly that a relative $\K$-cell complex in $\A$
is a (possibly transfinite) composite
\begin{equation*}
A = A_0\rightarrow A_1\rightarrow\dots
\rightarrow A_{\lambda-1}\xrightarrow{j_\lambda} A_{\lambda}\rightarrow\dots
\rightarrow\colim_{\lambda} A_\lambda = B
\end{equation*}
such that each $j_\lambda$ is defined by a pushout
\begin{equation*}
\xymatrix{ \displaystyle\bigvee_{\alpha} C_\alpha\ar[d]_{\sum_\alpha i_\alpha}\ar[r]^f & A_{\lambda-1}\ar@{.>}[d]^{j_\lambda} \\
\displaystyle\bigvee_{\alpha} D_\alpha\ar@{.>}[r]_g & A_{\lambda} }
\end{equation*}
with $i_\alpha\in\K$, $\forall\alpha$.

The usual model categories are equipped with:
\begin{itemize}
\item
a set of generating cofibrations $\I\subset\Mor\A$
which serve to produce factorizations $f = p i$
such that $p$ is an acyclic fibration and $i$ is a relative $\I$-cell complex,
\item
and a set of generating acyclic cofibrations $\J\subset\Mor\A$
which serve to produce factorizations $f = q j$
such that $q$ is a fibration and $j$ is a relative $\J$-cell complex.
\end{itemize}
The relative $\I$-cell complexes are automatically cofibrations.
The relative $\J$-cell complexes are automatically acyclic cofibrations.
The construction of the factorization $f = p i$ (respectively, $f = q j$)
is performed by the small object argument.
For an account of this construction,
we refer to~\cite[\S 10.5]{Hirschhorn} or to~\cite[\S 2.1]{Hovey}.

For our needs,
we recall the definition of the generating (acyclic) cofibrations
of the category of dg-modules
and we review the structure of the cofibrant cell complexes of~$\C$,
the dg-modules $C\in\C$
such that the initial morphism $i: 0\rightarrow C$
is a relative $\I$-cell complex.

\subsubsection{Cofibrant cell complexes in dg-modules}\label{Background:DGModuleCellComplexes}
Let $E[d]$ be the dg-module spanned by a homogeneous element $e_d$ of degree $d$
and a homogeneous element $b_{d-1}$ of degree $d-1$,
together with the differential such that $\delta(e_d) = b_{d-1}$.
Let $B[d]$ be the submodule of $E[d]$ spanned by~$b_{d-1}$.
The embeddings $i_d: B[d]\rightarrow E[d]$, $d\in\ZZ$,
define the generating cofibrations of the category of dg-modules.
The morphisms $j_d: 0\rightarrow E[d]$, $d\in\ZZ$,
define the generating acyclic cofibrations.

The cofibrant cell complexes of the category of dg-modules are identified with twisted dg-modules $C = (E,\partial)$
formed from a free graded $\kk$-module $E = \bigoplus_{\alpha\in\Lambda}\kk e_\alpha$,
equipped with a trivial internal differential,
together with a basis filtration
\begin{equation*}
\emptyset = \{e_\alpha\}_{\alpha\in\Lambda_0}\subset\{e_\alpha\}_{\alpha\in\Lambda_1}\subset\dots
\subset\{e_\alpha\}_{\alpha\in\Lambda_{\lambda}}\subset\dots
\subset\{e_\alpha\}_{\alpha\in\Lambda}
\end{equation*}
such that $\partial(E_{\lambda})\subset E_{\lambda-1}$,
where we set $E_{\lambda} = \bigoplus_{\alpha\in\Lambda_{\lambda}}\kk e_\alpha$.

This identity of structures follows from a straightforward inspection of the structure of pushouts
along generating cofibrations in the category of dg-modules.

Indeed,
one observes easily that a pushout
\begin{equation*}
\xymatrix{ \displaystyle\bigoplus_{\alpha\in S_{\lambda}} B[d_{\alpha}]\ar[r]^{f}\ar[d]_{\sum_{\alpha} i_{d_{\alpha}}} &
C_{\lambda-1}\ar@{.>}[d]^{j_{\lambda}} \\
\displaystyle\bigoplus_{\alpha\in S_{\lambda}} E[d_{\alpha}]\ar@{.>}[r]_{g} & C_{\lambda} },
\end{equation*}
amounts to the definition of a twisted dg-module
\begin{equation*}
C_{\lambda} = (C_{\lambda-1}\oplus\{\bigoplus_{\alpha\in S_{\lambda}}\kk e_{\alpha}\},\partial)
\end{equation*}
such that $\partial$ vanishes over $C_{\lambda-1}$ and maps any basis element $e_{\alpha}$, $\alpha\in S_{\lambda}$, to an element of $C_{\lambda-1}$.
The basis elements $e_{\alpha}$ represent the image of the generators $e_{d_{\alpha}}\in E[d_{\alpha}]$
under the morphism $g: \bigoplus_{\alpha\in S_{\lambda}} E[d_{\alpha}]\rightarrow C_{\lambda}$.
The twisting homomorphism is determined on $e_{\alpha}$
by the identity $\partial(e_{\alpha}) = f(b_{d_{\alpha}})$
which follows from the relation $\delta(e_{d_{\alpha}}) = b_{d_{\alpha}}$ in $E[d_{\alpha}]$
and the commutation of $g$ with differentials.

One sees immediately that a sequence of pushouts of this form produces a twisted dg-module
together with a basis filtration of the form stated,
where $\Lambda_{\mu} = \coprod_{\lambda\leq\mu} S_{\lambda}$.
To see that this correspondence of structures gives an equivalence,
note simply that $C_{\lambda}$
represents the dg-submodule of $C = \colim_{\lambda} C_{\lambda}$
spanned by the basis elements $e_{\alpha}$ such that $\alpha\in\Lambda_{\lambda}$.

\section{Operads in dg-modules}\label{Operads}
\renewcommand{\thesubsubsection}{\thesubsection.\arabic{subsubsection}}

The purpose of this section is to review standard definitions on the category of operads:
in~\S\ref{OperadDefinition},
we recall the basic definitions of the structure of an operad;
in~\S\ref{TreeOperadStructures},
we review the definition of the monad of tree tensors
which gives an intuitive and global representation of the composition structure of an operad;
in~\S\ref{OperadHomotopy},
we recall the definition the model structure of the category of operads in dg-modules;
in~\S\ref{QuasiFreeOperads},
we study the structure of cofibrant cell objects of the category of operads.

\subsection{Recollections of basic definitions}\label{OperadDefinition}
The structure formed by a collection $M = \{M(n)\}_{n\in\NN}$,
where $M(n)$ is a dg-module equipped with an action of the group $\Sigma_n$ of permutations of~$\{1,\dots,n\}$,
is called a $\Sigma_*$-object.

According to the original definition of~\cite{May},
an operad consists of a $\Sigma_*$-object $\POp$
equipped with a unit element $1\in\POp(1)$
and composition products
\begin{equation}
\POp(r)\otimes\POp(n_1)\otimes\dots\otimes\POp(n_r)\xrightarrow{\mu}\POp(n_1+\dots+n_r)
\end{equation}
that satisfy natural equivariance, associativity and unit relations.
The composition product of~$p\in\POp(r)$
with $q_1\in\POp(n_1),\dots,q_r\in\POp(n_r)$
is denoted by $p(q_1,\dots,q_r)\in\POp(n_1+\dots+n_r)$.
An operad is also equipped with partial composites
\begin{equation}\renewcommand{\theequation}{**}
p\circ_e q = p(1,\dots,q,\dots,1),\quad\text{for $e = 1,\dots,r$},
\end{equation}
defined by the composition of~$p\in\POp(r)$
with an operation $q\in\POp(s)$
at the $e$th position.
The associativity of composition products implies that the structure of an operad
can equivalently be defined in terms partial composites,
for which we also have natural equivariance, associativity and unit relations~(see~\cite{Markl} or \cite[Proposition 13]{MarklSurvey}
and \cite[\S 1.3]{MarklShniderStasheff}).
The proof of this equivalence of definitions is revisited next.
For the moment,
we only assume that the existence of a composition structure~(*)
implies the existence of partial composites~(**).

The previous definitions are the most natural if we regard the elements of an operad
as operations with $r$ inputs.
But the composition structure of an operad can also be embodied into a conceptual and compact categorical definition.

The unit element $1\in\POp(1)$
is equivalent to a morphism $\eta: \IOp\rightarrow\POp$,
where $\IOp$ is the $\Sigma_*$-object such that
\begin{equation*}
\IOp(n) = \begin{cases} \kk, & \text{if $n=1$}, \\ 0, & \text{otherwise}. \end{cases}
\end{equation*}
The composition products are equivalent to a single morphism of $\Sigma_*$-objects
$\mu: \POp\circ\POp\rightarrow\POp$,
where $\circ$ is a certain operation on the category of $\Sigma_*$-objects,
the composition product of~$\Sigma_*$-objects.
The equivariance axioms of the original definition is encoded in the definition of~$\circ$.
The composition product~$\circ$ is a unitary and associative operation, with the $\Sigma_*$-object~$\IOp$ as a unit,
and the unit and associativity axioms of the definition of an operad
are equivalent to the commutativity of usual diagrams:
\begin{equation*}
\vcenter{\xymatrix{ \IOp\circ\POp\ar[r]^{\eta\circ\POp}\ar[dr]_{\simeq} & \POp\circ\POp\ar[d]^{\mu} &
\POp\circ \IOp\ar[l]_{\eta\circ\POp}\ar[dl]^{\simeq} \\
& \POp & }}
\qquad\text{and}
\qquad\vcenter{\xymatrix{ \POp\circ\POp\circ\POp\ar[r]^{\POp\circ\mu}\ar[d]_{\mu\circ\POp} & \POp\circ\POp\ar[d]^{\mu} \\
\POp\circ\POp\ar[r]_{\mu} & \POp }}.
\end{equation*}

For the purpose of operadic cobar constructions,
we give a graphical definition of the composition product~$M\circ N$
from which we derive an intuitive representation
of the composition product of an operad $\mu: \POp\circ\POp\rightarrow\POp$.

For another detailed account on the composition structure of $\Sigma_*$-objects,
we refer to the surveys of~\cite[\S 1]{FresseKoszulDuality} and~\cite[\S 1.8]{MarklShniderStasheff}.
The abstract definition of an operad in terms of a triple $(\POp,\mu,\eta)$
goes back to~\cite{Smirnov}.
Next (\S\ref{OperadAlgebras:OperadMonad}),
we recall that this abstract definition reflects the correspondence of~\cite{May}
between operads and monads.

\subsubsection{The numbering free representation of $\Sigma_*$-objects}\label{OperadDefinition:SigmaObjectInputs}
In our graphical constructions,
we use that a $\Sigma_*$-object is equivalent to a functor $M: \Bij\rightarrow\C$,
where $\Bij$ refers to the category of finite sets and bijections between them.
We adopt the notation $|I|$ to refer to the cardinal
of any finite set $I$.

In one direction,
for a functor $M: \Bij\rightarrow\C$,
the dg-module
\begin{equation*}
M(n) =\linebreak M(\{1,\dots,n\})
\end{equation*}
inherits a natural $\Sigma_n$-action since a permutation $w\in\Sigma_n$
is equivalent to a bijection $w: \{1,\dots,n\}\rightarrow\{1,\dots,n\}$.
Hence,
the collection of dg-modules $M(n) = M(\{1,\dots,n\})$, $n\in\NN$,
forms a $\Sigma_*$-object naturally associated to~$M: \Bij\rightarrow\C$.

In the converse direction,
to a $\Sigma_*$-object $M$
we associate the functor $M: \Bij\rightarrow\C$
such that
\begin{equation*}
M(I) = \Bij(\{1,\dots,n\},I)\otimes_{\Sigma_n} M(n),
\end{equation*}
for any set~$I$ of cardinal~$n = |I|$.
The tensor product of a dg-module $C$
with a set $S$ is the coproduct over $S$
of copies of~$C$.
The tensor product over~$\Sigma_n$
coequalizes the natural $\Sigma_n$-action on~$M(n)$
with the action of permutations by right translations on~$\Bij(\{1,\dots,n\},I)$.
An element $x\in M(I)$
is represented by a box labeled by $x$
with $1$ output and $n$ inputs indexed by the set~$I = \{i_1,\dots,i_n\}$:
\begin{equation*}
\vcenter{\xymatrix@H=6pt@W=3pt@M=2pt@!R=1pt@!C=1pt{ i_1\ar[dr] & \cdots\ar@{}[d]|{\displaystyle{\cdots}} & i_n\ar[dl] \\
& *+<8pt>[F]{x}\ar[d] & \\ & 0 & }}.
\end{equation*}

\subsubsection{The representation of composites by trees with levels}\label{OperadDefinition:LevelTrees}
The terms $(M\circ N)(I)$
of the composite $\Sigma_*$-object $M\circ N$
are defined intuitively as dg-modules
spanned by tensors arranged on oriented trees with one level of inputs indexed by the set $I$,
two level of vertices,
and one output (for short, we call this structure an $I$-tree with two levels):
\begin{equation*}
\vcenter{\xymatrix@H=6pt@W=3pt@M=2pt@!R=1pt@!C=1pt{ \ar@{.}[r] &
i^1_1\ar[dr]\ar@{.}[r] & \cdots\ar@{.}[r]\ar@{}[d]|{\displaystyle{\cdots}} & i^1_{n_1}\ar[dl]\ar@{.}[r] &
\cdots\ar@{.}[r] & i^r_1\ar[dr]\ar@{.}[r] & \cdots\ar@{.}[r]\ar@{}[d]|{\displaystyle{\cdots}} & i^r_{n_r}\ar[dl]\ar@{.}[r] & \\
*+<2pt>{1}\ar@{.}[rr] && *+<8pt>[F]{y_1}\ar[drr]\ar@{.}[rr] && \cdots\ar@{.}[rr] && *+<8pt>[F]{y_r}\ar[dll]\ar@{.}[rr] && \\
*+<2pt>{0}\ar@{.}[rrrr] &&&& *+<8pt>[F]{x}\ar[d]\ar@{.}[rrrr] &&&& \\ \ar@{.}[rrrr] &&&& 0\ar@{.}[rrrr] &&&& }}.
\end{equation*}
The vertices at level $1$ of the tree are labeled by elements $y_v\in N$.
The inputs of $y_v$ are in bijection with the inputs of the associated vertex~$v$.
The vertex at level $0$ is labeled by an element $x\in M$
whose inputs are in bijection with vertices at level $1$.

The structure of an $I$-tree with two levels $\tau$
is fully determined by a set $I_0$, whose elements represent the vertices at level $1$ of~$\tau$,
together with a partition of the inputs $I$ indexed by the vertices at level $1$:
\begin{equation*}
I = \coprod_{v\in I_0} I_v.
\end{equation*}
The edges of the tree $\tau$ connect each input $i\in I$ to the vertex $v\in I_0$ such that $i\in I_v$,
each vertex at level $1$ to the vertex at level $0$,
and the vertex at level $0$ to the output $0$.
To an $I$-tree with two levels $\tau$,
we associate the tensor product
\begin{equation*}
\tau(M,N) = M(I_0)\otimes\{\bigotimes_{v\in I_0} N(I_v)\}.
\end{equation*}

Define an isomorphism of $I$-trees with two levels as a bijection between the sets of vertices at level $1$
such that the partition $I = \coprod_{v\in I_0} I_v$
of the tree inputs is preserved.
Naturally,
an isomorphism of $I$-trees with two levels $\theta: \sigma\rightarrow\tau$
induces an isomorphism of dg-modules $\theta_*: \sigma(M,N)\rightarrow\tau(M,N)$.

Let $\Lev_2(I)$ denote the groupoid formed by~$I$-trees with two levels.
The dg-module $(M\circ N)(I)$
is defined by a sum
\begin{equation*}
(M\circ N)(I) = \bigoplus_{\tau\in\Lev_2(I)} \tau(M,N)/\equiv
\end{equation*}
over~$\Lev_2(I)$
divided out by the action of isomorphisms.

Any bijection $u: I\rightarrow J$
induces a groupoid isomorphism $u_*: \Lev_2(I)\rightarrow\Lev_2(J)$
whose action consists in reindexing the inputs of~$I$-trees.
This isomorphism gives rise to a dg-module
isomorphism
\begin{equation*}
u_*: (M\circ N)(I)\rightarrow(M\circ N)(J).
\end{equation*}
From this observation,
we conclude that $(M\circ N)(I)$ defines a functor on the category of finite sets and bijections
and hence forms a $\Sigma_*$-object naturally associated to $M$ and $N$.

In the literature,
the composition product~$M\circ N$ is usually defined without any reference to tree structures.
Nevertheless,
the reader can check easily that the definition of this paragraph
is strictly equivalent to other standard definitions.
The existence of coherent associativity and unit isomorphisms
\begin{equation*}
(M\circ N)\circ P\simeq M\circ(N\circ P)
\quad\text{and}\quad M\circ \IOp\simeq M\simeq \IOp\circ M
\end{equation*}
is well known
and can be checked by an easy inspection (see for instance~\cite[\S 1.8]{MarklShniderStasheff}).

\subsubsection{The composition product of an operad}\label{OperadDefinition:CompositionProduct}
The elements of~$(M\circ N)(n) = (M\circ N)(\{1,\dots,n\})$
represent formal composites $w\cdot x(y_1,\dots,y_r)$,
where $w$ is a permutation of~$\{1,\dots,n\}$
which shares out the inputs of the composite between the factors~$y_1,\dots,y_r\in N$.
These formal composites $w\cdot x(y_1,\dots,y_r)$
satisfy $\Sigma_*$-invariance relations
which are encoded in the definition of~$(M\circ N)$.
Therefore
the equivariance axioms of operads imply readily that the composition products
of an operad
\begin{equation*}
\POp(r)\otimes\POp(n_1)\otimes\dots\otimes\POp(n_r)\xrightarrow{\mu}\POp(n_1+\dots+n_r)
\end{equation*}
assemble to a morphism of $\Sigma_*$-objects
$\mu: \POp\circ\POp\rightarrow\POp$
which maps the formal composites $w\cdot p(q_1,\dots,q_r)\in\POp\circ\POp$
to their evaluation in~$\POp$.

In the paper,
we essentially use the intuitive representation of the composition product $\mu: \POp\circ\POp\rightarrow\POp$.

The partial composites $w\cdot p\circ_e q = w\cdot p(1,\dots,q,\dots,1)$
are associated to tensors
of the form
\begin{equation*}
\xymatrix@H=6pt@W=3pt@M=2pt@!R=1pt@!C=1pt{ \ar@{.}[r] & i_*\ar[d]\ar@{.}[r]|{\displaystyle\cdots} &
i_*\ar[dr]\ar@{.}[r] & \cdots\ar@{.}[r]\ar@{}[d]|{\displaystyle{\cdots}} & i_*\ar[dl]\ar@{.}[r]|{\displaystyle\cdots} &
i_*\ar[d]\ar@{.}[r] & \cdots\ar@{.}[r] &
i_*\ar[d]\ar@{.}[r] & \\
1\ar@{.}[r] & *+<3mm>[F]{1}\ar@/_4pt/[drrr]\ar@{.}[r] & \cdots\ar@{.}[r]
& *+<3mm>[F]{q}\ar[dr]\ar@{.}[r] & \cdots\ar@{.}[r] &
*+<3mm>[F]{1}\ar[dl]\ar@{.}[r] & \cdots\ar@{.}[r] & *+<3mm>[F]{1}\ar@/^4pt/[dlll]\ar@{.}[r] & \\
0\ar@{.}[rrrr] &&&& *+<3mm>[F]{p}\ar[d]\ar@{.}[rrrr] &&&& \\
\ar@{.}[rrrr] &&&& 0\ar@{.}[rrrr] &&&& }
\end{equation*}
in our representation of~$\POp\circ\POp$.

\subsubsection{Free operads}\label{OperadDefinition:FreeOperads}
Let $\Op$ denote the category of operads. Let $\M$ denote the category of $\Sigma_*$-objects.
We have an obvious forgetful functor $U: \Op\rightarrow\M$.
This functor has a left adjoint $\FOp: \M\rightarrow\Op$
which maps any $\Sigma_*$-object $M$
to a free operad $\FOp(M)$
generated by $M$.
This free operad is endowed with a natural morphism $\eta: M\rightarrow \FOp(M)$
and is characterized by the usual adjunction relation, namely:
any morphism $f: M\rightarrow\POp$ toward an operad $\POp$
has a unique factorization
\begin{equation*}
\xymatrix{ M\ar[rr]^{f}\ar[dr]_{\eta} && \POp \\ & \FOp(M)\ar@{.>}[ur]_{\exists!\phi_f} & }
\end{equation*}
such that $\phi_f$ is a morphism of operads.

A monad on $\Sigma_*$-objects is defined by the functor underlying the free operad $\FOp: \M\rightarrow\M$
together with the natural transformation $\eta: M\rightarrow \FOp(M)$,
which represents the universal morphism of the free operad,
and the natural transformation $\mu: \FOp(\FOp(M))\rightarrow \FOp(M)$
associated to the identity morphism $\id: \FOp(M)\rightarrow \FOp(M)$
by the universal property of the free operad.
According to~\cite[Proposition 1.12]{GetzlerJones},
the category of operads is isomorphic to the category of algebras over~$\FOp$,
the structures formed by a $\Sigma_*$-object $\POp$
together with a morphism $\lambda: \FOp(\POp)\rightarrow\POp$
which satisfy natural unit and associativity relations
with respect to the monad structure of~$\FOp$.
To produce the morphism $\lambda: \FOp(\POp)\rightarrow\POp$
which provides any operad with the structure of an algebra over~$\FOp$,
we simply apply the universal property of the free operad to the identity morphism $\id: \POp\rightarrow\POp$

In the next subsection,
we review an explicit construction of the free operad $\FOp(M)$
but we do not address the monadic structure alluded to in this paragraph:
in view toward applications of~\S\S\ref{CobarConstruction}-\ref{CylinderHomotopyMorphisms},
we rather make explicit a reduced version of this structure.

\subsection{Augmented operads, connected operads and tree composites}\label{TreeOperadStructures}
The unit $\Sigma_*$-object $\IOp$
inherits an obvious operad structure
and represents the initial object of the category of operads.
By a natural extension of usual terminologies of algebra,
an operad $\POp$ equipped with an augmentation morphism $\epsilon: \POp\rightarrow \IOp$,
which makes $\POp$ an object of the category of operads over $\IOp$,
is called an augmented operad (see~\cite[Definition 20]{MarklSurvey}).
The kernel of the augmentation morphism of an augmented operad,
for which we adopt the notation $\tilde{\POp} = \ker(\epsilon: \POp\rightarrow \IOp)$,
is called the augmentation ideal of~$\POp$.
The unit morphism and the augmentation
of an augmented operad yields a natural splitting $\POp = \IOp\oplus\tilde{\POp}$.

In this subsection,
we study the composition structure of augmented operads.
First,
we review an explicit definition of the free operad $\FOp(M)$ generated by a $\Sigma_*$-object~$M$
and we observe that $\FOp(M)$ has a natural splitting $\FOp(M) = \IOp\oplus\tilde{\FOp}(M)$.
Then
we check that the functor~$\tilde{\FOp}(M)$ inherits a natural monad structure
and we prove that the composition structure of an augmented operad~$\POp$
is part of a total composition product $\lambda: \tilde{\FOp}(\tilde{\POp})\rightarrow\tilde{\POp}$
which makes the augmentation ideal of~$\POp$
an algebra over the monad $\tilde{\FOp}$.

From this observation,
we obtain that the structure of an algebra over $\tilde{\FOp}$
includes an augmented operad structure.
In the converse direction,
one observes that the total composition product $\lambda: \tilde{\FOp}(\tilde{\POp})\rightarrow\tilde{\POp}$ of an algebra over $\tilde{\FOp}$
is fully determined by the composition structure of an operad.
Hence,
we have an equivalence of categories between algebras over $\tilde{\FOp}$
and augmented operads.

For our needs,
we revisit the proof of this equivalence of categories,
already defined in~\cite[\S 5]{MarklSurvey}
(we also refer to~\cite[Proposition 1.12]{GetzlerJones} and~\cite[\S 1.2]{GinzburgKapranov} for the analogous case of unaugmented operads).
The main novelty of our approach lies in the observation that the composite $\Sigma_*$-object $\POp\circ\POp$
embeds into the free operad $\FOp(\tilde{\POp})$.
This argument is used crucially in~\S\ref{CylinderHomotopyMorphisms}, where we define and study cylinder objects in the category of operads,
and motivates the review of this subsection.

The free operad $\FOp(M)$ is defined intuitively as a dg-module spanned by formal composites of generating operations $x\in M$.
These formal composites are represented by tensors arranged on vertices of a tree.
First of all,
we define the tree structures
which occur in this representation of~$\FOp(M)$.
For that purpose, we adapt definitions of~\cite[\S 2]{Serre}.
The same formalism is used in~\cite[\S 3]{FresseKoszulDuality}.

\subsubsection{Tree structures}\label{TreeOperadStructures:TreeStructures}
Let $I$ be any finite set.
An $I$-tree refers to an abstract oriented tree with one outgoing edge, whose target is usually denoted by $0$,
and ingoing edges, whose sources are indexed by~$I$.
Formally,
the structure of an $I$-tree $\tau$
is defined by a set of vertices $V(\tau)$,
a set of edges $E(\tau)$,
together with a source map $s: E(\tau)\rightarrow V(\tau)\amalg I$
and a target map $t: E(\tau)\rightarrow V(\tau)\amalg\{0\}$
such that the following properties hold:
\begin{enumerate}\setcounter{enumi}{-1}
\item\label{TreeConnectedness}
each vertex $v\in V(\tau)$ is connected to the output $0$
by a chain of edges
\begin{equation*}
v\xrightarrow{e_1} v_1\xrightarrow{e_2}\cdots\xrightarrow{e_{l-1}} v_{l-1}\xrightarrow{e_l} 0
\end{equation*}
so that $v = s(e_1)$, $t(e_1) = s(e_2)$, \dots, $t(e_{l-1}) = s(e_l)$ and $t(e_l) = 0$;
\item\label{TreeInputs}
for each $i\in I$,
there is one and only one edge $e\in E(\tau)$
such that $s(e) = i$;
\item\label{TreeVertexOutput}
for each vertex $v\in V(\tau)$,
there is one and only one edge $e\in E(\tau)$
such that $s(e) = v$;
\item\label{TreeRootOutput}
there is one and only one edge $e\in E(\tau)$ such that $t(e) = 0$.
\end{enumerate}
As an example,
the tree of figure~\ref{Fig:TreeExample} has $V(\tau) = \{v_1,\dots,v_5\}$ as vertex set,
$E(\tau) = \{e_1,\dots,e_{11}\}$ as edge set
and $I = \{i_1,\dots,i_6\}$ as input set.
In this graphical representation of a tree,
the edges $e$ are represented by an arrow oriented from their source $s(e) = u$
to their target $t(e) = v$.
\begin{figure}
\begin{equation*}
\xymatrix@M=0pt@!R=2em@!C=2em{ &&& *+<3mm>{i_2}\ar[dr]|-*+<1mm>{e_9} & *+<3mm>{i_3}\ar[d]|-*+<1mm>{e_{10}} & *+<3mm>{i_4}\ar[dl]|-*+<1mm>{e_{11}} && \\
&& *+<3mm>{i_1}\ar[ddrr]|-*+<1mm>{e_3} && *+<3mm>[o][F]{v_5}\ar[d]|-*+<1mm>{e_6} & *+<3mm>{i_5}\ar[dr]|-*+<1mm>{e_7} && *+<3mm>{i_6}\ar[dl]|-*+<1mm>{e_8} \\
& *+<3mm>[o][F]{v_2}\ar@/_/[drrr]|-*+<1mm>{e_2} &&& *+<3mm>[o][F]{v_3}\ar[d]|-*+<1mm>{e_4} && *+<3mm>[o][F]{v_4}\ar@/^/[dll]|-*+<1mm>{e_5} & \\
&&&& *+<3mm>[o][F]{v_1}\ar[d]|-*+<1mm>{e_1} &&& \\
&&&& *+<3mm>{0} &&& }
\end{equation*}
\caption{}\label{Fig:TreeExample}\end{figure}

The set of $I$-trees, denoted by $\Theta(I)$,
is equipped with a natural groupoid structure.
Formally,
an isomorphism of $I$-trees $\theta: \sigma\rightarrow\tau$
is defined by bijections $\theta_V: V(\sigma)\rightarrow V(\tau)$ and $\theta_E: E(\sigma)\rightarrow E(\tau)$
preserving the source and target of edges.

\subsubsection{The dg-module of tree tensors}\label{TreeOperadStructures:TreeTensors}
The inputs of a vertex $v\in V(\tau)$ in an $I$-tree $\tau$
is the set $I_v\subset V(\tau)\amalg I$
formed by the source $s(e)$ of the edges such that $t(e) = v$.
In the example of figure~\ref{Fig:TreeExample},
we have $I_{v_1} = \{i_1,v_2,v_3,v_4\}$, $I_{v_2} = \emptyset$, $I_{v_3} = \{v_5\}$, $I_{v_4} = \{i_5,i_6\}$
and $I_{v_5} = \{i_2,i_3,i_4\}$.

To a $\Sigma_*$-object $M$,
we associate the dg-module
\begin{equation*}
\tau(M) = \bigotimes_{v\in V(\tau)} M(I_v),
\end{equation*}
spanned by tensor products $\bigotimes_v x_v$
whose elements $x_v$
are represented by a labeling of the vertices of~$\tau$.
The inputs of a label $x_v\in M(I_v)$ are in bijection with the inputs
of the associated vertex $v$
by definition of the dg-module $M(I_v)$.

Naturally,
an isomorphism of $I$-trees $\theta: \sigma\rightarrow\tau$
induces an isomorphism of dg-modules $\theta_*: \sigma(M)\rightarrow\tau(M)$.

\subsubsection{The construction of free operads}\label{TreeOperadStructures:FreeOperadConstruction}
The term $\FOp(M)(I)$ of the free operad $\FOp(M)$
is defined explicitly by the sum
\begin{equation*}
\FOp(M)(I) = \bigoplus_{\tau\in\Theta(I)} \tau(M)/\equiv
\end{equation*}
over the set of $I$-trees~$\Theta(I)$
divided out by the action of isomorphisms.
Naturally,
any bijection $u: I\rightarrow J$
induces a groupoid isomorphism $u_*: \Theta(I)\rightarrow\Theta(J)$,
whose action consists in reindexing the inputs of~$I$-trees,
and this isomorphism gives rise to a dg-module
isomorphism
\begin{equation*}
u_*: \FOp(M)(I)\rightarrow \FOp(M)(J),
\end{equation*}
so that the collection of dg-modules $\FOp(M)(I)$
defines a functor on the category of finite sets and bijections between them.

The unit morphism $\eta: \IOp\rightarrow \FOp(M)$,
identifies $\IOp(1) = \kk$
with the summand of~$\FOp(M)$
associated to the $1$-tree with no vertices
\begin{equation*}
\downarrow = \vcenter{\xymatrix@H=6pt@W=3pt@M=2pt@!R=1pt@!C=1pt{ 1\ar[d] \\ 0 }},
\end{equation*}
for which we have an identity $\downarrow(M) = \kk$.
The elements of the composite $\FOp(M)\circ \FOp(M)$
consist of tree tensors arranged on the vertices of a big tree.
From this representation,
we see readily that an element of~$\FOp(M)\circ \FOp(M)$ is identified with a tree tensor formed on a big tree
equipped with a partition into (possibly empty) small trees
arranged on two levels.
Hence,
we have a natural composition product $\mu: \FOp(M)\circ \FOp(M)\rightarrow \FOp(M)$
which simply forgets the extra partition and level structure.
The morphisms $\eta: \IOp\rightarrow \FOp(M)$ and $\mu: \FOp(M)\circ \FOp(M)\rightarrow \FOp(M)$
satisfy visibly the unit and associativity axioms of operads.
Hence,
we have a well defined operad structure on $\FOp(M)$.

For the $I$-tree with one vertex
\begin{equation*}
\psi = \vcenter{\xymatrix@H=6pt@W=3pt@M=2pt@!R=1pt@!C=1pt{ i_1\ar[dr] & \cdots\ar@{}[d]|{\displaystyle{\cdots}} & i_n\ar[dl] \\
& *+<3mm>[o][F]{v}\ar[d] & \\ & 0 & }},
\end{equation*}
we have a natural isomorphism $\psi(M)\simeq M(I)$.
This isomorphism
determines the canonical morphism of $\Sigma_*$-objects
\begin{equation*}
\eta: M\rightarrow\FOp(M)
\end{equation*}
associated to the structure of the free operad.
In the sequel,
we use this morphism to identify $M$
with a summand of~$\FOp(M)$.

We still have to check:

\begin{prop}[{see~\cite[Proposition 1.10]{GetzlerJones} and~\cite[\S 1.2]{GinzburgKapranov}}]\label{TreeOperadStructures:FreeOperadStatement}
The operad $\FOp(M)$
together with the canonical morphism $\eta: M\rightarrow\FOp(M)$
satisfies the universal property of free objects (\S\ref{OperadDefinition:FreeOperads}).\qed
\end{prop}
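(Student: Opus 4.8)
The plan is to verify directly that the construction $\FOp(M)(I) = \bigoplus_{\tau\in\Theta(I)}\tau(M)/\equiv$ has the universal property, by unwinding what a morphism $f\colon M\to\POp$ toward an operad determines. First I would fix a morphism of $\Sigma_*$-objects $f\colon M\to\POp$ and construct a candidate operad morphism $\phi_f\colon\FOp(M)\to\POp$. The natural idea is to define $\phi_f$ on each summand $\tau(M) = \bigotimes_{v\in V(\tau)}M(I_v)$ by the composite
\begin{equation*}
\tau(M) = \bigotimes_{v\in V(\tau)} M(I_v)\xrightarrow{\bigotimes_v f(I_v)}\bigotimes_{v\in V(\tau)}\POp(I_v)\xrightarrow{\tau(\mu)}\POp(I),
\end{equation*}
where $\tau(\mu)$ is the iterated composition product of $\POp$ read off from the tree $\tau$: one composes the operations $p_v\in\POp(I_v)$ along the edges of $\tau$, working from the leaves toward the root. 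The associativity and equivariance axioms of the operad $\POp$ guarantee that $\tau(\mu)$ is well defined, independent of the order in which one performs the composites, and compatible with tree isomorphisms $\theta\colon\sigma\to\tau$, so the map descends to the quotient $\tau(M)/\equiv$ and assembles into a morphism of $\Sigma_*$-objects $\phi_f\colon\FOp(M)\to\POp$.

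The next step is to check that $\phi_f$ is a morphism of operads, i.e.\ that it commutes with the composition products and the units. For the unit, $\phi_f$ sends the summand $\downarrow(M) = \kk$ attached to the empty $1$-tree to $1\in\POp(1)$ by construction, so $\phi_f\circ\eta_{\FOp(M)} = \eta_{\POp}$ holds. For the composition product, one uses the description of $\mu\colon\FOp(M)\circ\FOp(M)\to\FOp(M)$ from \S\ref{TreeOperadStructures:FreeOperadConstruction} as the map that forgets the two-level partition of a big tree into small trees: grafting trees and then applying $\tau(\mu)$ gives the same element of $\POp$ as applying $\tau(\mu)$ on each small tree first and then composing the results, again by the associativity of the composition product of $\POp$. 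Precomposition with $\eta\colon M\to\FOp(M)$, which identifies $M(I)$ with the summand $\psi(M)$ of the one-vertex tree $\psi$, recovers $f$ since $\tau(\mu)$ is trivial on a one-vertex tree; hence $\phi_f\circ\eta = f$.

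Finally I would prove uniqueness. Suppose $\phi\colon\FOp(M)\to\POp$ is any operad morphism with $\phi\circ\eta = f$. Every tree tensor $\bigotimes_v x_v\in\tau(M)$ can be written as an iterated composite, inside $\FOp(M)$, of the generators $x_v\in M(I_v)\subset\FOp(M)$ using the partial composites $\circ_e$ — this is exactly the assertion that $\FOp(M)$ is generated by $M$, which follows from the tree description since any $I$-tree is built from one-vertex trees by grafting. Because $\phi$ is an operad morphism it commutes with these composites, so $\phi(\bigotimes_v x_v)$ is forced to equal the iterated composite in $\POp$ of the elements $\phi(x_v) = f(x_v)$, which is precisely $\phi_f(\bigotimes_v x_v)$; thus $\phi = \phi_f$. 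The main obstacle I anticipate is purely bookkeeping: making the map $\tau(\mu)$ and its independence of the chosen bracketing of composites precise, including the sign conventions of \S\ref{Background:Signs} and the equivariance coequalizers built into the definition of $M(I)$ and of $\circ$. Once one grants that the iterated composition product along a tree is well defined — which is the content of the equivalence between the "operations with $r$ inputs" and the "$\mu\colon\POp\circ\POp\to\POp$" formulations recalled in \S\ref{OperadDefinition} — the three steps above are routine, and this is why the statement is flagged with \qed rather than given a written proof.
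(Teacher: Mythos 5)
Your proof is correct, and it is the standard direct verification of the universal property. The paper itself does not write out this argument: it defers to the references and observes that the statement can be extracted as a byproduct of the equivalence, developed in \S\ref{TreeOperadStructures:ReducedFreeOperad}--\S\ref{TreeOperadStructures:ReducedCompositionStructure}, between augmented operads and algebras over the reduced free-operad monad $\tilde{\FOp}$. The mathematical substance of the two routes is the same: your iterated composition product $\tau(\mu)$ along a tree is exactly the paper's tree composition product $\lambda_{\tau}\colon\tau(\tilde{\POp})\rightarrow\tilde{\POp}(I)$, and the one point you flag as the ``main obstacle'' --- that $\tau(\mu)$ is independent of the chosen order of edge contractions --- is precisely the coherence statement the paper isolates as lemma~\ref{TreeOperadStructures:TreeDecompositionImplication}, deduced there from the three-vertex associativity diagrams of figures~\ref{Fig:TreeLinearAssociativity}--\ref{Fig:TreeRamifiedAssociativity}; so your appeal to ``the associativity axioms of $\POp$'' is legitimate but is exactly where the nontrivial work sits. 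Your packaging buys a self-contained, universal-property-first argument; the paper's packaging buys that the same coherence work simultaneously yields the equivalence of categories of proposition~\ref{TreeOperadStructures:CompositionProductEquivalence}, of which the freeness of $\FOp(M)$ is then a formal consequence. Your uniqueness step (generation of $\FOp(M)$ by $M$ under partial composites, since every tree is obtained by grafting one-vertex trees) is sound and corresponds to the decomposition of tree composites used in the proof of that lemma.
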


This proposition is established in the references.
On the other hand,
we prove by independent arguments that (augmented) operads are equivalent to algebras over the monad $\tilde{\FOp}$ (defined next)
formed from the augmentation ideal of~$\FOp$.
Proposition~\ref{TreeOperadStructures:FreeOperadStatement}
can be obtained as a byproduct of the construction of this equivalence.
Therefore
the reader can extract a complete proof of proposition~\ref{TreeOperadStructures:FreeOperadStatement}
from explanations of the next paragraphs.

\subsubsection{The augmented structure of the free operad}\label{TreeOperadStructures:AugmentedFreeOperad}
The groupoids of $I$-trees admit a natural splitting $\Theta(I) = \coprod_r\Theta_r(I)$,
where $\Theta_r(I)$ consists of $I$-trees $\tau$ whose vertex set $V(\tau)$
has $r$ elements.
From this observation,
we deduce that the underlying $\Sigma_*$-object of the free operad $\FOp(M)$
inherits a splitting $\FOp(M) = \bigoplus_{r=0}^{\infty} \FOp_r(M)$,
where $\FOp_r(M)$ consists of summands $\tau(M)$
such that $\tau\in\Theta_d(I)$.

Note that $\Theta_0(I)$ is reduced to the unit $1$-tree with no vertices
if the input set~$I$ is reduced to $1$ element
and is empty otherwise.
Hence we have $\FOp_0(M) = \IOp$.
The projection onto this summand $\FOp_0(M) = \IOp$
defines clearly an operad morphism $\epsilon: \FOp(M)\rightarrow \IOp$
so that $\FOp(M)$ forms an augmented operad (see also~\cite[\S 4]{MarklSurvey} for this observation).
The augmentation ideal of~$\FOp(M)$
satisfies
\begin{equation*}
\tilde{\FOp}(M) = \bigoplus_{\tau\in\tilde{\Theta}(I)} \tau(M)/\equiv,
\end{equation*}
where $\tilde{\Theta}(I) = \coprod_{r>0}\Theta_d(I)$ is the set of $I$-trees
with a non-empty set of vertices.

Note that $\Theta_1(I)$ is reduced to the isomorphism class of the one-vertex tree~$\psi$
of~\S\ref{TreeOperadStructures:FreeOperadConstruction}.
Hence, we also have an isomorphism $M\simeq\FOp_1(M)$
and the universal morphism of the free operad $\eta: M\rightarrow\FOp(M)$
identifies the $\Sigma_*$-object $M$
with that summand $\FOp_1(M)\subset\FOp(M)$.

\subsubsection{The reduced monadic structure of the free operad}\label{TreeOperadStructures:ReducedFreeOperad}
In this paragraph,
we give, after~\cite[\S 5]{MarklSurvey}, a direct definition (without referring to the universal property of free objects)
of a monad structure on the augmentation ideal of the free operad $\FOp(M)$.

The isomorphism $M(I)\simeq\psi(M)$ of~\S\ref{TreeOperadStructures:FreeOperadConstruction},
and the universal morphism of the free operad $\eta: M\rightarrow \FOp(M)$,
identifies the $\Sigma_*$-object $M$
with a summand of~$\tilde{\FOp}(M)\subset\FOp(M)$.
The inclusion of this summand $M = \FOp_1(M)$
into $\tilde{\FOp}(M)$
defines the monadic unit
\begin{equation*}
\eta: M\rightarrow\tilde{\FOp}(M).
\end{equation*}

An element of the composite construction~$\tilde{\FOp}(\tilde{\FOp}(M))$
consists of non-empty tree tensors arranged on the vertices of a tree.
From this representation,
we see that an element of~$\tilde{\FOp}(\tilde{\FOp}(M))$ amounts to a tensor arranged on a big tree
equipped with a partition into non-empty small trees.
We have a natural monadic product
\begin{equation*}
\mu: \tilde{\FOp}(\tilde{\FOp}(M))\rightarrow\tilde{\FOp}(M)
\end{equation*}
which simply forgets the partition structure
to retain the structure of the big tree only.
This process is displayed in figure~\ref{Fig:TreeComposite}.
\begin{figure}[t]
\begin{equation*}
\vcenter{\xymatrix@H=8pt@W=8pt@R=12pt@C=12pt@M=2pt{ i\ar@/_/[ddrr] & j\ar[dr] && k\ar[dl] & l\ar@/^/[ddl] \\
&& *+<6pt>[F]{x}\ar[d]\save[]!C.[d]!C *+<11pt>[F:<2pt>]\frm{}\restore && \\
&& *+<6pt>[F]{y}\ar[dr] &*+<6pt>[F]{z}\ar[d]\save[]!C.[d]!C *+<11pt>[F:<2pt>]\frm{}\restore & \\
&&& *+<6pt>[F]{t}\ar[d] & \\ &&& 0 & }}
\quad\mapsto
\vcenter{\xymatrix@H=8pt@W=8pt@R=12pt@C=12pt@M=2pt{ i\ar@/_/[ddrr] & j\ar[dr] && k\ar[dl] & l\ar@/^/[ddl] \\
&& *+<6pt>[F]{x}\ar[d] && \\
&& *+<6pt>[F]{y}\ar[dr] & *+<6pt>[F]{z}\ar[d] & \\
&&& *+<6pt>[F]{t}\ar[d] & \\ &&& 0 & }}
\end{equation*}
\label{Fig:TreeComposite}
\caption{}\end{figure}

The morphisms $\eta: M\rightarrow\tilde{\FOp}(M)$ and $\mu: \tilde{\FOp}(\tilde{\FOp}(M))\rightarrow\tilde{\FOp}(M)$
satisfy visibly the unit and associativity axioms of monads.
Hence,
we have a well defined monad structure on the functor $\tilde{\FOp}: \M\rightarrow\M$.

From the definition,
it appears that the structure of an algebra over~$\tilde{\FOp}$
is fully determined by a collection of dg-module morphisms
$\lambda_{\tau}: \tau(\tilde{\POp})\rightarrow\tilde{\POp}(I)$, $\tau\in\tilde{\Theta}(I)$,
commuting with the action of $I$-tree isomorphisms, with reindexing bijections,
and so that:
\begin{enumerate}
\item\label{UnitTreeComposites}
for the one-vertex tree $\psi$,
the morphism
\begin{equation*}
\lambda_{\psi}: \psi(\tilde{\POp})\rightarrow\tilde{\POp}(I)
\end{equation*}
reduces to the canonical isomorphism $\tilde{\POp}(I)\simeq\psi(\tilde{\POp})$ of~\S\ref{TreeOperadStructures:FreeOperadConstruction};
\item\label{AssociativityTreeComposites}
for any partition of a tree~$\tau$ into small nonempty subtrees $\tau_v$
arranged on a big tree $\sigma$,
the composite
\begin{equation*}
\xymatrix{ \tau(\tilde{\POp})\ar[r]^{\sigma(\lambda_*)} & \sigma(\tilde{\POp})\ar[r]^{\lambda_{\sigma}} & \tilde{\POp}(I), }
\end{equation*}
where $\sigma(\lambda_*)$ refers to the evaluation of the morphisms $\lambda_{\tau_v}: \tau_v(\tilde{\POp})\rightarrow\tilde{\POp}(I_v)$
on the components of~$\tau$,
agrees with $\lambda_{\tau}: \tau(\tilde{\POp})\rightarrow\tilde{\POp}(I)$.
\end{enumerate}

\subsubsection{Tree associativity and partial composition products}\label{TreeOperadStructures:TreeAssociativity}
The associativity relations~(\ref{AssociativityTreeComposites})
yielded by trees with three vertices
are represented in figures~(\ref{Fig:TreeLinearAssociativity}-\ref{Fig:TreeRamifiedAssociativity}).
In these representations,
the tree composition product $\lambda_*: \tau(\POp)\rightarrow\POp(I)$
is applied to each framed subtree.
\begin{figure}[t]
\begin{equation*}
\xymatrix@W=2pt@H=4pt@R=8pt@C=2pt@M=2pt{ &&&&
\save [].[rrrrdddd]!C="g2"\restore
& k_1\ar[dr] & \cdots & k_t\ar[dl] & 
&&&
\save [].[rrrrdddd]!C="g3"\restore
& k_1\ar[ddr] & \cdots\ar@{}[dd]|{\displaystyle{\cdots}} & k_t\ar[ddl] & 
&&&
&& 
\\ &&&&
j_1\ar[drr] & \cdots & *+<6pt>[F]{p_{3}}\save[]!C.[d]!C *+<8pt>[F-,]\frm{}\restore \ar[d] & \cdots & j_s\ar[dll] 
&&&
j_1\ar@{}[r]|(0.8){\displaystyle{\cdots}}\ar[drr] &&&& j_s\ar@{}[l]|(0.8){\displaystyle{\cdots}}\ar[dll] 
&&&
&& 
\\ &&&&
i_1\ar[drr] & \cdots & *+<6pt>[F]{p_{2}}\ar[d] & \cdots & i_r\ar[dll] 
&&&
i_1\ar[drr] & \cdots & *+<6pt>[F]{p_{23}}\ar[d] \save[]!C.[d]!C *+<8pt>[F-,]\frm{}\restore
& \cdots & i_r\ar[dll] 
&&&
&& 
\\ &&&&
&& *+<6pt>[F]{p_{1}}\ar[d]\save[]!C.[]!C *+<8pt>[F-,]\frm{}\restore && 
&&&
&& *+<6pt>[F]{p_{1}}\ar[d] && 
&&&
&& 
\\ &&&&
&& 0 && 
&&&
&& 0 && 
&&&
&& 
\\
&&&& &&& &&&& &&& &&&&&& \\
\save [].[rrrrdddd]!C="g1"\restore
& k_1\ar[dr] & \cdots & k_t\ar[dl] & 
&&&
&&&& 
&&&
\save [].[rrrrrrdddd]!C="g4"\restore
&& k_1\ar[dddr] & \cdots\ar@{}[ddd]|(0.33){\displaystyle{\cdots}} & k_t\ar[dddl] && 
\\
j_1\ar[drr] & \cdots & *+<6pt>[F]{p_{3}}\ar[d] \save[]!C.[dd]!C *+<8pt>[F-,]\frm{}\restore
& \cdots & j_s\ar[dll] 
&&&
&&&& 
&&&
& j_1\ar[ddrr]\ar@{}[r]|(0.65){\displaystyle{\cdots}} &&&& j_s\ar@{}[l]|(0.65){\displaystyle{\cdots}}\ar[ddll] & 
\\
i_1\ar[drr] & \cdots & *+<6pt>[F]{p_{2}}\ar[d] & \cdots & i_r\ar[dll] 
&&&
&&&& 
&&&
i_1\ar[drrr] & \cdots &&&& \cdots & i_r\ar[dlll] 
\\
&& *+<6pt>[F]{p_1}\ar[d] && 
&&&
&&&& 
&&&
&&& *+<6pt>[F]{p_{123}}\ar[d] &&& 
\\
&& 0 && 
&&&
&&&& 
&&&
&&& 0 &&& 
\\
&&&& &&& &&&& &&& &&&&&& \\ &&&&
\save [].[rrrrdddd]!C="g5"\restore
& k_1\ar[dr] & \cdots & k_t\ar[dl] & 
&&&
\save [].[rrrrdddd]!C="g6"\restore
& k_1\ar[dr] & \cdots & k_t\ar[dl] & 
&&&
&& 
\\ &&&&
j_1\ar[drr] & \cdots & *+<6pt>[F]{p_{3}}\ar[d]\save[]!C.[]!C *+<8pt>[F-,]\frm{}\restore & \cdots & j_s\ar[dll] 
&&&
j_1\ar[ddrr] & \cdots & *+<6pt>[F]{p_{3}}\ar[dd] \save[]!C.[dd]!C *+<8pt>[F-,]\frm{}\restore
& \cdots & j_s\ar[ddll] 
&&&
&& 
\\ &&&&
i_1\ar[drr] & \cdots & *+<6pt>[F]{p_{2}}\ar[d]\save[]!C.[d]!C *+<8pt>[F-,]\frm{}\restore & \cdots & i_r\ar[dll] 
&&&
i_1\ar[drr]\ar@{}[r]|(0.6){\displaystyle{\cdots}} &&&& i_r\ar@{}[l]|(0.6){\displaystyle{\cdots}}\ar[dll] 
&&&
&&& 
\\ &&&&
&& *+<6pt>[F]{p_{1}}\ar[d] && 
&&&
&& *+<6pt>[F]{p_{12}}\ar[d] && 
&&&
&& 
\\ &&&&
&& 0 && 
&&&
&& 0 && 
&&&
&& 
\ar@/^8pt/ "g1"!U+<0pt,6pt>;"g2"^(0.2){\simeq}
\ar "g2"!R+<6pt,0pt>;"g3"!L-<6pt,0pt>^{\lambda_*}
\ar@/^8pt/ "g3";"g4"!U+<0pt,6pt>^(0.75){\lambda_*}
\ar@/_8pt/ "g1"!D-<0pt,6pt>;"g5"_(0.2){\simeq}
\ar "g5"!R+<6pt,0pt>;"g6"!L-<6pt,0pt>_{\lambda_*}
\ar@/_8pt/ "g6";"g4"!D-<0pt,6pt>_(0.75){\lambda_*}
\ar "g1"!R+<6pt,0pt>;"g4"!L-<6pt,0pt>^{\lambda_*}
}
\end{equation*}
\caption{}\label{Fig:TreeLinearAssociativity}\end{figure}
\begin{figure}[t]
\begin{equation*}
\xymatrix@W=2pt@H=4pt@R=12pt@C=1pt@M=1pt{
&& 
\save [].[rrrrrrrrddd]!C="g2"\restore 
& j_1\ar[dr] & \cdots & j_s\ar[dl] && k_1\ar[dr] & \cdots & k_t\ar[dl] & 
&&&&
\save [].[rrrrrrrrddd]!C="g3"\restore 
& j_1\ar[ddrrr] & \cdots & j_s\ar[ddr] && k_1\ar[dr] & \cdots & k_t\ar[dl] & 
&& 
\\
&& 
i_1\ar@/_2pt/[drrrr] & \cdots & *+<6pt>[F]{p_{2}}\ar[drr]\save[]!C.[rrd]!C *+<8pt>[F-,]\frm{}\restore
&& \cdots && *+<6pt>[F]{p_{3}}\ar[dll]\save[]!C *+<8pt>[F-,]\frm{}\restore & \cdots & i_r\ar@/^6pt/[dllll] 
&&&&
i_1\ar@/_2pt/[drrrr] & \cdots & && \cdots && *+<6pt>[F]{p_{3}}\ar[dll]\save[]!C.[dll]!C *+<8pt>[F-,]\frm{}\restore & \cdots & i_r\ar@/^2pt/[dllll] 
&& 
\\
&& 
&&&& *+<6pt>[F]{p_{1}}\ar[d] &&&& 
&&&&
&&&& *+<6pt>[F]{p_{12}}\ar[d] &&&& 
&& 
\\
&& 
&&&& 0 &&&& 
&&&&
&&&& 0 &&&& 
&& 
\\ &&&& &&&& && &&&& && &&&& &&&& \\
\save [].[rrrrrrrrddd]!C="g1"\restore 
& j_1\ar[dr] & \cdots & j_s\ar[dl] && k_1\ar[dr] & \cdots & k_t\ar[dl] & 
&& 
&&&&
&& 
\save [].[rrrrrrrrddd]!C="g4"\restore 
& j_1\ar[ddrrr] & \cdots & j_s\ar[ddr] && k_1\ar[ddl] & \cdots & k_t\ar[ddlll] & 
\\
i_1\ar@/_2pt/[drrrr] & \cdots & *+<6pt>[F]{p_{2}}\ar[drr] \save[]!C.[rrrr]!C.[rrd]!C *+<8pt>[F-,]\frm{}\restore
&& \cdots && *+<6pt>[F]{p_{3}}\ar[dll] & \cdots & i_r\ar@/^2pt/[dllll] 
&& 
&&&&
&& 
i_1\ar@/_2pt/[drrrr] & \cdots &&& \cdots &&& \cdots & i_r\ar@/^2pt/[dllll] 
\\
&&&& *+<6pt>[F]{p_{1}}\ar[d] &&&& 
&& 
&&&&
&& 
&&&& *+<6pt>[F]{p_{1 2 3}}\ar[d] &&&& 
\\
&&&& 0 &&&& 
&& 
&&&&
&& 
&&&& 0 &&&& 
\\ &&&& &&&& && &&&& && &&&& &&&& \\
&& 
\save [].[rrrrrrrrddd]!C="g5"\restore 
& j_1\ar[dr] & \cdots & j_s\ar[dl] && k_1\ar[dr] & \cdots & k_t\ar[dl] & 
&&&&
\save [].[rrrrrrrrddd]!C="g6"\restore 
& j_1\ar[dr] & \cdots & j_s\ar[dl] && k_1\ar[ddl] & \cdots & k_t\ar[ddlll] & 
&& 
\\
&& 
i_1\ar@/_6pt/[drrrr] & \cdots & *+<6pt>[F]{p_{2}}\ar[drr]\save[]!C *+<8pt>[F-,]\frm{}\restore
&& \cdots && *+<6pt>[F]{p_{3}}\ar[dll]\save[]!C.[lld]!C *+<8pt>[F-,]\frm{}\restore
& \cdots & i_r\ar@/^2pt/[dllll] 
&&&&
i_1\ar@/_2pt/[drrrr] & \cdots & *+<6pt>[F]{p_{2}}\ar[drr]\save[]!C.[drr]!C *+<8pt>[F-,]\frm{}\restore
&& \cdots && & \cdots & i_r\ar@/^2pt/[dllll] 
&& 
\\
&& 
&&&& *+<6pt>[F]{p_{1}}\ar[d] &&&& 
&&&&
&&&& *+<6pt>[F]{p_{1 3}}\ar[d] &&&& 
&& 
\\
&& 
&&&& 0 &&&& 
&&&&
&&&& 0 &&&& 
&& 
\ar@/^8pt/ "g1"!U+<0pt,6pt>;"g2"^(0.2){\simeq}
\ar "g2"!R+<6pt,0pt>;"g3"!L-<6pt,0pt>^{\lambda_*}
\ar@/^8pt/ "g3";"g4"!U+<0pt,6pt>^(0.75){\lambda_*}
\ar@/_8pt/ "g1"!D-<0pt,6pt>;"g5"_(0.2){\simeq}
\ar "g5"!R+<6pt,0pt>;"g6"!L-<6pt,0pt>_{\lambda_*}
\ar@/_8pt/ "g6";"g4"!D-<0pt,6pt>_(0.75){\lambda_*}
\ar "g1"!R+<6pt,0pt>;"g4"!L-<6pt,0pt>^{\lambda_*}
}
\end{equation*}
\caption{}\label{Fig:TreeRamifiedAssociativity}\end{figure}

The composites over trees with two vertices
\begin{equation*}
\vcenter{\xymatrix@W=2pt@H=4pt@R=12pt@C=6pt@M=2pt{ & j_1\ar[dr] & \cdots & j_s\ar[dl] & 
&&&
& j_1\ar[ddr] & \cdots & j_s\ar[ddl] & 
\\
i_1\ar[drr] & \cdots & *+<3mm>[F]{p_2}\ar[d]|(0.4){e} & \cdots & i_r\ar[dll] 
\ar@[]!R+<6pt,0pt>;[rrr]!L-<6pt,0pt>^{\lambda_*} &&&
i_1\ar[drr] & \cdots && \cdots & i_r\ar[dll] 
\\
&& *+<3mm>[F]{p_1}\ar[d] &&
&&&
&& *+<3mm>[F]{p_{1 2}}\ar[d] && \\
&& 0 &&
&&&
&& 0 && }}
\end{equation*}
provide an algebra over the monad $\tilde{\FOp}$
with composition operations
\begin{equation*}
\circ_e: \tilde{\POp}(I)\otimes\tilde{\POp}(J)\rightarrow\tilde{\POp}(I\setminus\{e\}\cup J).
\end{equation*}
In the formalism of~\S\ref{OperadDefinition},
these composition operations, which arise from the structure of an algebra over $\tilde{\FOp}$,
represent partial composites $w\cdot p\circ_e q$,
where $w$ is a permutation which corresponds to the sharing of indices.
Note that:

\begin{obsv}\label{TreeOperadStructures:AssociativityInterpretation}
The associativity relations of figures (\ref{Fig:TreeLinearAssociativity}-\ref{Fig:TreeRamifiedAssociativity})
are equivalent to the associativity relations
of the partial composition products of an augmented operad.
\end{obsv}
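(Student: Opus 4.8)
The plan is to unwind the definition of the partial composition products given in \S\ref{TreeOperadStructures:TreeAssociativity} and to observe that the relations of figures~\ref{Fig:TreeLinearAssociativity}--\ref{Fig:TreeRamifiedAssociativity} are exactly the instances of the associativity relation~(\ref{AssociativityTreeComposites}) produced by $I$-trees with three vertices. A rooted tree with three vertices has, up to isomorphism, precisely two shapes: the \emph{chain}, in which the outgoing edges run $v_3\to v_2\to v_1\to 0$, and the \emph{cherry}, in which $v_2$ and $v_3$ both have their outgoing edge targeting $v_1$ through distinct inputs of $v_1$, while $v_1$ has its outgoing edge targeting $0$; these are the trees underlying figures~\ref{Fig:TreeLinearAssociativity} and~\ref{Fig:TreeRamifiedAssociativity} respectively. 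In a partition of such a tree into nonempty subtrees arranged on a big tree $\sigma$ with two vertices, each part is forced to be a connected subtree, and a direct inspection shows that the chain admits exactly the two such partitions $\{v_1\}\amalg\{v_2,v_3\}$ and $\{v_1,v_2\}\amalg\{v_3\}$, whereas the cherry admits exactly the two partitions $\{v_1,v_2\}\amalg\{v_3\}$ and $\{v_1,v_3\}\amalg\{v_2\}$, the grouping $\{v_2,v_3\}$ being disconnected in the cherry case. The remaining two-vertex partitions involve the one-vertex subtree $\psi$ and only reproduce the normalization~(\ref{UnitTreeComposites}).

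Next I would translate each of these partitions through the definition of the operations $\circ_e$ attached to two-vertex trees in \S\ref{TreeOperadStructures:TreeAssociativity}, so that evaluating $\lambda_\sigma\circ\sigma(\lambda_*)$ becomes an iterated partial composite. For the chain, the partition $\{v_1\}\amalg\{v_2,v_3\}$ gives a composite of the form $p_1\circ_e(p_2\circ_f p_3)$ and the partition $\{v_1,v_2\}\amalg\{v_3\}$ gives $(p_1\circ_e p_2)\circ_{f'}p_3$, where $f'$ denotes the input of $p_1\circ_e p_2$ inherited from the $f$th input of $p_2$; their coincidence, forced by~(\ref{AssociativityTreeComposites}), is exactly the ``sequential'' associativity relation of partial composites. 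Dually, for the cherry the two partitions yield $(p_1\circ_e p_2)\circ_{e'}p_3$ and $(p_1\circ_{e'}p_3)\circ_{e''}p_2$, with $e$ and $e'$ distinct inputs of $p_1$, whose coincidence is the ``parallel'' associativity relation. Conversely, an augmented operad, whose partial composites satisfy these two relations (together with the unit relations), furnishes morphisms $\lambda_\sigma$ on two-vertex trees compatible with the diagrams of figures~\ref{Fig:TreeLinearAssociativity}--\ref{Fig:TreeRamifiedAssociativity}, which yields the asserted equivalence.

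The point requiring care---and, modulo the combinatorics above, the only genuine content---is the bookkeeping of input sets and of the permutations $w$ recording the sharing of indices: one must check that the reindexings built into the functors $\tau(-)$ and into the definition of $\circ_e: \tilde{\POp}(I)\otimes\tilde{\POp}(J)\to\tilde{\POp}(I\setminus\{e\}\cup J)$ reproduce the classical index shifts (of the type $\circ_{e+f-1}$) appearing in the partial-composite associativity axioms, and that the permutations $w$ in the formal composites $w\cdot p\circ_e q$ of \S\ref{OperadDefinition:CompositionProduct} combine in the expected way. This is routine index-tracking, for which the numbering-free formalism of \S\ref{OperadDefinition:SigmaObjectInputs} and the sign conventions of \S\ref{Background:Signs} are designed, and it raises no further difficulty.
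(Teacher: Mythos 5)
Your proposal is correct and follows the same route the paper intends: the paper offers no written proof for this observation, relying on figures~\ref{Fig:TreeLinearAssociativity}--\ref{Fig:TreeRamifiedAssociativity} themselves, and your argument is precisely a careful verbalization of those figures (the two isomorphism classes of three-vertex trees, their two connected two-part partitions each, and the identification of the resulting coincidences with the sequential and parallel associativity axioms of partial composites). The only caveat worth noting is the one you already flag, namely the routine tracking of input reindexings, which the numbering-free formalism is designed to absorb.
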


Thus the structure of an algebra over $\tilde{\FOp}$
includes partial composition operations of an augmented operad structure.
In a converse direction:

\begin{lemm}[{see~\cite[\S 1.2]{GinzburgKapranov}}]\label{TreeOperadStructures:TreeDecompositionImplication}
Let $\tilde{\POp}$ be any $\Sigma_*$-object equipped with partial composition operations
$\circ_e: \tilde{\POp}(r)\otimes\tilde{\POp}(s)\rightarrow\tilde{\POp}(r+s-1)$, $e = 1,\dots,r$.
\begin{enumerate}
\item
If we assume the equivariance axioms of partial composites of augmented operads,
then these operations amount to composites $\lambda_{\tau}: \tau(\tilde{\POp})\rightarrow\tilde{\POp}(I)$,
where $\tau\in\Theta_2(I)$ ranges over trees with two vertices.
\item
If the associativity axioms of partial composites of augmented operads hold in $\tilde{\POp}$,
then we have a unique collection of tree composition products $\lambda_{\tau}: \tau(\tilde{\POp})\rightarrow\tilde{\POp}(I)$, $\tau\in\Theta(I)$,
extending these two-vertex tree composites
and such that properties (\ref{UnitTreeComposites}-\ref{AssociativityTreeComposites})
of~\S\ref{TreeOperadStructures:ReducedFreeOperad} hold.
\end{enumerate}
\end{lemm}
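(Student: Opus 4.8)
The plan is to prove both assertions by a careful bookkeeping of $I$-trees, using the number of vertices as the principal induction parameter. For assertion (1), I would first recall that a two-vertex tree $\tau\in\Theta_2(I)$ is determined by the datum of its two vertices, say $v_1$ (the vertex attached to the output $0$) and $v_2$ (the other), together with the inputs $I_{v_1}\subset I\amalg\{v_2\}$ and $I_{v_2}\subset I$; concretely, the edge from $v_2$ lands in $v_1$ at some slot $e$, and $I = (I_{v_1}\setminus\{v_2\})\amalg I_{v_2}$. Thus $\tau(\tilde{\POp}) = \tilde{\POp}(I_{v_1})\otimes\tilde{\POp}(I_{v_2})$, and a map $\lambda_\tau: \tau(\tilde{\POp})\rightarrow\tilde{\POp}(I)$ is exactly the datum of a partial composite at slot $e$, up to the reindexing bijections that pass between the numbered presentation $\circ_e: \tilde{\POp}(r)\otimes\tilde{\POp}(s)\rightarrow\tilde{\POp}(r+s-1)$ and the numbering-free presentation on finite sets of \S\ref{OperadDefinition:SigmaObjectInputs}. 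The content here is that the equivariance axioms of the $\circ_e$ are precisely what is needed for the collection $\{\lambda_\tau\}_{\tau\in\Theta_2(I)}$ to commute with $I$-tree isomorphisms and with reindexing bijections; this is a direct unwinding of definitions, and I would present it as such, mirroring Observation \ref{TreeOperadStructures:AssociativityInterpretation}.

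For assertion (2), the strategy is induction on the number of vertices $r = |V(\tau)|$. For $r = 1$ the tree is $\psi$ and property (\ref{UnitTreeComposites}) forces $\lambda_\psi$ to be the canonical isomorphism $\tilde{\POp}(I)\simeq\psi(\tilde{\POp})$; for $r = 2$, $\lambda_\tau$ is the given two-vertex composite. For the inductive step, given $\tau$ with $r\geq 3$ vertices, I would pick the vertex $v_0$ adjacent to the output, let $\tau'$ be the tree obtained by pruning $v_0$ (so $\tau$ decomposes as the one-vertex tree at $v_0$ with the subtrees $\tau'_1,\dots,\tau'_k$ grafted onto its inputs, each $\tau'_j$ having fewer than $r$ vertices), and \emph{define} $\lambda_\tau$ as the composite $\tau(\tilde{\POp})\xrightarrow{\bigotimes_j\lambda_{\tau'_j}}\sigma(\tilde{\POp})\xrightarrow{\lambda_\sigma}\tilde{\POp}(I)$ where $\sigma$ is the associated two-level tree with $v_0$ below; here $\lambda_\sigma$ itself is built by iterating the two-vertex composites $\circ_e$ (attaching the subtree-roots to $v_0$ one at a time). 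The iteration of partial composites at distinct slots of $v_0$ is well defined by the equivariance and the "parallel" associativity axiom (the commuting-squares case, figure \ref{Fig:TreeRamifiedAssociativity}), which guarantees independence of the order in which the $k$ roots are grafted.

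The main obstacle — and the heart of the proof — is verifying property (\ref{AssociativityTreeComposites}) for the collection so defined, i.e. that $\lambda_\tau$ does not depend on the chosen decomposition of $\tau$ into subtrees, equivalently that $\lambda_\sigma\circ\sigma(\lambda_*) = \lambda_\tau$ for \emph{every} partition of $\tau$ into nonempty subtrees, not merely the specific one used in the definition. I would reduce the general associativity coherence to the two generating cases pictured in figures \ref{Fig:TreeLinearAssociativity} and \ref{Fig:TreeRamifiedAssociativity} — the "linear" case of three vertices stacked in a chain (handled by the sequential associativity axiom $(p_1\circ_e p_2)\circ_f p_3 = p_1\circ_e(p_2\circ_{f'} p_3)$) and the "ramified" case of a vertex with two independent subordinate vertices (the commuting case). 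The key technical lemma is that any two decompositions of a fixed tree into subtrees are connected by a finite sequence of elementary moves, each of which either splits/merges a subtree at a single internal edge or re-brackets at a single vertex, and each such move is covered by one of these two axioms; a straightforward induction on the number of internal edges not respected by a given partition then closes the argument. Uniqueness is immediate once (\ref{AssociativityTreeComposites}) is known, since property (\ref{AssociativityTreeComposites}) applied to the maximal decomposition into one-vertex subtrees expresses any $\lambda_\tau$ in terms of the $\lambda_\psi$ and the $\circ_e$, which are fixed.
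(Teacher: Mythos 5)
Your proposal is correct and follows essentially the same route as the paper's (sketched) proof: both define $\lambda_{\tau}$ by iterating two-vertex composites (equivalently, contractions of internal edges) and reduce the independence of the result from the chosen decomposition to the two three-vertex associativity relations of figures~\ref{Fig:TreeLinearAssociativity} and~\ref{Fig:TreeRamifiedAssociativity}. The paper organizes the coherence step around orderings of the internal edges of~$\tau$ rather than your induction on the number of vertices with root-pruning, but the mathematical content is identical.
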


\begin{proof}[Proof (sketch)]
The associativity properties of~\S\ref{TreeOperadStructures:ReducedFreeOperad}
imply that the composition product over any tree $\tau$
is determined by a decomposition into composition products over trees with two vertices.

The composition product over a subtree with two vertices
is equivalent to the contraction of an internal edge of~$\tau$.
The decompositions of a tree composition product $\lambda_{\tau}: \tau(\tilde{\POp})\rightarrow\tilde{\POp}(I)$
into composition products over trees with two vertices
are associated to orderings of the internal edges of~$\tau$.
Observe that all orderings yield the same composition product $\lambda_{\tau}: \tau(\tilde{\POp})\rightarrow\tilde{\POp}(I)$
whenever we have the associativity relations of figures~(\ref{Fig:TreeLinearAssociativity}-\ref{Fig:TreeRamifiedAssociativity})
for three-fold composites.

From this coherence property,
we deduce readily that the decomposition process provides the object $\tilde{\POp}$
with a well-defined and uniquely-determined structure of an algebra over $\tilde{\FOp}$.
\end{proof}

To summarize,
observation~\ref{TreeOperadStructures:AssociativityInterpretation} and lemma~\ref{TreeOperadStructures:TreeDecompositionImplication}
give:

\begin{prop}[{see~\cite[\S 1.2]{GinzburgKapranov} and~\cite[Theorem 40]{MarklSurvey}}]\label{TreeOperadStructures:PartialCompositeEquivalenceStatement}
The structure of an algebra over $\tilde{\FOp}$
amounts to a collection of operations
\begin{equation*}
\circ_e: \tilde{\POp}(r)\otimes\tilde{\POp}(s)\rightarrow\tilde{\POp}(r+s-1),\quad\text{$e = 1,\dots,r$},
\end{equation*}
such that the equivariance and associativity relations
of operadic partial composites hold.\qed
\end{prop}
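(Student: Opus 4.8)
The plan is to assemble the proposition directly from Observation~\ref{TreeOperadStructures:AssociativityInterpretation} and Lemma~\ref{TreeOperadStructures:TreeDecompositionImplication}, since essentially all of the substantive work has been carried out there; what remains is to organize the two directions, check that they are mutually inverse, and check functoriality.

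First I would unpack the left-to-right direction. Recall from~\S\ref{TreeOperadStructures:ReducedFreeOperad} that a structure of an algebra over the monad $\tilde{\FOp}$ on a $\Sigma_*$-object $\tilde{\POp}$ is the same thing as a collection of morphisms $\lambda_{\tau}: \tau(\tilde{\POp})\rightarrow\tilde{\POp}(I)$, $\tau\in\tilde{\Theta}(I)$, commuting with $I$-tree isomorphisms and with reindexing bijections, satisfying the unit relation~(\ref{UnitTreeComposites}) over the one-vertex tree~$\psi$ and the associativity relations~(\ref{AssociativityTreeComposites}) over all partitions of trees into nonempty subtrees. Restricting this collection to the groupoid $\Theta_2(I)$ of two-vertex trees yields, as recalled just before Observation~\ref{TreeOperadStructures:AssociativityInterpretation}, partial composition operations $\circ_e: \tilde{\POp}(r)\otimes\tilde{\POp}(s)\rightarrow\tilde{\POp}(r+s-1)$ (together with the index-sharing permutation implicit in the numbering-free picture of~\S\ref{OperadDefinition}). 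Compatibility of the $\lambda_{\tau}$ with tree isomorphisms and reindexing bijections translates into the equivariance relations of partial composites, while the instances of~(\ref{AssociativityTreeComposites}) coming from trees with three vertices are exactly the relations of figures~(\ref{Fig:TreeLinearAssociativity}--\ref{Fig:TreeRamifiedAssociativity}), which by Observation~\ref{TreeOperadStructures:AssociativityInterpretation} are equivalent to the associativity relations of the partial composites of an augmented operad. Hence every $\tilde{\FOp}$-algebra structure produces the asserted data.

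Next I would treat the converse, which is precisely Lemma~\ref{TreeOperadStructures:TreeDecompositionImplication}. Starting from a $\Sigma_*$-object $\tilde{\POp}$ with equivariant partial composites $\circ_e$ satisfying the associativity axioms, part~(1) records that equivariance turns these operations into two-vertex tree composites $\lambda_{\tau}$, $\tau\in\Theta_2(I)$, and part~(2) produces a unique collection $\lambda_{\tau}$, $\tau\in\Theta(I)$, extending them and satisfying~(\ref{UnitTreeComposites}--\ref{AssociativityTreeComposites}); by~\S\ref{TreeOperadStructures:ReducedFreeOperad} this is exactly an $\tilde{\FOp}$-algebra structure on $\tilde{\POp}$. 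The two constructions are mutually inverse: passing from an $\tilde{\FOp}$-algebra to its partial composites and back recovers the original collection $\lambda_{\tau}$ by the uniqueness clause of Lemma~\ref{TreeOperadStructures:TreeDecompositionImplication}(2), while passing from partial composites to the extended collection and restricting to two-vertex trees recovers them because the extension is required to extend the two-vertex composites. Finally, a morphism of $\tilde{\FOp}$-algebras is a morphism of $\Sigma_*$-objects commuting with all $\lambda_{\tau}$, and since the $\lambda_{\tau}$ are determined by the two-vertex ones this coincides with a morphism commuting with the partial composites; so the correspondence is an isomorphism of categories.

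The only genuinely nontrivial input is the coherence behind Lemma~\ref{TreeOperadStructures:TreeDecompositionImplication}(2) --- that all orderings of the internal-edge contractions of a tree yield the same composite once the three-vertex relations of figures~(\ref{Fig:TreeLinearAssociativity}--\ref{Fig:TreeRamifiedAssociativity}) hold --- but that is exactly what the lemma already supplies. So at the level of this proposition there is no remaining obstacle: the argument is pure bookkeeping of the two directions, together with the verification of functoriality just indicated.
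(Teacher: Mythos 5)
Your proposal is correct and follows the paper's own route: the paper deduces this proposition directly from Observation~\ref{TreeOperadStructures:AssociativityInterpretation} and Lemma~\ref{TreeOperadStructures:TreeDecompositionImplication} ("To summarize, observation ... and lemma ... give:"), exactly as you do, with the two directions given by restricting tree composites to two-vertex trees and by the unique extension of partial composites supplied by the lemma. Your added remarks on mutual inverseness and functoriality are sound bookkeeping that the paper leaves implicit.
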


In the next paragraphs,
we check that the structure morphisms
\begin{equation*}
\lambda_{\tau}: \tau(\tilde{\POp})\rightarrow\tilde{\POp}(I)
\end{equation*}
include all components of an operadic composition product $\mu: \POp\circ\POp\rightarrow\POp$ as natural summands,
and not only partial composites.
This approach is used in~\S\ref{CylinderOperads} (in a dual context)
for the definition of the explicit cylinder object
associated to an operad.

To begin with,
we rephrase the definition of the composition product $\mu: \POp\circ\POp\rightarrow\POp$ of an augmented operad $\POp$
in terms of a composition structure on the augmentation ideal of~$\POp$.

\subsubsection{Trees of height $2$}\label{TreeOperadStructures:HeightTwoTrees}
Define the height of a vertex $v$ in a tree $\tau$ as the length $l$
of the unique chain of edges
\begin{equation*}
v\xrightarrow{e_1} v_1\xrightarrow{e_2}\cdots\xrightarrow{e_{l-1}} v_{l-1}\xrightarrow{e_l} 0
\end{equation*}
which connects the vertex to the output of the tree.
Define the height of a tree $\tau$
as the maximal height of vertices $v\in V(\tau)$.
By definition of a tree structure,
the source of the outgoing edge of a tree $\tau$
is the unique vertex of height $1$
in $\tau$.

The set of $I$-trees of height $h$, for which we adopt the notation $\Psi_h(I)$,
forms clearly a subgroupoid of the groupoid of $I$-trees $\Theta(I)$.
These subgroupoids $\Psi_h(I)$ are also preserved by the action of bijections $u: I\rightarrow J$.

In~\S\ref{OperadDefinition:LevelTrees},
we define composite $\Sigma_*$-objects as dg-modules of tensors arranged on trees
with two levels of vertices.
The structure of an $I$-tree with two levels, as defined in~\S\ref{OperadDefinition:LevelTrees},
is equivalent to an $I$-tree $\tau$
equipped with a decomposition $V(\tau) = V_0(\tau)\amalg V_1(\tau)$ of the set of vertices $V(\tau)$
such that $V_0(\tau)$
is reduced to the source $v_0 = s(e)$ of the outgoing edge of the tree (the edge $e$ such that $t(e) = 0$),
the inputs of $v_0$ are given by the vertex set $I_{v_0} = V_1(\tau)$
and we have $I_v\subset I$ for every $v\in V_1(\tau)$.
The component $V_i(\tau)$ represents the subset of vertices at level $i = 0,1$.
Each vertex $v\in V_1(\tau)$
has height $2$.
Finally,
a tree with two levels of vertices can be identified with a tree of height $2$
such that every ingoing edge targets to a vertex $v$
of height $2$.

\subsubsection{The composition structure of augmented operads}\label{TreeOperadStructures:ReducedCompositionStructure}
Recall that we adopt the notation $\Lev_2(I)$
for the groupoid of $I$-trees with two levels of vertices.
According to the definition of~\S\ref{OperadDefinition:LevelTrees},
we have
\begin{equation*}
\POp\circ\POp(I) = \bigoplus_{\xi\in\Lev_2(I)} \xi(\POp,\POp)/\equiv.
\end{equation*}

Any $I$-tree $\tau$ of height $2$
has a completion $\widehat{\tau}\in\Lev_2(I)$
defined by the insertion of unital vertices $\xymatrix@H=6pt@W=3pt@M=2pt@!R=1pt@!C=1pt{ \ar[r] & *+<3mm>[o][F]{1}\ar[r] & }$
on edges $e$ going directly from a tree input $s(e) = i$ to the vertex of height $1$ of $\tau$.
A tree tensor $\varpi\in\tau(\tilde{\POp})$
is naturally associated to a two level tree tensor $\widehat{\varpi}\in\widehat{\tau}(\POp,\POp)$
in the sense of~\S\ref{OperadDefinition:LevelTrees}:
just label the added vertices $\xymatrix@H=6pt@W=3pt@M=2pt@!R=1pt@!C=1pt{ \ar[r] & *+<3mm>[o][F]{1}\ar[r] & }$
by unit elements $1\in\IOp(1)$.

Graphically,
we have a morphism $\tau(\tilde{\POp})\rightarrow\widehat{\tau}(\POp,\POp)$
which associates the two level tree tensors of the form
\begin{equation*}
\vcenter{\xymatrix@H=6pt@W=3pt@M=2pt@!R=1pt@!C=1pt{ \ar@{.}[r] & i_*\ar[d]\ar@{.}[r] &
i_*\ar[dr]\ar@{.}[r] & \cdots\ar@{.}[r]\ar@{}[d]|{\displaystyle{\cdots}} & i_*\ar[dl]\ar@{.}[r] &
\cdots\ar@{.}[r]
& i_*\ar[dr]\ar@{.}[r] & \cdots\ar@{.}[r]\ar@{}[d]|{\displaystyle{\cdots}} & i_*\ar[dl]\ar@{.}[r] &
i_*\ar[d]\ar@{.}[r] & \\
1\ar@{.}[r] & *+<3mm>[F]{1}\ar@/_4pt/[drrrr]\ar@{.}[r] & \cdots\ar@{.}[r]
& *+<3mm>[F]{q_1}\ar[drr]\ar@{.}[rr] &
& \cdots\ar@{.}[rr] &
& *+<3mm>[F]{q_r}\ar[dll]\ar@{.}[r] & \cdots\ar@{.}[r]
& *+<3mm>[F]{1}\ar@/^4pt/[dllll]\ar@{.}[r] & \\
0\ar@{.}[rrrrr] &&&&& *+<3mm>[F]{p}\ar[d]\ar@{.}[rrrrr] &&&&& \\
\ar@{.}[rrrrr] &&&&& 0\ar@{.}[rrrrr] &&&&& }},
\end{equation*}
to the tree tensors
\begin{equation*}
\vcenter{\xymatrix@H=6pt@W=3pt@M=2pt@!R=1pt@!C=1pt{ &
i_*\ar[dr] & \cdots\ar@{}[d]|{\displaystyle{\cdots}} & i_*\ar[dl] &
& i_*\ar[dr] & \cdots\ar@{}[d]|{\displaystyle{\cdots}} & i_*\ar[dl] & \\
i_*\ar@/_4pt/[drrrr] &
& *+<3mm>[F]{q_1}\ar[drr] &
& \cdots &
& *+<3mm>[F]{q_r}\ar[dll] &
& i_*\ar@/^4pt/[dllll] \\
&&&& *+<3mm>[F]{p}\ar[d] &&&& \\ &&&& 0 &&&& }},
\end{equation*}
where $p\in\tilde{\POp}$ and $q_1,\dots,q_r\in\tilde{\POp}$.

Note that a tree of height $2$
has at least one vertex of height $2$.
Accordingly,
for an augmented operad $\POp$
equipped with a splitting $\POp = \IOp\oplus\tilde{\POp}$,
we have an identity
\begin{equation*}
\POp\circ\POp(I) = \downarrow^{\sharp}(\IOp)(I)\oplus\psi^{\flat}(\tilde{\POp})\oplus\psi^{\sharp}(\tilde{\POp})
\oplus\Bigl\{\bigoplus_{\tau\in\Psi_2(I)} \tau(\tilde{\POp})/\equiv\Bigr\},
\end{equation*}
where:
\begin{itemize}
\item
the summand $\downarrow^{\sharp}(\IOp)(I)\simeq\IOp(I)$
consists of two level tree tensors of the form
\begin{equation*}
\vcenter{\xymatrix@H=6pt@W=3pt@M=2pt@!R=1pt@!C=1pt{ \ar@{.}[r] & i\ar[d]\ar@{.}[r] & \\
*+<2pt>{1}\ar@{.}[r] & *+<8pt>[F]{1}\ar[d]\ar@{.}[r] & \\
*+<2pt>{0}\ar@{.}[r] & *+<8pt>[F]{1}\ar[d]\ar@{.}[r] & \\ \ar@{.}[r] & 0\ar@{.}[r] & }},
\end{equation*}
\item
the summand $\psi^{\flat}(\tilde{\POp})(I)\simeq\tilde{\POp}(I)$
consists of two level tree tensors of the form
\begin{equation*}
\vcenter{\xymatrix@H=6pt@W=3pt@M=2pt@!R=1pt@!C=1pt{ \ar@{.}[r] & i_1\ar[d]\ar@{.}[r] & *{\cdots}\ar@{.}[r] & i_n\ar[d]\ar@{.}[r] & \\
*+<2pt>{1}\ar@{.}[r] & *+<8pt>[F]{1}\ar[dr]\ar@{.}[r] & *{\cdots}\ar@{.}[r] & *+<8pt>[F]{1}\ar[dl]\ar@{.}[r] & \\
*+<2pt>{0}\ar@{.}[rr] && *+<8pt>[F]{p}\ar[d]\ar@{.}[rr] && \\ \ar@{.}[rr] && 0\ar@{.}[rr] && }},
\end{equation*}
for an element $p\in\tilde{\POp}(I)$,
\item
the summand $\psi^{\sharp}(\tilde{\POp})(I)\simeq\tilde{\POp}(I)$
consists of two level tree tensors of the form
\begin{equation*}
\vcenter{\xymatrix@H=6pt@W=3pt@M=2pt@!R=1pt@!C=1pt{ i_1\ar[dr]\ar@{.}[r] & *{\cdots}\ar@{.}[r] & i_n\ar[dl] \\
*+<2pt>{1}\ar@{.}[r] & *+<8pt>[F]{p}\ar[d]\ar@{.}[r] & \\
*+<2pt>{0}\ar@{.}[r] & *+<8pt>[F]{1}\ar[d]\ar@{.}[r] & \\ \ar@{.}[r] & 0\ar@{.}[r] & }},
\end{equation*}
for an element $p\in\tilde{\POp}(I)$,
\item
and the other summands are associated to $I$-trees of height two $\tau\in\Psi_2(I)$.
\end{itemize}

The summand $\psi^{\flat}(\tilde{\POp})$
is identified with the image of the composite $\Sigma_*$-object $\tilde{\POp}\simeq\tilde{\POp}\circ I$ in $\POp\circ\POp$,
the summand $\psi^{\sharp}(\tilde{\POp})$ with the image of~$\tilde{\POp}\simeq I\circ\tilde{\POp}$,
and $\downarrow^{\sharp}(\IOp)$ with the image of~$I\circ I$.
The composition product $\mu: \POp\circ\POp\rightarrow\POp$ is determined on these summands by the unit axiom of operads.
Therefore,
we obtain that the composition structure of an augmented operad $\POp$
can be determined by a collection of morphisms $\lambda: \tau(\tilde{\POp})\rightarrow\tilde{\POp}(I)$,
included in the structure of an algebra over the monad~$\tilde{\FOp}$,
where $\tau$ runs over trees of height $2$.

We have moreover:

\begin{lemm}
The tree composition products $\lambda: \tau(\tilde{\POp})\rightarrow\tilde{\POp}(I)$, $\tau\in\Psi_2(I)$,
included in the monad action $\lambda: \tilde{\FOp}(\tilde{\POp})\rightarrow\tilde{\POp}$,
determine an operadic composition product $\mu: \POp\circ\POp\rightarrow\POp$
for which the associativity property of~\S\ref{TreeOperadStructures:ReducedFreeOperad}
holds.
\end{lemm}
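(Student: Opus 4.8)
The plan is to define $\mu:\POp\circ\POp\rightarrow\POp$ summandwise on the decomposition of $\POp\circ\POp(I)$ recalled in~\S\ref{TreeOperadStructures:ReducedCompositionStructure} and then to deduce its associativity from the associativity of tree composites for the monad~$\tilde{\FOp}$. On the three summands $\downarrow^{\sharp}(\IOp)$, $\psi^{\flat}(\tilde{\POp})$ and $\psi^{\sharp}(\tilde{\POp})$ one sets $\mu$ to be the isomorphism onto $\IOp(I)$, respectively $\tilde{\POp}(I)$, forced by the unit axiom of operads; on the remaining summand $\bigoplus_{\tau\in\Psi_2(I)}\tau(\tilde{\POp})/\equiv$ one takes $\mu$ to be the evaluation of the tree composition products $\lambda_{\tau}:\tau(\tilde{\POp})\rightarrow\tilde{\POp}(I)\subset\POp(I)$ included in the monad action $\lambda:\tilde{\FOp}(\tilde{\POp})\rightarrow\tilde{\POp}$. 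Equivariance, naturality with respect to reindexing bijections $u:I\rightarrow J$, and compatibility with the identifications $/\equiv$ are inherited from the corresponding properties of~$\lambda$, while the unit relations $\mu(\eta\circ\POp)=\id=\mu(\POp\circ\eta)$ hold by construction on the first three summands and by property~(\ref{UnitTreeComposites}) of~\S\ref{TreeOperadStructures:ReducedFreeOperad} on the last one. The substance of the lemma is the associativity relation $\mu(\mu\circ\POp)=\mu(\POp\circ\mu)$, which expresses the associativity property~(\ref{AssociativityTreeComposites}) of~\S\ref{TreeOperadStructures:ReducedFreeOperad} at the level of the composite $\Sigma_*$-objects, and which I would establish as follows.

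First I would analyse the triple composite $\POp\circ\POp\circ\POp(I)$ just as $\POp\circ\POp(I)$ was analysed in~\S\ref{TreeOperadStructures:ReducedCompositionStructure}: its elements are represented by tree tensors on $I$-trees $\sigma$ equipped with a decomposition of the vertex set into three levels $V(\sigma)=V_0(\sigma)\amalg V_1(\sigma)\amalg V_2(\sigma)$, with labels in $\POp=\IOp\oplus\tilde{\POp}$. Splitting off the vertices carrying unit labels $1\in\IOp(1)$, such a tensor becomes the datum of an $I$-tree $\tau$ of height $\leq 3$, a labelling of $V(\tau)$ by elements of $\tilde{\POp}$, and a record of the edges of $\tau$ along which the deleted unit vertices sat; the two associativity composites $\mu(\mu\circ\POp)$ and $\mu(\POp\circ\mu)$ act on this datum by forgetting the level structure two adjacent levels at a time.

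The goal is then to show that both composites agree with $\lambda_{\tau}$ applied to the underlying tree tensor on~$\tau$. On the summands where one of the three levels contributes only unit vertices I would use the unit relations for $\mu$ already established to reduce each composite to a single further application of $\mu$, so that the claim follows from the description of $\POp\circ\POp$ in~\S\ref{TreeOperadStructures:ReducedCompositionStructure}. On the remaining, genuine summands, where all labels lie in $\tilde{\POp}$, the operation $\mu$ coincides throughout with $\lambda$, and collapsing the adjacent levels $\{0,1\}$ of $\sigma$ amounts, under property~(\ref{AssociativityTreeComposites}) of~\S\ref{TreeOperadStructures:ReducedFreeOperad}, to contracting the outgoing internal edges of the level-$1$ vertices of $\tau$, while collapsing the levels $\{1,2\}$ contracts the outgoing internal edges of the level-$2$ vertices; these two families together exhaust the internal edges of $\tau$. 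Thus $\mu(\mu\circ\POp)$ realises $\lambda_{\tau}$ by contracting first the lower internal edges and then the upper ones, whereas $\mu(\POp\circ\mu)$ contracts them in the opposite order; since the value of the tree composition product $\lambda_{\tau}$ does not depend on the ordering of the internal-edge contractions by which it is built --- this is precisely the coherence underlying property~(\ref{AssociativityTreeComposites}), made explicit in the proof of Lemma~\ref{TreeOperadStructures:TreeDecompositionImplication} --- the two composites coincide, which gives the associativity of~$\mu$.

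I expect the main obstacle to be the bookkeeping behind these identifications: one must check that the unit-vertex completions $\widehat{\tau}$ used to describe $\POp\circ\POp$ and $\POp\circ\POp\circ\POp$ vary compatibly with the level structure, so that the level-collapsing maps really do realise internal-edge contractions of $\tau$ with no spurious unit vertices left behind. The cleanest route to circumvent this, as above, is to phrase every summand of $\POp\circ\POp\circ\POp$ through $\tau(\tilde{\POp})$ for a tree $\tau$ of height $\leq 3$ and to reduce to $\tilde{\POp}$-labellings by means of the unit axiom before applying property~(\ref{AssociativityTreeComposites}); the remaining verifications are routine, if lengthy, inspections of tree diagrams of the type appearing in figures~\ref{Fig:TreeLinearAssociativity}--\ref{Fig:TreeRamifiedAssociativity}.
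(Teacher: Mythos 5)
Your proposal is correct and follows essentially the same route as the paper's (very compressed) proof sketch: represent $\POp\circ\POp\circ\POp$ by three-level tree tensors, extend the splitting of~\S\ref{TreeOperadStructures:ReducedCompositionStructure} to separate the unit-labelled vertices, and reduce the associativity of~$\mu$ to the associativity property~(\ref{AssociativityTreeComposites}) of~\S\ref{TreeOperadStructures:ReducedFreeOperad}. Your additional bookkeeping (the summandwise definition of~$\mu$, the unit-level reductions, and the identification of the two bracketings with two orderings of internal-edge contractions) is a faithful expansion of what the paper leaves implicit.
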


\begin{proof}[Proof (sketch)]
The elements of the composite $\POp\circ\POp\circ\POp$
can be represented by tensors arranged on trees with three levels $\xi\in\Lev_3(I)$.
The associativity of~$\mu$
amounts to the identity of morphisms $\xi(\POp,\POp,\POp)\rightrightarrows\POp(I)$,
where $\xi(\POp,\POp,\POp)$ is the module of tree tensors associated to $\xi\in\Lev_3(I)$.

The splitting of~\S\ref{TreeOperadStructures:ReducedCompositionStructure} can be extended to the modules of tree tensors $\xi(\POp,\POp,\POp)$
and the associativity of the composition product $\mu: \POp\circ\POp\rightarrow\POp$
reduces to the associativity property of~\S\ref{TreeOperadStructures:ReducedFreeOperad}.
\end{proof}

Hence:

\begin{prop}\label{TreeOperadStructures:CompositionProductInclusion}
The structure of an algebra over~$\tilde{\FOp}$ includes an augmented operad structure.\qed
\end{prop}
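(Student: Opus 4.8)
The argument combines the structural results of this subsection. Let $(\tilde\POp,\lambda)$ be an algebra over the monad $\tilde\FOp$. By Proposition~\ref{TreeOperadStructures:PartialCompositeEquivalenceStatement} and the analysis of~\S\ref{TreeOperadStructures:ReducedFreeOperad}, the action $\lambda$ is equivalent to a full system of tree composition products $\lambda_\tau\colon\tau(\tilde\POp)\rightarrow\tilde\POp(I)$, one for each $I$-tree $\tau\in\Theta(I)$, natural in $I$, compatible with tree isomorphisms, and satisfying the unit and associativity properties~(\ref{UnitTreeComposites}-\ref{AssociativityTreeComposites}) of~\S\ref{TreeOperadStructures:ReducedFreeOperad}. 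First I would form the $\Sigma_*$-object $\POp = \IOp\oplus\tilde\POp$ and equip it with the unit morphism $\eta\colon\IOp\rightarrow\POp$ and the augmentation $\epsilon\colon\POp\rightarrow\IOp$ given respectively by the canonical inclusion and the canonical projection onto the first summand; the splitting $\POp = \IOp\oplus\tilde\POp$ that enters the discussion of~\S\ref{TreeOperadStructures} is then the tautological one.

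Next I would define the composition product $\mu\colon\POp\circ\POp\rightarrow\POp$ through the decomposition
\[
\POp\circ\POp(I) = \downarrow^{\sharp}(\IOp)(I)\oplus\psi^{\flat}(\tilde\POp)(I)\oplus\psi^{\sharp}(\tilde\POp)(I)\oplus\Bigl\{\bigoplus_{\tau\in\Psi_2(I)}\tau(\tilde\POp)/\equiv\Bigr\}
\]
established in~\S\ref{TreeOperadStructures:ReducedCompositionStructure}: on the three distinguished summands, which account for the images of $\IOp\circ\IOp$, $\tilde\POp\circ\IOp$ and $\IOp\circ\tilde\POp$, the map $\mu$ is forced to be the canonical identification with $\IOp$, $\tilde\POp$ and $\tilde\POp$ respectively (this is dictated by the unit axiom of operads, and agrees with the normalization~(\ref{UnitTreeComposites}) of~\S\ref{TreeOperadStructures:ReducedFreeOperad}); on the remaining summands $\mu$ is the height-two tree composition product $\lambda_\tau$ supplied by the $\tilde\FOp$-algebra structure. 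That $\mu$ is a morphism of $\Sigma_*$-objects follows from the naturality of the $\lambda_\tau$ with respect to reindexing bijections and tree isomorphisms, which is built into the definition of an algebra over $\tilde\FOp$.

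It then remains to verify the operad axioms for $(\POp,\mu,\eta)$ and that $\epsilon$ is a morphism of operads. The unit axioms hold by construction, since on the summands $\downarrow^{\sharp}(\IOp)$, $\psi^{\flat}(\tilde\POp)$ and $\psi^{\sharp}(\tilde\POp)$ -- which exhaust $\IOp\circ\POp$ and $\POp\circ\IOp$ inside $\POp\circ\POp$ -- the map $\mu$ is the canonical identification. The associativity axiom is precisely the content of the lemma above: elements of $\POp\circ\POp\circ\POp$ are represented by tree tensors on trees with three levels, the decomposition of~\S\ref{TreeOperadStructures:ReducedCompositionStructure} extends to these modules, and both composites $\POp\circ\POp\circ\POp\rightrightarrows\POp$ reduce to the associativity property of~\S\ref{TreeOperadStructures:ReducedFreeOperad} satisfied by the monad action $\lambda$. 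Finally, $\epsilon\colon\POp\rightarrow\IOp$ is an operad morphism because $\mu$ maps the summands $\psi^{\flat}(\tilde\POp)$, $\psi^{\sharp}(\tilde\POp)$ and $\bigoplus_{\tau\in\Psi_2(I)}\tau(\tilde\POp)$ into the augmentation ideal $\tilde\POp$ -- indeed each $\lambda_\tau$ takes values in $\tilde\POp(I)$, and likewise for the two $\psi$-summands -- so that $\epsilon\circ\mu$ vanishes on every summand except $\downarrow^{\sharp}(\IOp)\simeq\IOp\circ\IOp$, on which $\mu$ restricts to the composition product of the trivial operad $\IOp$; this is exactly the identity expressing that $\epsilon$ commutes with composition products. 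Hence $(\POp,\mu,\eta,\epsilon)$ is an augmented operad, and the assignment $(\tilde\POp,\lambda)\mapsto(\POp,\mu,\eta,\epsilon)$ is manifestly functorial, which gives the asserted inclusion of structures.

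I expect the only genuine obstacle to be the associativity of $\mu$, and this has already been isolated in the lemma above, whose proof reduces it to the coherence property of~\S\ref{TreeOperadStructures:ReducedFreeOperad}; the remainder of the verification is bookkeeping forced by the decomposition of $\POp\circ\POp$. The one point that deserves a moment's care is that the summands $\psi^{\flat}(\tilde\POp)$ and $\psi^{\sharp}(\tilde\POp)$ -- which do not arise from height-two trees but from the one-vertex tree $\psi$ placed at level $0$ or level $1$ -- must receive their unit-axiom value rather than an independently chosen one; this, however, is already encoded in the description of~\S\ref{TreeOperadStructures:ReducedCompositionStructure} together with the normalization condition~(\ref{UnitTreeComposites}).
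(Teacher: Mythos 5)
Your proposal is correct and follows essentially the same route as the paper: it assembles $\mu$ from the splitting of $\POp\circ\POp$ in \S\ref{TreeOperadStructures:ReducedCompositionStructure} (unit axiom on the distinguished summands, height-two tree composition products $\lambda_\tau$ elsewhere) and reduces associativity to the three-level-tree argument of the preceding lemma. The additional explicit checks of the unit axioms and of $\epsilon$ being an operad morphism are routine bookkeeping that the paper leaves implicit.
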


Note that we retrieve the representation of partial composition products
if we apply the construction of~\S\ref{TreeOperadStructures:ReducedCompositionStructure}
to trees with two vertices (which belong to the set of trees of height $2$).
Hence,
the equivalence of proposition~\ref{TreeOperadStructures:PartialCompositeEquivalenceStatement}
and the implication of proposition~\ref{TreeOperadStructures:CompositionProductInclusion}
give finally:

\begin{prop}[{see~\cite[Theorem 40]{MarklSurvey} or~\cite[Proposition 1.12]{GetzlerJones}}]\label{TreeOperadStructures:CompositionProductEquivalence}
We have an equivalence between the category of algebras over~$\tilde{\FOp}$
and the category of augmented operads.\qed
\end{prop}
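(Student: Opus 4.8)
The plan is to assemble the equivalence of Proposition~\ref{TreeOperadStructures:CompositionProductEquivalence} out of the two directions that have already been prepared in the preceding paragraphs, so that what remains is only to check that the two constructions are mutually inverse. Concretely, I would proceed as follows. First, Proposition~\ref{TreeOperadStructures:PartialCompositeEquivalenceStatement} already identifies the structure of an algebra over $\tilde{\FOp}$ with a $\Sigma_*$-object $\tilde{\POp}$ endowed with partial composition operations $\circ_e\colon\tilde{\POp}(r)\otimes\tilde{\POp}(s)\rightarrow\tilde{\POp}(r+s-1)$ satisfying the equivariance and associativity relations of operadic partial composites. In the reverse direction, Proposition~\ref{TreeOperadStructures:CompositionProductInclusion} shows that such a structure determines an operadic composition product $\mu\colon\POp\circ\POp\rightarrow\POp$ on the split object $\POp = \IOp\oplus\tilde{\POp}$, and one checks (unit axiom, plus the associativity lemma just proved) that $(\POp,\mu,\eta)$ together with the projection $\epsilon\colon\POp\rightarrow\IOp$ is an augmented operad. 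Going the other way, an augmented operad $\POp$ comes with its canonical splitting $\POp=\IOp\oplus\tilde{\POp}$ from the beginning of~\S\ref{TreeOperadStructures}, and its operadic composition restricts to partial composites on $\tilde{\POp}$, which by Observation~\ref{TreeOperadStructures:AssociativityInterpretation} and Lemma~\ref{TreeOperadStructures:TreeDecompositionImplication} are exactly the data of a $\tilde{\FOp}$-algebra.

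The core of the argument is then the verification that these two passages are inverse to one another, both on objects and on morphisms. For objects: starting from a $\tilde{\FOp}$-algebra, extracting its partial composites, building the augmented operad via~\S\ref{TreeOperadStructures:ReducedCompositionStructure}, and then restricting back to partial composites on the augmentation ideal manifestly returns the original $\circ_e$, since the summand $\Psi_2(I)$ of trees of height~$2$ contains the two-vertex trees that carry precisely those partial composites. Conversely, starting from an augmented operad, passing to its augmentation ideal with its partial composites, and then reconstructing $\mu\colon\POp\circ\POp\rightarrow\POp$ via the splitting $\POp\circ\POp(I) = \downarrow^{\sharp}(\IOp)(I)\oplus\psi^{\flat}(\tilde{\POp})\oplus\psi^{\sharp}(\tilde{\POp})\oplus\{\bigoplus_{\tau\in\Psi_2(I)}\tau(\tilde{\POp})/\equiv\}$ recovers the original composition product: on the three ``degenerate'' summands it is forced by the unit axiom (these are the images of $\IOp\circ\IOp$, $\tilde{\POp}\circ\IOp$, and $\IOp\circ\tilde{\POp}$), and on the essential summands indexed by height-two trees it is the tree composition product $\lambda_\tau$, which by definition agrees with the iterated partial composites of $\POp$. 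On morphisms, a morphism of $\tilde{\FOp}$-algebras is by definition a map commuting with the $\lambda_\tau$, hence with the $\circ_e$, hence (extending by the identity on the $\IOp$ summand) a morphism of augmented operads, and vice versa; one checks both functors act as the identity on underlying maps, so they are mutually inverse functors.

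The step I expect to require the most care is the coherence bookkeeping in the object-level round trip for augmented operads: one must be sure that the decomposition of an arbitrary element of $\POp\circ\POp(I)$ into the four summands, followed by reassembly through the $\lambda_\tau$, does not depend on any choices and genuinely reproduces $\mu$. This is where the associativity relations of figures~(\ref{Fig:TreeLinearAssociativity}--\ref{Fig:TreeRamifiedAssociativity}) and the choice-independence established in the proof of Lemma~\ref{TreeOperadStructures:TreeDecompositionImplication} are doing the real work; the remaining compatibilities with $\Sigma_*$-actions, reindexing bijections, and the unit are routine once this is in place. Since both propositions cited in the statement are already recorded, this final proposition is essentially a matter of recording that the constructions of Propositions~\ref{TreeOperadStructures:PartialCompositeEquivalenceStatement} and~\ref{TreeOperadStructures:CompositionProductInclusion} fit together into an equivalence of categories, and I would present the proof at that level of detail rather than re-deriving the coherence from scratch.
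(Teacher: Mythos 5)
Your proposal is correct and follows the paper's own route: the paper likewise deduces the proposition by combining Proposition~\ref{TreeOperadStructures:PartialCompositeEquivalenceStatement} with Proposition~\ref{TreeOperadStructures:CompositionProductInclusion}, noting that the partial composites are recovered by restricting the construction of~\S\ref{TreeOperadStructures:ReducedCompositionStructure} to two-vertex trees (which lie among the trees of height~$2$). The extra detail you supply on the object- and morphism-level round trips is exactly what the paper leaves implicit behind its \textsl{qed}.
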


This proposition is established by other arguments in~\cite{MarklSurvey}.

\subsubsection{Connected operads}\label{TreeOperadStructures:ConnectedOperads}
To ensure good homotopical properties,
we restrict certain operadic constructions to operads $\POp$ such that $\POp(0) = 0$ and $\POp(1) = \kk$.
Such operads are called connected.
The category of connected operads is denoted by $\Op_1$.

Any connected operad comes equipped with an augmentation $\epsilon: \POp\rightarrow \IOp$,
simply given by the identity of~$\POp(1) = \IOp(1) = \kk$ in arity $1$,
and a morphism of connected operads commutes automatically with this augmentation.
Obviously,
the augmentation ideal of a connected operad is defined by:
\begin{equation*}
\tilde{\POp}(n) = \begin{cases} 0, & \text{if $n = 0,1$}, \\
\POp(n), & \text{otherwise}. \end{cases}
\end{equation*}

Let $\M_1$ be the category formed the category of $\Sigma_*$-objects $M$
such that $M(0) = M(1) = 0$ (we say that $M$ is reduced).
The free operad $\FOp(M)$ associated to a $\Sigma_*$-object $M$
is connected if and only if $M\in\M_1$.
The restriction of the free operad to $\M_1$
defines clearly a left adjoint of the functor $(-)^{\sim}: \Op_1\rightarrow\M_1$
which maps a connected operad $\POp$ to its augmentation ideal $\tilde{\POp}$.
Moreover,
the augmentation ideal of the free operad $\tilde{\FOp}: \M\rightarrow\M$
restricts clearly to a monad on $\M_1$
so that the category of algebras over $\tilde{\FOp}$
is equivalent to the category of connected operads.

\subsection{The homotopy of operads in dg-modules}\label{OperadHomotopy}
The purpose of this subsection is to recall the definition of the model structure of operads in dg-modules.

The applications of homotopical algebra to operads in dg-modules
go back to~\cite{GetzlerJones}.
In this reference,
the authors deal with operads and algebras in non-negatively graded dg-modules
over a field of characteristic zero.
The generalization of the definition of~\cite[\S 4]{GetzlerJones}
in the context of unbounded dg-modules over a ring
is addressed in~\cite{HinichHomotopy}.
In~\cite{BergerMoerdijkModel,Spitzweck},
the authors study applications of homotopical algebra to operads in the axiomatic setting of symmetric monoidal model categories,
which include the example of dg-module categories.

The articles~\cite{BergerMoerdijkModel,GetzlerJones,HinichHomotopy,Spitzweck}
can be used as references for the recollections of this subsection.
For applications of model structures to algebras over operads,
we also refer to the book~\cite{FresseModuleBook}.

\subsubsection{Adjoint model structures}\label{OperadHomotopy:AdjointModelStructures}
The model structure of the category of operads
is an instance of a model structure
produced by adjunction from a well-defined model category.

In general,
we have an adjunction $F: \X\rightleftarrows\A :U$,
such that $\X$ is a cofibrantly generated model category
and $\A$ is a category equipped with colimits and limits.
Define classes of weak-equivalence, cofibrations and fibrations
in~$\A$ by:
\begin{itemize}
\item
the weak-equivalences, respectively fibrations, are the morphisms~$f\in\Mor\A$
which are mapped to weak-equivalences, respectively fibrations, by the functor $U: \A\rightarrow\X$
(we say that the functor $U: \A\rightarrow\X$ creates weak-equivalences and fibrations in $\A$);
\item
the cofibrations are the morphisms which have the right lifting property with respect to acyclic fibrations.
\end{itemize}

In good cases,
this definition provides the category $\A$ with a well-defined model structure,
for which all axioms hold without restriction.
In less good cases,
the axioms of model categories hold provided that we restrict applications
of the lifting and factorization axioms to morphisms
with a cofibrant domain.
In this situation,
one says that $\A$ forms a semi-model category (see~\cite{HoveySemiModel}).
The axioms of semi-model categories
are enough for most constructions of homotopical algebra.

In all cases,
the morphisms $F(i): F(C)\rightarrow F(D)$
such that $i$ ranges over the generating (acyclic) cofibrations of~$\X$
define a set of generating (acyclic) cofibrations in~$\A$.

\subsubsection{The model structures of $\Sigma_*$-objects and operads}\label{OperadHomotopy:SigmaObjectModelStructure}
To define the model structure of the category of operads,
we use a composite adjunction
\begin{equation*}
\xymatrix{ \C^{\NN}\ar@<+2pt>[r]^{\Sigma_*\otimes-} & \M\ar@<+2pt>[r]^{\FOp}\ar@<+2pt>[l]^{V} & \Op\ar@<+2pt>[l]^{U} },
\end{equation*}
where $\C^{\NN}$ refers to the category of $\NN$-graded collections of dg-modules.

The functor $V: \M\rightarrow\C^{\NN}$
simply forgets the action of symmetric groups
and maps a $\Sigma_*$-object $M$
to the underlying collection of dg-modules $\{M(n)\}_{n\in\NN}\in\C^{\NN}$.
The functor $\Sigma_*\otimes-: \C^{\NN}\rightarrow\M$,
adjoint to $V$,
maps a collection $K\in\C^{\NN}$
to the $\Sigma_*$-object such that $(\Sigma_*\otimes K)(n) = \Sigma_n\otimes K(n)$.
The functors $\FOp: \M\rightleftarrows\Op :U$
are the free operad and forgetful functors,
whose definition is reviewed in~\S\S\ref{OperadDefinition}-\ref{TreeOperadStructures}.

The category $\C^{\NN}$ has an obvious model structure for which a morphism $f: K\rightarrow L$
is a weak-equivalence (respectively, cofibration, fibration)
if and only if all its components $f: K(n)\rightarrow L(n)$, $n\in\NN$,
are weak-equivalences (respectively, cofibrations, fibrations) of dg-modules.
Moreover,
we have an obvious set of generating (acyclic) cofibrations
in $\C^{\NN}$.

According to~\cite{HinichHomotopy,Spitzweck} (see also~\cite[\S 11.4, \S 12.2]{FresseModuleBook}),
the application of the construction of~\S\ref{OperadHomotopy:AdjointModelStructures}
to the adjunctions $\C^{\NN}\rightleftarrows\M\rightleftarrows\Op$
gives the following results:

\begin{fact}[{see for instance~\cite[\S 11.4]{FresseModuleBook}}]
The category of $\Sigma_*$-objects $\M$
inherits a full model structure
such that the forgetful functor $V: \M\rightarrow\C^{\NN}$
creates weak-equivalences and fibrations.
\end{fact}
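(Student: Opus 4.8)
The plan is to obtain this model structure by the transfer construction of~\S\ref{OperadHomotopy:AdjointModelStructures}, applied to the induction--restriction adjunction
\begin{equation*}
\Sigma_*\otimes-: \C^{\NN}\rightleftarrows\M :V,
\end{equation*}
and to check that we land in the ``good case'', so that the transferred structure is a genuine (full) model structure and not merely a semi-model structure. First I would record the basic structural facts about~$\M$: a $\Sigma_*$-object is the same datum as an $\NN$-indexed collection of dg-modules over the group rings $\kk[\Sigma_n]$, so $\M$ is a product of categories of dg-modules over rings; in particular it is bicomplete, with limits and colimits computed aritywise, and the forgetful functor $V$ both preserves and creates them. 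The decisive point is that, in each arity, $V$ is restriction of scalars along $\kk\rightarrow\kk[\Sigma_n]$, hence is \emph{both} a left and a right adjoint, and so preserves all small colimits. We then set the weak-equivalences and the fibrations of~$\M$ to be the morphisms carried by~$V$ to weak-equivalences, respectively fibrations, of~$\C^{\NN}$, and we define the cofibrations by the left lifting property against acyclic fibrations; the candidate generating (acyclic) cofibrations are the maps $\Sigma_*\otimes i$ with $i$ ranging over the generating (acyclic) cofibrations $\I$ (respectively $\J$) of~$\C^{\NN}$, the latter built aritywise from the maps $i_d: B[d]\rightarrow E[d]$ (respectively $j_d: 0\rightarrow E[d]$) of~\S\ref{Background:DGModuleCellComplexes}. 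Write $\Sigma_*\otimes\I$ and $\Sigma_*\otimes\J$ for these two sets of maps.

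By the recognition theorem for cofibrantly generated model categories (see~\cite{Hirschhorn,Hovey}, or~\cite[\S 11.4]{FresseModuleBook}) it then remains to check two points. The first is smallness: since $\M$ is locally presentable (a product of module categories), the small object argument applies to any set of maps, in particular to $\Sigma_*\otimes\I$ and $\Sigma_*\otimes\J$. The second, and only substantial, point is the acyclicity condition: every relative $(\Sigma_*\otimes\J)$-cell complex must be a weak-equivalence of~$\M$, i.e.\ must be sent by~$V$ to a weak-equivalence of~$\C^{\NN}$. Here the transparency of $\Sigma_*\otimes-$ does all the work. Since $V$ preserves colimits, it carries a relative $(\Sigma_*\otimes\J)$-cell complex to a relative $V(\Sigma_*\otimes\J)$-cell complex in~$\C^{\NN}$; and each generator $V(\Sigma_*\otimes j)$, where $j: 0\rightarrow E[d]$ is supported in arity~$n$, is again supported in arity~$n$, where it is the map $0\rightarrow\Sigma_n\otimes E[d]\simeq E[d]^{\oplus n!}$, a coproduct of generating acyclic cofibrations of~$\C$ and hence an acyclic cofibration of~$\C^{\NN}$. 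A transfinite composite of pushouts of such maps is again an acyclic cofibration of~$\C^{\NN}$, in particular a weak-equivalence, so the original map is a weak-equivalence of~$\M$ by definition. (Alternatively, one may bypass the transfer entirely: each arity-$n$ factor is the category of dg-modules over $\kk[\Sigma_n]$, which carries the projective model structure of~\cite[Theorem 2.3.11]{Hovey}, and the product of these over $n\in\NN$ is exactly the asserted structure on~$\M$.)

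With these two verifications in hand the recognition theorem equips $\M$ with a cofibrantly generated model structure having $\Sigma_*\otimes\I$ and $\Sigma_*\otimes\J$ as generating (acyclic) cofibrations, for which $V$ creates weak-equivalences and fibrations. The structure is \emph{full} --- every instance of the lifting and factorization axioms holds, with no cofibrancy hypothesis --- precisely because the acyclicity condition was established for arbitrary relative $(\Sigma_*\otimes\J)$-cell complexes. I expect no real obstacle in this argument: the one delicate step, the acyclicity check, is easy here because $\Sigma_*\otimes-$ is merely aritywise induction, so that $V$ of a cell complex is visibly again a cell complex built from acyclic cofibrations. (This is exactly the step that becomes genuinely subtle when the same scheme is run for the free-operad monad $\FOp$ in~\S\ref{OperadHomotopy}, where its failure under weak hypotheses on the ground ring is responsible for the appearance of semi-model structures.)
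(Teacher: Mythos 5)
Your argument is correct and follows exactly the route the paper indicates: the paper states this as a Fact obtained by applying the adjoint transfer construction of~\S\ref{OperadHomotopy:AdjointModelStructures} to the adjunction $\Sigma_*\otimes-:\C^{\NN}\rightleftarrows\M:V$, citing~\cite[\S 11.4]{FresseModuleBook} rather than giving details. Your verification of the acyclicity condition (that $V$ sends relative $(\Sigma_*\otimes\J)$-cell complexes to acyclic cofibrations of~$\C^{\NN}$, since aritywise induction $\Sigma_n\otimes E[d]\simeq E[d]^{\oplus n!}$ is a coproduct of generating acyclic cofibrations), together with the observation that the whole structure is just the product over $n$ of the projective model structures on dg-$\kk[\Sigma_n]$-modules, is exactly what the cited reference carries out.
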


\begin{fact}[see~\cite{HinichHomotopy,Spitzweck}]
The category of operads~$\Op$
inherits a semi-model structure
such that the forgetful functor $U: \Op\rightarrow\M$
creates weak-equivalences and fibrations.
\end{fact}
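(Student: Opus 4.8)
The plan is to obtain the structure on $\Op$ by transfer along the free--forgetful adjunction $\FOp:\M\rightleftarrows\Op:U$, following the template of~\S\ref{OperadHomotopy:AdjointModelStructures} and using the model structure on $\M$ furnished by the preceding Fact. Accordingly I declare a morphism of operads to be a weak-equivalence (respectively a fibration) precisely when $U$ carries it to a weak-equivalence (respectively a fibration) of $\Sigma_*$-objects, and a cofibration to be a morphism with the left lifting property against the acyclic fibrations; the images $\FOp(\I)$ and $\FOp(\J)$ of the generating (acyclic) cofibrations $\I,\J$ of $\M$ serve as generating (acyclic) cofibrations of $\Op$.

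First I would dispatch the formal prerequisites. The category $\Op$ has all small limits, created aritywise by $U$, and all small colimits: since $\Op$ is isomorphic to the category of algebras over the monad $\FOp$ on the cocomplete category $\M$ (see~\S\ref{OperadDefinition:FreeOperads}), and the explicit tree description of~\S\ref{TreeOperadStructures} shows that $\FOp$ preserves filtered colimits and reflexive coequalizers, $\Op$ inherits coequalizers, pushouts and coproducts by the standard argument for algebras over such a monad. The same preservation of filtered colimits shows that each generator $\FOp(C)$, with $C$ a domain or codomain of a map of $\I\cup\J$, is small in $\Op$ relative to transfinite cell extensions along $\FOp(\I)$ and $\FOp(\J)$; hence the small object argument applies and produces, for every $f\in\Mor\Op$, factorizations $f=pi$ with $i$ a relative $\FOp(\I)$-cell complex and $p$ an acyclic fibration, and $f=qj$ with $j$ a relative $\FOp(\J)$-cell complex and $q$ a fibration.

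Next, and this is the substantial point, I would prove the \emph{acyclicity} of relative $\FOp(\J)$-cell complexes, i.e.\ that each such morphism is a quasi-isomorphism in every arity. Passing to transfinite composites, it suffices to treat a single pushout
\begin{equation*}
\xymatrix{ \FOp(C)\ar[r]\ar[d] & \POp\ar@{.>}[d] \\ \FOp(D)\ar@{.>}[r] & \QOp }
\end{equation*}
with $C\rightarrow D$ a generating acyclic cofibration of $\M$, so that $C=0$ and $D$ is built from the contractible cofibrant dg-modules $E[d]$ of~\S\ref{Background:DGModuleCellComplexes}. There being no closed formula for $\QOp$, I would filter it, $\POp=\QOp_0\subset\QOp_1\subset\dots\subset\colim_k\QOp_k=\QOp$, by the number of $D$-generators occurring in a tree monomial, and identify each subquotient $\QOp_k/\QOp_{k-1}$ as a $\Sigma_*$-object assembled, by means of the tree functors of~\S\ref{TreeOperadStructures}, from copies of $\POp$ and of $D$, carrying an action of a symmetric group that permutes the $D$-factors. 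This is exactly where cofibrancy of $D$ as a dg-module does the work that characteristic-zero hypotheses would otherwise do: tensoring with, and taking symmetric-group quotients of, the contractible cofibrant complex $D$ keeps each subquotient acyclic, so each inclusion $\QOp_{k-1}\hookrightarrow\QOp_k$ is a quasi-isomorphism, and a colimit argument of the kind set up in~\S\ref{Background} then shows that $\POp\rightarrow\QOp$ is a quasi-isomorphism in each arity. The same filtration, run on a pushout along $\FOp(C)\rightarrow\FOp(D)$ with $C\rightarrow D$ a generating \emph{cofibration} of $\M$, yields in passing that relative $\FOp(\I)$-cell complexes are degreewise split monomorphisms.

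Finally I would assemble the axioms. Retract closure and the two-out-of-three property of weak-equivalences are immediate since these are created by $U$; retract closure of cofibrations and fibrations is formal; the factorizations are the output of the small object argument. For lifting, the (cofibration, acyclic fibration) pair holds by the definition of cofibrations, while for the (acyclic cofibration, fibration) pair one argues, \emph{for morphisms with cofibrant domain}, that an acyclic cofibration is a retract of a relative $\FOp(\J)$-cell complex by the usual factorization-plus-retract trick together with the acyclicity established above; this retract step is precisely where the hypothesis of a cofibrant domain enters, which is why $\Op$ carries only a semi-model structure in general. The hard part is the acyclicity of the pushout $\POp\rightarrow\QOp$: controlling the subquotients of the generator filtration and computing their homology over an arbitrary ring, where $\kk[\Sigma_n]$ is not semisimple -- an obstruction neutralized by working with the cofibrant generating acyclic cofibrations $E[d]$, just as in the cell-complex analysis of~\S\ref{Background:DGModuleCellComplexes}.
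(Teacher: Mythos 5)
Your overall strategy---transferring the structure along the adjunction $\FOp:\M\rightleftarrows\Op:U$ as in \S\ref{OperadHomotopy:AdjointModelStructures}---is exactly the route the paper indicates (the paper states this as a Fact and defers the verification to Hinich and Spitzweck), and the formal steps (completeness, cocompleteness, smallness, the small object argument, the factorization-plus-retract characterization of acyclic cofibrations) are all in order. The gap is in the one step you rightly single out as substantial, and it causes you to misplace the reason the transferred structure is only a \emph{semi}-model structure.

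The acyclicity of a pushout $\POp\rightarrow\POp\vee\FOp(D)$, with $D$ contractible and cofibrant in $\M$, does \emph{not} follow from the cofibrancy of $D$ alone. The subquotients of your filtration are sums, over isomorphism classes of trees with $k$ vertices labeled by $D$ and the rest labeled by $\POp$, of tree tensor modules divided out by tree automorphisms; these automorphism groups act simultaneously on the $D$-factors and on the $\POp$-factors, and coinvariants are not exact over a general ring. Concretely, take $\kk=\mathbb{F}_2$, $\POp=\COp$ the commutative operad, and let $D$ be the target of the generating acyclic cofibration concentrated in arity $0$, so that $D(0)=E[d]$. The tree with a $\COp(2)$-labeled root and two $D$-labeled arity-zero leaves contributes the direct summand $(\COp(2)\otimes E[d]^{\otimes 2})_{\Sigma_2}$ to the second subquotient (here the filtration even splits the complex, since the differential preserves the number of $D$-vertices), and this summand is not acyclic: the class of $e_d\otimes e_d$ is a cycle because its boundary becomes $2[e_d\otimes b_{d-1}]=0$ in the coinvariants, and it bounds nothing. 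Hence relative $\FOp(\J)$-cell complexes are \emph{not} weak-equivalences over an arbitrary base operad. This, and not the retract step, is where the cofibrant-domain hypothesis enters: if the acyclicity you assert held unconditionally, your factorization-plus-retract argument would require no cofibrancy anywhere and would deliver a full model structure. What the references actually prove is that the pushout is acyclic when the base operad is cofibrant (hence $\Sigma_*$-cofibrant), which eliminates the offending tree automorphisms, or makes the relevant group actions free enough for the coinvariants to preserve acyclicity; the factorization and lifting axioms for the pair (acyclic cofibration, fibration) are therefore obtained only for morphisms with cofibrant domain, and that is the precise sense in which $\Op$ is a semi-model category.
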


Usually,
we say that an operad morphism $f: \POp\rightarrow\QOp$
is a $\Sigma_*$-cofibration
if its image under the forgetful functor $U: \Op\rightarrow\M$
defines a cofibration in the category of $\Sigma_*$-objects
and an operad $\POp$ is $\Sigma_*$-cofibrant
if the initial morphism $\eta: \IOp\rightarrow\POp$
is a $\Sigma_*$-cofibration.
Similarly,
we say that a morphism of $\Sigma_*$-objects  $f: \POp\rightarrow\QOp$
is a $\C$-cofibration
if its image under the forgetful functor $V: \M\rightarrow\C^{\NN}$
defines a cofibration in $\C^{\NN}$.

According to~\cite{Spitzweck},
the forgetful functor $U: \Op\rightarrow\M$ maps cofibrations with a $\Sigma_*$-cofibrant domain to cofibrations.
Hence,
any cofibrant operad is automatically $\Sigma_*$-cofibrant.
The forgetful functor $V: \M\rightarrow\C^{\NN}$
preserves cofibrations too.

\subsubsection{The restriction of model structures to the subcategory of connected operads}\label{OperadHomotopy:ConnectedOperads}
The construction of~\S\ref{OperadHomotopy:AdjointModelStructures}
can also be applied
to the adjunction
\begin{equation*}
\FOp: \M_1\rightleftarrows\Op_1 :U
\end{equation*}
between reduced $\Sigma_*$-objects and connected operads.
In fact,
the category of connected operads $\Op_1$
inherits the weak-equivalences (respectively, cofibrations, fibrations) $f\in\Mor\Op_1$
such that $f$ forms a weak-equivalence (respectively, a cofibration, a fibration) in $\Op$.
The category of reduced $\Sigma_*$-objects itself $\M_1$
forms naturally a model subcategory of the category of $\Sigma_*$-objects $\M$.

The whole axioms of operads hold in the subcategory of connected operads (see~\cite{BergerMoerdijkModel}).

\subsubsection{Generating cofibrations}\label{OperadHomotopy:GeneratingCofibrations}
For the sake of completeness,
we review quickly the definition of generating (acyclic) cofibrations
in $\C^{\NN}$, in the category of $\Sigma_*$-objects $\M$
and in the category of operads $\Op$.

The category of collections $\C^{\NN}$ and the category of $\Sigma_*$-objects $\M$
are naturally tensored over the category of dg-modules.
In both cases,
the tensor product of an object $M$ with a dg-module $C$
is defined by the obvious formula
\begin{equation*}
(C\otimes M)(n) = C\otimes M(n),\quad\text{for $n\in\NN$}.
\end{equation*}
The objects $G_r\in\C^{\NN}$
such that
\begin{equation*}
G_r(n) = \begin{cases} \kk, & \text{if $n = r$}, \\ 0, & \text{otherwise}, \end{cases}
\end{equation*}
are in an enriched sense generators of the category of collections $\C^{\NN}$.
The associated $\Sigma_*$-objects $\Sigma_*\otimes G_r$,
also defined by the formula
\begin{equation*}
(\Sigma_*\otimes G_r)(n) = \begin{cases} \kk[\Sigma_r], & \text{if $n = r$}, \\ 0, & \text{otherwise}, \end{cases}
\end{equation*}
where $\kk[\Sigma_r]$ represents the regular representation of $\Sigma_r$,
are generators of~$\M$.

The category~$\C^{\NN}$
is equipped with generating (acyclic) cofibrations
of the form
\begin{equation*}
i\otimes G_r: C\otimes G_r\rightarrow D\otimes G_r,
\end{equation*}
where $i$ range over generating (acyclic) cofibrations of dg-modules.
By construction,
the morphisms of $\Sigma_*$-objects $\Sigma_*\otimes(i\otimes G_r)$
associated to these generating (acyclic) cofibrations $i\otimes G_r$
are the generating (acyclic) cofibrations of the model category of $\Sigma_*$-objects.
The morphisms of free operads $\FOp(\Sigma_*\otimes(i\otimes G_r))$
induced by these morphisms $\Sigma_*\otimes(i\otimes G_r)$
form the generating (acyclic) cofibrations of the semi-model category of operads.

The functor $\Sigma_*\otimes -$
commutes clearly with the tensor product over dg-modules.
Hence,
we also have an identity $\Sigma_*\otimes(i\otimes G_r) = i\otimes(\Sigma_*\otimes G_r)$
for the generating (acyclic) cofibrations of the model category of $\Sigma_*$-objects.

\subsubsection{Cofibrant cell complexes of operads}\label{OperadHomotopy:CofibrantCellComplexes}
In the next subsection,
we review the notion of a quasi-free operad
which gives an effective representation of the structure of cofibrant cell operads
in dg-modules.

For technical reasons,
we consider an extended notion of cofibrant cell complex in the context of operads:
in our constructions,
we get morphisms $j: \POp\rightarrow\QOp$
which splits into composites
\begin{equation*}
\POp = \QOp_0\xrightarrow{j_1}\dots
\rightarrow\QOp_{\lambda-1}\xrightarrow{j_{\lambda}}\QOp_{\lambda}\rightarrow\dots
\rightarrow\colim_{\lambda}\QOp_{\lambda} = \QOp
\end{equation*}
of pushouts
\begin{equation*}
\xymatrix{ \FOp(M_{\lambda})\ar[r]\ar[d]_{\FOp(i_{\lambda})} & \QOp_{\lambda-1}\ar@{.>}[d]^{j_{\lambda}} \\
\FOp(N_{\lambda})\ar@{.>}[r] & \QOp_{\lambda} }
\end{equation*}
such that $i_{\lambda}: M_{\lambda}\rightarrow N_{\lambda}$ is any cofibration of $\Sigma_*$-objects
and not necessarily a direct sum of generating cofibrations.
Recall that the class of cofibrations in a semi-model category is closed under pushouts and composites.
Observe that any morphism of free operads $\FOp(i): \FOp(M)\rightarrow \FOp(N)$
induced by a cofibration of $\Sigma_*$-objects $i: M\rightarrow N$
forms a cofibration in the category of operads (immediate by adjunction)
to conclude that these generalized cell structures also define cofibrations of operads.

Similar observations hold for acyclic cofibrations.

\subsection{Quasi-free operads}\label{QuasiFreeOperads}
In short,
a quasi-free operad is a dg-operad $\QOp$
defined by the addition of a twisting homomorphism $\partial: \FOp(M)\rightarrow \FOp(M)$
to the natural differential of a free operad $\FOp(M)$.
The first purpose of this section is to review an explicit construction
of the twisting homomorphisms of quasi-free operads.
Then we prove that the quasi-free operads $\QOp = (\FOp(M),\partial)$
generated by a reduced and cofibrant $\Sigma_*$-object $M$
form cofibrant objects of the category of operads.

For our needs,
we also study morphisms of quasi-free operads $\phi_f: (\FOp(M),\partial)\rightarrow(\FOp(N),\partial)$
induced by morphisms of reduced $\Sigma_*$-objects $f: M\rightarrow N$.
We prove that $\phi_f$ is a cofibration (respectively, an acyclic cofibration) of operads
if $f$ is a cofibration (respectively, an acyclic cofibration) of $\Sigma_*$-objects.

\subsubsection{Homomorphisms of $\Sigma_*$-objects}
The category of~$\Sigma_*$-objects
is naturally enriched over the category of dg-modules,
with as homomorphisms $f\in\Hom_{\M}(M,N)$
the collections of homomorphisms of dg-modules
\begin{equation*}
f\in\Hom_{\C}(M(n),N(n)),\quad\text{$n\in\NN$},
\end{equation*}
that commute with the action of permutations.
The differential of a homomorphism in~$\Hom_{\M}(M,N)$
is defined termwise by the differential of the dg-homs $\Hom_{\C}(M(n),N(n))$, $n\in\NN$.

Obviously,
a morphism of~$\Sigma_*$-objects
is a homomorphism of degree $0$ such that $\delta(f) = 0$.
Homomorphisms which are not morphisms
occur naturally in the constructions of this subsection.

\subsubsection{Derivations of operads, twisted dg-operads and quasi-free operads}\label{QuasiFreeOperads:TwistingDerivations}
One says that a collection of homogeneous homomorphisms
\begin{equation*}
\theta\in\Hom_{\C}(\POp(n),\POp(n)),\quad\text{$n\in\NN$},
\end{equation*}
defines an operad derivation
if these homomorphisms commute with the action of permutations on $\POp(n)$
and satisfy the derivation relation
\begin{equation}
\theta(p(q_1,\dots,q_r)) = \theta(p)(q_1,\dots,q_r) + \sum_{i=1}^{r} p(q_1,\dots,\theta(q_i),\dots,q_r)
\end{equation}
for any composite $p(q_1,\dots,q_r))\in\POp$.

By definition,
the composition products of an operad in dg-modules
$\mu: \POp(r)\otimes\POp(n_1)\otimes\dots\otimes\POp(n_r)\rightarrow\POp(n_1+\dots+n_r)$
are morphisms of dg-modules.
This assumption amounts to the requirement that the differentials of the dg-modules~$\POp(n)$
satisfy the derivation relation (*).
The internal differential of each~$\POp(n)$
is also assumed to commute with the action of permutations $\POp(n)$.
Hence,
the differentials of an operad consist of homomorphisms
of degree $-1$
\begin{equation*}
\delta: \POp(n)\rightarrow\POp(n)
\end{equation*}
which satisfy the identity $\delta^2 = 0$ and form an operad derivation.

Clearly,
the differentials $\delta+\partial: \POp(n)\rightarrow\POp(n)$
obtained by the addition of twisting homomorphisms $\partial\in\Hom_{\C}(\POp(n),\POp(n))$
to the internal differential of each~$\POp(n)$
define an operad differential
if and only if $\partial$ is an operad derivation.
Thus,
we obtain that a collection of twisted dg-modules $(\POp,\partial) = \{(\POp(n),\partial)\}_{n\in\NN}$
inherits an operad structure from $\POp$
if and only if the twisting homomorphisms $\partial$ form an operad derivation.

A quasi-free operad is a twisted dg-operad of the form $\QOp = (\FOp(M),\partial)$,
where $\POp = \FOp(M)$ is a free operad in dg-modules.

\subsubsection{Derivation of free operads}\label{QuasiFreeOperads:FreeOperadDerivations}
The derivation relation of~\S\ref{QuasiFreeOperads:TwistingDerivations}
implies that any derivation $\partial: \FOp(M)\rightarrow \FOp(M)$
on a free operad is determined by its restriction to~$M$
since a free operad $\FOp(M)$
is spanned by formal composites of elements of~$M$.

The purpose of this paragraph is to review the construction of a derivation~$\partial_{\alpha}: \FOp(M)\rightarrow \FOp(M)$
such that $\partial_\alpha|_M = \alpha$
for a given homomorphism of $\Sigma_*$-objects $\alpha: M\rightarrow \FOp(M)$.
Since $\FOp(M) = \bigoplus_{\sigma} \sigma(M)/\equiv$,
the homomorphism $\alpha: M\rightarrow \FOp(M)$
splits into a sum of homomorphisms $\alpha_\sigma: M(I)\rightarrow\sigma(M)$,
naturally associated to each $I$-tree $\sigma\in\Theta(I)$.

The derivation~$\partial_{\alpha}: \FOp(M)\rightarrow \FOp(M)$
has a component $\partial_\alpha: \tau(M)\rightarrow\theta(M)$
for each application of a homomorphism~$\alpha_\sigma: M(I_v)\rightarrow\sigma(M)$
to a vertex~$v$ of a tree~$\tau$,
and for each tree $\tau\in\Theta(I)$.
The tree $\theta$ is defined by blowing up the vertex~$v$ into~$\sigma$
within the tree $\tau$.

Note that the input set of~$\sigma$ is by assumption the input set~$I_v$
of the vertex~$v$ in~$\tau$.
For convenience,
we identify the output of~$\sigma$ with the target of the edge $e_v\in E(\tau)$
such that $s(e_v) = v$.

Formally,
the $I$-tree $\theta$ is defined by the vertex set $V(\theta) = V(\tau)\setminus\{v\}\coprod V(\sigma)$
and the edge set $E(\theta) = E(\tau)\setminus(s^{-1}\{v\}\cup t^{-1}\{v\})\coprod E(\sigma)$.
The source (respectively, the target) of an edge~$e$ in~$\theta$
is defined by the source (respectively, the target) of~$e$ in~$\tau$
if~$e\in E(\tau)\setminus(s^{-1}\{v\}\cup t^{-1}\{v\})$,
by the source (respectively, the target) of~$e$ in~$\sigma$
if~$e\in E(\sigma)$.
An example is represented in figure~\ref{Fig:VertexBlowUp}.
The tree $\sigma$ is the framed subtree of the right-hand side.
\begin{figure}
\begin{equation*}
\vcenter{\xymatrix@W=2pt@H=4pt@R=12pt@C=6pt@M=2pt{i\ar[dr] && j\ar[dl] & k\ar[ddr] && l\ar[ddl] & m\ar[dr] && n\ar[dl] \\ 
& *+<3mm>[o][F]{v}\ar@/_4pt/[drrr] &&&&&& *+<3mm>[o][F]{w}\ar@/^4pt/[dlll] & \\ 
&&&& *+<3mm>[o][F]{u}\ar[d] &&&& \\ 
&&&& 0 &&&& 
}}
\mapsto\vcenter{\xymatrix@W=2pt@H=4pt@R=12pt@C=6pt@M=2pt{ i\ar[dr] && j\ar[dl] & k\ar[dd] & m\ar[dr] && n\ar[dl] & \\ 
& *+<3mm>[o][F]{v}\ar@/_4pt/[ddrr] &&&& *+<3mm>[o][F]{w}\ar[dll] && \\ 
&&& *+<3mm>[o][F]{t}\ar[d]\save []!C.[d]!C *+<8pt>[F-:<6pt>]\frm{}\restore &&&& l\ar@/^4pt/[dllll] \\ 
&&& *+<3mm>[o][F]{s}\ar[d] &&&& \\ 
&&& 0 &&&& 
}}
\end{equation*}
\caption{}\label{Fig:VertexBlowUp}\end{figure}

We have identities
\begin{equation*}
\tau(M) = M(I_v)\otimes(\tau\setminus v)(M)
\quad\text{and}\quad\theta(M) = \sigma(M)\otimes(\theta\setminus\sigma)(M),
\end{equation*}
where we set
\begin{equation*}
(\theta\setminus\sigma)(M) = (\tau\setminus v)(M) = \bigotimes_{u\in V(\tau)\setminus\{v\}} M(I_u),
\end{equation*}
and the component $\partial_{\alpha}: \tau(M)\rightarrow\theta(M)$ of~$\partial_\alpha$
is defined by the tensor product of~$\alpha_{\sigma}: M(I_v)\rightarrow\sigma(M)$
with the identity on $(\theta\setminus\sigma)(M) = (\tau\setminus v)(M)$.
Note that the homomorphisms $\partial_{\alpha}: \tau(M)\rightarrow\theta(M)$
sum up to a well defined homomorphism
\begin{equation*}
\underbrace{\bigoplus_{\tau}\tau(M)}_{\FOp(M)}\xrightarrow{\partial_{\alpha}}\underbrace{\bigoplus_{\theta}\theta(M)}_{\FOp(M)}
\end{equation*}
essentially because we deal with finite trees
and the homomorphisms $\alpha_\sigma: M(I_v)\rightarrow\sigma(M)$
are given as components of a well-defined homomorphism $\alpha: M\rightarrow \FOp(M)$.

We check easily that the homomorphism $\partial_{\alpha}$ defined by the construction of this paragraph
forms an operad derivation.
We have clearly $\partial_{\alpha}|_M = \alpha$,
because for an $I$-tree with a single vertex $\psi$ the components of~$\partial_\alpha$
reduce to an application of the homomorphisms~$\alpha_{\sigma}$.

Since we observe at the beginning of this paragraph
that the relation $\partial_{\alpha}|_M = \alpha$
fully determines the derivation $\partial_{\alpha}: \FOp(M)\rightarrow \FOp(M)$,
we obtain finally:

\begin{prop}\label{QuasiFreeOperads:DerivationConstruction}
We have a bijective correspondence between operad derivations $\partial_\alpha: \FOp(M)\rightarrow \FOp(M)$
and homogeneous homomorphisms $\alpha: M\rightarrow \FOp(M)$
such that $\partial_{\alpha}|_M = \alpha$.\qed
\end{prop}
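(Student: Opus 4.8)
The plan is to present the correspondence as a pair of mutually inverse maps and to reduce everything to the vertex blow-up construction carried out just above the statement. For the backward map, recall that the canonical morphism $\eta\colon M\rightarrow\FOp(M)$ identifies $M$ with the summand $\FOp_1(M)\subset\FOp(M)$ spanned by one-vertex trees; hence any operad derivation $\partial\colon\FOp(M)\rightarrow\FOp(M)$ restricts to a homogeneous homomorphism of $\Sigma_*$-objects $\alpha=\partial|_M\colon M\rightarrow\FOp(M)$ of the same degree. The forward map is $\alpha\mapsto\partial_\alpha$, where $\partial_\alpha$ is the homomorphism assembled from the components $\partial_\alpha\colon\tau(M)\rightarrow\theta(M)$, one for each application of $\alpha_\sigma\colon M(I_v)\rightarrow\sigma(M)$ to a vertex $v$ of an $I$-tree $\tau$.

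Next I would check that the two composites are the identity. That $\partial_\alpha|_M=\alpha$ is immediate: restricting to the one-vertex trees $\psi$, the only blow-up components of $\partial_\alpha$ that contribute are the applications of the $\alpha_\sigma$ to the unique vertex, and these reassemble precisely into $\alpha$. For the other composite I would show that an operad derivation is determined by its restriction to $M$. Every tree tensor in $\FOp(M)=\bigoplus_\tau\tau(M)/{\equiv}$ is an iterated formal composite of generators $x\in M$, and the derivation relation (*) of \S\ref{QuasiFreeOperads:TwistingDerivations} expresses $\partial$ on such a composite in terms of $\partial$ on its factors; a straightforward induction on the number of vertices $|V(\tau)|$ then forces $\partial$ on all of $\FOp(M)$ once $\partial|_M$ is known. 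Since $\partial_\alpha$ is itself a derivation with $\partial_\alpha|_M=\alpha$, this uniqueness yields $\partial_\alpha=\partial$ for $\alpha=\partial|_M$.

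The main obstacle is the content packaged into the preceding paragraph, which a self-contained proof should not take entirely for granted: one must verify that the components $\partial_\alpha\colon\tau(M)\rightarrow\theta(M)$ descend to a well-defined homomorphism on the quotient $\bigoplus_\tau\tau(M)/{\equiv}$ (i.e. blowing up a vertex commutes with $I$-tree isomorphisms and with reindexing bijections $u\colon I\rightarrow J$), commutes with the $\Sigma_n$-actions, and that the resulting $\partial_\alpha$ satisfies the derivation relation (*). The last point I would deduce from the description of $\mu\colon\FOp(M)\circ\FOp(M)\rightarrow\FOp(M)$ as "forget the two-level structure": a composite $p(q_1,\dots,q_r)$ corresponds to a big tree split into a top subtree carrying $p$ and bottom subtrees carrying $q_1,\dots,q_r$, and blowing up one vertex of the big tree occurs either inside the top subtree or inside exactly one bottom subtree; collecting these cases reproduces the two types of terms on the right-hand side of (*), with signs dictated by the dg-sign convention of \S\ref{Background:Signs}. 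Finiteness of the trees involved guarantees that the sum of components is locally finite and hence defines an honest homomorphism $\partial_\alpha\colon\FOp(M)\rightarrow\FOp(M)$.
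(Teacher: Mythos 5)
Your proposal is correct and follows essentially the same route as the paper: the paper's proof is exactly the content of \S\ref{QuasiFreeOperads:FreeOperadDerivations} (uniqueness because a derivation is determined on the generators $M=\FOp_1(M)$ via the derivation relation, existence via the vertex blow-up construction of $\partial_\alpha$, and finiteness of trees for local finiteness of the sum), and your additional care about well-definedness on the quotient by tree isomorphisms and the two-level-tree verification of the derivation relation just makes explicit what the paper dismisses as "we check easily."
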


For our needs,
we also record:

\begin{prop}\label{QuasiFreeOperads:TwistingDerivationConstruction}
The operad derivation $\partial_\alpha: \FOp(M)\rightarrow \FOp(M)$
associated to a homomorphism of $\Sigma_*$-objects of degree $-1$
satisfies the equation of a twisting homomorphism $\delta(\partial_\alpha) + \partial_\alpha^2 = 0$
if and only if we have the identity $\delta(\alpha) + \partial_\alpha\cdot\alpha = 0$
in $\Hom_{\M}(M,\FOp(M))$.
\end{prop}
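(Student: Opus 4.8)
The idea is to recognize the left-hand side $\delta(\partial_\alpha) + \partial_\alpha^2$ as an operad derivation of the free operad $\FOp(M)$, so that by proposition~\ref{QuasiFreeOperads:DerivationConstruction} it vanishes if and only if its restriction to the generating $\Sigma_*$-object $M\subset\FOp(M)$ vanishes; the proposition will then follow by identifying that restriction with $\delta(\alpha) + \partial_\alpha\cdot\alpha$.

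To carry this out, I would first observe that $\delta + \partial_\alpha$ is an operad derivation of $\FOp(M)$ of degree $-1$, being the sum of the internal differential $\delta$, which is an operad derivation (see~\S\ref{QuasiFreeOperads:TwistingDerivations}), and of $\partial_\alpha$. Next I would use the elementary fact that the square of an operad derivation $D$ of odd degree is again an operad derivation: applying the derivation relation of~\S\ref{QuasiFreeOperads:TwistingDerivations} twice to a composite $p(q_1,\dots,q_r)$, the terms in which $D$ hits two distinct factors come in pairs carrying opposite signs and cancel, and what survives is precisely the derivation relation for $D^2$ (which has the even degree $2\deg D$). I want to stress that this cancellation is purely formal and does not require dividing by~$2$, so it holds over an arbitrary ground ring $\kk$; keeping the Koszul signs straight here is the main --- really the only --- delicate point of the argument. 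Granting this, and using $\delta^2 = 0$ together with $\delta(\partial_\alpha) = \delta\partial_\alpha + \partial_\alpha\delta$ (the sign being the one dictated by $\deg\partial_\alpha = -1$ in the convention of~\S\ref{Background:TwistedObjects}), the expansion $(\delta+\partial_\alpha)^2 = \delta^2 + \delta\partial_\alpha + \partial_\alpha\delta + \partial_\alpha^2$ shows that $\delta(\partial_\alpha) + \partial_\alpha^2 = (\delta+\partial_\alpha)^2$ is an operad derivation of $\FOp(M)$.

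It then remains to compute the restriction of this derivation to $M$. Since $\partial_\alpha|_M = \alpha$, one gets $\partial_\alpha^2|_M = \partial_\alpha\cdot\alpha$, the composite of $\alpha$ followed by $\partial_\alpha$ in $\Hom_{\M}(M,\FOp(M))$. For the remaining term, precomposing $\delta\partial_\alpha + \partial_\alpha\delta$ with the inclusion $M\subset\FOp(M)$ and using that this inclusion is a chain map gives $(\delta\partial_\alpha + \partial_\alpha\delta)|_M = \delta\circ\alpha + \alpha\circ\delta$, which is exactly the differential $\delta(\alpha)$ of $\alpha$ in the dg-hom $\Hom_{\M}(M,\FOp(M))$ (again the sign matches because $\deg\alpha = -1$). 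Hence $(\delta(\partial_\alpha) + \partial_\alpha^2)|_M = \delta(\alpha) + \partial_\alpha\cdot\alpha$, and since by proposition~\ref{QuasiFreeOperads:DerivationConstruction} an operad derivation of $\FOp(M)$ is zero precisely when its restriction to $M$ is zero, the asserted equivalence follows.
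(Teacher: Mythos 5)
Your argument is correct, and it supplies real content where the paper is terse: the paper's own proof is the single sentence ``straightforward inspection from the explicit definition of the derivation $\partial_\alpha$,'' i.e.\ a term-by-term check on the tree components of~\S\ref{QuasiFreeOperads:FreeOperadDerivations}, whereas you package that inspection into two structural facts --- that $(\delta+\partial_\alpha)^2=\delta(\partial_\alpha)+\partial_\alpha^2$ is itself an operad derivation (of degree $-2$), and that by the uniqueness clause of proposition~\ref{QuasiFreeOperads:DerivationConstruction} a derivation of the free operad vanishes exactly when its restriction to the generating summand $M=\FOp_1(M)$ does. Both halves hold up: the cancellation of the cross terms in $D^2(p(q_1,\dots,q_r))$ for an odd-degree derivation $D$ is purely a matter of Koszul signs and requires no division by $2$, so the argument is valid over an arbitrary ground ring --- which is exactly the concern the paper itself raises after proposition~\ref{OperadAlgebras:TwistingDerivationConstruction} about the Lie-bracket formulation of~\cite{GetzlerJones} breaking down in the presence of $2$-torsion; and the restriction computation $(\delta(\partial_\alpha)+\partial_\alpha^2)|_M=\delta(\alpha)+\partial_\alpha\cdot\alpha$ uses only that $M$ is a dg-submodule of $\FOp(M)$ and that $\partial_\alpha|_M=\alpha$. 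What your route buys over the paper's is that the ``only if'' direction comes for free from the injectivity of the correspondence $\partial_\alpha\mapsto\alpha$ rather than from re-examining all tree summands; the only point worth making explicit if you write this up is that the correspondence of proposition~\ref{QuasiFreeOperads:DerivationConstruction} is being invoked for a derivation of degree $-2$, which is covered since the construction there is degreewise uniform.
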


\begin{proof}
Straightforward inspection from the explicit definition of the derivation~$\partial_\alpha$.
\end{proof}

\subsubsection{Homomorphisms on free operads}\label{QuasiFreeOperads:FreeHomomorphisms}
By definition of the free operad,
we have a bijective correspondence between operad morphisms $\phi_f: \FOp(M)\rightarrow\POp$
and morphisms of $\Sigma_*$-objects $f: M\rightarrow\POp$
such that $\phi_f|_M = f$.
For a morphism toward a free operad $f: M\rightarrow\FOp(N)$
and such that $f(M)\subset N$,
this morphism $\phi_f: \FOp(M)\rightarrow\FOp(N)$
is identified with the morphism associated to $f: M\rightarrow N$
by the free operad functor $\FOp: \M\rightarrow\Op$.

The correspondence $f\mapsto\phi_f$
has an obvious extension to homomorphisms
of degree~$0$.
The homomorphism $\phi_f: \FOp(M)\rightarrow\POp$
associated to a homomorphism $f: M\rightarrow \FOp(M)$
is represented by a composite
\begin{equation*}
\FOp(M)\xrightarrow{f} \FOp(\ROp)\xrightarrow{\lambda}\ROp,
\end{equation*}
where $\lambda: \FOp(\ROp)\rightarrow\ROp$ represents the universal composition product of the operad~$\ROp$.
Note simply that, according to the explicit construction of the free operad in~\S\ref{TreeOperadStructures:FreeOperadConstruction},
the free operad functor $\FOp(M)$
has a natural extension to homomorphisms of~$\Sigma_*$-objects of degree~$0$.

For a quasi-free operad,
we obtain:

\begin{prop}\label{QuasiFreeOperads:MorphismConstruction}
Let $\QOp = (\FOp(M),\partial_\alpha)$
be a quasi-free operad.
Let $\ROp$ be any operad.
We have a bijective correspondence between the operad morphisms
\begin{equation*}
(\FOp(M),\partial_\alpha)\xrightarrow{\phi_f}\ROp
\end{equation*}
and the homomorphisms of degree $0$
\begin{equation*}
f\in\Hom_{\M}(M,\ROp)
\end{equation*}
such that $\delta(f) = \phi_f\cdot\alpha$.
\end{prop}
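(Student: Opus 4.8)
Since the graded operad underlying $\QOp = (\FOp(M),\partial_\alpha)$ is the free operad $\FOp(M)$, an operad morphism $\QOp\to\ROp$ is the same as a morphism of graded operads $\phi\colon\FOp(M)\to\ROp$ that commutes with differentials, namely $\delta\circ\phi = \phi\circ(\delta+\partial_\alpha)$, where $\delta$ denotes the internal differential of $\ROp$ on the source of this equation and the natural differential of the free operad $\FOp(M)$ on the target. Extending the correspondence of \S\ref{QuasiFreeOperads:FreeHomomorphisms} to homomorphisms of degree $0$, morphisms of graded operads $\phi\colon\FOp(M)\to\ROp$ are in bijection with homogeneous homomorphisms $f\in\Hom_{\M}(M,\ROp)$ of degree $0$, through $f = \phi|_M$ and $\phi = \phi_f$. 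Hence the plan is reduced to showing that, for such an $f$, the chain map relation $\delta\circ\phi_f = \phi_f\circ(\delta+\partial_\alpha)$ is equivalent to the single identity $\delta(f) = \phi_f\cdot\alpha$ in $\Hom_{\M}(M,\ROp)$.

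The key notion is that of a $\phi_f$-derivation: a homogeneous homomorphism $D\colon\FOp(M)\to\ROp$ satisfying
\begin{equation*}
D(p(q_1,\dots,q_r)) = D(p)(\phi_f(q_1),\dots,\phi_f(q_r)) + \sum_{i=1}^{r}\pm\,\phi_f(p)(\phi_f(q_1),\dots,D(q_i),\dots,\phi_f(q_r))
\end{equation*}
for every operadic composite in $\FOp(M)$. Because $\phi_f$ preserves composites and units, and because $\delta$ on $\ROp$, $\delta$ on $\FOp(M)$, and $\partial_\alpha$ are all operad derivations, the homomorphisms $\delta\circ\phi_f$, $\phi_f\circ\delta$ and $\phi_f\circ\partial_\alpha$ are $\phi_f$-derivations of degree $-1$; so is $\phi_f\circ\delta+\phi_f\circ\partial_\alpha$, since a sum of $\phi_f$-derivations is one. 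I then argue, by the same induction on the number of vertices of a tree tensor that is used in \S\ref{QuasiFreeOperads:FreeOperadDerivations} to establish Proposition~\ref{QuasiFreeOperads:DerivationConstruction}, that a $\phi_f$-derivation $D$ is uniquely determined by its restriction $D|_M$: decomposing a tree tensor along its root vertex expresses $D$ on it in terms of $\phi_f$ and of the values of $D$ on tree tensors with strictly fewer vertices, the one-vertex case being exactly the generating summand $M\subset\FOp(M)$. It follows that $\delta\circ\phi_f = \phi_f\circ\delta+\phi_f\circ\partial_\alpha$ holds on all of $\FOp(M)$ if and only if it holds after restriction to $M$.

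It remains to restrict both sides to $M = \FOp_1(M)$. There the natural differential of $\FOp(M)$ is the internal differential of the $\Sigma_*$-object $M$, the twisting homomorphism $\partial_\alpha$ is $\alpha$ by Proposition~\ref{QuasiFreeOperads:DerivationConstruction}, and $\phi_f$ coincides with $f$ both on $M$ and on the image of its internal differential. Thus on $M$ the relation reads $\delta\circ f = f\circ\delta + \phi_f\circ\alpha$, which, as $\deg(f) = 0$, is exactly $\delta(f) = \delta f - (-1)^{\deg(f)}f\delta = \phi_f\cdot\alpha$; conversely this identity on $M$ forces the chain map relation on $\FOp(M)$ by the uniqueness just proved. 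This yields the claimed bijection. The only delicate point is the uniqueness statement for $\phi_f$-derivations; everything else is a formal consequence of the universal property of the free operad and of the derivation relations, so I expect the write-up to be short once that lemma is in place.
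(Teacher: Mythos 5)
Your proposal is correct and follows essentially the same route as the paper: the paper's proof likewise observes that $\phi_f$ preserves composition products by construction, writes the commutation with differentials as $\delta\cdot\phi_f = \phi_f\cdot\delta + \phi_f\cdot\partial_{\alpha}$, and reduces this to the identity $\delta(f) = \phi_f\cdot\alpha$ on the generating summand $M$ using the explicit constructions of $\phi_f$ and $\partial_\alpha$. Your device of $\phi_f$-derivations determined by their restriction to $M$ is simply a clean formalization of the verification the paper leaves to the reader.
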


\begin{proof}
The homomorphism $\phi_f: \FOp(M)\rightarrow\ROp$
associated to a homomorphism $f: M\rightarrow\ROp$ of degree $0$
preserves composition products by construction.
The commutation of~$\phi_f$
with differential reads $\delta\cdot\phi_f = \phi_f\cdot\delta + \phi\cdot\partial_{\alpha}$.
Use the explicit construction of~$\phi_f$
and $\partial_\alpha$
to check that this equation is equivalent to the identity $\delta(f) = \phi_f\cdot\alpha$
in $\Hom_{\M}(M,\ROp)$.
\end{proof}

This proposition implies in a particular case:

\begin{prop}\label{QuasiFreeOperads:InducedMorphisms}
A morphism of $\Sigma_*$-objects $f: M\rightarrow N$
defines a morphism between quasi-free operads
\begin{equation*}
\underbrace{(\FOp(M),\partial_\alpha)}_{\POp}\xrightarrow{\FOp(f)}\underbrace{(\FOp(N),\partial_\beta)}_{\QOp}.
\end{equation*}
if and only if we have the identity $\beta\cdot f = \FOp(f)\cdot\alpha$
in $\Hom_{\M}(M,\FOp(N))$.
\end{prop}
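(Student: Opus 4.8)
The plan is to deduce the statement from Proposition~\ref{QuasiFreeOperads:MorphismConstruction} by specializing the target operad to $\ROp = \QOp = (\FOp(N),\partial_\beta)$. Let $g: M\rightarrow\QOp$ be the homomorphism of degree~$0$ obtained as the composite of $f: M\rightarrow N$ with the canonical morphism $\eta: N\rightarrow\FOp(N)$. Since $g$ factors through the summand $N\subset\FOp(N)$, the description of homomorphisms on free operads in~\S\ref{QuasiFreeOperads:FreeHomomorphisms} identifies the associated operad homomorphism $\phi_g: \FOp(M)\rightarrow\FOp(N)$ with the morphism $\FOp(f)$ produced by the free operad functor. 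By Proposition~\ref{QuasiFreeOperads:MorphismConstruction}, this homomorphism $\FOp(f)$ commutes with the twisted differentials of $(\FOp(M),\partial_\alpha)$ and $\QOp$, and hence defines a morphism of operads $(\FOp(M),\partial_\alpha)\rightarrow(\FOp(N),\partial_\beta)$, if and only if the homomorphism $g = \phi_g|_M$ satisfies $\delta(g) = \phi_g\cdot\alpha$ in $\Hom_{\M}(M,\QOp)$.

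It remains to unwind this last equation. I would evaluate $\delta(g)$ using that, since $\deg(g) = 0$, the differential of $\Hom_{\M}(M,\QOp)$ gives $\delta(g) = \delta_{\QOp}\cdot g - g\cdot\delta_M$, where $\delta_{\QOp} = \delta+\partial_\beta$ is the twisted differential of $\QOp$ and $\delta_M$ is the internal differential of~$M$. The internal differential of $\FOp(N)$ extends that of $N$, so it commutes with $\eta$; combined with $\delta_N\cdot f = f\cdot\delta_M$ (as $f$ is a morphism of $\Sigma_*$-objects), the contribution of the internal differentials to $\delta(g)$ cancels out. Since $\partial_\beta$ restricts to $\beta$ on the summand $N\subset\FOp(N)$ (Proposition~\ref{QuasiFreeOperads:DerivationConstruction}), what remains is $\delta(g) = \beta\cdot f$. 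On the other hand $\phi_g\cdot\alpha = \FOp(f)\cdot\alpha$ by the identification above, so the condition $\delta(g) = \phi_g\cdot\alpha$ becomes precisely the asserted identity $\beta\cdot f = \FOp(f)\cdot\alpha$ in $\Hom_{\M}(M,\FOp(N))$.

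The only point requiring care is the bookkeeping in the computation of $\delta(g)$: one has to keep track simultaneously of the internal differential of~$M$ and of the full twisted differential $\delta+\partial_\beta$ of the target, and verify that the $\delta_M$-terms cancel; everything else is a formal consequence of Proposition~\ref{QuasiFreeOperads:MorphismConstruction} together with~\S\ref{QuasiFreeOperads:FreeHomomorphisms}. Should one prefer a self-contained argument, one can instead observe that both $\partial_\beta\cdot\FOp(f)$ and $\FOp(f)\cdot\partial_\alpha$ are homomorphisms $\FOp(M)\rightarrow\FOp(N)$ satisfying a Leibniz rule twisted by the operad morphism $\FOp(f)$, hence determined by their restrictions to the generating object~$M$, where they equal $\beta\cdot f$ and $\FOp(f)\cdot\alpha$ respectively; as $\FOp(f)$ automatically commutes with the internal differentials, it commutes with the twisted differentials $\delta+\partial_\alpha$ and $\delta+\partial_\beta$ exactly when these two restrictions coincide. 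This, however, only reproves the relevant case of Proposition~\ref{QuasiFreeOperads:MorphismConstruction}, so I would take the short deduction as the main proof.
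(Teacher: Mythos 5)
Your proposal is correct and follows essentially the same route as the paper: both deduce the statement from Proposition~\ref{QuasiFreeOperads:MorphismConstruction} applied with target $\QOp = (\FOp(N),\partial_\beta)$, observe that the differential of~$f$ in $\Hom_{\M}(M,\QOp)$ reduces to $\beta\cdot f$ because the internal-differential terms vanish for a morphism of $\Sigma_*$-objects, and identify $\phi_f$ with $\FOp(f)$. Your version merely spells out the cancellation of the $\delta$-terms more explicitly than the paper does.
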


\begin{proof}
In the equation of proposition~\ref{QuasiFreeOperads:MorphismConstruction},
the differential of~$f$
has to be replaced by the commutator $(\delta + \partial_\beta)\cdot f + f\cdot\delta = \delta(f) + \beta\cdot f$
since the operad $\QOp = (\FOp(N),\partial_\beta)$
is equipped with the differential $\delta+\partial_\beta: \FOp(N)\rightarrow \FOp(N)$.
Observe that $\delta(f) = 0$
since $f$ is supposed to form a morphism of $\Sigma_*$-objects in dg-modules.
For a morphism $f: M\rightarrow N$,
we also have an identity $\phi_f = \FOp(f)$.
Hence the equation of proposition~\ref{QuasiFreeOperads:MorphismConstruction}
reduces to the identity $\beta\cdot f = \FOp(f)\cdot\alpha$.
\end{proof}

\subsubsection{The canonical filtration of a quasi-free operad}\label{QuasiFreeOperads:FiltrationDefinition}
Recall that the underlying $\Sigma_*$-object of the free operad
has a natural splitting
$\FOp(M) = \bigoplus_{r=0}^{\infty} \FOp_r(M)$
such that
\begin{equation*}
\FOp_r(M)(I) = \bigoplus_{\tau\in\Theta_r(I)} \tau(M)/\equiv,
\end{equation*}
where $\Theta_r(I)$ is the groupoid of $I$-trees with $r$ vertices (see~\S\ref{TreeOperadStructures:AugmentedFreeOperad}).
We have moreover $\FOp_0(M) = \IOp$ and $\FOp_1(M) = M$.
The projection onto $\FOp_0(M)$
provides the free operad with a natural augmentation $\epsilon: \FOp(M)\rightarrow\IOp$
such that $\tilde{\FOp}(M) = \bigoplus_{r=1}^{\infty} \FOp_r(M)$
represents the augmentation ideal of~$\FOp(M)$.

Any homomorphism $\alpha: M\rightarrow \FOp(M)$
admits a splitting $\alpha = \sum_{r=0}^{\infty} \alpha_r$ such that $\alpha_r(M)\subset \FOp_r(M)$,
to which corresponds a splitting $\partial_{\alpha} = \sum_{r=0}^{\infty} \partial_{\alpha_r}$
of the operad derivation associated to~$\alpha$.
We assume usually $\alpha_0 = 0$, or equivalently $\alpha(M)\subset\tilde{\FOp}(M)$.

For a quasi-free operad $\QOp = (\FOp(M),\partial_{\alpha})$,
we also assume $\alpha_1 = 0$,
because $M\xrightarrow{\alpha_1}\FOp_1(M) = M$
determine a twisting homomorphism of~$\Sigma_*$-objects
which can be embodied into the internal differential of~$M$.
These assumptions amount to the requirement $\alpha(M)\subset \FOp_{\geq 2}(M)$,
where we set $\FOp_{\geq 2}(M) = \bigoplus_{r\geq 2} \FOp_r(M)$.

Let $M_{\leq r}$
be the $\Sigma_*$-object
such that
\begin{equation*}
M_{\leq r}(n) = \begin{cases} M(n), & \text{if $n\leq r$}, \\ 0, & \text{otherwise}. \end{cases}
\end{equation*}
We study the suboperads $\FOp(M_{\leq r})\subset\FOp(M)$.
We aim to prove that these suboperads give a filtration of~$\QOp = (\FOp(M),\partial_{\alpha})$.

To begin with, we observe:

\begin{lemm}\label{QuasiFreeOperads:FiltrationDifferential}
Let $\partial_\alpha: \FOp(M)\rightarrow \FOp(M)$ be a derivation of operads
associated to a homomorphism
such that $\alpha(M)\subset \FOp_{\geq 2}(M)$.
If $M$ is reduced,
then we have the relation $\alpha(M_{\leq r})\subset \FOp(M_{\leq r-1})$.
\end{lemm}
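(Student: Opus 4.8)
The plan is to reduce the statement to an edge count in the trees indexing the free operad, using the explicit description of the components of $\alpha$ from~\S\ref{QuasiFreeOperads:FreeOperadDerivations}. Recall that $\alpha\colon M\rightarrow\FOp(M)$ splits into homomorphisms $\alpha_\sigma\colon M(I)\rightarrow\sigma(M)$ indexed by the $I$-trees $\sigma\in\Theta(I)$, and that the hypothesis $\alpha(M)\subset\FOp_{\geq 2}(M)$ means $\alpha_\sigma = 0$ unless $\sigma$ has at least two vertices. Since $M$ is reduced, the summand $\sigma(M) = \bigotimes_{v\in V(\sigma)} M(I_v)$ vanishes as soon as one vertex $v$ of $\sigma$ has $|I_v|\leq 1$; so the only trees carrying a nonzero component $\alpha_\sigma$ are those with $|V(\sigma)|\geq 2$ and $|I_v|\geq 2$ for every $v\in V(\sigma)$. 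It therefore suffices to show that for such a tree $\sigma$ with input set $I$ of cardinal $|I|\leq r$, each vertex satisfies $|I_v|\leq r-1$: then $M(I_v) = M_{\leq r-1}(I_v)$ for all $v$, so $\sigma(M) = \sigma(M_{\leq r-1})$ is a summand of $\FOp(M_{\leq r-1})$, and the desired inclusion $\alpha(M_{\leq r})\subset\FOp(M_{\leq r-1})$ holds term by term.

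For the count I would use the axioms of an $I$-tree recalled in~\S\ref{TreeOperadStructures:TreeStructures}: the source map is a bijection $E(\sigma)\xrightarrow{\sim} V(\sigma)\amalg I$, so $|E(\sigma)| = |V(\sigma)| + |I|$; exactly one edge targets the output; and $|I_v|$ is the number of edges with target $v$, so that $\sum_{v\in V(\sigma)}|I_v|$ counts all edges whose target is a vertex, namely $|E(\sigma)| - 1 = |V(\sigma)| + |I| - 1$. Fixing $v\in V(\sigma)$ and bounding each of the $|V(\sigma)|-1$ remaining summands below by $2$ (because $M$ is reduced), I get
\begin{equation*}
|I_v| = \Bigl(\sum_{w\in V(\sigma)}|I_w|\Bigr) - \sum_{w\neq v}|I_w|\leq\bigl(|V(\sigma)| + |I| - 1\bigr) - 2\bigl(|V(\sigma)| - 1\bigr) = |I| - |V(\sigma)| + 1\leq|I| - 1,
\end{equation*}
the last inequality using $|V(\sigma)|\geq 2$. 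Since $|I|\leq r$, this gives $|I_v|\leq r-1$, which is what we needed.

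I do not expect a genuine obstacle: once phrased in terms of the tree summands of $\alpha$, the whole content of the lemma is the identity $\sum_v|I_v| = |V(\sigma)| + |I| - 1$ together with the two standing hypotheses, that $\sigma$ has at least two vertices and that every vertex has at least two ingoing edges. The only point deserving a remark is the degenerate range $r\leq 2$, where $M_{\leq r-1} = 0$ (as $M$ is reduced) and hence $\FOp(M_{\leq r-1}) = \IOp$: there the same count shows that no tree of arity $\leq r$ with at least two vertices can have all vertices reduced, so every relevant component $\alpha_\sigma$ vanishes and the inclusion is trivial.
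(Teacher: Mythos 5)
Your proof is correct and follows essentially the same route as the paper's: reduce to the tree summands of $\alpha$, use reducedness of $M$ to force every vertex to have at least two inputs, and conclude that a tree with at least two vertices and at most $r$ inputs has every vertex of arity at most $r-1$. The only difference is that you make the edge count $\sum_{v}|I_v| = |V(\sigma)|+|I|-1$ explicit where the paper simply asserts the conclusion, which is a welcome clarification rather than a deviation.
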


\begin{proof}
The assumption $M(0) = M(1) = 0$
implies that the object $\FOp(M)(I) = \bigoplus_{\tau\in\Theta(I)} \tau(M)/\equiv$
reduce to summands $\tau(M)$ such that every vertex of~$\tau$ has at least $2$ inputs.
If $\tau$ has at least $2$ vertices and no more than $r$ inputs,
then this requirement implies that every vertex of~$\tau$
has strictly less than $r$ inputs.
Therefore we have $\alpha(M)\subset \FOp_{\geq 2}(M)\Rightarrow\alpha(M_{\leq r})\subset \FOp(M_{\leq r-1})$.
\end{proof}

\begin{lemm}\label{QuasiFreeOperads:QuasiFreeFiltration}
Let $\QOp = (\FOp(M),\partial)$ be a quasi-free connected operad
such that $\partial(M)\subset \FOp_{\geq 2}(M)$.
The pairs $\QOp_{\leq r} = (\FOp(M_{\leq r}),\partial)$
form a nested collections of suboperads of~$\QOp$
such that $\QOp = \colim_r\QOp_{\leq r}$.
Moreover,
the embeddings $j_r: \QOp_{\leq r-1}\hookrightarrow\QOp_{\leq r}$
fit pushout diagrams
\begin{equation}
\vcenter{\xymatrix{ \FOp(B[0]\otimes M_{(r)})\ar[r]^(0.6){\phi_f}\ar[d]_{\FOp(i[0]\otimes M_{(r)})} & \QOp_{\leq r-1}\ar@{.>}[d]^{j_r} \\
\FOp(E[0]\otimes M_{(r)})\ar@{.>}[r]_(0.6){\phi_g} & \QOp_{\leq r-1} }},
\end{equation}
where $i[0]: B[0]\rightarrow E[0]$ is the degree $0$ generating cofibration of dg-modules
and $M_{(r)}$ is the $\Sigma_*$-object such that
\begin{equation*}
M_{(r)}(n) = \begin{cases} M(n), & \text{if $n = r$}, \\ 0, & \text{otherwise}. \end{cases}
\end{equation*}
\end{lemm}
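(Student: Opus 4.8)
The plan is to treat the two assertions of the lemma in turn: the description of the filtration is essentially formal, while the pushout squares carry the content. Throughout I would use that $M$ is reduced, which holds because $\QOp$ is connected.

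For the filtration, I would first note that $M_{\leq r}$ is a sub-$\Sigma_*$-object of $M$, so $\FOp(M_{\leq r})$ is the suboperad of $\FOp(M)$ spanned by the summands $\tau(M)$ indexed by the $I$-trees all of whose vertices have at most $r$ inputs; this class of trees is stable under grafting, and the chain $M_{\leq r-1}\subset M_{\leq r}$ gives the asserted nesting. To see that the twisted differential $\delta+\partial$ restricts to $\FOp(M_{\leq r})$, I would invoke Lemma~\ref{QuasiFreeOperads:FiltrationDifferential}: it yields $\partial(M_{\leq r})=\alpha(M_{\leq r})\subset\FOp(M_{\leq r-1})\subset\FOp(M_{\leq r})$, and since $\partial$ is an operad derivation of $\FOp(M)$ and $\FOp(M_{\leq r})$ is generated by $M_{\leq r}$, the derivation preserves $\FOp(M_{\leq r})$. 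Hence $\QOp_{\leq r}=(\FOp(M_{\leq r}),\partial)$ is a sub-dg-operad of $\QOp$, quasi-free on $M_{\leq r}$ with twisting $\alpha|_{M_{\leq r}}$. Finally $M=\bigcup_r M_{\leq r}$, and every element of $\FOp(M)$ is a finite sum of tree tensors involving only finitely many arities, so $\FOp(M)=\colim_r\FOp(M_{\leq r})$ with compatible differentials, that is $\QOp=\colim_r\QOp_{\leq r}$.

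For the pushout square, the starting observation is the splitting $M_{\leq r}=M_{\leq r-1}\oplus M_{(r)}$ of $\Sigma_*$-objects, together with $E[0]\otimes M_{(r)}\cong(e_0\otimes M_{(r)})\oplus(b_{-1}\otimes M_{(r)})$ and $b_{-1}\otimes M_{(r)}\cong B[0]\otimes M_{(r)}$, which exhibits $\FOp(i[0]\otimes M_{(r)})$ as a coproduct inclusion of free operads on underlying graded operads. I would define the two maps of the square using Proposition~\ref{QuasiFreeOperads:MorphismConstruction}: $\phi_f$ extends the homomorphism $b_{-1}\otimes x\mapsto\alpha(x)$, which lands in $\FOp(M_{\leq r-1})=\QOp_{\leq r-1}$ by Lemma~\ref{QuasiFreeOperads:FiltrationDifferential}, and $\phi_g$ extends $e_0\otimes x\mapsto x\in M_{(r)}\subset\QOp_{\leq r}$, which forces $b_{-1}\otimes x\mapsto\alpha(x)$; the well-definedness of $\phi_f$ and $\phi_g$ as dg-operad morphisms is precisely the twisting equation $\delta(\alpha)+\partial_\alpha\cdot\alpha=0$ of Proposition~\ref{QuasiFreeOperads:TwistingDerivationConstruction}. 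I would then check the square is cocartesian directly. Given an operad $\ROp$ with morphisms $u\colon\QOp_{\leq r-1}\to\ROp$ and $v\colon\FOp(E[0]\otimes M_{(r)})\to\ROp$ agreeing after restriction along $\FOp(i[0]\otimes M_{(r)})$, Proposition~\ref{QuasiFreeOperads:MorphismConstruction} identifies $u$ with a homomorphism $h'\colon M_{\leq r-1}\to\ROp$ satisfying $\delta(h')=\phi_{h'}\cdot\alpha|_{M_{\leq r-1}}$, identifies $v$ with an unconstrained degree-$0$ homomorphism $v_0\colon M_{(r)}\to\ROp$ (the value on $e_0\otimes-$, the value on $b_{-1}\otimes-$ being forced to $\delta(v_0)$), and turns the agreement condition into $\delta(v_0)=\phi_{h'}\cdot\alpha|_{M_{(r)}}$. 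The required morphism $w\colon\QOp_{\leq r}\to\ROp$ is then the one attached to $h=(h',v_0)\colon M_{\leq r}\to\ROp$: its defining relation $\delta(h)=\phi_h\cdot\alpha|_{M_{\leq r}}$ holds on $M_{\leq r-1}$ because $u$ is a morphism, and on $M_{(r)}$ because $\delta(v_0)=\phi_{h'}\cdot\alpha|_{M_{(r)}}=\phi_h\cdot\alpha|_{M_{(r)}}$, the last equality holding since Lemma~\ref{QuasiFreeOperads:FiltrationDifferential} places $\alpha(M_{(r)})$ inside $\FOp(M_{\leq r-1})$, where $\phi_h$ and $\phi_{h'}$ coincide. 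Uniqueness of $w$ is immediate, $h'$ being forced by $w j_r=u$ and $v_0$ by $w\phi_g=v$.

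The only genuinely delicate point, and the one I would spend care on, is the bookkeeping of the twisting homomorphism across the square: checking that $\phi_f,\phi_g$ respect the twisted differentials and that the translated agreement and factorization conditions align on generators. These all reduce to repeated use of Lemma~\ref{QuasiFreeOperads:FiltrationDifferential}, which pushes $\alpha$ of an arity-$r$ generator down into $\FOp(M_{\leq r-1})$ so that the extensions $\phi_h$ and $\phi_{h'}$ cannot tell the difference, and of the sign-sensitive identity $\delta(\alpha)+\partial_\alpha\cdot\alpha=0$. I would deliberately avoid arguing through ``the forgetful functor preserves pushouts'', since colimits of operads are not created by the forgetful functor to $\Sigma_*$-objects; verifying the universal property by hand via Proposition~\ref{QuasiFreeOperads:MorphismConstruction} circumvents this entirely.
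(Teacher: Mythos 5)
Your proposal is correct and follows essentially the same route as the paper: restrict the derivation using Lemma~\ref{QuasiFreeOperads:FiltrationDifferential} to get the nested suboperads, build $f$ on $B[0]\otimes M_{(r)}$ from $\alpha$ and $g$ on $E[0]\otimes M_{(r)}$ from the canonical embedding, and verify the pushout via the characterization of morphisms out of quasi-free operads. The only difference is that you spell out the universal-property check that the paper leaves as a one-line instruction, and your bookkeeping there (the identification of $v$ with an unconstrained degree-$0$ homomorphism $v_0$ on $M_{(r)}$, and the use of $\alpha(M_{(r)})\subset\FOp(M_{\leq r-1})$ to match $\phi_h$ with $\phi_{h'}$) is accurate.
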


\begin{proof}
According to lemma~\ref{QuasiFreeOperads:FiltrationDifferential},
we have $\alpha(M_{\leq r})\subset \FOp(M_{\leq r-1})\subset \FOp(M_{\leq r})$,
from which we deduce the relation $\partial_{\alpha}(\FOp(M_{\leq r}))\subset\FOp(M_{\leq r})$
for the derivation associated to $\alpha$.
Hence
we have a well-defined suboperad of~$\QOp = (\FOp(M),\partial_{\alpha})$
defined by the free operad $\FOp(M_{\leq r})$
and the restriction of the derivation $\partial_{\alpha}: \FOp(M)\rightarrow \FOp(M)$
to $\FOp(M_{\leq r})$.

We have by definition $E[0] = \kk e\oplus\kk b$ and $B[0] = \kk b\subset E[0]$,
for generating elements such that $\deg(e) = 0$, $\deg(b) = -1$ and $\delta(e) = b$.

The homomorphism $\alpha: M(r)\rightarrow \FOp(M_{\leq r-1})$, which has degree $-1$,
yields a homomorphism of degree $0$
\begin{equation*}
f: B[0]\otimes M_{(r)}\rightarrow \FOp(M_{\leq r-1}).
\end{equation*}
The relation $\delta(\alpha) + \partial_{\alpha}\cdot\alpha = 0$
implies $(\delta + \partial_{\alpha})\cdot f = f\cdot\delta$,
from which we deduce that $f$ gives a well-defined morphism of $\Sigma_*$-objects $f: B[0]\otimes M_{(r)}\rightarrow\QOp_{\leq r-1}$.
Form the operad morphism $\phi_f: \FOp(B[0]\otimes M_{(r)})\rightarrow\QOp_{\leq r-1}$
associated to~$f$.

Let
\begin{equation*}
g: E[0]\otimes M_{(r)}\rightarrow \FOp(M_{\leq r}).
\end{equation*}
be the extension of the homomorphism
\begin{equation*}
B[0]\otimes M_{(r)}\xrightarrow{f} \FOp(M_{\leq r-1})\rightarrow \FOp(M_{\leq r})
\end{equation*}
defined by the canonical embedding of~$M(r)$ into $\FOp(M_{\leq r})$
on the summand $\kk e\otimes M_{(r)}$.
One checks readily that $g$, like $f$,
gives a well-defined morphism of $\Sigma_*$-objects $g: E[0]\otimes M_{(r)}\rightarrow\QOp_{\leq r}$.
We form the operad morphism $\phi_g: \FOp(E[0]\otimes M_{(r)})\rightarrow\QOp_{\leq r}$
associated to~$g$.

We have by construction $g\cdot \FOp(i[0]\otimes M_{(r)}) = j_r\cdot f$.
Use the characterization of morphisms on quasi-free operads
to check that the commutative square (*)
formed by these morphisms is a pushout of operads.
\end{proof}

\begin{thm}\label{QuasiFreeOperads:CofibrantOperads}
Let $\QOp = (\FOp(M),\partial_{\alpha})$ be a quasi-free connected operad
such that $\partial_{\alpha}(M)\subset\FOp_{\geq 2}(M)$.
If $M$ is $\Sigma_*$-cofibrant,
then $\QOp$ is cofibrant as an operad.
\end{thm}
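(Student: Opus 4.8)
The plan is to bootstrap from the model structure on $\Sigma_*$-objects to the (semi-)model structure on operads via the two reductions already prepared in the preceding paragraphs: Lemma~\ref{QuasiFreeOperads:QuasiFreeFiltration}, which exhibits $\QOp$ as a transfinite composite of pushouts along free maps, and the observation of \S\ref{OperadHomotopy:CofibrantCellComplexes}, that pushouts of $\FOp$ applied to cofibrations of $\Sigma_*$-objects yield cofibrations of operads. Concretely, I would argue as follows. Since $M$ is $\Sigma_*$-cofibrant, each layer $M_{(r)}$ is a cofibrant $\Sigma_*$-object (it is a summand of $M$ concentrated in a single arity, and the model structure on $\M$ is created arity-wise through $\C^{\NN}$). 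Hence the generating cofibration $i[0]\colon B[0]\to E[0]$ of dg-modules, tensored with $M_{(r)}$, gives a cofibration of $\Sigma_*$-objects $i[0]\otimes M_{(r)}\colon B[0]\otimes M_{(r)}\to E[0]\otimes M_{(r)}$ — this is exactly the pushout-product/tensoring compatibility recorded in \S\ref{Background:DGModuleModelStructure} and \S\ref{OperadHomotopy:GeneratingCofibrations}, applied in $\M$. Applying the free operad functor, $\FOp(i[0]\otimes M_{(r)})$ is a cofibration of operads by the immediate-by-adjunction remark of \S\ref{OperadHomotopy:CofibrantCellComplexes}.

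Next I would feed this into the filtration. By Lemma~\ref{QuasiFreeOperads:QuasiFreeFiltration} we have $\QOp = \colim_r \QOp_{\leq r}$ with $\QOp_{\leq 1} = (\FOp(M_{\leq 1}),\partial) = \FOp(0) = \IOp$ (using that $M$ is reduced, so $M_{\leq 1} = 0$), and each $j_r\colon \QOp_{\leq r-1}\to\QOp_{\leq r}$ is a pushout of the cofibration $\FOp(i[0]\otimes M_{(r)})$ along the structure map $\phi_f$. Since the class of cofibrations in the semi-model category of operads is closed under pushout and (transfinite) composition — this is part of the definition of the (semi-)model structure recalled in \S\ref{OperadHomotopy} — each $j_r$ is a cofibration of operads, and therefore so is the composite $\IOp = \QOp_{\leq 1}\to\colim_r\QOp_{\leq r} = \QOp$. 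In other words the initial morphism $\eta\colon\IOp\to\QOp$ is a cofibration, which is precisely the assertion that $\QOp$ is a cofibrant operad.

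The one point that needs a little care — and which I expect to be the main (mild) obstacle — is the passage from ``$M_{(r)}$ cofibrant in $\M$'' to ``$i[0]\otimes M_{(r)}$ a cofibration in $\M$'', and the parallel step that $\FOp$ sends this to a cofibration of operads. For the first, one invokes that $\M$ is tensored over $\C$ in a way compatible with the model structures: $-\otimes M_{(r)}$ is a left Quillen functor on $\C$ when $M_{(r)}$ is cofibrant, essentially by the identity $\Sigma_*\otimes(i\otimes G_r) = i\otimes(\Sigma_*\otimes G_r)$ of \S\ref{OperadHomotopy:GeneratingCofibrations} together with the pushout-product axiom of \S\ref{Background:DGModuleModelStructure}; for a general cofibrant $M_{(r)}$ rather than a sum of the $\Sigma_*\otimes G_r$ one writes $M_{(r)}$ as a cell complex and checks stability of cofibrations under the relevant colimits. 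For the second, one uses that $\FOp$ is left adjoint to the forgetful functor $U\colon\Op\to\M$, which creates fibrations and acyclic fibrations (\S\S\ref{OperadHomotopy:AdjointModelStructures}--\ref{OperadHomotopy:SigmaObjectModelStructure}), so $\FOp$ automatically preserves cofibrations by a lifting argument. Everything else is bookkeeping with the filtration of Lemma~\ref{QuasiFreeOperads:QuasiFreeFiltration}; the twisting homomorphism $\partial_\alpha$ plays no role beyond ensuring, via Lemma~\ref{QuasiFreeOperads:FiltrationDifferential}, that the filtration by $\QOp_{\leq r}$ is well defined.
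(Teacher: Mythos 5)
Your argument is correct and follows essentially the same route as the paper: reduce to the filtration of Lemma~\ref{QuasiFreeOperads:QuasiFreeFiltration}, note that each $M_{(r)}$ is $\Sigma_*$-cofibrant as a retract of $M$ so that $i[0]\otimes M_{(r)}$ is a cofibration by the pushout-product axiom for the tensoring of $\M$ over $\C$, and conclude via the generalized cell-complex observation of \S\ref{OperadHomotopy:CofibrantCellComplexes} that $\eta\colon\IOp\rightarrow\QOp$ is a cofibration. The paper's proof is just a compressed version of what you wrote.
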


\begin{proof}
Each object $M_{(r)}$ is $\Sigma_*$-cofibrant if $M$ is so (for instance, observe that $M_{(r)}$ forms a retract of~$M$).
From this assertion,
we deduce that each morphism $i[0]\otimes M_{(r)}$
forms a cofibration of $\Sigma_*$-objects,
because the tensor product of $\Sigma_*$-objects with dg-modules
satisfies the pushout-product axiom of symmetric monoidal model categories (see~\cite[Proposition 11.4.B]{FresseModuleBook}).
Hence,
the observations of~\S\ref{OperadHomotopy:CofibrantCellComplexes}
imply that the morphism $\eta: \IOp\rightarrow\QOp$
forms a cofibration
and the proposition follows.
\end{proof}

\begin{prop}\label{QuasiFreeOperads:Cofibrations}
Suppose we have a morphism of reduced $\Sigma_*$-objects $f: M\rightarrow N$
which determines a morphism of quasi-free operads
\begin{equation*}
\underbrace{(\FOp(M),\partial_{\alpha})}_{\POp}\xrightarrow{\FOp(f)}\underbrace{(\FOp(N),\partial_{\beta})}_{\QOp},
\end{equation*}
for which we have $\alpha(M)\subset\FOp_{\geq 2}(M)$ and $\beta(N)\subset\FOp_{\geq 2}(N)$.
This morphism $\FOp(f)$ is a cofibration (respectively, an acyclic cofibration of operads
whenever $f$ is a cofibration (respectively, an acyclic cofibration)
of $\Sigma_*$-objects.
\end{prop}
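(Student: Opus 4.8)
The plan is to factor $\FOp(f)$ as a (countable) composite of pushouts of free operads on cofibrations of $\Sigma_*$-objects and to conclude by the observations on generalized cell structures of~\S\ref{OperadHomotopy:CofibrantCellComplexes}. First I would record, by Proposition~\ref{QuasiFreeOperads:InducedMorphisms}, that the hypothesis ``$\FOp(f)$ is a morphism of quasi-free operads'' amounts to the identity $\beta\cdot f = \FOp(f)\cdot\alpha$ in $\Hom_{\M}(M,\FOp(N))$; this compatibility relation and the hypotheses $\alpha(M)\subset\FOp_{\geq 2}(M)$, $\beta(N)\subset\FOp_{\geq 2}(N)$ are all I shall use. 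Then for each $r\geq 1$ I would introduce the reduced $\Sigma_*$-object $N^{(r)}$ that agrees with $N$ in arities $\leq r$ and with $M$ in arities $> r$, equipped with a degree $-1$ homomorphism $\gamma^{(r)}: N^{(r)}\rightarrow\FOp(N^{(r)})$ which restricts to $\beta$ in arities $\leq r$ (landing in $\FOp(N_{\leq r-1})\subset\FOp(N^{(r)})$ by Lemma~\ref{QuasiFreeOperads:FiltrationDifferential}) and to $\alpha$ in arities $> r$ (mapped into $\FOp(N^{(r)})$ along the morphism $M\rightarrow N^{(r)}$ that is $f$ in low arities and the identity in high arities). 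A direct inspection, of the same kind as in the proof of Lemma~\ref{QuasiFreeOperads:QuasiFreeFiltration} and using the relation $\beta\cdot f = \FOp(f)\cdot\alpha$ on the arity-$r$ part where the two prescriptions for $\gamma^{(r)}$ overlap, should show that $\gamma^{(r)}(N^{(r)})\subset\FOp_{\geq 2}(N^{(r)})$ and that $\gamma^{(r)}$ satisfies the equation of a twisting homomorphism (Proposition~\ref{QuasiFreeOperads:TwistingDerivationConstruction}); hence $\POp^{(r)} := (\FOp(N^{(r)}),\gamma^{(r)})$ is a quasi-free connected operad of the form considered in Lemma~\ref{QuasiFreeOperads:QuasiFreeFiltration}. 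I would then check that the morphisms of $\Sigma_*$-objects $N^{(r-1)}\rightarrow N^{(r)}$, which are the identity outside arity $r$ and the arity-$r$ component $f_{(r)}: M_{(r)}\rightarrow N_{(r)}$ of $f$ in arity $r$, induce operad morphisms $j_r: \POp^{(r-1)}\rightarrow\POp^{(r)}$ (Proposition~\ref{QuasiFreeOperads:InducedMorphisms}, again via the compatibility relation), that $\POp^{(1)} = \POp$ since $M$ is reduced, that $\colim_r\POp^{(r)} = \QOp$, and that the resulting composite $\POp\rightarrow\QOp$ is $\FOp(f)$.

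Next I would prove that each $j_r$ is a cofibration of operads, and an acyclic cofibration when $f$ is. Since $f$ preserves arities, $f_{(r)}: M_{(r)}\rightarrow N_{(r)}$ is a retract of $f$ (use the projection $M\rightarrow M_{(r)}$, the insertion $M_{(r)}\rightarrow M$, likewise for $N$, together with the fact that $f$ maps the arity-$r$ part to the arity-$r$ part), hence $f_{(r)}$ is a cofibration, respectively an acyclic cofibration, of $\Sigma_*$-objects whenever $f$ is. The passage from $\POp^{(r-1)}$ to $\POp^{(r)}$ only changes the generators in arity $r$, where $M_{(r)}$ with twisting $\alpha$ is replaced by $N_{(r)}$ with twisting $\beta$, the two differentials agreeing on the image of $M_{(r)}$ by the compatibility relation. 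Imitating the step of Lemma~\ref{QuasiFreeOperads:QuasiFreeFiltration} in which $M_{(r)}$ is attached to $\QOp_{\leq r-1}$ by a pushout of operads along $\FOp(i[0]\otimes M_{(r)}): \FOp(B[0]\otimes M_{(r)})\rightarrow\FOp(E[0]\otimes M_{(r)})$, I would realize $j_r$ as a pushout of operads
\begin{equation*}
\xymatrix{ \FOp(X_r)\ar[r]\ar[d]_{\FOp(\kappa_r)} & \POp^{(r-1)}\ar[d]^{j_r} \\ \FOp(Y_r)\ar[r] & \POp^{(r)} }
\end{equation*}
along the free operad on a suitable cofibration $\kappa_r: X_r\rightarrow Y_r$ of $\Sigma_*$-objects concentrated in arity $r$, obtained from $f_{(r)}$ together with auxiliary generators of the type $B[0]\otimes(-)$ and $E[0]\otimes(-)$ that encode the twisting homomorphisms; $\kappa_r$ would be an acyclic cofibration when $f_{(r)}$ is. That the square is a pushout would be checked by means of the characterization of morphisms out of quasi-free operads of Proposition~\ref{QuasiFreeOperads:MorphismConstruction}, exactly as in the proof of Lemma~\ref{QuasiFreeOperads:QuasiFreeFiltration}. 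Since the free operad on a cofibration (respectively, acyclic cofibration) of $\Sigma_*$-objects is a cofibration (respectively, acyclic cofibration) of operads by~\S\ref{OperadHomotopy:CofibrantCellComplexes}, and these classes are closed under pushout, each $j_r$ is a cofibration, respectively an acyclic cofibration, of operads.

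Finally, $\FOp(f)$ is the (transfinite) composite of the morphisms $j_r$, so it is a cofibration, respectively an acyclic cofibration, of operads by the stability of these classes under composition recalled in~\S\ref{OperadHomotopy:CofibrantCellComplexes}. The main obstacle will be the second step, and more precisely the explicit construction of the intermediate cofibrations $\kappa_r$ of $\Sigma_*$-objects and the verification that $\FOp(\kappa_r)$ produces $j_r$ as a pushout of operads: the delicate point is that $f_{(r)}$ need not be a split injection and that the twisting homomorphisms $\alpha$ and $\beta$ couple all arities, so that the naive ``new generators $=$ cokernel of $f_{(r)}$'' picture available when $f_{(r)}$ is split has to be replaced by a mapping-cylinder-type construction carried out in the spirit of the step $\FOp(B[0]\otimes M_{(r)})\rightarrow\FOp(E[0]\otimes M_{(r)})$ of Lemma~\ref{QuasiFreeOperads:QuasiFreeFiltration}.
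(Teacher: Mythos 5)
Your plan follows the paper's proof essentially step for step: your intermediate operads $\POp^{(r)} = (\FOp(N^{(r)}),\gamma^{(r)})$ are exactly the pushouts $\POp\vee_{\POp_{\leq r}}\QOp_{\leq r}$ used there (since $\FOp(M)\vee_{\FOp(M_{\leq r})}\FOp(N_{\leq r})\simeq\FOp(M\oplus_{M_{\leq r}} N_{\leq r})$), and the decomposition of $\FOp(f)$ as the composite of the $j_r$ is the same. The one step you leave open, the attaching cofibration $\kappa_r$, is resolved in the paper by taking $\kappa_r$ to be the pushout-product
\begin{equation*}
i[0]\square f_{(r)}: E[0]\otimes M_{(r)}\bigoplus_{B[0]\otimes M_{(r)}} B[0]\otimes N_{(r)}\rightarrow E[0]\otimes N_{(r)}
\end{equation*}
of $i[0]: B[0]\rightarrow E[0]$ with $f_{(r)}: M_{(r)}\rightarrow N_{(r)}$; this is precisely the mapping-cylinder-type object you anticipate, and the pushout-product axiom for the tensor product of $\Sigma_*$-objects with dg-modules (\cite[Proposition 11.4.B]{FresseModuleBook}) shows it is a cofibration, respectively an acyclic cofibration, whenever $f$ is. With that choice the pushout verification goes through by the characterization of morphisms on quasi-free operads, exactly as in lemma~\ref{QuasiFreeOperads:QuasiFreeFiltration}, and the conclusion follows from the stability properties recalled in~\S\ref{OperadHomotopy:CofibrantCellComplexes}, so no idea beyond what you already have is needed.
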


\begin{proof}
This morphism $\FOp(f)$ preserves clearly the natural filtration of quasi-free operads.
Thus we have a diagram of morphisms
\begin{equation*}
\xymatrix{ \POp\ar[r]^{=} & \POp\ar[r]^{=} & \ar@{.}[r] &
\ar[r]^{=} & \POp\ar[r]^{=} & \ar@{.}[r] &
\ar[r]^{=} & \POp \\
\IOp\ar[u]\ar[d]_{=}\ar[r]^{=} & \POp_{\leq 0}\ar[u]^{\FOp(f)}\ar@{^{(}->}[]!R+<4pt,0pt>;[r]\ar@{.>}[d] & \ar@{.}[r] &
\ar@{^{(}->}[]!R+<4pt,0pt>;[r] & \POp_{\leq r}\ar[u]^{\FOp(f)}\ar@{^{(}->}[]!R+<4pt,0pt>;[r]\ar@{.>}[d] & \ar@{.}[r] &
\ar@{^{(}->}[]!R+<4pt,0pt>;[r] & \POp\ar[u]_{=}\ar[d]^{\FOp(f)} \\
\IOp\ar[r]^{=} & \QOp_{\leq 0}\ar@{^{(}->}[]!R+<4pt,0pt>;[r] & \ar@{.}[r] &
\ar@{^{(}->}[]!R+<4pt,0pt>;[r] & \QOp_{\leq r}\ar@{^{(}->}[]!R+<4pt,0pt>;[r] & \ar@{.}[r] &
\ar@{^{(}->}[]!R+<4pt,0pt>;[r] & \QOp }.
\end{equation*}
that can be patched together to give a decomposition of~$\FOp(f)$
of the form:
\begin{multline*}
\POp = \POp\bigvee_{\POp_{\leq 0}}\QOp_{\leq 0}\rightarrow\dots
\rightarrow\POp\bigvee_{\POp_{\leq r-1}}\QOp_{\leq r-1}\xrightarrow{j_r}\POp\bigvee_{\POp_{\leq r}}\QOp_{\leq r}\rightarrow\dots\\
\dots\rightarrow\colim_r\POp\bigvee_{\POp_{\leq r}}\QOp_{\leq r} = \QOp.
\end{multline*}
Observe that the morphisms $j_r$ which occur in this decomposition
fit pushouts
\begin{equation*}
\xymatrix{ \FOp(E[0]\otimes M_{(r)}\bigoplus_{B[0]\otimes M_{(r)}} B[0]\otimes N_{(r)})\ar[r]\ar[d]_{\FOp(i[0]\square f)} &
\POp\bigvee_{\POp_{\leq r-1}}\QOp_{\leq r-1}\ar@{.>}[d]^{j_r} \\
\FOp(E[0]\otimes N_{(r)})\ar@{.>}[r] & \POp\bigvee_{\POp_{\leq r}}\QOp_{\leq r} },
\end{equation*}
where
\begin{equation*}
i[0]\square f: E[0]\otimes M_{(r)}\bigoplus_{B[0]\otimes M_{(r)}} B[0]\otimes N_{(r)}\rightarrow E[0]\otimes N_{(r)}
\end{equation*}
is the pushout-product of~$i[0]: B[0]\rightarrow E[0]$
and $f: M_{(r)}\rightarrow N_{(r)}$.

Observe that each morphism $f: M_{(r)}\rightarrow N_{(r)}$
forms a cofibration (respectively, an acyclic cofibration) of~$\Sigma_*$-objects
if $f$ is so
(for instance, because $f: M_{(r)}\rightarrow N_{(r)}$ forms a retract of $f: M\rightarrow N$ in the category of $\Sigma_*$-objects).
Recall that the tensor product of $\Sigma_*$-objects with dg-modules
satisfies the pushout-product axiom of monoidal model categories (see~\cite[Proposition 11.4.B]{FresseModuleBook}).
Thus the morphism $i[0]_*\square f_*$
forms a cofibration (respectively, an acyclic cofibration) of~$\Sigma_*$-objects
if $f$ is so.

The conclusion of the proposition follows since (see~\S\ref{OperadHomotopy:CofibrantCellComplexes}):
any morphism of free operads $\FOp(i): \FOp(M)\rightarrow \FOp(N)$
induced by a cofibration (respectively, an acyclic cofibration) of $\Sigma_*$-objects $i: M\rightarrow N$
forms a cofibration (respectively, an acyclic cofibration) in the category of operads (immediate by adjunction);
the class of (acyclic) cofibrations in a semi-model category is closed under pushouts and composites.
\end{proof}

\section{The category of algebras associated to an operad}\label{OperadAlgebras}
\renewcommand{\thesubsubsection}{\thesection.\arabic{subsubsection}}

The first purpose of this section is to review the definition of the category of algebras associated to an operad.

The categories of algebras associated to $\Sigma_*$-cofibrant operads
inherit a semi-model structure from dg-modules.
This model structure is defined in~\cite[\S 4]{GetzlerJones}
for algebras over operads in non-negatively graded dg-modules
over a field of characteristic zero.
The generalizations of the definition of~\cite{GetzlerJones}
in the context of unbounded dg-modules over a ring
are addressed in~\cite{HinichHomotopy} and \cite{BergerMoerdijkModel,Harper,Spitzweck}.
We review briefly this definition.

In the setting of~\cite{GetzlerJones},
the cofibrant objects are retracts of quasi-free objects of the category of $\POp$-algebras,
for any operad $\POp$.
In the context of unbounded dg-modules over a ring,
we prove that the quasi-free $\POp$-algebras equipped with a good filtration
are cofibrant $\POp$-algebras.

\subsubsection{The functors associated to $\Sigma_*$-objects}\label{OperadAlgebras:SigmaObjectFunctor}
To any $\Sigma_*$-object $M$,
we associate a functor $S(M): \C\rightarrow\C$
which maps a dg-module $C\in\C$
to the dg-module of symmetric tensors with coefficients in $M$:
\begin{equation*}
S(M,C) = \bigoplus_{n=0}^{\infty} (M(n)\otimes C^{\otimes n})_{\Sigma_n}.
\end{equation*}
In this expansion,
the quotient by the action of symmetric groups $\Sigma_n$
identifies the action of permutations on tensor powers~$C^{\otimes n}$
with the natural $\Sigma_n$-action on~$M(n)$.

The element of~$S(M,C)$
represented by the tensor~$x\otimes c_1\otimes\dots\otimes c_n\in M(n)\otimes C^{\otimes n}$
is denoted by $x(c_1,\dots,c_n)$.
The $\Sigma_n$-coinvariance
relation reads
\begin{equation*}
(w x)(c_1,\dots,c_n) = x(c_{w(1)},\dots,c_{w(n)}),
\end{equation*}
for $w\in\Sigma_n$.
The tensor $x(c_1,\dots,c_n)$
can also be represented by a tree with one vertex labeled by $x$
whose inputs are labeled by the elements $c_1,\dots,c_n\in C$:
\begin{equation*}
\vcenter{\xymatrix@H=6pt@W=3pt@M=2pt@!R=1pt@!C=1pt{ c_1\ar@{.}[r]\ar[dr] & \cdots\ar@{.}[r]\ar@{}[d]|{\displaystyle{\cdots}} & c_n\ar[dl] \\
\ar@{.}[r] & *+<8pt>[F]{x}\ar@{.}[r]\ar[d] & \\ \ar@{.}[r] & 0\ar@{.}[r] & }}.
\end{equation*}
The $\Sigma_n$-coinvariance relation
is encoded in the structure of the tree.

Let $\F$ denote the category of functors $F: \C\rightarrow\C$.
The map $S: M\mapsto S(M)$ defines clearly a functor $S: \M\rightarrow\F$.
Moreover:
\begin{itemize}
\item
the functor associated to the unit $\Sigma_*$-object $\IOp$
is isomorphic to the identity functor $\Id$,
\item
the functor associated to a composite $\Sigma_*$-object $M\circ N$
is isomorphic to the composite functor $S(M)\circ S(N)$,
\item
and the isomorphisms that give these relations $S(\IOp)\simeq\Id$ and $S(M\circ N)\simeq S(M)\circ S(N)$
satisfy natural coherence identities
with respect to the associativity and unit isomorphisms
of composition structures
\end{itemize}
(see for instance the survey of~\cite[\S 1]{FresseKoszulDuality}).
To summarize these observations,
one says that the map $S: M\mapsto S(M)$ defines a monoidal functor $S: (\M,\circ,\IOp)\rightarrow(\F,\circ,\Id)$.

\subsubsection{The category of algebras associated to an operad}\label{OperadAlgebras:CategoryDefinition}
An algebra over an operad $\POp$
is a dg-module $A$
together with $\Sigma_*$-invariant evaluation products
\begin{equation}
\lambda: \POp(n)\otimes A^{\otimes n}\rightarrow A,\quad\text{$n\in\NN$},
\end{equation}
that satisfy natural unit and associativity relations
with respect to the unit and composition product of~$\POp$.
The $\Sigma_*$-invariance requirement implies that these evaluation products
assemble to a morphism $\lambda: S(\POp,A)\rightarrow A$.
The image of~$p\otimes a_1\otimes\dots\otimes a_n\in\POp(n)\otimes A^{\otimes n}$
under the evaluation product of~$A$
is denoted by~$p(a_1,\dots,a_n)$.
Naturally,
a morphism of $\POp$-algebras is a morphism of dg-modules $f: A\rightarrow B$
which preserves the evaluation products (*).

Usual categories of algebras, like the category of associative and commutative algebras,
the category of associative algebras,
and the category of Lie algebras,
are associated to operads (called the commutative operad $\COp$,
the associative operad $\AOp$,
and the Lie operad $\LOp$ in the above-mentioned examples).

\subsubsection{Operads and monads}\label{OperadAlgebras:OperadMonad}
In fact,
the functor $S(\POp): \C\rightarrow\C$
associated to an operad $\POp$ inherits a monad structure
since the operad unit $\eta: \IOp\rightarrow\POp$ and the operad composition product $\mu: \POp\circ\POp\rightarrow\POp$
induce functor morphisms
\begin{equation*}
\Id\simeq S(\IOp)\xrightarrow{S(\eta)} S(\POp)
\quad\text{and}
\quad S(\POp)\circ S(\POp)\simeq S(\POp\circ\POp)\xrightarrow{S(\mu)} S(\POp)
\end{equation*}
that still satisfy unit and associativity relations.
For an algebra over $\POp$,
the unit and associativity relation of the evaluation products~(*)
amount to unit and associativity relations
with respect to the unit and composition product of the monad $S(\POp)$.
Hence
we obtain that the structure of an algebra over an operad $\POp$
amounts to the structure of an algebra over the monad $S(\POp)$
associated to $\POp$.
Intuitively,
the elements of~$S(\POp,A)$ represents formal composites $p(a_1,\dots,a_n)$
and the morphism $\lambda: S(\POp,A)\rightarrow A$
performs the evaluation of these composite elements.

The relationship between operads and monads was a motivation for the original definition of an operad in~\cite{May}.
The abstract interpretation of this relationship in terms of a composition structure on the category of $\Sigma_*$-objects
goes back to~\cite{Smirnov}.

\subsubsection{Algebras over augmented operads}
For an augmented operad $\POp$, for which we have a splitting $\POp = \IOp\oplus\tilde{\POp}$,
the evaluation structure of a $\POp$-algebra $A$
is fully determined by the action of operations of the augmentation ideal on $A$,
because the unit operation $1\in\POp(1)$
is supposed to verify the identity $1(a) = a$, for every $a\in A$.

The associativity relation of a $\POp$-algebra structure
is also implied by the identities
\begin{equation*}
(p\circ_e q)(a_1,\dots,a_{r+s-1}) = p(a_1,\dots,q(a_e,\dots,a_{e+s-1}),\dots,a_{r+s-1}),
\end{equation*}
which only involve the action and the structure of the augmentation ideal of~$\POp$.
This representation of the associativity relation
will be sufficient for our needs.

\subsubsection{Free algebras}\label{OperadAlgebras:FreeAlgebras}
The monadic definition of the structure of a $\POp$-algebra implies that the object $\POp(C) = S(\POp,C)$
associated to a dg-module $C\in\C$
represents the free $\POp$-algebra generated by $C$ in the usual sense:
we have a morphism of dg-modules $\eta: C\rightarrow\POp(C)$
which is universal in the sense that any morphism $f: \POp(C)\rightarrow A$ toward a $\POp$-algebra $A$
has a unique factorization
\begin{equation*}
\xymatrix{ C\ar[rr]^{f}\ar[dr]_{\eta} && A \\ & \POp(C)\ar@{.>}[ur]_{\exists!\phi_f} & }
\end{equation*}
such that $\phi_f$ is a morphism of $\POp$-algebras.
The evaluation product of~$\POp(C) = S(\POp,C)$
is defined by the morphism
\begin{equation*}
S(\POp,S(\POp,C))\simeq S(\POp\circ\POp,C)\xrightarrow{S(\mu,C)} S(\POp,C)
\end{equation*}
induced by the composition product of~$\POp$.
The universal morphism $\eta: C\rightarrow\POp(C)$
is defined by the morphism
\begin{equation*}
C\simeq S(\IOp,C)\xrightarrow{S(\eta,C)} S(\POp,C)
\end{equation*}
induced by the unit of~$\POp$.

In principle,
we adopt the notation $\POp(C)$ to refer to the object $S(\POp,C)$ equipped with this free $\POp$-algebra structure,
but we keep the notation $S(\POp,C)$ for the underlying dg-module of~$\POp(C)$.

In the sequel,
we use the canonical morphism $\eta: C\rightarrow\POp(C)$
to identify the dg-module $C$
with a submodule of~$\POp(C)$.
The formal composites $p(c_1,\dots,c_n)\in\POp(C)$ of~\S\ref{OperadAlgebras:OperadMonad}
represent the evaluation of operations $p\in\POp(n)$
on elements $c_1,\dots,c_n\in C$.
The morphism of $\POp$-algebras $\phi_f: \POp(C)\rightarrow A$
associated to a morphism of dg-modules $f: C\rightarrow A$
is given by the explicit formula
\begin{equation*}
\phi_f(p(c_1,\dots,c_n)) = p(f(c_1),\dots,f(c_n)),
\end{equation*}
for every $p(c_1,\dots,c_n)\in\POp(C)$.

\subsubsection{The semi-model category of algebras over an operad}\label{OperadAlgebras:SemiModelStructure}
The category of algebras over an operad $\POp$
is denoted by ${}_{\POp}\C$.
The free $\POp$-algebra functor $\POp(-): \C\rightarrow{}_{\POp}\C$
defines a left adjoint of the obvious forgetful functor $U: {}_{\POp}\C\rightarrow\C$.
The construction of~\S\ref{OperadHomotopy:AdjointModelStructures}
is applied to the adjunction $\POp(-): \C\rightleftarrows{}_{\POp}\C :U$
in order to provide the category of $\POp$-algebras with a model structure
such that the forgetful functor $U: {}_{\POp}\C\rightarrow\C$
creates weak-equivalences and fibrations.
The construction returns the following result:

\begin{fact}
The category of $\POp$-algebras inherits a semi-model structure if the operad $\POp$ is $\Sigma_*$-cofibrant
(see~\cite{FresseModuleBook,Mandell,Spitzweck}),
a full model structure if $\POp$ is cofibrant as an operad (and in many other good cases, see~\cite{BergerMoerdijkModel,Harper,HinichHomotopy}).
\end{fact}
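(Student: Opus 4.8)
The plan is to apply the adjunction construction of~\S\ref{OperadHomotopy:AdjointModelStructures} to the free-forgetful adjunction $\POp(-)\colon\C\rightleftarrows{}_{\POp}\C\colon U$. The category ${}_{\POp}\C$ is complete and cocomplete: limits are created by $U$, and colimits exist by the standard argument for categories of algebras over the monad $S(\POp)$, which preserves reflexive coequalizers and filtered colimits. The generating cofibrations and acyclic cofibrations of ${}_{\POp}\C$ are the free maps $\POp(i_d)$ and $\POp(j_d)$, $d\in\ZZ$; their domains are free $\POp$-algebras on the small dg-modules $B[d]$, $E[d]$, $0$, hence are small objects of ${}_{\POp}\C$, so the small object argument supplies the factorizations $f = pi$ and $f = qj$.

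By~\S\ref{OperadHomotopy:AdjointModelStructures}, it remains to check the acyclicity condition: a relative $\{\POp(j_d)\}$-cell complex must be a weak-equivalence --- with a cofibrant domain when $\POp$ is only $\Sigma_*$-cofibrant, and unconditionally when $\POp$ is cofibrant. It suffices to treat one pushout $B = A\bigvee_{\POp(C)}\POp(D)$ along a free map $\POp(u)$ with $u\colon C\to D$ an acyclic cofibration of dg-modules. The key step is to equip $B$ with a functorial filtration $A = B_0\hookrightarrow B_1\hookrightarrow\dots\hookrightarrow\colim_k B_k = B$ whose layers $B_{k-1}\to B_k$ are cofibrations of dg-modules with cofibre $(\mathrm{U}_{\POp}(A)(k)\otimes Q^k)_{\Sigma_k}$, where $\mathrm{U}_{\POp}(A)(k)$ is the $k$th term of the enveloping operad of $A$ (a $\Sigma_k$-object built from $\POp$ and $A$) and $Q^k$ is the $k$-fold pushout-product of $u$, carrying its $\Sigma_k$-action by permutation of tensor factors. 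This filtration is obtained by the same tree-indexed bookkeeping as in~\S\ref{TreeOperadStructures}, with signs governed by~\S\ref{Background:Signs}; in the unbounded $\ZZ$-graded setting the colimit $B = \colim_k B_k$ causes no difficulty, since the filtration is exhaustive and filtered colimits of dg-modules compute homology, so $H_*(B)\cong\colim_k H_*(B_k)$.

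The heart of the matter is that $\mathrm{U}_{\POp}(A)(k)$ is $\Sigma_k$-cofibrant. When $\POp$ is $\Sigma_*$-cofibrant this holds as soon as $A$ is a cofibrant $\POp$-algebra (or a cell extension of one); when $\POp$ is cofibrant as an operad it holds for every $A$, because the enveloping operad $\mathrm{U}_{\POp}(A)$ of any algebra over a cofibrant operad is again $\Sigma_*$-cofibrant. Granting this, the equivariant pushout-product axiom (applied to $0\to\mathrm{U}_{\POp}(A)(k)$ and to the acyclic cofibration $Q^k$) shows that each layer $B_{k-1}\to B_k$ is an acyclic cofibration of dg-modules, so $A\to B$ is a weak-equivalence; the same filtration with $u$ only a cofibration shows that any cofibration of $\POp$-algebras (with cofibrant domain, resp.\ unconditionally) is a $\C$-cofibration, whence in particular that cofibrant $\POp$-algebras are $\C$-cofibrant. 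This gives the semi-model structure when $\POp$ is $\Sigma_*$-cofibrant and the full model structure when $\POp$ is cofibrant.

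The main obstacle is precisely the construction of the filtration of the pushout and the $\Sigma_k$-cofibrancy of $\mathrm{U}_{\POp}(A)(k)$: both demand a detailed analysis of the monad $S(\POp)$ and of its interaction with colimits, and constitute the technical core carried out in~\cite{FresseModuleBook} (and, in related settings, in~\cite{Harper,Spitzweck,HinichHomotopy,BergerMoerdijkModel,Mandell}).
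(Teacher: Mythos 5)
The paper offers no proof of this Fact: it is stated as a known result, with the transfer along the adjunction $\POp(-)\colon\C\rightleftarrows{}_{\POp}\C\colon U$ indicated in~\S\ref{OperadHomotopy:AdjointModelStructures} and the verification deferred entirely to~\cite{FresseModuleBook,Mandell,Spitzweck} and~\cite{BergerMoerdijkModel,Harper,HinichHomotopy}. Your sketch reconstructs precisely the argument of those references (most closely~\cite{FresseModuleBook}): transfer, smallness, and the filtration of a pushout along a free map with layers controlled by the enveloping operad and the iterated pushout-product, so in that sense it takes the same approach as the paper's (delegated) proof. Two small cautions: the cofibre of the layer $B_{k-1}\to B_k$ is $(\mathrm{U}_{\POp}(A)(k)\otimes(D/C)^{\otimes k})_{\Sigma_k}$ --- the domain of the $k$-fold pushout-product is what gets attached, not what appears as the cofibre --- and, more substantively, your one-line justification of the full model structure (``the enveloping operad of \emph{any} algebra over a cofibrant operad is $\Sigma_*$-cofibrant'') is exactly the delicate point, since $\mathrm{U}_{\POp}(A)$ is a coequalizer and quotients of cofibrant objects need not be cofibrant; this is where the cited references do genuine work rather than something one can assert in passing.
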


By construction,
the semi-model category of $\POp$-algebras is cofibrantly generated by the morphisms
of free $\POp$-algebras $\POp(i): \POp(C)\rightarrow\POp(D)$
such that $i: C\rightarrow D$ is a generating (acyclic) cofibration
of the category of dg-modules.
The goal of the next paragraphs is to identify the cofibrant cell complexes
of the category of $\POp$-algebras.
In summary,
we prove that these cofibrant cell complexes
are quasi-free operads equipped with a good filtration.
First of all,
we review the definition of a quasi-free object in the category of algebras over an operad.

\subsubsection{Derivations and twisted dg-algebras over operads}\label{OperadAlgebras:Derivations}
By definition,
the evaluation products $\lambda: \POp(r)\otimes A^{\otimes r}\rightarrow A$ of a $\POp$-algebra in dg-modules $A$
are morphisms of dg-modules.
This assumption amounts to the requirement that the differential of~$A$
satisfies the derivation relation
\begin{equation}
\delta(p(a_1,\dots,a_r)) = \delta(p)(a_1,\dots,a_r) + \sum_{i=1}^{r} \pm p(a_1,\dots,\delta(a_i),\dots,a_r)
\end{equation}

Say that a homogeneous homomorphisms $\theta\in\Hom_{\C}(A,A)$
defines a $\POp$-algebra derivation
if we have the relation
\begin{equation}\renewcommand{\theequation}{**}
\theta(p(a_1,\dots,a_r)) = \sum_{i=1}^{r} \pm p(a_1,\dots,\theta(a_i),\dots,a_r)
\end{equation}
for every composite $p(a_1,\dots,a_r))\in A$.

Clearly,
the differential $\delta+\partial: A\rightarrow A$
obtained by the addition of a twisting homomorphism $\partial\in\Hom_{\C}(A,A)$
to the internal differential of a $\POp$-algebra~$A$
satisfies the derivation relation (*)
if and only if $\partial$ is a $\POp$-algebra derivation in the sense (**).
Thus,
a twisted dg-module $(A,\partial)$ associated to a $\POp$-algebra $A$
inherits a $\POp$-algebra structure
if and only if the twisting homomorphism $\partial$ forms a $\POp$-algebra derivation.

\subsubsection{Quasi-free algebras over operads}\label{OperadAlgebras:QuasiFreeAlgebras}
A quasi-free $\POp$-algebra is a twisted $\POp$-algebra $(\POp(C),\partial)$
formed from a free $\POp$-algebra $\POp(C)$.

The derivation relation of~\S\ref{OperadAlgebras:Derivations}
implies that any derivation on a free $\POp$-algebra $\partial: \POp(C)\rightarrow\POp(C)$
is determined by its restriction to the generating dg-module $C$.
In the converse direction,
we have a well-defined derivation $\partial_{\alpha}: \POp(C)\rightarrow\POp(C)$
associated to any homomorphism $\alpha: C\rightarrow\POp(C)$
and such that $\partial_{\alpha}|_{C} = \alpha$.
This derivation is defined by the formula
\begin{equation*}
\partial_{\alpha}(p(c_1,\dots,c_r)) = \sum_{i=1}^{r} \pm p(c_1,\dots,\alpha(c_i),\dots,c_r),
\end{equation*}
for any element $p(c_1,\dots,c_r)\in\POp(C)$.
Hence:

\begin{prop}\label{OperadAlgebras:DerivationConstruction}
For a free $\POp$-algebra $\POp(C)$,
we have a bijective correspondence between derivations $\partial_{\alpha}: \POp(C)\rightarrow\POp(C)$
and homomorphisms $\alpha: C\rightarrow\POp(C)$.\qed
\end{prop}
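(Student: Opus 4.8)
The plan is to establish the correspondence by constructing maps in both directions and checking that they are mutually inverse, the essential input being that the free $\POp$-algebra $\POp(C)$ is generated, as a $\POp$-algebra, by the submodule $C$, so that the derivation relation~(**) of~\S\ref{OperadAlgebras:Derivations} forces any derivation to be determined by its restriction to $C$. In one direction, to a derivation $\partial\colon\POp(C)\to\POp(C)$ I would associate the homomorphism $\alpha = \partial|_C$ obtained by restriction along the canonical embedding $\eta\colon C\hookrightarrow\POp(C)$. In the other direction, to a homomorphism $\alpha\colon C\to\POp(C)$ I would associate the map $\partial_\alpha$ given, on a representative $p\otimes c_1\otimes\dots\otimes c_r$ of an element $p(c_1,\dots,c_r)\in S(\POp,C)$, by the formula displayed just before the statement. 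First I would check that this formula descends to the symmetric tensors and hence defines a genuine homomorphism $\partial_\alpha$ of degree $\deg(\alpha)$ on $\POp(C)$: its right-hand side is assembled from the $\Sigma_r$-equivariant evaluation product $\lambda\colon S(\POp,\POp(C))\to\POp(C)$ of the free $\POp$-algebra, with $\alpha$ inserted in one argument at a time and the Koszul sign rule accounting for the $\pm$, so that a permutation $w\in\Sigma_r$ of the inputs $c_1,\dots,c_r$ merely permutes the $r$ summands, with matching signs.

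Next I would verify that $\partial_\alpha$ satisfies the derivation relation~(**) for \emph{arbitrary} arguments $a_1,\dots,a_r\in\POp(C)$, not only for $a_j\in C$. Writing each $a_j$ as a linear combination of composites $q_j(c_{j,1},\dots,c_{j,s_j})$ with entries in $C$, the associativity of the monad $S(\POp)$ identifies $p(a_1,\dots,a_r)$ with the composite $p(q_1,\dots,q_r)$ evaluated on all the $c_{j,k}$; applying the defining formula of $\partial_\alpha$ and then regrouping the resulting summands according to which block $j$ receives the insertion of $\alpha$ yields exactly $\sum_j \pm\, p(a_1,\dots,\partial_\alpha(a_j),\dots,a_r)$. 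This step — where operad associativity and the bookkeeping of Koszul signs have to be combined — is the one I expect to require the most care; the remaining verifications are formal. (One could instead deduce the existence of $\partial_\alpha$ from the universal property of $\POp(C)$ by a square-zero extension argument, but that would first require setting up operadic modules, whereas the direct check is immediate here.)

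Finally I would check that the two constructions are mutually inverse. Starting from $\alpha$, the element $c\in C$ is the image of $1(c)$ for the operad unit $1\in\POp(1)$, so the defining formula gives $\partial_\alpha(c) = 1(\alpha(c)) = \alpha(c)$, whence $\partial_\alpha|_C = \alpha$. Conversely, starting from a derivation $\partial$ and setting $\alpha = \partial|_C$, both $\partial$ and $\partial_\alpha$ are derivations of $\POp(C)$ agreeing on $C$; since relation~(**) expresses the value of a derivation on any composite $p(c_1,\dots,c_r)$ with $c_j\in C$ in terms of its values on $C$, and every element of $\POp(C)=S(\POp,C)$ is a linear combination of such composites, the two derivations coincide. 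This yields the asserted bijection.
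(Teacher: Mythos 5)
Your proposal is correct and follows essentially the same route as the paper: restriction to the generators $C$ in one direction, and the explicit formula $\partial_{\alpha}(p(c_1,\dots,c_r)) = \sum_i \pm\, p(c_1,\dots,\alpha(c_i),\dots,c_r)$ in the other, with uniqueness forced by the derivation relation. The paper states these two facts without spelling out the verifications (well-definedness on coinvariants, the derivation relation for arbitrary arguments via monad associativity), which your write-up supplies; nothing is missing or different in substance.
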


By an easy verification, we obtain moreover:

\begin{prop}\label{OperadAlgebras:TwistingDerivationConstruction}
The $\POp$-algebra derivation $\partial_\alpha: \POp(C)\rightarrow \POp(C)$
associated to a homomorphism $\alpha: C\rightarrow\POp(C)$ of degree $-1$
satisfies the equation of a twisting homomorphism $\delta(\partial_\alpha) + \partial_\alpha^2 = 0$
if and only if we have the identity
\begin{equation}
\delta(\alpha) + \partial_\alpha\cdot\alpha = 0
\end{equation}
in $\Hom_{\M}(C,\POp(C))$.\qed
\end{prop}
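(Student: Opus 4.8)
The plan is to run the same argument as for the operadic analogue, Proposition~\ref{QuasiFreeOperads:TwistingDerivationConstruction}: rewrite the twisting equation as the vanishing of a single $\POp$-algebra derivation, and then use that a derivation of a free $\POp$-algebra is determined by its restriction to the generating module~$C$ (Proposition~\ref{OperadAlgebras:DerivationConstruction}). First I would recall from~\S\ref{Background:TwistedObjects} that $\delta(\partial_\alpha) = \delta\partial_\alpha - (-1)^{\deg(\partial_\alpha)}\partial_\alpha\delta = \delta\partial_\alpha + \partial_\alpha\delta$ is the differential of~$\partial_\alpha$ in the dg-hom $\Hom_{\C}(\POp(C),\POp(C))$, so that, using $\delta^2 = 0$ on~$\POp(C)$, the twisting equation $\delta(\partial_\alpha) + \partial_\alpha^2 = 0$ is equivalent to the identity $(\delta+\partial_\alpha)^2 = 0$ of homomorphisms of~$\POp(C)$. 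I set $\theta = (\delta+\partial_\alpha)^2 = \delta(\partial_\alpha)+\partial_\alpha^2$, a homomorphism of degree~$-2$, and the goal becomes to show $\theta = 0$ if and only if $\delta(\alpha)+\partial_\alpha\cdot\alpha = 0$.

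The key step is to observe that $\theta$ is a $\POp$-algebra derivation in the sense of~\S\ref{OperadAlgebras:Derivations}. Since the internal differential $\delta$ satisfies the product relation~(*) of~\S\ref{OperadAlgebras:Derivations} and $\partial_\alpha$ satisfies~(**), the odd homomorphism $\delta+\partial_\alpha$ satisfies
\begin{equation*}
(\delta+\partial_\alpha)(p(a_1,\dots,a_r)) = \delta(p)(a_1,\dots,a_r) + \sum_{i=1}^{r}\pm\, p(a_1,\dots,(\delta+\partial_\alpha)(a_i),\dots,a_r).
\end{equation*}
Squaring this relation — the usual computation showing that the square of an odd derivation is again a derivation, transposed to the operadic evaluation product — the terms carrying $\delta(p)$ cancel because the internal differential of~$\POp$ itself squares to zero, and the mixed terms of the form $p(\dots,(\delta+\partial_\alpha)(a_j),\dots,(\delta+\partial_\alpha)(a_i),\dots)$ cancel pairwise; what remains is exactly $\theta(p(a_1,\dots,a_r)) = \sum_i \pm\, p(a_1,\dots,\theta(a_i),\dots,a_r)$, the derivation relation~(**) in degree~$-2$. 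I expect this sign bookkeeping to be the only delicate point, but it is entirely forced by the conventions of~\S\ref{Background:Signs}, so in the style of the paper it reduces to a straightforward inspection.

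Finally, since $\theta$ is a derivation of the free $\POp$-algebra~$\POp(C)$, Proposition~\ref{OperadAlgebras:DerivationConstruction} (together with the fact recalled before it that a derivation is determined by its restriction to~$C$) gives $\theta = 0$ if and only if $\theta|_C = 0$. I would then evaluate $\theta|_C = (\delta+\partial_\alpha)^2|_C$: the canonical morphism $C\hookrightarrow\POp(C)$ is a morphism of dg-modules, so $\delta$ restricts to the internal differential of~$C$ on the submodule~$C$, while $\partial_\alpha|_C = \alpha$ by construction of~$\partial_\alpha$. Using $\delta^2 = 0$ on~$C$ once more, this yields
\begin{equation*}
\theta|_C = \delta\cdot\alpha + \alpha\cdot\delta + \partial_\alpha\cdot\alpha = \delta(\alpha) + \partial_\alpha\cdot\alpha,
\end{equation*}
where $\delta(\alpha)$ now denotes the differential of~$\alpha$ in $\Hom_{\C}(C,\POp(C))$. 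Hence $\delta(\partial_\alpha)+\partial_\alpha^2 = 0$ if and only if $\delta(\alpha)+\partial_\alpha\cdot\alpha = 0$, which is the assertion.
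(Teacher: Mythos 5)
Your proposal is correct, and it supplies in the expected way the verification that the paper leaves implicit (the proposition is stated after "By an easy verification" with no written proof): you identify $\delta(\partial_\alpha)+\partial_\alpha^2$ with $(\delta+\partial_\alpha)^2$, check that this square is again a $\POp$-algebra derivation, and conclude by the fact that a derivation of the free $\POp$-algebra is determined by its restriction to $C$, where it evaluates to $\delta(\alpha)+\partial_\alpha\cdot\alpha$. The sign bookkeeping you flag is indeed the only delicate point, and your cancellations are genuine pairwise ones, so the argument remains valid in the presence of $2$-torsion, consistently with the remark following the proposition in the paper.
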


In~\cite[\S 2.2]{GetzlerJones},
a similar assertion is established for operads over a ring of characteristic $0$.
In this reference,
equation (*) is replaced by a condition involving the Lie bracket
$[\partial_{\alpha},\partial_{\alpha}] = 2\partial_{\alpha}\partial_{\alpha}$,
for a derivation $\partial_{\alpha}$ of degree $1$.
Naturally,
the condition of~\cite{GetzlerJones} is no more equivalent to (*)
when $2$-torsion occurs
and we can not use the Lie bracket of derivations
in our setting.

\medskip
The formula (*) of~\S\ref{OperadAlgebras:FreeAlgebras}
has an obvious extension
to homomorphisms of degree $0$
and yields a bijection between the homomorphisms of degree $0$
\begin{equation*}
f\in\Hom_{\M}(C,A)
\end{equation*}
and the homomorphisms
\begin{equation*}
\phi_f: \POp(C)\rightarrow A
\end{equation*}
which commutes with the action of operations.
For a quasi-free $\POp$-algebra,
we have further:

\begin{prop}\label{OperadAlgebras:QuasiFreeMorphismConstruction}
Let $(\POp(C),\partial_\alpha)$ be a quasi-free $\POp$-algebra.
Let $A$ be any $\POp$-algebra.
We have a bijective correspondence between the morphisms of $\POp$-algebras
\begin{equation*}
(\POp(C),\partial_{\alpha})\xrightarrow{\phi_f} A
\end{equation*}
and the homomorphisms of degree $0$
\begin{equation*}
f\in\Hom_{\M}(C,A)
\end{equation*}
such that $\delta(f) = \phi_f\cdot\alpha$.
\end{prop}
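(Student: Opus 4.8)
The plan is to transcribe, almost verbatim, the argument used for quasi-free operads in Proposition~\ref{QuasiFreeOperads:MorphismConstruction}. Recall from the paragraph preceding the statement (and from formula~(*) of~\S\ref{OperadAlgebras:FreeAlgebras}) that the assignment $f\mapsto\phi_f$, with $\phi_f(p(c_1,\dots,c_n)) = p(f(c_1),\dots,f(c_n))$, already sets up a bijection between degree~$0$ homomorphisms $f\in\Hom_{\C}(C,A)$ and degree~$0$ homomorphisms $\phi_f\colon\POp(C)\to A$ that commute with the action of operations. Since a morphism of $\POp$-algebras is exactly a degree~$0$ homomorphism which both commutes with operations \emph{and} commutes with differentials, the only thing left to do is to identify the chain-map condition on $\phi_f$ with the stated equation $\delta(f)=\phi_f\cdot\alpha$.

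First I would spell out the chain-map condition. The $\POp$-algebra $(\POp(C),\partial_\alpha)$ carries the differential $\delta+\partial_\alpha$ while $A$ carries $\delta$, so $\phi_f$ commutes with differentials if and only if $\delta\cdot\phi_f = \phi_f\cdot\delta + \phi_f\cdot\partial_\alpha$, that is $\delta(\phi_f) = \phi_f\cdot\partial_\alpha$ in $\Hom_{\C}(\POp(C),A)$, where $\delta(\phi_f) = \delta\phi_f - \phi_f\delta$ is the internal differential of the dg-hom (and $\deg\phi_f = 0$). Next I would observe that both $\delta(\phi_f)$ and $\phi_f\cdot\partial_\alpha$ are degree~$-1$ homomorphisms $\POp(C)\to A$ satisfying the \emph{$\phi_f$-twisted derivation relation}
\[
\theta(p(c_1,\dots,c_r)) = \sum_{i=1}^{r}\pm\,p(f(c_1),\dots,\theta(c_i),\dots,f(c_r)).
\]
For $\phi_f\cdot\partial_\alpha$ this is immediate from the explicit formula for $\partial_\alpha$ in~\S\ref{OperadAlgebras:QuasiFreeAlgebras} and the fact that $\phi_f$ commutes with operations, and one reads off that its restriction to $C$ is $\phi_f\cdot\alpha$. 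For $\delta(\phi_f)$ one expands $\delta\phi_f(p(c_1,\dots,c_r))$ and $\phi_f\delta(p(c_1,\dots,c_r))$ using the derivation relation~(*) of~\S\ref{OperadAlgebras:Derivations} for the differential of the free algebra $\POp(C)$ and for that of $A$; the terms $\delta(p)(\dots)$ cancel, leaving precisely the twisted-derivation formula with $\delta(f) = \delta f - f\delta$ inserted in each slot, so $\delta(\phi_f)$ restricts to $\delta(f)$ on $C$. Finally, exactly as in Proposition~\ref{OperadAlgebras:DerivationConstruction}, a degree~$-1$ homomorphism $\POp(C)\to A$ with this twisted-derivation property is determined by its restriction to the generating dg-module $C$; hence $\delta(\phi_f) = \phi_f\cdot\partial_\alpha$ if and only if their restrictions to $C$ coincide, i.e. if and only if $\delta(f) = \phi_f\cdot\alpha$, which is the asserted correspondence.

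The only mildly delicate point is the sign bookkeeping: checking that $\delta(\phi_f)$ genuinely obeys the twisted-derivation relation and that it matches $\phi_f\cdot\partial_\alpha$ slot by slot involves commuting the degree~$-1$ symbol $\delta$ past the $c_i$ and past elements of $\POp$, but all such signs are fixed once and for all by the conventions of~\S\ref{Background:Signs} and raise no real difficulty. Everything else is a direct unwinding of the explicit constructions of~\S\S\ref{OperadAlgebras:FreeAlgebras}--\ref{OperadAlgebras:QuasiFreeAlgebras}, so I would present this proof quite tersely, essentially by pointing to the parallel with Proposition~\ref{QuasiFreeOperads:MorphismConstruction}.
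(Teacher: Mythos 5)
Your proposal is correct and is exactly the unwinding the paper intends: its own proof of this proposition reads only ``Easy from the explicit definition of $\partial_\alpha$ and $\phi_f$,'' and your argument (reduce to the chain-map condition $\delta(\phi_f)=\phi_f\cdot\partial_\alpha$, observe both sides are $\phi_f$-twisted derivations determined by their restriction to $C$) is the standard way to make that explicit, mirroring the proof of Proposition~\ref{QuasiFreeOperads:MorphismConstruction}. No gaps.
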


\begin{proof}
Easy from the explicit definition of~$\partial_\alpha$ and $\phi_f$.
\end{proof}

The next proposition gives a motivation
for the definition of quasi-free $\POp$-algebras:

\begin{prop}\label{OperadAlgebras:CofibrantAlgebras}
Let $A = (\POp(C),\partial_\alpha)$ be a quasi-free $\POp$-algebra.
Suppose we have a filtration
\begin{equation*}
0 = C_0\hookrightarrow\cdots\hookrightarrow C_{\lambda}\hookrightarrow\cdots\hookrightarrow\colim_{\lambda} C_{\lambda} = C
\end{equation*}
such that $\partial_\alpha(C_\lambda)\subset\POp(C_{\lambda-1})$
and each embedding $i_{\lambda}: C_{\lambda-1}\hookrightarrow C_{\lambda}$
forms a cofibration of dg-modules.
Then $A$ is cofibrant as a $\POp$-algebra.
\end{prop}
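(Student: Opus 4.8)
The plan is to adapt the argument of Lemma~\ref{QuasiFreeOperads:QuasiFreeFiltration} and Theorem~\ref{QuasiFreeOperads:CofibrantOperads} from operads to the category of $\POp$-algebras. First I would set $A_{\lambda} = (\POp(C_{\lambda}),\partial_{\alpha})$. The hypothesis $\partial_{\alpha}(C_{\lambda})\subset\POp(C_{\lambda-1})\subset\POp(C_{\lambda})$, together with the derivation relation of~\S\ref{OperadAlgebras:Derivations}, implies that $\partial_{\alpha}$ restricts to a twisting homomorphism on the free $\POp$-algebra $\POp(C_{\lambda})$, so that each $A_{\lambda}$ is a well-defined sub-$\POp$-algebra of $A$. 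The free $\POp$-algebra functor preserves colimits, hence $A = \colim_{\lambda} A_{\lambda}$, while $A_{0} = \POp(0)$ is the initial $\POp$-algebra. It is therefore enough to prove that each embedding $j_{\lambda}\colon A_{\lambda-1}\hookrightarrow A_{\lambda}$ is a cofibration of $\POp$-algebras: then $A_{0}\to A$ is a transfinite composite of cofibrations, and $A$ is cofibrant.

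For the key step I would realize $j_{\lambda}$ as a pushout of a morphism of free $\POp$-algebras induced by a cofibration of dg-modules, in the spirit of Lemma~\ref{QuasiFreeOperads:QuasiFreeFiltration}. Since $i_{\lambda}\colon C_{\lambda-1}\hookrightarrow C_{\lambda}$ is a cofibration of dg-modules it is degreewise split; I fix a graded splitting $C_{\lambda} = C_{\lambda-1}\oplus Q_{\lambda}$, where $Q_{\lambda}$, equipped with the induced cokernel differential, is a cofibrant dg-module (a retract of a cofibrant cell complex, by~\S\ref{Background:DGModuleCellComplexes} and~\S\ref{Background:CellComplexes}). On a generator $q\in Q_{\lambda}$ the total differential of $A_{\lambda}$ splits as the sum of the internal differential of $q$ in $Q_{\lambda}$ and a term $\gamma(q)$ lying in the underlying module of $A_{\lambda-1}$, where $\gamma\colon Q_{\lambda}\to\POp(C_{\lambda-1})$ combines $\alpha|_{Q_{\lambda}}$ with the off-diagonal component of the internal differential of $C_{\lambda}$. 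Using the twisting equation $\delta(\alpha)+\partial_{\alpha}\cdot\alpha = 0$ of Proposition~\ref{OperadAlgebras:TwistingDerivationConstruction} one checks that $\gamma$ yields a morphism of dg-modules $B[0]\otimes Q_{\lambda}\to A_{\lambda-1}$, and then, arguing as for morphisms out of quasi-free $\POp$-algebras (Proposition~\ref{OperadAlgebras:QuasiFreeMorphismConstruction}), that $j_{\lambda}$ fits into a pushout square
\begin{equation*}
\vcenter{\xymatrix{ \POp(B[0]\otimes Q_{\lambda})\ar[r]^{\phi_{\gamma}}\ar[d]_{\POp(i[0]\otimes Q_{\lambda})} & A_{\lambda-1}\ar[d]^{j_{\lambda}} \\
\POp(E[0]\otimes Q_{\lambda})\ar[r] & A_{\lambda}. }}
\end{equation*}
As $i[0]\colon B[0]\to E[0]$ is the degree $0$ generating cofibration of dg-modules and $Q_{\lambda}$ is cofibrant, the pushout-product axiom for the tensor product of dg-modules makes $i[0]\otimes Q_{\lambda}$ a cofibration of dg-modules; hence $\POp(i[0]\otimes Q_{\lambda})$ is a cofibration of $\POp$-algebras (the free $\POp$-algebra functor preserves cofibrations, immediate by adjunction), and so is $j_{\lambda}$ by closure of cofibrations under pushout.

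An alternative that avoids the choice of splitting: write an arbitrary cofibration $i_{\lambda}$ as a retract, relative to $C_{\lambda-1}$, of a relative $\I$-cell complex $\bar{\imath}_{\lambda}\colon C_{\lambda-1}\to\bar C_{\lambda}$, transport $\alpha$ along the retraction to a twisting homomorphism $\bar\alpha$ on $\POp(\bar C_{\lambda})$ that still carries $\bar C_{\lambda}$ into $\POp(C_{\lambda-1})$, and note that $\bar A_{\lambda} = (\POp(\bar C_{\lambda}),\partial_{\bar\alpha})$ is a cell-by-cell cofibration-extension of $A_{\lambda-1}$ of which $j_{\lambda}$ is a retract relative to $A_{\lambda-1}$.

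I expect the only genuine work to be bookkeeping: verifying that $\gamma$ (respectively $\bar\alpha$) is compatible with differentials --- equivalently, that the displayed pushout reproduces exactly the twisting homomorphism $\partial_{\alpha}$ --- and keeping track of the permutation signs when passing between $C_{\lambda}$, its cokernel, and the cone $E[0]\otimes Q_{\lambda}$. The conceptual content, that adjoining cofibrant generators whose total differential drops one step of the filtration gives a cofibration, is forced once the twisting equation and the filtration hypothesis are in hand, and is the exact analogue of the situation for quasi-free operads.
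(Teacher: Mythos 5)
Your argument is correct and follows essentially the same route as the paper: filter $A$ by the sub-$\POp$-algebras $A_\lambda = (\POp(C_\lambda),\partial_\alpha)$, use the twisting equation to turn $\alpha$ into a chain map out of $B[0]\otimes(-)$ with values in $A_{\lambda-1}$, realize each embedding $j_\lambda$ as a pushout of a morphism of free $\POp$-algebras induced by a cofibration of dg-modules, and conclude by the pushout-product axiom, adjunction, and closure of cofibrations under pushouts and (transfinite) composites. The only difference is presentational: the paper attaches $\POp(i[0]\square i_\lambda)$ built from all of $C_\lambda$ via the pushout-product, whereas you first split off the cofibrant cokernel $Q_\lambda$ (using that cofibrations of dg-modules are degreewise split) and attach $\POp(i[0]\otimes Q_\lambda)$ --- the paper's formulation avoids the choice of splitting and the bookkeeping with the off-diagonal differential, but the two pushout presentations of $j_\lambda$ agree.
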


\begin{proof}
The assumption
\begin{equation*}
\partial_\alpha(C_\lambda)\subset\POp(C_{\lambda-1})\subset\POp(C_{\lambda})
\end{equation*}
implies the existence of a nested sequence of quasi-free $\POp$-algebras $A_{\lambda} = (\POp(C_{\lambda}),\partial_{\alpha})$
such that $A = \bigcup_{\lambda} A_{\lambda}$.

Recall that $E[0]$
is the dg-module such that $E[0] = \kk e\oplus\kk b$,
where $\deg(e) = 0$, $\deg(b) = -1$, $\delta(e) = b$,
and $B[0]$ is the submodule of~$E[0]$ spanned by $b$.
The homomorphism $\alpha: C_{\lambda}\rightarrow\POp(C_{\lambda-1})$,
defined by the restriction of~$\alpha: C\rightarrow\POp(C)$,
gives rise to a homomorphism of degree $0$
\begin{equation*}
f_{\lambda}: B[0]\otimes C_{\lambda}\rightarrow\POp(C_{\lambda-1}).
\end{equation*}
The relation $\delta(\alpha) + \partial_\alpha\cdot\alpha = 0$
implies that $f_{\lambda}$
forms a morphism of dg-modules $f_{\lambda}: B[0]\otimes C_{\lambda}\rightarrow A_{\lambda-1}$
and yields a morphism of $\POp$-algebras $\phi_{f_{\lambda}}: \POp(B[0]\otimes C_{\lambda})\rightarrow A_{\lambda-1}$.
We also have a morphism of dg-modules $g_{\lambda}: E[0]\otimes C_{\lambda}\rightarrow A_{\lambda}$
such that $g_{\lambda}(e\otimes c) = c\in\POp(C_{\lambda})$,
for each $c\in C_{\lambda}$
and an associated morphism of $\POp$-algebras $\phi_{g_{\lambda}}: \POp(E[0]\otimes C_{\lambda})\rightarrow A_{\lambda}$.

Check that the square
\begin{equation*}
\xymatrix{ \POp(B[0]\otimes C_{\lambda-1})\ar[r]\ar[d] & \POp(E[0]\otimes C_{\lambda-1})\ar@{.>}[d]^{\phi_{g_{\lambda-1}}} \\
\POp(B[0]\otimes C_{\lambda})\ar@{.>}[r]_{\phi_{f_{\lambda}}} & A_{\lambda-1} }
\end{equation*}
commutes
and observe that
\begin{equation*}
\POp(B[0]\otimes C_{\lambda}\bigoplus_{B[0]\otimes C_{\lambda-1}} E[0]\otimes C_{\lambda-1})
= \POp(B[0]\otimes C_{\lambda})\bigvee_{\POp(B[0]\otimes C_{\lambda-1})}\POp(E[0]\otimes C_{\lambda-1})
\end{equation*}
to deduce that $(\phi_{f_{\lambda}},\phi_{g_{\lambda-1}})$
defines a morphism of $\POp$-algebras
\begin{equation*}
\POp(B[0]\otimes C_{\lambda}\bigoplus_{B[0]\otimes C_{\lambda-1}} E[0]\otimes C_{\lambda-1})
\xrightarrow{(\phi_{f_{\lambda}},\phi_{g_{\lambda-1}})} A_{\lambda-1}.
\end{equation*}

Each embedding $j_{\lambda}: A_{\lambda-1}\rightarrow A_{\lambda}$
fits a commutative square
\begin{equation*}
\xymatrix{ \POp(B[0]\otimes C_{\lambda}\bigoplus_{B[0]\otimes C_{\lambda-1}} E[0]\otimes C_{\lambda-1})
\ar[rr]^(0.7){(\phi_{f_{\lambda}},\phi_{g_{\lambda-1}})}\ar[d]_{\POp(i[0]\square i_{\lambda})} &&
A_{\lambda-1}\ar@{.>}[d]^{j_{\lambda}} \\
\POp(E[0]\otimes C_{\lambda})\ar@{.>}[rr]_{\phi_{g_{\lambda}}} &&
A_{\lambda} }
\end{equation*}
where
\begin{equation*}
B[0]\otimes C_{\lambda}\bigoplus_{B[0]\otimes C_{\lambda-1}} E[0]\otimes C_{\lambda-1}
\xrightarrow{i[0]\square i_{\lambda}} C_{\lambda}
\end{equation*}
represents the pushout-product of~$i[0]: B[0]\rightarrow E[0]$ and $i_{\lambda}: C_{\lambda-1}\rightarrow C_{\lambda}$.
By an easy verification,
we see that these commutative squares form pushouts
in the category of $\POp$-algebras.

The pushout-product axiom
implies that $i[0]\square i_{\lambda}$ forms a cofibration of dg-modules.
By adjunction,
we obtain that $\POp(i[0]\square i_{\lambda})$
forms a cofibration in the category of $\POp$-algebras.
Since cofibrations (with a cofibrant domain) are stable under pushouts in semi-model categories,
we obtain that each $j_{\lambda}$
forms a cofibration of $\POp$-algebras,
from which we conclude that $A$ is cofibrant as a $\POp$-algebra.
\end{proof}

\subsubsection{Endomorphism operads}\label{OperadAlgebras:EndomorphismOperads}
Let $C\in\C$ be any dg-module.
The collection of dg-modules $\End_C(n) = \Hom_{\C}(C^{\otimes n},C)$
inherits a natural operad structure
and defines the endomorphism operad of~$C$.

The structure of a $\POp$-algebra $A$ is equivalent to a morphism of operads $\phi: \POp\rightarrow\End_A$,
because the evaluation morphisms of a $\POp$-algebra $\lambda: S(\POp,A)\rightarrow A$
are adjoint to morphisms of dg-modules $\phi: \POp(n)\rightarrow\Hom_{\C}(A^{\otimes n},A)$
preserving operad structures.

Usually,
we use the notation of the underlying dg-module $A$ to refer to a $\POp$-algebra,
because we assume that $A$ has a natural internal $\POp$-algebra structure.
This is no more the case in~\S\S\ref{CoalgebraModels}-\ref{CylinderHomotopyMorphisms}.
In this situation,
we represent a $\POp$-algebra by a pair $(A,\phi)$, where $A$ is the underlying dg-module
and $\phi: \POp\rightarrow\End_A$
is the operad morphism
which determines the $\POp$-algebra structure.

\part*{Applications of the operadic cobar construction}

\section{The operadic cobar construction}\label{CobarConstruction}
\renewcommand{\thesubsubsection}{\thesection.\arabic{subsubsection}}

The main purpose of this section is to review the definition of the cobar construction associated to a cooperad
(we adapt ideas of~\cite{GetzlerJones} to our setting).
To summarize the construction,
a cooperad is a $\Sigma_*$-object $\DOp$ equipped with a structure essentially dual to the composition product of an operad
and the operadic cobar construction of~$\DOp$ is a quasi-free operad, denoted by $B^c(\DOp)$,
whose twisting derivation
is determined by the cooperad structure of~$\DOp$.

In the second part of the section,
we study the morphisms $\phi: B^c(\DOp)\rightarrow\POp$ toward an operad $\POp$.
In the next section
we prove that an operad equivalence $\phi: B^c(\DOp)\xrightarrow{\sim}\POp$
determines the twisting homomorphism of a quasi-free replacements of $\POp$-algebras,
for any operad $\POp$ in unbounded dg-modules.
For the moment
we only check that certain twisted $\Sigma_*$-objects $(\POp\circ\DOp,\partial_{\theta})$
which represent these quasi-free replacements
are acyclic.

The bar duality of operads associates to any operad $\POp$
a cooperad $\DOp$
such that we have a weak-equivalence $\phi: B^c(\DOp)\xrightarrow{\sim}\POp$.
To complete the results of this section,
we explain briefly how to arrange this construction in order to form a $\Sigma_*$-cofibrant cooperad from~$\DOp$.
We also study applications of the Koszul duality of operads
and the usual examples of the associative, commutative and Lie operads.

\subsubsection{Cooperads}\label{CobarConstruction:Cooperads}
Basically,
a cooperad is a $\Sigma_*$-object $\DOp$
equipped with a counit $\epsilon: \DOp\rightarrow \IOp$
and a composition coproduct $\nu: \DOp\rightarrow\DOp\circ\DOp$
such that the diagrams
\begin{equation*}
\vcenter{\xymatrix{ & \ar[dl]_{\simeq}\DOp\ar[dr]^{\simeq}\ar[d]^{\nu} & \\
\IOp\circ\DOp & \DOp\circ\DOp\ar[l]^{\epsilon\circ\DOp}\ar[r]_{\DOp\circ\epsilon} & \DOp\circ \IOp }}
\qquad\text{and}
\qquad\vcenter{\xymatrix{ \DOp\ar[r]^{\nu}\ar[d]_{\nu} & \DOp\circ\DOp\ar[d]^{\DOp\circ\nu} \\
\DOp\circ\DOp\ar[r]_{\nu\circ\DOp} & \DOp\circ\DOp\circ\DOp }},
\end{equation*}
dual to the diagrams of~\S\ref{OperadDefinition},
commute.

In the sequel,
we represent the expansion of a cooperad coproduct
by an expression of the form:
\begin{equation*}
\nu\left\{\vcenter{\xymatrix@H=6pt@W=3pt@M=2pt@!R=1pt@!C=1pt{
i_1\ar[dr] & \cdots\ar@{}[d]|{\displaystyle{\cdots}} & i_n\ar[dl] \\
& *+<8pt>[F]{\gamma}\ar[d] & \\ & 0 & }}\right\}
= \sum_{\nu(\gamma)}
\left\{\vcenter{\xymatrix@H=6pt@W=3pt@M=2pt@!R=1pt@!C=1pt{ \ar@{.}[r] &
i_*\ar[dr]\ar@{.}[r] & \cdots\ar@{.}[r]\ar@{}[d]|{\displaystyle{\cdots}} & i_*\ar[dl]\ar@{.}[r] &
\cdots\ar@{.}[r] & i_*\ar[dr]\ar@{.}[r] & \cdots\ar@{.}[r]\ar@{}[d]|{\displaystyle{\cdots}} & i_*\ar[dl]\ar@{.}[r] & \\
*+<2pt>{1}\ar@{.}[rr] && *+<8pt>[F]{\gamma_*}\ar[drr]\ar@{.}[rr] && \cdots\ar@{.}[rr] && *+<8pt>[F]{\gamma_*}\ar[dll]\ar@{.}[rr] && \\
*+<2pt>{0}\ar@{.}[rrrr] &&&& *+<8pt>[F]{\gamma_*}\ar[d]\ar@{.}[rrrr] &&&& \\
\ar@{.}[rrrr] &&&& 0\ar@{.}[rrrr] &&&& }}\right\}.
\end{equation*}
In the summation set,
the notation $\gamma$ refers to the element of which we take the coproduct.
In the expansion,
we simply use the notation $\gamma_*$
to refer to the factors of the coproduct of~$\gamma$.

In this paper,
we only consider cooperads which are connected (in the sense that $\DOp(0) = 0$ and $\DOp(1) = \kk$),
because technical difficulties occur when this assumption is not satisfied.

The unit $\Sigma_*$-object $\IOp$
forms a cooperad,
and any connected cooperad is coaugmented
since the identity $\DOp(1) = \kk$
determines a cooperad morphism $\eta: \IOp\rightarrow\DOp$.
The connected $\Sigma_*$-object $\tilde{\DOp}$
such that
\begin{equation*}
\tilde{\DOp}(r) = \begin{cases} 0, & \text{if $r = 0,1$}, \\ \DOp(r), & \text{otherwise}, \end{cases}
\end{equation*}
represents the coaugmentation coideal of~$\DOp$.

In our constructions,
we use that the structure of a connected cooperad is equivalent to a collection of tree coproducts,
which are dual to the tree composition products of an operad.
This representation
is reviewed in the next paragraphs.

\subsubsection{The tree comonad}\label{CobarConstruction:TreeComonad}
The reduced module of tree tensors
\begin{equation*}
\tilde{\FOp}(M)(I) = \bigoplus_{\tau\in\tilde{\Theta}(I)} \tau(M)/\equiv
\end{equation*}
which represents the augmentation ideal of the free operad
comes also equipped with a comonad structure,
determined by a counit $\epsilon: \tilde{\FOp}(M)\rightarrow M$
and a coproduct $\nu: \tilde{\FOp}(M)\rightarrow\tilde{\FOp}(\tilde{\FOp}(M))$.
The counit $\epsilon: \tilde{\FOp}(M)\rightarrow M$
is given by the projection
onto the summand $\psi(M)$
associated to the one-vertex tree $\psi$ of~\S\ref{TreeOperadStructures:FreeOperadConstruction}.
Recall that the elements of~$\tilde{\FOp}(\tilde{\FOp}(M))$
are equivalent to tensors arranged on a big tree
equipped with a partition into small subtrees (see~\S\ref{TreeOperadStructures:ReducedFreeOperad}).
The coproduct $\nu: \tilde{\FOp}(M)\rightarrow\tilde{\FOp}(\tilde{\FOp}(M))$
is given on each component $\tau(M)$
by the sum over all partitions of~$\tau$
into small subtrees.

The structure of a coalgebra over $\tilde{\FOp}$
consists of a $\Sigma_*$-object $\tilde{\DOp}$
together with a coproduct $\rho: \tilde{\DOp}\rightarrow \FOp(\tilde{\DOp})$
which satisfy natural unit and associativity relations
with respect to the comonad structure of~$\tilde{\FOp}$.
From the definition of~$\tilde{\FOp}$,
it appears that the structure of an algebra over~$\tilde{\FOp}$
is fully determined by a collection of dg-module morphisms
$\rho_{\tau}: \tilde{\DOp}(I)\rightarrow\tau(\tilde{\DOp})$, $\tau\in\tilde{\Theta}(I)$,
commuting with the action of $I$-tree isomorphisms, with the action of reindexing bijections,
and so that:
\begin{enumerate}
\item\label{CounitTreeComposites}
for the one-vertex tree $\psi$ of~\S\ref{TreeOperadStructures:FreeOperadConstruction},
the morphism
\begin{equation*}
\rho_{\psi}: \tilde{\DOp}(I)\rightarrow\psi(\tilde{\DOp})
\end{equation*}
reduces to the canonical isomorphism $\tilde{\DOp}(I)\simeq\psi(\tilde{\DOp})$;
\item\label{CoassociativityTreeComposites}
for any collection of blow-ups of vertices $v$ of a tree $\sigma$
into subtrees $\tau_v$ of a big tree~$\tau$,
the composite
\begin{equation*}
\xymatrix{ \tilde{\DOp}(I)\ar[r]^{\rho_{\sigma}} & \sigma(\tilde{\DOp})\ar[r]^{\sigma(\rho_*)} & \tau(\tilde{\DOp}) },
\end{equation*}
where $\sigma(\rho_*)$ refers to the evaluation of the morphisms $\rho_{\tau_v}: \tilde{\DOp}(I_v)\rightarrow\tau_v(\tilde{\DOp})$
on the vertices of~$\sigma$,
agrees with $\rho_{\tau}: \tilde{\DOp}(I)\rightarrow\tau(\tilde{\DOp})$.
\end{enumerate}
Note simply that the morphisms $\rho_\tau: \tilde{\DOp}(I)\rightarrow\tau(\tilde{\DOp})$
sum up to a well-defined morphism $\rho: \tilde{\DOp}(I)\rightarrow\tilde{F}(\DOp)(I)$
though $\tilde{F}(\DOp)$ is defined by a direct sum,
because the assumption $\tilde{\DOp}(0) = \tilde{\DOp}(1) = 0$
implies that, for a fixed finite set $I$,
the dg-module $\tilde{F}(\DOp)(I)$
reduces to a sum over a finite subset of~$\Theta(I)$.

The $I$-trees $\tau\in\Theta(I)$
such that $|I_v|\geq 1$ for every $v\in V(\tau)$
have no automorphisms.
This property occurs crucially in the proof of the equivalence
between cooperads and coalgebras over~$\tilde{\FOp}$
(see next).

We represent the tree coproducts of an element $\gamma\in\tilde{\DOp}(I)$
by expressions such that:
\begin{equation*}
\rho_{\tau}
\left\{\vcenter{\xymatrix@H=6pt@W=4pt@M=2pt@R=8pt@C=4pt{ i\ar[drrr] && j\ar[dr] && k\ar[dl] && l\ar[dlll] \\
&&& *+<8pt>[F]{\gamma}\ar[d] &&& \\
&&& 0 &&& }}\right\}
= \sum_{\rho_{\tau}(\gamma)}
\left\{\vcenter{\xymatrix@H=6pt@W=4pt@M=2pt@R=8pt@C=4pt{ i\ar[dr] & & j\ar[dl] && k\ar[dr] & & l\ar[dl] \\
& *+<6pt>[F]{\gamma_*}\ar[drr] &&&& *+<6pt>[F]{\gamma_*}\ar[dll] & \\
&&& *+<6pt>[F]{\gamma_*}\ar[d] &&& \\
&&& 0 &&& }}\right\}.
\end{equation*}
The notation $\tau$ in the summation set refers to the tree represented in the expansion of the formula.
Again,
we simply use the notation $\gamma_*$
to refer to the factors of the coproduct of~$\gamma$.
If we sum up tree coproducts $\rho_*(\gamma)\in\tau(\tilde{\DOp})$ over a certain set of trees $\tau\in\Psi(I)$,
then we add this summation set $\Psi(I)$
to the sum notation.

\subsubsection{Cooperads as coalgebras over the tree comonad}\label{CobarConstruction:TreeCooperadStructure}
The constructions of~\S\ref{TreeOperadStructures:ReducedCompositionStructure}
can be dualized.

For an element $\gamma\in\tilde{\DOp}(I)$,
we form the tree coproducts
\begin{equation*}
\sum_{\substack{\tau\in\Psi_2(I)\\ \rho_{\tau}(\gamma)}}
\left\{\vcenter{\xymatrix@H=6pt@W=4pt@M=2pt@R=8pt@C=4pt{ &
i_*\ar[dr] & \cdots\ar@{}[d]|{\displaystyle{\cdots}} & i_*\ar[dl] &
& i_*\ar[dr] & \cdots\ar@{}[d]|{\displaystyle{\cdots}} & i_*\ar[dl] & \\
i_*\ar@/_4pt/[drrrr] &
& *+<6pt>[F]{\gamma_*}\ar[drr] &
& \cdots &
& *+<6pt>[F]{\gamma_*}\ar[dll] &
& i_*\ar@/^4pt/[dllll] \\
&&&& *+<6pt>[F]{\gamma_*}\ar[d] &&&& \\ &&&& 0 &&&& }}\right\},
\end{equation*}
over the set of trees of height $2$.
Insert unit vertices to obtain the two level tree tensors
associated to these elements (use the correspondence of~\S\ref{TreeOperadStructures:ReducedCompositionStructure})
and add the unital coproducts
\begin{equation*}
\vcenter{\xymatrix@H=6pt@W=4pt@M=2pt@R=8pt@C=4pt{ \ar@{.}[r] & i_1\ar[d]\ar@{.}[r] & *{\cdots}\ar@{.}[r] & i_n\ar[d]\ar@{.}[r] & \\
*+<2pt>{1}\ar@{.}[r] & *+<6pt>[F]{1}\ar[dr]\ar@{.}[r] & *{\cdots}\ar@{.}[r] & *+<6pt>[F]{1}\ar[dl]\ar@{.}[r] & \\
*+<2pt>{0}\ar@{.}[rr] && *+<6pt>[F]{\gamma}\ar[d]\ar@{.}[rr] && \\ \ar@{.}[rr] && 0\ar@{.}[rr] && }}
\quad\text{and}
\quad\vcenter{\xymatrix@H=6pt@W=4pt@M=2pt@R=8pt@C=4pt{ i_1\ar[dr]\ar@{.}[r] & *{\cdots}\ar@{.}[r] & i_n\ar[dl] \\
*+<2pt>{1}\ar@{.}[r] & *+<6pt>[F]{\gamma}\ar[d]\ar@{.}[r] & \\
*+<2pt>{0}\ar@{.}[r] & *+<6pt>[F]{1}\ar[d]\ar@{.}[r] & \\ \ar@{.}[r] & 0\ar@{.}[r] & }},
\end{equation*}
to form the expansion of~$\nu(\gamma)$
in $\DOp\circ\DOp$.

Under the assumption $\tilde{\DOp}(0) = \tilde{\DOp}(1) = 0$,
the associativity relations of~\S\ref{CobarConstruction:TreeComonad}
imply that the coproduct $\nu: \DOp\rightarrow\DOp\circ\DOp$
defined by the construction of this paragraph
is coassociative.
The coproduct $\nu: \DOp\rightarrow\DOp\circ\DOp$
is also counital by construction.
Hence,
we obtain:

\begin{obsv}
The structure of a coalgebra over $\tilde{\FOp}$ includes the structure of a coaugmented cooperad.
\end{obsv}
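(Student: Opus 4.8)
The plan is to dualize verbatim the argument that leads to Proposition~\ref{TreeOperadStructures:CompositionProductInclusion} for augmented operads. Starting from a coalgebra $\tilde{\DOp}$ over the tree comonad $\tilde{\FOp}$, which by~\S\ref{CobarConstruction:TreeComonad} amounts to a collection of tree coproducts $\rho_{\tau}: \tilde{\DOp}(I)\rightarrow\tau(\tilde{\DOp})$, $\tau\in\tilde{\Theta}(I)$, satisfying the counit and coassociativity relations~(\ref{CounitTreeComposites}-\ref{CoassociativityTreeComposites}), I would first form the coaugmented connected $\Sigma_*$-object $\DOp = \IOp\oplus\tilde{\DOp}$. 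The coaugmentation $\eta: \IOp\rightarrow\DOp$ and the counit $\epsilon: \DOp\rightarrow\IOp$ are the inclusion and projection attached to this splitting, so the only real content is to manufacture the composition coproduct $\nu: \DOp\rightarrow\DOp\circ\DOp$ and to check its coassociativity.

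For $\nu$, I would use the decomposition of $\DOp\circ\DOp$ dual to the one of~\S\ref{TreeOperadStructures:ReducedCompositionStructure}: in arity~$I$,
$\DOp\circ\DOp(I) = \downarrow^{\sharp}(\IOp)(I)\oplus\psi^{\flat}(\tilde{\DOp})(I)\oplus\psi^{\sharp}(\tilde{\DOp})(I)\oplus\{\bigoplus_{\tau\in\Psi_2(I)}\tau(\tilde{\DOp})/\!\equiv\}$.
On the first three summands I force $\nu$ to be the map prescribed by the counit axiom of cooperads (the identifications $\DOp\circ\IOp\simeq\DOp\simeq\IOp\circ\DOp$ with their images), and on the height-$2$ summand I take the tree coproducts $\rho_{\tau}$, $\tau\in\Psi_2(I)$, supplied by the $\tilde{\FOp}$-coalgebra structure, completed by unit vertices exactly as in the construction of~\S\ref{CobarConstruction:TreeCooperadStructure}. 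The connectedness hypothesis $\tilde{\DOp}(0) = \tilde{\DOp}(1) = 0$ guarantees, as in~\S\ref{CobarConstruction:TreeComonad}, that for a fixed finite set $I$ only finitely many trees contribute, so this assembles to a genuine morphism $\nu: \DOp(I)\rightarrow\DOp\circ\DOp(I)$, visibly counital by construction.

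It then remains to verify coassociativity, i.e.\ the commutativity of the square $\DOp\rightarrow\DOp\circ\DOp\rightrightarrows\DOp\circ\DOp\circ\DOp$ of~\S\ref{CobarConstruction:Cooperads}. The approach is to extend the splitting of~\S\ref{TreeOperadStructures:ReducedCompositionStructure} to the module of three-level tree tensors underlying $\DOp\circ\DOp\circ\DOp$ (the modules $\xi(\DOp,\DOp,\DOp)$ for $\xi\in\Lev_3(I)$, in the sense of~\S\ref{OperadDefinition:LevelTrees}): on the components carrying at least one vertex at each level, both composites around the square reduce, after discarding the unit vertices, to a map of the form $\rho_{\sigma}$ followed by $\sigma(\rho_*)$ where $\sigma$ is a height-$2$ tree blown up into a height-$2$ bottom subtree and height-$2$ top subtrees, and the coassociativity relation~(\ref{CoassociativityTreeComposites}) shows these two composites coincide; on the components involving unit vertices the equality follows from the counit relations~(\ref{CounitTreeComposites}), just as in the operad case. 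The step that needs care is the bookkeeping of unit vertices across this three-level decomposition --- deciding which added unital coproducts belong to which level when a tree of height~$3$ is read as an element of $(\DOp\circ\DOp)\circ\DOp$ versus $\DOp\circ(\DOp\circ\DOp)$ --- but this is formally the mirror image of the (sketched) lemma that establishes the associativity of $\mu: \POp\circ\POp\rightarrow\POp$ for an algebra over~$\tilde{\FOp}$ in~\S\ref{TreeOperadStructures:ReducedCompositionStructure}, so no new idea is required. Once $\nu$ is shown coassociative and counital, $(\DOp,\nu,\epsilon)$ is a coaugmented cooperad in the sense of~\S\ref{CobarConstruction:Cooperads}, which is exactly the assertion.
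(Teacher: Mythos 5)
Your proposal follows the paper's own argument essentially verbatim: the paper likewise dualizes the splitting of \S\ref{TreeOperadStructures:ReducedCompositionStructure}, defines $\nu(\gamma)$ as the sum of the height-$2$ tree coproducts $\rho_{\tau}$, $\tau\in\Psi_2(I)$, completed by unit vertices, plus the two forced unital summands, and deduces coassociativity from the relations (\ref{CounitTreeComposites}--\ref{CoassociativityTreeComposites}) of the $\tilde{\FOp}$-coalgebra structure via the three-level tree decomposition, with the same appeal to $\tilde{\DOp}(0)=\tilde{\DOp}(1)=0$ for finiteness. The proof is correct and no further comment is needed.
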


The equivalence of~\S\ref{TreeOperadStructures}
can be dualized too.\footnote{Note however that the assumption $\tilde{\DOp}(0) = 0$
is also needed to define partial composition coproducts
$\rho_e: \tilde{\DOp}(I\setminus\{e\}\cup J)\rightarrow\tilde{\DOp}(I)\otimes\tilde{\DOp}(J)$
from a cooperad coproduct $\nu: \DOp\rightarrow\DOp\circ\DOp$.}
Therefore,
we have finally:

\begin{prop}
The category of coalgebras over $\tilde{\FOp}$ is equivalent to the category of coaugmented cooperads.\qed
\end{prop}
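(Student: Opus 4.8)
The plan is to dualize, term by term, the chain of arguments of \S\ref{TreeOperadStructures} that leads to Proposition~\ref{TreeOperadStructures:CompositionProductEquivalence}. One direction is already at hand: the observation preceding the statement records that, for a coalgebra over $\tilde{\FOp}$, the tree coproducts $\rho_{\tau}$ indexed by trees of height~$2$, completed by unit vertices as in \S\ref{CobarConstruction:TreeCooperadStructure}, assemble into a counital coassociative coproduct $\nu: \DOp\rightarrow\DOp\circ\DOp$ on $\DOp = \IOp\oplus\tilde{\DOp}$, so that $\DOp$ becomes a coaugmented cooperad. What remains is the converse passage, from a coaugmented cooperad $\DOp$ to a structure of coalgebra over $\tilde{\FOp}$ on $\tilde{\DOp}$, together with the verification that the two passages are mutually inverse and functorial.

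First I would extract the ``partial'' data from~$\nu$. Projecting $\nu: \DOp\rightarrow\DOp\circ\DOp$ onto the summand of $\DOp\circ\DOp$ attached to a two-vertex tree produces partial composition coproducts $\rho_e: \tilde{\DOp}(I\setminus\{e\}\cup J)\rightarrow\tilde{\DOp}(I)\otimes\tilde{\DOp}(J)$; here one uses the hypothesis $\tilde{\DOp}(0) = 0$, as flagged in the footnote to~\S\ref{CobarConstruction:TreeCooperadStructure}, together with the fact recalled in~\S\ref{CobarConstruction:TreeComonad} that an $I$-tree with $|I_v|\geq 1$ at every vertex has no nontrivial automorphism, so that the component of $\nu(\gamma)$ along such a tree is well defined. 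Counitality and coassociativity of~$\nu$ then translate into the equivariance and coassociativity relations for the maps~$\rho_e$ dual to the partial-composite relations of an augmented operad; this is precisely the dual of Lemma~\ref{TreeOperadStructures:TreeDecompositionImplication}(1), and since it involves only two-vertex trees no convergence question intervenes.

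Next I would carry out the dual of Lemma~\ref{TreeOperadStructures:TreeDecompositionImplication}(2): for an arbitrary $I$-tree~$\tau$, define $\rho_{\tau}: \tilde{\DOp}(I)\rightarrow\tau(\tilde{\DOp})$ by iterating the partial composition coproducts along a chosen ordering of the internal edges of~$\tau$ --- each internal edge corresponding, dually to edge contraction, to an expansion through a two-vertex subtree --- and check that the result is independent of the ordering by invoking the dualized three-fold coherence relations of figures~(\ref{Fig:TreeLinearAssociativity}--\ref{Fig:TreeRamifiedAssociativity}). One must then observe that, since $\tilde{\DOp}$ is reduced, for a fixed finite set~$I$ only those $I$-trees all of whose vertices have at least two inputs contribute, so the morphisms~$\rho_{\tau}$ sum to a genuine morphism $\rho: \tilde{\DOp}(I)\rightarrow\tilde{\FOp}(\tilde{\DOp})(I)$ landing in the direct sum; properties~(\ref{CounitTreeComposites}--\ref{CoassociativityTreeComposites}) of~\S\ref{CobarConstruction:TreeComonad} then hold by construction, so that $\tilde{\DOp}$ becomes a coalgebra over the comonad~$\tilde{\FOp}$. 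I expect this finiteness point --- the only place where the passage is not a purely formal transcription of~\S\ref{TreeOperadStructures} --- to be the step requiring genuine attention; it is exactly what the connectedness hypothesis on~$\DOp$ is designed to secure.

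Finally I would check that the two constructions are inverse to one another. Starting from a coalgebra over~$\tilde{\FOp}$, forming~$\nu$ as in~\S\ref{CobarConstruction:TreeCooperadStructure} and then reconstructing the tree coproducts recovers the original~$\rho_{\tau}$, by the uniqueness obtained in the previous step applied to the height-$2$ coproducts; conversely, starting from a coaugmented cooperad, the reconstructed tree coproducts restricted to trees of height~$2$ return~$\nu$, via the dual of the splitting of $\POp\circ\POp$ in~\S\ref{TreeOperadStructures:ReducedCompositionStructure}. Both passages are manifestly compatible with morphisms, so combining the observation quoted before the statement with this converse construction yields the asserted equivalence of categories --- the dual of Proposition~\ref{TreeOperadStructures:PartialCompositeEquivalenceStatement} and of Proposition~\ref{TreeOperadStructures:CompositionProductEquivalence}.
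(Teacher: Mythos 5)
Your proposal is correct and follows exactly the route the paper intends: the paper's own justification is simply the remark that the equivalence of \S\ref{TreeOperadStructures} between algebras over $\tilde{\FOp}$ and augmented operads ``can be dualized,'' with the footnoted caveat that $\tilde{\DOp}(0)=0$ is needed to extract partial composition coproducts from $\nu$, and you carry out precisely that dualization. You also correctly isolate the two points the paper flags as the only non-formal ones --- the absence of automorphisms for trees with $|I_v|\geq 1$ at every vertex, and the finiteness of the sum defining $\rho$ guaranteed by reducedness --- so nothing is missing.
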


\subsubsection{The cobar construction of a cooperad}\label{CobarConstruction:Definition}
The cobar construction of a (connected) cooperad $\DOp$
is a quasi-free operad $B^c(\DOp)$
such that
\begin{equation*}
B^c(\DOp) = (\FOp(\Sigma^{-1}\tilde{\DOp}),\partial_{\beta}),
\end{equation*}
where $\Sigma^{-1}\tilde{\DOp}$
is the desuspension of the coaugmentation coideal of~$\DOp$.
The desuspension $\Sigma^{-1} M$ of a $\Sigma_*$-object $M$
is defined by tensor products
\begin{equation*}
(\Sigma^{-1} M)(r) = \kk\sigma\otimes M(r),
\end{equation*}
where $\sigma$ is a homogeneous element of degree $-1$
and $\delta(\sigma) = 0$.
The purpose of this paragraph is to review the definition
of the twisting derivation $\partial_{\beta}$.

Recall that $\Theta_2(I)$ denotes the groupoid of $I$-trees with $2$ vertices.
The dg-module of tree tensors associated to an $I$-tree $\tau\in\Theta_2(I)$
with vertices $V(\tau) = \{u,v\}$
reads $\tau(M) = M(I_u)\otimes M(I_v)$.
By symmetry of tensor products,
this dg-module of tree tensors satisfies the commutation relation
\begin{equation*}
\tau(\Sigma^{-1} M)
= (\Sigma^{-1} M(I_u))\otimes(\Sigma^{-1} M(I_v))
\simeq\Sigma^{-2}(M(I_u)\otimes M(I_v))
= \Sigma^{-2}\tau(M)
\end{equation*}
with respect to desuspensions.
Accordingly,
for the coaugmentation coideal of a connected cooperad $M = \tilde{\DOp}$,
the tree coproducts $\rho_{\tau}: \tilde{\DOp}(I)\rightarrow\tau(\tilde{\DOp})$
associated to trees with two vertices $\tau\in\Theta_2(I)$
give rise to homomorphisms
of degree $-1$:
\begin{equation*}
\Sigma^{-1}\tilde{\DOp}(I)\xrightarrow{\rho_{\tau}}\tau(\Sigma^{-1}\tilde{\DOp}).
\end{equation*}
The homomorphism
$\beta: \Sigma^{-1}\tilde{\DOp}\rightarrow\FOp(\Sigma^{-1}\tilde{\DOp})$
that determines the twisting derivation of the cobar construction
is formed by the sum of these homomorphisms.
Graphically,
the expansion of~$\beta$
reads:
\begin{equation*}
\beta\left\{\vcenter{\xymatrix@H=6pt@W=4pt@M=2pt@R=8pt@C=4pt{ i_1\ar[dr] & \cdots & i_n\ar[dl] \\
& *+<6pt>[F]{\sigma\otimes\gamma}\ar[d] & \\
& 0 & }}\right\}
= \sum_{\substack{\tau\in\Theta_2(I)\\ \rho_{\tau}(\gamma)}}
\pm\left\{\vcenter{\xymatrix@H=6pt@W=4pt@M=2pt@R=8pt@C=4pt{ & i_*\ar[dr] & \cdots & i_*\ar[dl] & \\
i_*\ar[drr] & \cdots & *+<6pt>[F]{\sigma\otimes\gamma_*}\ar[d] & \cdots & i_*\ar[dll] \\
&& *+<6pt>[F]{\sigma\otimes\gamma_*}\ar[d] && \\
&& 0 && }}\right\},
\end{equation*}
for any element $\sigma\otimes\gamma\in\Sigma^{-1}\tilde{\DOp}(I)$.
The sign $\pm$
is produced by the commutation of a suspension factor $\sigma$
with a factor $\gamma_*$:
\begin{equation*}
\sigma\otimes\gamma\mapsto\sigma^2\otimes\gamma_*\otimes\gamma_*\mapsto\sigma\otimes\gamma_*\otimes\sigma\otimes\gamma_*.
\end{equation*}
In order to obtain the right signs in the expansion of the operad derivation $\partial_{\beta}$
determined by the homomorphism $\beta$,
it is crucial to patch the tensors $\sigma\otimes\gamma_*\in\Sigma^{-1}\tilde{\DOp}$.

The commutation of tree coproducts with differentials
implies $\delta(\beta) = 0$.
The associativity properties of two vertex tree coproducts
(dualize the relations of figures~\ref{Fig:TreeLinearAssociativity}-\ref{Fig:TreeRamifiedAssociativity})
imply:

\begin{claim}
The derivation $\partial_{\beta}$ satisfies the identity $\partial_{\beta}\cdot\beta = 0$.\qed
\end{claim}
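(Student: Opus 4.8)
The plan is to compute the homomorphism $\partial_{\beta}\cdot\beta\colon\Sigma^{-1}\tilde{\DOp}\rightarrow\FOp(\Sigma^{-1}\tilde{\DOp})$ explicitly on trees and to check that all contributions cancel in pairs. By Proposition~\ref{QuasiFreeOperads:DerivationConstruction}, a homogeneous homomorphism of $\Sigma_*$-objects $\alpha\colon M\rightarrow\FOp(M)$ is entirely determined by its components $\alpha_{\sigma}\colon M(I)\rightarrow\sigma(M)$, $\sigma\in\Theta(I)$; so it suffices to show that each component $(\partial_{\beta}\cdot\beta)_{\theta}\colon\Sigma^{-1}\tilde{\DOp}(I)\rightarrow\theta(\Sigma^{-1}\tilde{\DOp})$ vanishes. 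Since $\beta$ takes values in the summand $\FOp_{2}(\Sigma^{-1}\tilde{\DOp})$ spanned by trees with two vertices, and since the derivation $\partial_{\beta}$ associated to $\beta$ (see~\S\ref{QuasiFreeOperads:FreeOperadDerivations}) blows up each vertex of a tree into a two-vertex tree, the composite $\partial_{\beta}\cdot\beta$ takes values in the summand $\FOp_{3}(\Sigma^{-1}\tilde{\DOp})$. Hence we only have to consider trees $\theta$ with three vertices; note moreover that, since $\tilde{\DOp}$ is reduced, every vertex of such a $\theta$ has at least two inputs and $\theta$ has no automorphisms, so no coefficient of multiplicity will interfere.

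The next step is to unravel which contributions reach a fixed three-vertex tree $\theta$. Such a tree has exactly two internal edges $e_{1}$ and $e_{2}$; contracting one of them, say $e_{k}$, yields a two-vertex tree $\theta/e_{k}$ together with a distinguished vertex $v_{k}$ (the image of the two endpoints of $e_{k}$), and blowing $v_{k}$ up again along $e_{k}$ recovers $\theta$. Following the definition of $\beta$ in~\S\ref{CobarConstruction:Definition}, the component $(\partial_{\beta}\cdot\beta)_{\theta}$ is therefore the sum of exactly two operations, indexed by $k=1,2$: first apply the two-vertex tree coproduct $\rho_{\theta/e_{k}}$ of~\S\ref{CobarConstruction:TreeCooperadStructure}, then apply to the factor sitting on $v_{k}$ the two-vertex coproduct that refines it along $e_{k}$. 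In each case the underlying composite of cooperad coproducts is a two-fold tree coproduct $\tilde{\DOp}(I)\rightarrow\theta(\tilde{\DOp})$, and by the coassociativity relations~(\ref{CounitTreeComposites})--(\ref{CoassociativityTreeComposites}) of~\S\ref{CobarConstruction:TreeComonad} — which are dual to the associativity relations of figures~\ref{Fig:TreeLinearAssociativity}--\ref{Fig:TreeRamifiedAssociativity} for three-fold composites — the two composites of cooperad coproducts agree, for both the linear and the ramified shapes of $\theta$. Up to signs, the two terms of $(\partial_{\beta}\cdot\beta)_{\theta}$ thus have the same underlying map.

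It then remains to check that the two terms carry opposite signs. Each application of $\beta$ moves a suspension symbol $\sigma$ of degree $-1$ past a cooperad factor according to the rule $\sigma\otimes\gamma\mapsto\sigma^{2}\otimes\gamma_{*}\otimes\gamma_{*}\mapsto\sigma\otimes\gamma_{*}\otimes\sigma\otimes\gamma_{*}$ displayed in~\S\ref{CobarConstruction:Definition}. Performing this twice — once along $e_{1}$ and once along $e_{2}$ — transports three odd-degree suspension symbols past one another, and contracting $e_{1}$ first versus $e_{2}$ first moves them in the two possible orders; a Koszul-sign count then shows that the two build-ups differ by exactly $(-1)$. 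Consequently the two contributions to $(\partial_{\beta}\cdot\beta)_{\theta}$ cancel, and since this holds for every three-vertex tree $\theta$ we conclude $\partial_{\beta}\cdot\beta=0$ (which, together with the already observed identity $\delta(\beta)=0$, makes $B^{c}(\DOp)$ a well-defined quasi-free operad by Proposition~\ref{QuasiFreeOperads:TwistingDerivationConstruction}).

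The only genuinely delicate point — and the expected main obstacle — is this last sign computation: one must verify that the signs hidden in the tensor-permutation identifications $\tau(\Sigma^{-1}M)\simeq\Sigma^{-2}\tau(M)$ combine with the Koszul signs of the transported suspension symbols so as to leave precisely a relative sign of $-1$ between the two terms, uniformly over all shapes of three-vertex trees. This is exactly the subtlety flagged in~\S\ref{CobarConstruction:Definition}, where it is stressed that the tensors $\sigma\otimes\gamma_{*}\in\Sigma^{-1}\tilde{\DOp}$ must be kept patched together; organizing the desuspensions in this way is what makes the two signs manifestly opposite and avoids a term-by-term sign chase through each tree shape.
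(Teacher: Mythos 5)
Your proof is correct and follows essentially the same route as the paper: the paper likewise derives the identity from the coassociativity of the two-vertex tree coproducts (the duals of the three-vertex associativity relations of figures~\ref{Fig:TreeLinearAssociativity}--\ref{Fig:TreeRamifiedAssociativity}), observing that the permutation of the odd-degree suspension symbols $\sigma$ turns the associativity identity into a cancellation. Your write-up merely makes explicit the reduction to three-vertex trees and the pairing of terms by contraction of internal edges, which the paper leaves implicit.
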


Note simply that a permutation of homogeneous elements $\sigma$ reverses a sign in the associativity relation
and transforms the associativity identity into a vanishing relation.

By proposition~\ref{QuasiFreeOperads:TwistingDerivationConstruction},
the relations $\delta(\beta) = \partial_{\beta}\cdot\beta = 0$ imply that $\partial_{\beta}$
satisfies the equation of twisting homomorphisms.
Hence,
we have a well-defined twisted dg-operad $B^c(\DOp) = (\FOp(\Sigma^{-1}\tilde{\DOp}),\partial_{\beta})$.

\subsubsection{Operadic twisting cochains}\label{CobarConstruction:TwistingCochains}
According to proposition~\ref{QuasiFreeOperads:MorphismConstruction}
an operad morphism $\phi: B^c(\DOp)\rightarrow\POp$
is equivalent to a homomorphism of degree $0$
\begin{equation*}
\Sigma^{-1}\tilde{\DOp}\xrightarrow{\theta}\POp
\end{equation*}
such that $\delta(\theta) = \phi_{\theta}\cdot\beta$.
Since the cooperad $\tilde{\DOp}$ is connected,
we have automatically $\theta(\Sigma^{-1}\tilde{\DOp})\subset\tilde{\POp}$.
The homomorphism $\phi_{\theta}\cdot\beta$
is defined by the composite
\begin{equation*}
\Sigma^{-1}\tilde{\DOp}(I)\xrightarrow{\beta}\bigoplus_{\tau}\tau(\Sigma^{-1}\tilde{\DOp})
\xrightarrow{\tau(\theta)}\bigoplus_{\tau}\tau(\tilde{\POp})
\xrightarrow{\lambda_*}\tilde{\POp},
\end{equation*}
where~$\lambda_*$ refers to the tree composition products of~$\POp$
and the homomorphism~$\tau(\theta)$
applies~$\theta$ to all factors of the tree tensor product over~$\tau$.

Naturally,
a homomorphism of degree $0$ on a desuspension~$\Sigma^{-1}\tilde{\DOp}$
is equivalent to a homomorphism of degree $-1$
on~$\tilde{\DOp}$.
For simplicity,
we also use the notation $\theta$
to refer to the homomorphism of degree $-1$
equivalent to $\theta: \Sigma^{-1}\tilde{\DOp}\rightarrow\POp$.

By definition of~$\beta$,
this equation $\delta(\theta) = \phi_{\theta}\cdot\beta$
is equivalent to the identity
\begin{equation}
\delta(\theta)\left\{\vcenter{\xymatrix@H=6pt@W=4pt@M=2pt@R=8pt@C=4pt{ i_1\ar[dr] & \cdots & i_n\ar[dl] \\
& *+<6pt>[F]{\gamma}\ar[d] & \\
& 0 & }}\right\}
= \sum_{\substack{\tau\in\Theta_2(I)\\ \rho_{\tau}(\gamma)}}
\pm\lambda_*\left\{\vcenter{\xymatrix@H=6pt@W=4pt@M=2pt@R=8pt@C=4pt{ & i_*\ar[dr] & \cdots & i_*\ar[dl] & \\
i_*\ar[drr] & \cdots & *+<6pt>[F]{\theta(\gamma_*)}\ar[d] & \cdots & i_*\ar[dll] \\
&& *+<6pt>[F]{\theta(\gamma_*)}\ar[d] && \\
&& 0 && }}\right\}
\end{equation}
in $\POp$, for every $\gamma\in\DOp$.
In this expression,
the sign $\pm$
is produced by the commutation of the homomorphism $\theta: \tilde{\DOp}\rightarrow\tilde{\POp}$
of degree $-1$
with a factor $\gamma_*$.
Naturally,
this sign agrees with the sign produced by the differential of~$\gamma$
when we take the homomorphism of degree $0$
equivalent to~$\theta$.

In the literature (see~\cite[\S 2.3]{GetzlerJones}),
the homomorphisms $\theta: \tilde{\DOp}\rightarrow\POp$
of degree $-1$
such that (*) holds
are called (operadic) twisting cochains.
Usually,
a twisting cochain is supposed to be defined on the cooperad $\DOp$ as a whole
with the convention that $\theta(1) = 0$
for the unit element $1\in\DOp(1)$.
The set of twisting cochains, denoted by $\Tw_{\Op}(\DOp,\POp)$
forms a bifunctor of $\DOp$ and $\POp$
and the correspondence between twisting cochains $\theta: \tilde{\DOp}\rightarrow\POp$
and morphisms $\phi_{\theta}: B^c(\DOp)\rightarrow\POp$
gives a natural bijection
\begin{equation*}
\Mor_{\Op}(B^c(\DOp),\POp)\simeq\Tw_{\Op}(\DOp,\POp).
\end{equation*}

The homomorphism of degree $-1$
\begin{equation*}
\tilde{\DOp}\xrightarrow{\iota}\underbrace{(\FOp(\Sigma^{-1}\tilde{\DOp}),\partial_{\beta})}_{B^c(\DOp)}
\end{equation*}
determined to the universal morphism $\eta: \Sigma^{-1}\tilde{\DOp}\rightarrow\FOp(\Sigma^{-1}\tilde{\DOp})$
forms a universal twisting cochain
in the sense that the associated morphism $\phi_{\iota}: B^c(\DOp)\rightarrow B^c(\DOp)$
is the identity of~$B^c(\DOp)$.

\subsubsection{Operad-cooperad twisted complexes}\label{CobarConstruction:OperadCoperadTwistedComplexes}
A twisting homomorphism $\partial_{\theta}: \POp\circ\DOp\rightarrow\POp\circ\DOp$
is naturally associated to any twisting cochain $\theta\in\Tw_{\Op}(\DOp,\POp)$.
In~\S\ref{QuasiFreeReplacements},
we use the twisted composite $\Sigma_*$-object $(\POp\circ\DOp,\partial_{\theta})$
determined by this twisting homomorphism $\partial_{\theta}: \POp\circ\DOp\rightarrow\POp\circ\DOp$
to define natural quasi-free replacements
in the category of $\POp$-algebras, for certain operads $\POp$.
For the moment,
we focus on intrinsic properties of the $\Sigma_*$-object $(\POp\circ\DOp,\partial_{\theta})$
and we review the definition of the twisting homomorphism $\partial_{\theta}$.

The homomorphism $\partial_{\theta}: \POp\circ\DOp\rightarrow\POp\circ\DOp$
splits in two parts $\partial_{\theta} = \partial_{\theta}^0 + \partial_{\theta}^1$.
The image of a two level tree tensor $\varpi$ under $\partial_{\theta}^1: \POp\circ\DOp\rightarrow\POp\circ\DOp$
is determined by the operations represented in figure~\ref{Fig:CompositeTwistingDifferential}.
In~(1), we perform the two-vertex composition coproduct $\lambda_*(\gamma_i)$
of a factor~$\gamma_i$
such that $\gamma_i\in\tilde{\DOp}$.
In~(2), we apply the twisting cochain $\theta$ to the bottom factor $(\gamma_i)_*$ of the coproduct of~$\gamma_i$.
In~(3), we apply the two-vertex composition product of the operad $\POp$
to retrieve an element of~$\POp\circ\DOp$.
Unital vertices
$\xymatrix@H=6pt@W=3pt@M=2pt@!R=1pt@!C=1pt{ \ar[r] & *+<3mm>[o][F]{1}\ar[r] & }$,
labeled by unit elements $1\in\IOp(1)$,
have simply to be added on the edges $e$ going directly from a tree input $s(e) = i$ to the vertex at level $0$
to have a well defined element of~$\POp\circ\DOp$.
Sum up over all factors $\gamma_i$ of the coaugmentation coideal $\tilde{\DOp}$
to obtain the expansion of~$\partial_{\theta}^1(\varpi)$.
\begin{figure}
\begin{align*} 
& \left\{\vcenter{\xymatrix@M=3pt@H=4pt@W=3pt@R=8pt@C=4pt{ \ar@{.}[r] &
i_*\ar[dr]\ar@{.}[r] & \cdots\ar@{.}[r]\ar@{}[d]|{\displaystyle{\cdots}} & i_*\ar[dl]\ar@{.}[r] &
\cdots\ar@{.}[r] & i_*\ar[dr]\ar@{.}[r] & \cdots\ar@{.}[r]\ar@{}[d]|{\displaystyle{\cdots}} & i_*\ar[dl]\ar@{.}[r] & \\
*+<2pt>{1}\ar@{.}[rr] && *+<8pt>[F]{\gamma_1}\ar[drr]\ar@{.}[rr] && \cdots\ar@{.}[rr] && *+<8pt>[F]{\gamma_r}\ar[dll]\ar@{.}[rr] && \\
*+<2pt>{0}\ar@{.}[rrrr] &&&& *+<8pt>[F]{p}\ar[d]\ar@{.}[rrrr] &&&& \\
\ar@{.}[rrrr] &&&& 0\ar@{.}[rrrr] &&&& }}\right\}
\\
\xrightarrow{(1)} \sum_{\substack{\tau\in\Theta_2(I)\\ \rho_{\tau}(\gamma_i)}}\pm &
\left\{\vcenter{\xymatrix@M=3pt@H=4pt@W=3pt@R=8pt@C=4pt{ \ar@{.}[r] &
i_*\ar[dr]\ar@{.}[r] & \cdots\ar@{.}[r] & i_*\ar[dl]\ar@{.}[r] &
i_*\ar[ddrr]\ar@{.}[r] & i_*\ar[dr]\ar@{.}[r] & \cdots\ar@{.}[r] & i_*\ar[dl]\ar@{.}[r] & i_*\ar[ddll]\ar@{.}[r] &
i_*\ar[dr]\ar@{.}[r] & \cdots\ar@{.}[r] & i_*\ar[dl]\ar@{.}[r] & \\
*+<2pt>{1}\ar@{.}[rr] && *+<8pt>[F]{\gamma_1}\ar@/_8pt/[ddrrrr]\ar@{.}[rr]
&& \cdots
&& *+<8pt>[F]{(\gamma_i)_*}\ar[d]\save[]!C.[d]!C *+<8pt>[F-,]\frm{}\restore\ar@{.}[]!R;[rr]\ar@{.}[]!L;[ll] &&
\cdots\ar@{.}[rr] && *+<8pt>[F]{\gamma_r}\ar@/^8pt/[ddllll]\ar@{.}[rr] && \\
&&&&&& *+<8pt>[F]{(\gamma_i)_*}\ar[d] &&&&&& \\
*+<2pt>{0}\ar@{.}[rrrrrr] &&&&&& *+<8pt>[F]{p}\ar[d]\ar@{.}[rrrrrr] &&&&&& \\
\ar@{.}[rrrrrr] &&&&&& 0\ar@{.}[rrrrrr] &&&&&& }}\right\}
\\
\xrightarrow{(2)} \sum_{\substack{\tau\in\Theta_2(I)\\ \rho_{\tau}(\gamma_i)}}\pm &
\left\{\vcenter{\xymatrix@M=3pt@H=4pt@W=3pt@R=8pt@C=4pt{ \ar@{.}[r] &
i_*\ar[dr]\ar@{.}[r] & \cdots\ar@{.}[r] & i_*\ar[dl]\ar@{.}[r] &
i_*\ar[ddrr]\ar@{.}[r] & i_*\ar[dr]\ar@{.}[r] & \cdots\ar@{.}[r] & i_*\ar[dl]\ar@{.}[r] & i_*\ar[ddll]\ar@{.}[r] &
i_*\ar[dr]\ar@{.}[r] & \cdots\ar@{.}[r] & i_*\ar[dl]\ar@{.}[r] & \\
*+<2pt>{1}\ar@{.}[rr] && *+<8pt>[F]{\gamma_1}\ar@/_8pt/[ddrrrr]\ar@{.}[rr]
&& \cdots
&& *+<8pt>[F]{(\gamma_i)_*}\ar[d]\ar@{.}[rr]\ar@{.}[ll] &&
\cdots\ar@{.}[rr] && *+<8pt>[F]{\gamma_r}\ar@/^8pt/[ddllll]\ar@{.}[rr] && \\
&&&&&& *+<8pt>[F]{\theta((\gamma_i)_*)}\ar[d] &&&&&& \\
*+<2pt>{0}\ar@{.}[rrrrrr] &&&&&& *+<8pt>[F]{p}\ar[d]\ar@{.}[rrrrrr] &&&&&& \\
\ar@{.}[rrrrrr] &&&&&& 0\ar@{.}[rrrrrr] &&&&&& }}\right\}
\\
\xrightarrow{(3)} \sum_{\substack{\tau\in\Theta_2(I)\\ \rho_{\tau}(\gamma_i)}}\pm &
\left\{\vcenter{\xymatrix@M=3pt@H=4pt@W=3pt@R=8pt@C=4pt{ \ar@{.}[r] &
i_*\ar[dr]\ar@{.}[r] & \cdots\ar@{.}[r] & i_*\ar[dl]\ar@{.}[r] &
i_*\ar[d]\ar@{.}[r] & i_*\ar[dr]\ar@{.}[r] & \cdots\ar@{.}[r] & i_*\ar[dl]\ar@{.}[r] & i_*\ar[d]\ar@{.}[r] &
i_*\ar[dr]\ar@{.}[r] & \cdots\ar@{.}[r] & i_*\ar[dl]\ar@{.}[r] & \\
*+<2pt>{1}\ar@{.}[rr] && *+<8pt>[F]{\gamma_1}\ar@/_8pt/[ddrrrr]\ar@{.}[rr]
&& *+<8pt>[F]{1}\ar@/_4pt/[drr]\ar@{.}[r] & \cdots
& *+<8pt>[F]{(\gamma_i)_*}\ar[d]\ar@{.}[r]\ar@{.}[l] &
\cdots\ar@{.}[r] &
*+<8pt>[F]{1}\ar@/^4pt/[dll]\ar@{.}[rr] && *+<8pt>[F]{\gamma_r}\ar@/^8pt/[ddllll]\ar@{.}[rr] && \\
&&&&&& *+<8pt>[F]{\theta((\gamma_i)_*)}\ar[d]\save[]!C.[d]!C *+<4pt>[F-,]\frm{}*+<6pt>\frm{\{}*+<0pt>\frm{\}}
\restore\ar@{}[]!L-<8pt,0pt>;[d]!L-<8pt,0pt>_(0.3){\displaystyle{\lambda_*}} &&&&&& \\
*+<2pt>{0}\ar@{.}[rrrrrr] &&&&&& *+<8pt>[F]{p}\ar[d]\ar@{.}[rrrrrr] &&&&&& \\
\ar@{.}[rrrrrr] &&&&&& 0\ar@{.}[rrrrrr] &&&&&& }}\right\}
\end{align*}
\caption{}\label{Fig:CompositeTwistingDifferential}\end{figure}

The homomorphism $\partial_{\theta}^0$ is given by the same process,
but we do not perform the composition coproduct and simply push the factor $\gamma_i\in\tilde{\DOp}$
to the bottom.

By a straightforward inspection, we check that:

\begin{claim}
The homomorphism $\partial_{\theta} = \partial_{\theta}^0 + \partial_{\theta}^1$
satisfies the equation of twisting homomorphisms $\delta(\partial_{\theta}) + \partial_{\theta}^2 = 0$.\qed
\end{claim}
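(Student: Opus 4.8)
The plan is to expand $\delta(\partial_\theta) + \partial_\theta^2$ into its constituent terms and to check that they cancel in small groups, each group governed by one structural identity already recorded in the excerpt. With $\partial_\theta = \partial_\theta^0 + \partial_\theta^1$, the equation to be verified reads
\[
\delta(\partial_\theta^0) + \delta(\partial_\theta^1) + (\partial_\theta^0)^2 + \partial_\theta^0\partial_\theta^1 + \partial_\theta^1\partial_\theta^0 + (\partial_\theta^1)^2 = 0 ,
\]
where $\delta(-)$ denotes the graded commutator with the internal differential of $\POp\circ\DOp$ in the sense of~\S\ref{Background:DGModules}. I would sort the terms by how many level-$1$ factors $\gamma_i\in\tilde{\DOp}$ are processed and by whether a factor is processed by $\partial_\theta^0$ (apply $\theta$ to $\gamma_i$, then glue the resulting element of $\POp$ into the bottom factor with a composition product of $\POp$) or by $\partial_\theta^1$ (take a two-vertex coproduct $\rho_\tau(\gamma_i)$, apply $\theta$ to the bottom factor, then compose into the bottom $\POp$-element).

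First I would dispose of the ``internal'' contributions. Because the composition products of $\POp$ and the composition coproduct of $\DOp$ are morphisms of dg-modules, the internal differential of $\POp\circ\DOp$ commutes through all the gluings used to build $\partial_\theta^0$ and $\partial_\theta^1$; consequently every contribution of $\delta(\partial_\theta^0)$ and $\delta(\partial_\theta^1)$ in which $\delta$ hits the bottom $\POp$-element or an unprocessed factor $\gamma_j$ either reassembles into the $\delta$ already present on the target or cancels against the analogous contribution of the other summand. The sole genuinely new term comes from the commutator falling on $\theta$ inside $\partial_\theta^0$, which produces $\delta(\theta)$ applied to a factor $\gamma_i$. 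Here I would invoke the twisting cochain equation~(*) of~\S\ref{CobarConstruction:TwistingCochains}, namely $\delta(\theta) = \phi_\theta\cdot\beta$: this rewrites the term as the two-vertex coproduct of $\gamma_i$ followed by $\theta$ on \emph{both} resulting factors followed by a composition product of $\POp$. But that is exactly the term obtained by first applying $\partial_\theta^1$ to $\gamma_i$ and then applying $\partial_\theta^0$ to the level-$1$ factor left behind, i.e.\ a piece of $\partial_\theta^0\partial_\theta^1$; so this new term and that piece cancel.

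The remaining terms fall into three self-cancelling families. In $(\partial_\theta^0)^2$ two distinct factors $\gamma_i,\gamma_j$ are glued into disjoint positions of the bottom $\POp$-element, so the two processing orders yield the same tree tensor (using that composition at disjoint positions commutes) up to transposing the odd-degree outputs $\theta(\gamma_i)$ and $\theta(\gamma_j)$; since $\theta$ has degree $-1$, this family is annihilated by its own symmetry, exactly as a square of anticommuting pieces vanishes. In $(\partial_\theta^1)^2$, when the two steps touch the same factor $\gamma_i$, coassociativity of $\nu$ (the associativity relations of two-vertex tree coproducts, dual to figures~\ref{Fig:TreeLinearAssociativity}--\ref{Fig:TreeRamifiedAssociativity}) identifies the two orderings with the two bracketings of a single three-fold coproduct, and transposing $\theta$ across the middle factor reverses the sign, so these cancel in pairs just as $\partial_\beta\cdot\beta = 0$ in~\S\ref{CobarConstruction:Definition}; when the two steps touch different factors they cancel within $(\partial_\theta^1)^2$ by the same disjointness argument. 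Finally the leftover cross terms in $\partial_\theta^0\partial_\theta^1 + \partial_\theta^1\partial_\theta^0$ --- those in which $\partial_\theta^0$ and $\partial_\theta^1$ touch different factors --- pair off between the two summands by commuting disjoint operations, using associativity of $\mu$ and again picking up a sign reversal from the odd $\theta$. Summing all the cancellations gives the claimed identity.

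The step I expect to be the main obstacle is the sign bookkeeping. One must fix once and for all a convention for how the odd homomorphism $\theta$, the desuspension-type signs carried by factors of $\tilde{\DOp}$, and the Koszul signs of the composition and coproduct operations are patched onto each tree tensor --- exactly the point flagged for the derivation $\partial_\beta$ of the cobar construction, where ``a permutation of homogeneous elements $\sigma$ reverses a sign'' --- and then check that each pairing above really comes with opposite signs. Once such a convention is in place every cancellation is forced, and the verification rests only on the three structural inputs already available: $\delta(\theta) = \phi_\theta\cdot\beta$, the coassociativity of $\nu$, and the associativity of $\mu$.
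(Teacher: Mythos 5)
Your overall strategy (expand $\delta(\partial_\theta)+\partial_\theta^2$, sort the terms by which level-$1$ factors are processed, and cancel using the twisting cochain equation, coassociativity of $\nu$, associativity of $\mu$ and Koszul signs) is the right one, but two of your pairings are wrong, and they are the two that carry the actual content of the claim. First, the twisting cochain $\theta$ occurs inside $\partial_\theta^1$ as well as inside $\partial_\theta^0$ (step (2) of figure~\ref{Fig:CompositeTwistingDifferential} applies $\theta$ to the bottom factor of the coproduct), so the commutator $\delta(\partial_\theta^1)$ also produces a genuinely new term, namely $\delta(\theta)$ evaluated on that bottom factor; your assertion that the sole new term comes from the commutator falling on $\theta$ inside $\partial_\theta^0$ is false. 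Second, $(\partial_\theta^1)^2$ restricted to a single factor $\gamma_i$ does \emph{not} self-cancel the way $\partial_\beta\cdot\beta=0$ does: after the first application of $\partial_\theta^1$, the bottom half of the split has been converted by $\theta$ into an element of $\POp$ and is no longer a $\DOp$-factor at level $1$, so the second application can only split the top remnant. Only one of the two bracketings of the iterated coproduct is therefore realized inside $(\partial_\theta^1)^2$; each configuration occurs exactly once and there is nothing for it to cancel against internally. Its cancelling partner is precisely the $\delta(\theta)$ term from $\delta(\partial_\theta^1)$ that you discarded: the twisting cochain equation of \S\ref{CobarConstruction:TwistingCochains} rewrites that term as the missing bracketing, in which the bottom factor is split again and $\theta$ is applied to both pieces.

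The reliable way to organize the verification is the one the paper spells out for the strictly parallel identity $\delta(\omega)+\partial_\omega\cdot\omega=0$ in \S\ref{QuasiFreeReplacements:QuasiFreeAlgebraConstruction}: use coassociativity to convert ``coproduct, then coproduct of one resulting factor'' into ``coproduct, then cooperad coproduct of one cooperad factor'', and then apply the twisting cochain equation exactly once to the two nested occurrences of $\theta$. If you rerun your bookkeeping with the two corrections above, you will also find that the configurations in which the surviving level-$1$ remnant is attached to an input of the \emph{root} $\theta$'d factor (rather than to the upper one) are produced by $\delta(\partial_\theta^1)$ but by neither $(\partial_\theta^1)^2$ nor a two-vertex $\partial_\theta^0\partial_\theta^1$; closing this last family requires reading the coproduct in $\partial_\theta^1$ as the full reduced coproduct of $\DOp$ (height-two trees with possibly several factors left at level $1$), as the identification of $\partial_\theta$ with the coderivation $\partial_\omega$ in the proof of lemma~\ref{QuasiFreeReplacements:Equivalences} in any case forces. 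So the cancellations are not ``forced once a sign convention is in place'': the pairing itself has to be set up differently from what you propose.
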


Hence, we have a well-defined twisted complex $(\POp\circ\DOp,\partial_{\theta})$.
Note moreover that the augmentations $\epsilon: \POp\rightarrow\IOp$ and $\epsilon: \DOp\rightarrow\IOp$
induce a natural morphism $\epsilon: (\POp\circ\DOp,\partial_{\theta})\rightarrow\IOp$.

The applications of twisted $\Sigma_*$-objects $(\POp\circ\DOp,\partial_{\theta})$
are derived from the following result:

\begin{thm}\label{CobarConstruction:UniversalAcyclicity}
Let $\DOp$ be a $\C$-cofibrant connected cooperad.
The twisted composite $\Sigma_*$-object
\begin{equation*}
(B^c(\DOp)\circ\DOp,\partial_{\iota})
\end{equation*}
associated to the universal twisting cochain $\iota: \DOp\rightarrow B^c(\DOp)$
is weakly equivalent to the unit $\Sigma_*$-object $\IOp$.
The weak-equivalence is defined by the augmentation
\begin{equation*}
\epsilon: (B^c(\DOp)\circ\DOp,\partial_{\iota})\rightarrow \IOp.
\end{equation*}
\end{thm}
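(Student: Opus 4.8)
Recall that the underlying graded $\Sigma_*$-object of $(B^c(\DOp)\circ\DOp,\partial_{\iota})$ is $\FOp(\Sigma^{-1}\tilde{\DOp})\circ\DOp$: its elements are two-level tree tensors, consisting of a cobar tree $\sigma$ with vertices labeled by $\Sigma^{-1}\tilde{\DOp}$ and carrying, on each leaf of $\sigma$, a single vertex labeled by $\DOp$. The full differential splits as $D = \delta + \partial_{\beta} + \partial_{\iota}^{0} + \partial_{\iota}^{1}$, where $\delta$ gathers the internal differentials, $\partial_{\beta}$ is the cobar twisting derivation acting on the cobar tree, and $\partial_{\iota}^{0}+\partial_{\iota}^{1}$ is the twisting homomorphism of figure~\ref{Fig:CompositeTwistingDifferential}. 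Since $\DOp$ is connected, the cobar tree $\sigma$ has only vertices of arity $\geq 2$, hence at most $|I|-1$ vertices for a fixed finite set $I$, and at most $|I|$ further $\DOp$-vertices can be grafted onto it; moreover the reducedness of $\tilde{\DOp}$ makes the relevant trees rigid, so $(B^c(\DOp)\circ\DOp)(I)$ is a finite \emph{direct sum} of tensor products of suspensions of copies of $\tilde{\DOp}$ evaluated on strictly smaller sets. In particular every filtration used below is finite in each arity, so the spectral sequences converge strongly and no convergence issue of the kind discussed in~\S\ref{QuasiFreeReplacements} occurs here.

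The plan is to strip off the internal differential first. Filter $(B^c(\DOp)\circ\DOp)(I)$ by the number $w$ of vertices of the cobar tree. Inspecting figure~\ref{Fig:CompositeTwistingDifferential} and the formula of~\S\ref{CobarConstruction:Definition} for $\partial_{\beta}$, one sees that each of $\partial_{\beta}$, $\partial_{\iota}^{0}$ and $\partial_{\iota}^{1}$ creates exactly one new cobar vertex, hence raises $w$ by one, whereas $\delta$ preserves $w$. The decreasing filtration by $w$ is therefore $D$-stable, the differential induced on the associated graded $E^0$ is $\delta$ alone, and $E^1 = H_*\bigl((B^c(\DOp)\circ\DOp)(I),\delta\bigr)$. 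Here the hypothesis that $\DOp$ is $\C$-cofibrant intervenes: the dg-modules $\DOp(n)$ are then cofibrant, hence flat, so the K\"unneth theorem applies with no correction term to each tensor-product summand above, and $E^1$ is identified with the underlying graded $\Sigma_*$-object of $B^c(H_*\DOp)\circ H_*\DOp$. Because $\partial_{\beta}$ is built from the two-vertex coproducts of $\DOp$ and $\partial_{\iota}^{0}$ from the desuspension-and-inclusion homomorphism $\tilde{\DOp}\to\FOp(\Sigma^{-1}\tilde{\DOp})$, the induced differential $d^1$ is precisely the twisting homomorphism attached to the universal twisting cochain $\iota'\colon H_*\DOp\to B^c(H_*\DOp)$. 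In other words $(E^1,d^1) = B^c(\EOp)\circ_{\iota'}\EOp$, where $\EOp = H_*\DOp$ is a connected cooperad in graded $\kk$-modules with trivial internal differential (again a cooperad, thanks to the flatness of the $\DOp(n)$).

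It remains to prove that $B^c(\EOp)\circ_{\iota'}\EOp$ is acyclic, i.e.\ that its homology reduces to the summand $\IOp$ picked out by the augmentation (the trivial cobar tree grafted with the counit of $\EOp$). As the internal differential now vanishes, this is a purely combinatorial statement, which I would establish by an explicit contracting homotopy $h$ of degree $+1$. The idea is that $h$ reverses the operation ``absorb a $\DOp$-vertex into the cobar tree'', performed on the vertex carrying the smallest input: fixing an ordering of $I$, one acts on an element only when the smallest input $1\in I$ lies on a leaf of the cobar tree whose adjacent cobar vertex $u$ has all of its inputs on leaves of $\sigma$ --- such a $u$ is exactly a $\partial_{\iota}^{0}$-image of a $\DOp$-vertex, and $h$ undoes that step; otherwise $h$ is set to $0$, so in particular $h$ vanishes on the $\IOp$-summand. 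Running $Dh + hD$ on a generator, the single term of $\partial_{\iota}^{0}$ that moves the smallest input into the cobar tree is matched by $h$ and returns the generator, while all the other contributions of $\partial_{\iota}^{0}$, $\partial_{\beta}$ and $\partial_{\iota}^{1}$ either leave the smallest input inside a $\DOp$-vertex or place it below a non-contractible cobar vertex --- configurations on which $h$ is zero --- and therefore cancel in pairs; hence $Dh + hD = \id - \eta\epsilon$.

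Putting this together: the spectral sequence of the $w$-filtration has $E^2 = E^\infty = \IOp$ in each arity, so $H_*(B^c(\DOp)\circ\DOp) = \IOp$, and since the augmentation $\epsilon\colon (B^c(\DOp)\circ\DOp,\partial_{\iota})\to\IOp$ is exactly the projection onto this summand, it is a weak-equivalence. The main obstacle is the verification of the homotopy identity $Dh + hD = \id - \eta\epsilon$ in the combinatorial core: one must track the signs produced by the commutations of the suspension factors and check that the ``smallest input'' selection rule is compatible with all three of $\partial_{\beta}$, $\partial_{\iota}^{0}$ and $\partial_{\iota}^{1}$, so that no spurious term survives. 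The reduction of the two preceding paragraphs --- which is where $\C$-cofibrancy is genuinely used, through the K\"unneth theorem --- is precisely what brings the proof down to this combinatorial statement with a trivial internal differential, where the homotopy can be written out by hand.
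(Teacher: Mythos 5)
Your filtration by the number of cobar vertices is legitimate and bounded in each arity, but the identification of $E^1$ is where the argument breaks. You invoke the K\"unneth theorem ``with no correction term'' on the grounds that the $\DOp(n)$ are cofibrant, hence flat. Flatness of the chain modules only gives the K\"unneth short exact sequence; it does not kill the $\mathrm{Tor}_1(H_*\DOp(m),H_*\DOp(n))$ terms, which vanish only when the \emph{homology} (or the cycles) is flat. Over a general ground ring this fails for cofibrant dg-modules (take $\kk=\ZZ$ and the complex $\ZZ\xrightarrow{2}\ZZ$), and the theorem is stated precisely in that generality --- handling $\ZZ$-torsion is one of the announced purposes of the paper. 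For the same reason $H_*\DOp$ need not inherit a cooperad structure (the coproduct lands in $H_*(\DOp\circ\DOp)$, which is not $H_*\DOp\circ H_*\DOp$), so the object $B^c(H_*\DOp)\circ H_*\DOp$ you propose to contract is not even defined in general. Your reduction therefore only works over a field.

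The paper sidesteps this by filtering differently: by the \emph{total} number of vertices of the underlying tree (cobar vertices and $\DOp$-vertices together). With that choice $\partial_{\beta}$ and the coproduct part $\partial_{\iota}^{1}$ strictly decrease filtration, but the ``push a $\DOp$-vertex into the cobar tree'' part $\partial_{\iota}^{0}$ preserves it, so the $E^0$-differential is $\delta+\partial_{\iota}^{0}$ rather than $\delta$ alone. One then contracts $\partial_{\iota}^{0}$ summand by summand (one summand per isomorphism class of trees) by a homotopy that raises a fixed top vertex $v_1$ with $I_{v_1}\subset I$ back to level $1$; this homotopy anticommutes with $\delta$, so no homology of a tensor product is ever computed and no K\"unneth hypothesis is needed. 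Note also that your homotopy at the $E^1$-stage has to kill the whole combinatorial differential $\partial_{\beta}+\partial_{\iota}^{0}+\partial_{\iota}^{1}$ at once, and the assertion that all unmatched terms ``cancel in pairs'' under the smallest-input rule is exactly the kind of claim that requires a full sign-by-sign verification; the paper's homotopy only has to invert the single operator $\partial_{\iota}^{0}$, which is why the check there is genuinely immediate. If you want to salvage your plan, the fix is to move $\partial_{\iota}^{0}$ onto the $E^0$-page, i.e., to adopt the total-vertex filtration.
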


The twisted complex $(B^c(\DOp)\circ\DOp,\partial_{\iota})$
can be identified with the two-sided cobar complex $B^c(I,\DOp,\DOp)$
of~\cite[\S 4]{FresseKoszulDuality}.
The acyclicity of a dual operadic complex~$B(I,\POp,\POp)$
is proved in~\cite[\S 4.7]{FresseKoszulDuality}
and in~\cite[\S 2.3]{GetzlerJones}.
In~\cite[\S 4]{FresseKoszulDuality},
we give arguments to prove that $B^c(I,\DOp,\DOp)$
is acyclic when the cooperad $\DOp$ is non-negatively graded.
In the following proof,
we adapt the argument of~\cite{GetzlerJones}
in order to extend the result to the case of $\ZZ$-graded dg-cooperads.

\begin{proof}
Since we assume $\DOp(0) = 0$ and $\DOp(1) = \kk$,
the augmentation $\epsilon: (B^c(\DOp)\circ\DOp,\partial_{\theta})\rightarrow\IOp$
reduces to an identity isomorphism in arity $1$.
Therefore we are reduced to check that the complex $(B^c(\DOp)\circ\DOp,\partial_{\theta})$
is acyclic in arity $r$, for any $r\geq 2$.

The elements of the composite $\Sigma_*$-object $B^c(\DOp)\circ\DOp$
are represented by tensors arranged on a two level tree
such that the vertex at level $0$
consists itself of a tree tensor representing an element of~$B^c(\DOp) = (F(\Sigma^{-1}\tilde{\DOp}),\partial_{\beta})$.
From this representation,
we see that an element of $B^c(\DOp)\circ\DOp$ is equivalent to a tensor $\varpi\in\tau(\DOp)$
arranged on a big tree with two level of vertices
such that:
\begin{enumerate}\setcounter{enumi}{-1}
\item
the ingoing edges of the tree $e$
target to a vertex at level $1$;
\item
the edges $e$ arising from a vertex at level $1$
target to a vertex at level $0$
or the output of the tree;
\item
the edges $e$ arising from a vertex at level $0$
target to a vertex at level $0$
or the output of the tree;
\item
the vertices $v$ at level $1$ are either labeled by an element $\gamma_v\in\tilde{\DOp}(I_v)$
of the coaugmentation coideal of~$\DOp$
or by a unit element $1\in\IOp(I_v)$;
\item
the vertices $v$ at level $0$ are labeled by elements $\sigma\otimes\gamma_v\in\Sigma^{-1}\tilde{\DOp}(I_v)$
in the desuspension of the coaugmentation coideal of~$\DOp$.
\end{enumerate}
An example is displayed in figure~\ref{Fig:TreeTensorComposite}.
\begin{figure}
\begin{equation*}
\xymatrix@M=4pt@H=4pt@W=6pt@R=8pt@C=8pt{ \ar@{.}[r] & i\ar[d]\ar@{.}[rr] && j\ar[d]\ar@{.}[r] &
k\ar[dr]\ar@{.}[rr] && l\ar[dl]\ar@{.}[r] & \\
*+<2pt>{1}\ar@{.}[r] & *+<8pt>[F]{1}\ar[dr]\ar@{.}[rr] && *+<8pt>[F]{1}\ar[dl]\ar@{.}[rr] &&
*+<8pt>[F]{\gamma_3}\ar[ddl]\ar@{.}[rr] && \\
\ar@{}[d]|{\displaystyle 0} & \save[]!C.[r]!C.[drrrr]!C *+<4pt>\frm{\{}\restore & *+<8pt>[F]{\sigma\otimes\gamma_2}\ar[drr] &&&&& \\
&&&& *+<8pt>[F]{\sigma\otimes\gamma_1}\ar[d] &&& \\
\ar@{.}[rrrr] &&&& 0\ar@{.}[rrr] &&& }
\end{equation*}
\caption{}\label{Fig:TreeTensorComposite}\end{figure}
The tree tensor defined by the subtree of vertices at level $0$
represents the factor $p\in B^c(\DOp)$ of the composite $B^c(\DOp)\circ\DOp$.
If the set of vertices at level $0$ is empty,
then this factor is the unit element $1\in B^c(\DOp)$.

Formally,
we consider the groupoid $\Pi(I)$ of pairs $(\xi,l)$,
where $\xi$ is an $I$-tree and $l$ is a map $l: V(\xi)\rightarrow\{0,1\}$
which assigns a level to the vertices of~$\xi$
so that (1-2) are satisfied.
The isomorphisms of~$\Pi(I)$ are the isomorphisms of $I$-trees
which preserve level structures.
To a level tree $(\xi,l)\in\Pi(I)$,
we associate the tensor product
\begin{equation*}
\xi(\DOp,l) = \Bigl\{\bigotimes_{v\in l^{-1}(0)} \Sigma^{-1}\tilde{\DOp}(I_v)\Bigr\}
\otimes\Bigl\{\bigotimes_{v\in l^{-1}(1)} \DOp(I_v)\Bigr\}.
\end{equation*}
The composite $\Sigma_*$-object $B^c(\DOp)\circ\DOp$
has an expansion of the form
\begin{equation*}
B^c(\DOp)\circ\DOp(I) = \bigoplus_{(\xi,l)\in\Pi(I)} \xi(\DOp,l)/\equiv,
\end{equation*}
where the sum is divided out by the action of isomorphisms.

The correspondence of~\S\ref{TreeOperadStructures:ReducedCompositionStructure}
between trees with two levels and trees of height $2$
can be generalized to the level trees of~$\Pi(I)$.
Let $\Lambda(I)$ be the groupoid of pairs $(\tau,l)$,
where $\tau$ is an $I$-tree and $l$ is a map $l: V(\xi)\rightarrow\{0,1\}$
such that (1-2) are satisfied
but not necessarily (0).
To a level tree of~$\Lambda(I)$,
we associate a module of tree tensors, defined like $\xi(\DOp,l)$,
but where every factor is associated to an element of the coaugmentation coideal of~$\DOp$:
\begin{equation*}
\tau(\tilde{\DOp},l) = \Bigl\{\bigotimes_{v\in l^{-1}(0)} \Sigma^{-1}\tilde{\DOp}(I_v)\Bigr\}
\otimes\Bigl\{\bigotimes_{v\in l^{-1}(1)} \tilde{\DOp}(I_v)\Bigr\}
\end{equation*}

To a reduced level tree $(\tau,l)\in\Lambda(I)$,
we can associate a tree with two levels $(\widehat{\tau},\widehat{l})\in\Pi(I)$
defined by the insertion of unital vertices $\xymatrix@H=6pt@W=3pt@M=2pt@!R=1pt@!C=1pt{ \ar[r] & *+<3mm>[o][F]{1}\ar[r] & }$
on edges $e$ going directly from a tree input $s(e) = i$ to a vertex at level $0$.
The tree tensors of~$\tau(\tilde{\DOp},l)$
are associated to elements of~$\widehat{\tau}(\DOp,\widehat{l})$
with unit elements $1\in\IOp(1)$
on the inserted unital vertices $\xymatrix@H=6pt@W=3pt@M=2pt@!R=1pt@!C=1pt{ \ar[r] & *+<3mm>[o][F]{1}\ar[r] & }$.
As an example,
the tree tensor of figure~\ref{Fig:ReducedTreeTensorComposite}
is associated to the reduced tree tensor of figure~\ref{Fig:TreeTensorComposite}.
\begin{figure}
\begin{equation*}
\xymatrix@M=4pt@H=4pt@W=6pt@R=8pt@C=8pt{ \ar@{.}[r] & i\ar@/_/[ddr]\ar@{.}[rr] && j\ar@/^/[ddl]\ar@{.}[r] &
k\ar[dr]\ar@{.}[rr] && l\ar[dl]\ar@{.}[r] & \\
*+<2pt>{1}\ar@{.}[rrrrr] &&&&&
*+<8pt>[F]{\gamma_3}\ar[ddl]\ar@{.}[rr] && \\
\ar@{}[d]|{\displaystyle 0} & \save[]!C.[r]!C.[drrrr]!C *+<4pt>\frm{\{}\restore & *+<8pt>[F]{\sigma\otimes\gamma_2}\ar[drr] &&&&& \\
&&&& *+<8pt>[F]{\sigma\otimes\gamma_1}\ar[d] &&& \\
\ar@{.}[rrrr] &&&& 0\ar@{.}[rrr] &&& }
\end{equation*}
\caption{}\label{Fig:ReducedTreeTensorComposite}\end{figure}
The expansion of~$B^c(\DOp)\circ\DOp(I)$
has a reduction:
\begin{equation*}
B^c(\DOp)\circ\DOp(I) = \bigoplus_{(\tau,l)\in\Lambda(I)} \tau(\tilde{\DOp},l)/\equiv,
\end{equation*}
where $(\tau,l)$
runs over the groupoid $\Lambda(I)$.

Let $F_s B^c(\DOp)\circ\DOp$
be the submodule of $B^c(\DOp)\circ\DOp$ generated by the summands $\tau(\tilde{\DOp},l)$
such that $\tau$ has at least $-s$ vertices.
The total differential of $B^c(\DOp)\circ\DOp$
satisfies clearly
\begin{equation*}
(\delta+\partial_{\iota})(F_s B^c(\DOp)\circ\DOp)\subset F_s B^c(\DOp)\circ\DOp,
\end{equation*}
for every $s\in\ZZ$.
Hence we have a spectral sequence $E^0\Rightarrow H_*(B^c(\DOp)\circ\DOp)$
such that
\begin{equation*}
E^0_{s *} = \bigoplus_{\substack{(\tau,l)\in\Lambda(I)\\ |V(\tau)| = -s}} \tau(\tilde{\DOp},l)/\equiv,
\end{equation*}
where the sum ranges over trees with $-s$ vertices.
This spectral sequence converges in a strong sense
because, for a fixed arity $r$,
the assumption $\tilde{\DOp}(0) = \tilde{\DOp}(1) = 0$
implies that the filtration is bounded:
\begin{equation*}
0 = F_{- s_r} B^c(\DOp)\circ\DOp(r)\subset\dots\subset F_{0} B^c(\DOp)\circ\DOp(r) = B^c(\DOp)\circ\DOp(r).
\end{equation*}

The differential $d^0: E^0\rightarrow E^0$
reduces to terms $\delta: B^c(\DOp)\circ\DOp\rightarrow B^c(\DOp)\circ\DOp$ induced by the internal differential of~$\DOp$
and to the terms $\partial^0_{\iota}$
of the twisting homomorphisms $\partial_{\iota}$
(which push a vertex at level $1$
to the bottom)
since the other part of~$\partial_{\iota}$
clearly decreases filtrations by $1$
as well as the twisting homomorphism $\partial_{\beta}$
of the cobar construction $B^c(\DOp)$.

The summand
\begin{equation*}
E^0_{\tau} = \Bigl\{\bigoplus_l \tau(\tilde{\DOp},l)/\equiv\Bigr\}\subset E^0
\end{equation*}
associated to a given isomorphism class of~$I$-trees $\tau$
is clearly preserved by this differential $d^0 = \delta+\partial^0_{\iota}$.
We define a contracting chain homotopy $h: E^0_{\tau}\rightarrow E^0_{\tau}$
to prove that each summand $(E^0_{\tau},d^0)$ with $|V(\tau)|>0$
is acyclic.

For this purpose,
we fix a vertex $v_1\in V(\tau)$ such that $I_{v_1}\subset I$, the inputs of~$\tau$ (such a vertex always exist)
\footnote{N.B.: As we assume $\tilde{\DOp}(0) = 0$,
we have necessarily $I_{v_1}\not=\emptyset$ (otherwise the dg-module $\tau(\tilde{\DOp})$ vanishes)
and this implies that $v_1$ is fixed under tree isomorphisms.}.
The chain homotopy $h: E^0_{\tau}\rightarrow E^0_{\tau}$ simply raises the vertex $v_1$ to level $1$
for tree tensors $\xi\in\tau(\tilde{\DOp},l)$
such that $l(v_1) = 0$
and cancels the tree tensors $\xi\in\tau(\tilde{\DOp},l)$
for which $l(v_1) = 1$.
As an example,
if the vertex $v_1$ such that $I_{v_1} = \{i,j\}$
is the distinguished vertex
of the tree underlying the tensor of figure~\ref{Fig:ReducedTreeTensorComposite},
then the image of this tree tensor under the chain homotopy $h$
is the tree tensor represented in figure~\ref{Fig:ReducedTreeTensorHomotopy}.
\begin{figure}
\begin{equation*}
\xymatrix@M=4pt@H=4pt@W=6pt@R=8pt@C=8pt{ \ar@{.}[r] & i\ar[dr]\ar@{.}[rr] && j\ar[dl]\ar@{.}[rr] &&
k\ar[dr]\ar@{.}[rr] && l\ar[dl]\ar@{.}[r] & \\
*+<2pt>{1}\ar@{.}[rr] && *+<8pt>[F]{\gamma_2}\ar[drr]\ar@{.}[rrrr] &&&& *+<8pt>[F]{\gamma_3}\ar[dll]\ar@{.}[rr] && \\
0 & \save[]!C.[r]!C.[rrrrr]!C \Biggl\{\restore &&& *+<8pt>[F]{\sigma\otimes\gamma_1}\ar[d] &&&& \\
\ar@{.}[rrrr] &&&& 0\ar@{.}[rrrr] &&&& }
\end{equation*}
\caption{}\label{Fig:ReducedTreeTensorHomotopy}\end{figure}

By an immediate inspection,
we obtain the identities $\delta h + h\delta = 0$ and $\partial^0_{\iota} h + h\partial^0_{\iota} = 0$
from which the acyclicity of~$E^0_{\tau}$ follows.

Finally,
we obtain that $E^1$ reduces to the summand $\IOp$.
Hence,
we conclude that the spectral sequence degenerates and $H_*(B^c(\DOp)\circ\DOp,\partial_{\iota}) = \IOp$.
\end{proof}

To complete the results of this section,
we prove:

\begin{thm}\label{CobarConstruction:CobarModelAcyclicity}
Let $\DOp$ be a $\C$-cofibrant connected cooperad.
Let $\POp$ be a $\C$-cofibrant operad.
If the morphism $\phi_{\theta}: B^c(\DOp)\rightarrow\POp$
associated to a twisting cochain $\theta: \DOp\rightarrow\POp$
defines a weak-equivalence of operads,
then the complex $(\POp\circ\DOp,\partial_{\theta})$ determined by $\theta$
is weakly equivalent to the unit $\Sigma_*$-object $\IOp$.
The weak-equivalence is defined by the augmentation $\epsilon: (\POp\circ\DOp,\partial_{\theta})\rightarrow \IOp$.
\end{thm}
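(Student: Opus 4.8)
The plan is to deduce the statement from Theorem~\ref{CobarConstruction:UniversalAcyclicity} through a comparison morphism, exploiting the fact that the twisting cochain $\theta$ factors through the universal one.

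First I would record that $\theta = \phi_{\theta}\cdot\iota$, where $\iota\colon\DOp\rightarrow B^c(\DOp)$ is the universal twisting cochain: this is immediate from the bijection $\Mor_{\Op}(B^c(\DOp),\POp)\simeq\Tw_{\Op}(\DOp,\POp)$ of~\S\ref{CobarConstruction:TwistingCochains} and the relation $\phi_{\iota}=\id$. Next I claim that $\phi_{\theta}$ induces a morphism of twisted composite $\Sigma_*$-objects $\Phi=\phi_{\theta}\circ\id_{\DOp}\colon(B^c(\DOp)\circ\DOp,\partial_{\iota})\rightarrow(\POp\circ\DOp,\partial_{\theta})$, compatible with the augmentations onto $\IOp$. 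To check that $\Phi$ intertwines the total differentials I would use that $\phi_{\theta}$, being a morphism of operads, commutes with the differential $\delta+\partial_{\beta}$ of $B^c(\DOp)=(\FOp(\Sigma^{-1}\tilde{\DOp}),\partial_{\beta})$ and with the internal differential of $\POp$, and then invoke $\phi_{\theta}\cdot\iota=\theta$: combined with the preservation of composition products, this last identity shows that $\Phi$ carries the operad--cooperad twisting homomorphism $\partial_{\iota}$ of~\S\ref{CobarConstruction:OperadCoperadTwistedComplexes}---which evaluates $\iota$ on a two-vertex cofactor of a $\tilde{\DOp}$-factor and composes the result into the $B^c(\DOp)$-factor---exactly to $\partial_{\theta}$. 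Granting that $\Phi$ is a weak-equivalence, the theorem follows: the source is acyclic by Theorem~\ref{CobarConstruction:UniversalAcyclicity} via its augmentation, whence the augmentation of the target, which equals $\epsilon\circ\Phi$, is a weak-equivalence as well.

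It remains to prove that $\Phi$ is a weak-equivalence, and here I would use a spectral sequence. On both $B^c(\DOp)\circ\DOp$ and $\POp\circ\DOp$, let $F_s$ be the submodule spanned by the tree tensors for which the sum of the arities of the level-$1$ vertices labelled by elements of the coaugmentation coideal $\tilde{\DOp}$ is at least $-s$; since $\tilde{\DOp}(0)=\tilde{\DOp}(1)=0$, this sum is bounded above by the arity of the component, so the filtration is bounded in each arity and the associated spectral sequences converge strongly. One checks that the internal differential and, on the $B^c(\DOp)$-side, the cobar twisting $\partial_{\beta}$ preserve this filtration, whereas each component of the operad--cooperad twisting homomorphisms strictly lowers it: the $\partial^0$-part of $\partial_{\theta}$ (resp.\ $\partial_{\iota}$) removes a $\tilde{\DOp}$-factor of arity $\geq 2$, while the $\partial^1$-part replaces a $\tilde{\DOp}$-factor by the top cofactor of a two-vertex tree coproduct, which has strictly smaller arity, the bottom cofactor (of arity $\geq 2$) being absorbed into the $\POp$- resp.\ $B^c(\DOp)$-factor. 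The morphism $\Phi$ preserves the filtration since it leaves the level-$1$ structure untouched, and so induces a morphism of the two spectral sequences. On the $E^0$-page the differential reduces to the internal differential on $\POp\circ\DOp$ and to the internal differential together with $\partial_{\beta}$ on $B^c(\DOp)\circ\DOp$; thus $E^0$ is the composite $\Sigma_*$-object $\POp\circ\DOp$, respectively the composite of the dg-operad $B^c(\DOp)$ with the dg-cooperad $\DOp$, and $E^0\Phi$ is identified with $\phi_{\theta}\circ\id_{\DOp}$.

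The key step---and the one I expect to be the main obstacle---is then to conclude that $E^0\Phi$ is a weak-equivalence: since $\phi_{\theta}$ is a weak-equivalence between the $\Sigma_*$-objects underlying $B^c(\DOp)$ and $\POp$, this requires knowing that the functor $-\circ\DOp$ preserves this weak-equivalence, which is precisely where one must control the $\Sigma$-quotients---and the attendant $\ZZ$-torsion---occurring in the composition product, using that $\DOp$ and $\POp$ are $\C$-cofibrant (homotopy invariance of the composition product of $\Sigma_*$-objects; cf.~\cite{FresseModuleBook}). Granting this, $E^0\Phi$, hence $E^1\Phi$, is an isomorphism, and by convergence $\Phi$ is a weak-equivalence, which completes the argument; a secondary, purely bookkeeping difficulty is tracking the desuspension signs through the comparison of $\partial_{\iota}$ with $\partial_{\theta}$ and the identification of $E^0\Phi$.
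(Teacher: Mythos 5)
Your overall architecture coincides with the paper's: the same comparison morphism $(B^c(\DOp)\circ\DOp,\partial_{\iota})\rightarrow(\POp\circ\DOp,\partial_{\theta})$ induced by $\phi_{\theta}$, the same reduction to Theorem~\ref{CobarConstruction:UniversalAcyclicity}, a filtration spectral sequence whose $E^0$-differential is the internal one (plus $\partial_{\beta}$ on the source), and the same key input --- the homotopy invariance of $-\circ\DOp$ for $\C$-cofibrant objects --- which you correctly single out as the crux. The one genuine defect is that your filtration runs in the wrong direction. You set $F_s=\{N\geq -s\}$, where $N$ is the sum of the arities of the level-$1$ vertices labelled in $\tilde{\DOp}$, and claim the operad--cooperad twisting homomorphisms \emph{lower} this filtration. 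But as your own analysis of $\partial^0$ and $\partial^1$ shows, both components strictly \emph{decrease} $N$; with your indexing ($F_{s-1}\subset F_s$ consists of tensors with \emph{larger} $N$), decreasing $N$ means mapping $F_s$ \emph{out of} $F_s$, so the differential does not preserve the filtration and the spectral sequence does not exist as written. (You appear to have transplanted the ``at least $-s$'' convention from the proof of Theorem~\ref{CobarConstruction:UniversalAcyclicity}, where the twisting homomorphism \emph{increases} the relevant count.) The fix is immediate: take $F_s=\{N\leq s\}$, which is increasing, bounded in each arity by $0=F_{-1}\subset\cdots\subset F_r=$ everything, preserved by $\delta$, $\partial_{\beta}$ and the comparison morphism, and strictly lowered by $\partial_{\theta}$ and $\partial_{\iota}$; everything else in your argument then goes through. (The paper itself filters by the arity of the level-$0$ operad factor rather than by your $N$; both invariants work once the direction is right.)
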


The construction $(\POp\circ\DOp,\partial_\theta)$
is clearly functorial with respect to the operad morphisms $\phi: \POp\rightarrow\QOp$
commuting with given twisting cochain:
\begin{equation*}
\xymatrix{ & \DOp\ar[dl]_{\theta}\ar[dr]^{\upsilon} & \\
\POp\ar[rr]_{\phi} && \QOp }.
\end{equation*}
In particular,
the natural morphism $\phi_{\theta}: B^c(\DOp)\rightarrow\POp$
associated to a twisting cochain $\theta: \DOp\rightarrow\POp$
induces a morphism of twisted $\Sigma_*$-objects
\begin{equation*}
(B^c(\DOp)\circ\DOp,\partial_{\iota})
\xrightarrow{\phi_\theta{}_*}(\POp\circ\DOp,\partial_{\theta}).
\end{equation*}
The idea is to prove that this morphism
is a weak-equivalence
and to deduce the conclusion of theorem~\ref{CobarConstruction:CobarModelAcyclicity}
from theorem~\ref{CobarConstruction:UniversalAcyclicity}.
For this purpose,
we use a spectral sequence argument.

First,
observe that every twisted object $(\POp\circ\DOp,\partial_\theta)$
is endowed with a natural filtration.
Indeed,
let $F_s\POp\circ\DOp$ be the submodule of $\POp\circ\DOp$
spanned by level tree tensors
whose factor $p\in\POp(r)$
verifies $r\geq s$.
From the definition of the twisting homomorphism $\partial_\theta$,
we see immediately that the assumption $\tilde{\DOp}(0) = \tilde{\DOp}(1) = 0$
implies
\begin{equation*}
\partial_{\theta}(F_s\POp\circ\DOp)\subset F_{s-1}\POp\circ\DOp.
\end{equation*}

Thus, we obtain:

\begin{obsv}\label{CobarConstruction:TwistedComplexSpectralSequence}
For any complex $(\POp\circ\DOp,\partial_{\theta})$,
we have a spectral sequence
\begin{equation*}
E^r(\POp\circ\DOp,\partial_{\theta})\Rightarrow H_*(\POp\circ\DOp,\partial_{\theta}),
\end{equation*}
such that $E^0 = \POp\circ\DOp$
and where $d^0: E^0\rightarrow E^0$
reduces to the natural differential of the composite $\Sigma_*$-object $\POp\circ\DOp$,
the differential $\delta: \POp\circ\DOp\rightarrow\POp\circ\DOp$
induced by the internal differential of~$\POp$ and $\DOp$.
\end{obsv}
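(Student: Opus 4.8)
The plan is to exhibit the asserted spectral sequence as the one attached to the filtered chain complex $(\POp\circ\DOp,\delta+\partial_{\theta})$ with the decreasing filtration $F_{\bullet}\POp\circ\DOp$ introduced just above, so that the whole statement reduces to checking that this filtration is stable under the total differential, that its associated graded is the expected one, and that it converges.

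First I would verify the compatibility with the total differential. The differential $\delta$ of the composite $\Sigma_*$-object $\POp\circ\DOp$ is the one induced by the internal differentials of $\POp$ and $\DOp$ through the tensor product formula, so differentiating a level tree tensor leaves its underlying tree, and in particular the arity of its factor $p\in\POp(r)$, unchanged; hence $\delta(F_s\POp\circ\DOp)\subset F_s\POp\circ\DOp$, and in fact $\delta$ preserves the arity grading exactly. The inclusion $\partial_{\theta}(F_s\POp\circ\DOp)\subset F_{s-1}\POp\circ\DOp$ has already been asserted above and is read off the explicit description of $\partial_{\theta}=\partial_{\theta}^0+\partial_{\theta}^1$ (figure~\ref{Fig:CompositeTwistingDifferential} and the paragraph following it) together with $\tilde{\DOp}(0)=\tilde{\DOp}(1)=0$; in fact that description shows that $\partial_{\theta}$ strictly raises the arity of the bottom factor, so that $\partial_{\theta}(F_s\POp\circ\DOp)\subset F_{s+1}\POp\circ\DOp$ and $\partial_{\theta}$ induces the zero map on the associated graded of $F_{\bullet}$. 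This already gives that $F_{\bullet}$ is a filtration of the twisted complex, that its associated graded coincides as a $\Sigma_*$-object with $\POp\circ\DOp$ itself — the filtration merely sorts the summands of $\POp\circ\DOp$ by the arity of their $\POp$-factor, so $E^0=\POp\circ\DOp$ — and that the induced differential $d^0$ on $E^0$ is the internal differential $\delta$ of $\POp\circ\DOp$.

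It then remains to settle convergence, which is the only step genuinely using the $\ZZ$-graded hypotheses of this paper, and which I would write out. For a fixed arity $r$, the assumption $\DOp(0)=0$ forces every $\DOp$-factor of a level tree tensor in $(\POp\circ\DOp)(r)$ to carry at least one input, so $p$ lies in $\POp(k)$ with $k\leq r$, and $\POp(0)=0$ gives $k\geq 1$; hence the filtration is finite in each arity,
\begin{equation*}
0 = F_{r+1}\POp\circ\DOp(r)\subset\dots\subset F_1\POp\circ\DOp(r) = \POp\circ\DOp(r),
\end{equation*}
uniformly in the homological degree. A bounded filtration of this type yields a strongly convergent spectral sequence $E^r(\POp\circ\DOp,\partial_{\theta})\Rightarrow H_*(\POp\circ\DOp,\partial_{\theta})$ by the standard machinery of spectral sequences of filtered complexes, which finishes the proof; apart from this boundedness check, which is what prevents the usual pathologies of unbounded complexes, the observation is pure bookkeeping.
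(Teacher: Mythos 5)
Your proof is correct and follows essentially the same route as the paper: the spectral sequence of the filtration of $(\POp\circ\DOp,\delta+\partial_{\theta})$ by the arity of the $\POp$-factor, with $\partial_{\theta}$ vanishing on the associated graded so that $d^0$ reduces to $\delta$, and with strong convergence coming from the arity-wise boundedness of the filtration under the connectedness hypotheses. Your sharpened inclusion $\partial_{\theta}(F_s\POp\circ\DOp)\subset F_{s+1}\POp\circ\DOp$ is in fact the statement actually needed to kill $\partial_{\theta}$ on the associated graded (the inclusion $\partial_{\theta}(F_s\POp\circ\DOp)\subset F_{s-1}\POp\circ\DOp$ recorded in the text just before the observation is too weak for this), so your write-up usefully tightens that step.
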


This spectral sequence converges in a strong sense
if $\POp$ and $\DOp$ are connected
because, for a fixed arity $r$, we have $F_s\POp\circ\DOp(r) = \POp\circ\DOp(r)$ for $s\geq r$
and $F_s\POp\circ\DOp(r) = 0$ for $s\leq 0$.
Hence,
we obtain that the filtration is bounded.

\medskip
We have clearly:

\begin{obsv}\label{CobarConstruction:TwistedComplexFunctoriality}
Any operad morphism $\phi: \POp\rightarrow\QOp$
that fits a commutative diagram
\begin{equation*}
\xymatrix{ & \DOp\ar[dl]_{\theta}\ar[dr]^{\upsilon} & \\
\POp\ar[rr]_{\phi} && \QOp },
\end{equation*}
where $\theta$ and $\upsilon$ are given twisting cochains,
preserves filtrations and yields a morphism on the spectral sequence
of observation~\ref{CobarConstruction:TwistedComplexSpectralSequence}.
\end{obsv}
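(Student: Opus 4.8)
The plan is to unwind the definitions and observe that every operation in sight respects the arity grading on $\POp$ which underlies the filtration $F_s\POp\circ\DOp$. First I would record that $\phi: \POp\rightarrow\QOp$, being a morphism of operads, is in particular a morphism of $\Sigma_*$-objects, so each of its components $\phi: \POp(r)\rightarrow\QOp(r)$ preserves arity. In the representation of~\S\ref{OperadDefinition:LevelTrees}, a generating level tree tensor of $\POp\circ\DOp$ carries a distinguished bottom factor $p\in\POp(r)$ indexed by the vertex at level $0$, and the induced map $\phi\circ\DOp: \POp\circ\DOp\rightarrow\QOp\circ\DOp$ sends such a tensor to the one obtained by replacing $p$ by $\phi(p)\in\QOp(r)$ and leaving all the $\DOp$-factors untouched. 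Since $\phi(p)$ has the same arity $r$ as $p$, the map $\phi\circ\DOp$ carries $F_s\POp\circ\DOp$ into $F_s\QOp\circ\DOp$ for every $s\in\ZZ$.

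Next I would check that $\phi\circ\DOp$ is a morphism of twisted $\Sigma_*$-objects, that is, a chain map for the total differentials $\delta+\partial_{\theta}$ and $\delta+\partial_{\upsilon}$. Compatibility with the internal differential $\delta$ is automatic, since $\phi$ is a morphism of dg-operads and $\DOp$ is fixed. Compatibility with the twisting parts reduces to a term-by-term comparison in the expansion of figure~\ref{Fig:CompositeTwistingDifferential}: on one side one takes a two-vertex coproduct $\rho_{\tau}(\gamma_i)$ of a factor $\gamma_i\in\tilde{\DOp}$, applies $\theta$ to the bottom factor, composes with the composition product of $\POp$, and then applies $\phi$; on the other side one does the same with $\upsilon$ and the composition product of $\QOp$ from the start. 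These two recipes agree because $\phi$ commutes with composition products and the hypothesis $\phi\cdot\theta=\upsilon$ in $\Hom_{\M}(\DOp,\QOp)$ gives $\phi(\theta((\gamma_i)_*))=\upsilon((\gamma_i)_*)$; the $\partial_{\theta}^0$ part, where no coproduct is performed, is handled by the same, even simpler, comparison. Hence $\phi\circ\DOp$ defines a morphism $(\POp\circ\DOp,\partial_{\theta})\rightarrow(\QOp\circ\DOp,\partial_{\upsilon})$ of complexes.

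Finally, I would invoke the standard fact that a filtration-preserving chain map between filtered complexes induces a morphism of the associated spectral sequences, compatible with the abutments $H_*(\POp\circ\DOp,\partial_{\theta})\rightarrow H_*(\QOp\circ\DOp,\partial_{\upsilon})$; combined with the boundedness of the filtration noted right after observation~\ref{CobarConstruction:TwistedComplexSpectralSequence}, this yields the claim. I do not expect any genuine obstacle here; the only mildly delicate point is the bookkeeping of the signs $\pm$ in the comparison of $\partial_{\theta}$ and $\partial_{\upsilon}$, but those signs are prescribed by the symmetry isomorphism of dg-modules and are left unchanged when one applies the degree $0$ map $\phi$, so they match on the two sides by construction.
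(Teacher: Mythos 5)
Your proposal is correct and fills in exactly the routine verification the paper leaves implicit (the observation is introduced with ``We have clearly'' and given no proof): filtration-preservation from arity-preservation of $\phi$, compatibility of $\phi\circ\DOp$ with the twisted differentials via $\phi\cdot\theta=\upsilon$ and the commutation of $\phi$ with composition products, and the standard functoriality of spectral sequences of filtered complexes. Nothing further is needed.
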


The analysis of~\cite[\S\S 1.3.5-1.3.10]{FresseKoszulDuality} (see also~\cite[\S 15.3]{FresseModuleBook})
shows that any weak-equivalence $\phi: M\xrightarrow{\sim} N$
between $\C$-cofibrant $\Sigma_*$-objects
induces a weak-equivalence
\begin{equation*}
\phi\circ P: M\circ P\xrightarrow{\sim} N\circ P,
\end{equation*}
for any $\C$-cofibrant $\Sigma_*$-object $P$
such that $P(1) = 0$.
The application of this result to the spectral sequence
of observation~\ref{CobarConstruction:TwistedComplexSpectralSequence}
gives:

\begin{lemm}\label{CobarConstruction:TwistedComplexEquivalence}
In observation~\ref{CobarConstruction:TwistedComplexFunctoriality},
suppose that the morphism $\phi: \POp\xrightarrow{\sim}\QOp$
is a weak-equivalence between $\C$-cofibrant operads
and the cooperad $\DOp$ is $\C$-cofibrant (and connected).
Then
the morphism of twisted $\Sigma_*$-objects
\begin{equation*}
(\POp\circ\DOp,\partial_{\theta})\xrightarrow{\phi_*}(\QOp\circ\DOp,\partial_{\upsilon})
\end{equation*}
induced by $\phi$
forms a weak-equivalence.
\end{lemm}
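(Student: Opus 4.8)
The plan is to run the filtration spectral sequence of observation~\ref{CobarConstruction:TwistedComplexSpectralSequence} and use its functoriality. Since $\phi$ fits the commutative triangle $\phi\theta = \upsilon$, observation~\ref{CobarConstruction:TwistedComplexFunctoriality} tells us that $\phi_*\colon(\POp\circ\DOp,\partial_\theta)\rightarrow(\QOp\circ\DOp,\partial_\upsilon)$ preserves the filtration by the arity of the factor $p\in\POp(r)$, hence induces a morphism of spectral sequences. On the initial page we have $E^0 = \POp\circ\DOp$, respectively $\QOp\circ\DOp$, equipped with the internal differential $\delta$ alone (because $\partial_\theta$ and $\partial_\upsilon$ strictly lower the filtration), and the map induced by $\phi_*$ on $E^0$ is exactly the composite $\phi\circ\DOp$: the identity on the $\DOp$-factors and $\phi$ on the factor~$p$. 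So the lemma reduces to showing that $\phi\circ\DOp$ is a weak-equivalence of $\Sigma_*$-objects, followed by a comparison of strongly convergent spectral sequences.

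To prove $\phi\circ\DOp$ is a weak-equivalence I would invoke the statement recalled just above, that a weak-equivalence $\phi\colon M\xrightarrow{\sim}N$ between $\C$-cofibrant $\Sigma_*$-objects induces a weak-equivalence $\phi\circ P\colon M\circ P\xrightarrow{\sim}N\circ P$ for every $\C$-cofibrant $\Sigma_*$-object $P$ with $P(1)=0$. The one point requiring care, and what I expect to be the \emph{main obstacle}, is that $\DOp$ is connected, so $\DOp(1)=\kk\neq 0$ and the recalled result does not apply to $P=\DOp$ directly. I would remove this by using the splitting $\DOp = \IOp\oplus\tilde\DOp$ together with the explicit tree description of the composition product: since $\tilde\DOp(0)=\tilde\DOp(1)=0$, every level-one vertex of a two-level tree whose input set is a singleton is forced to carry the unit operation, and grouping the summands of $(M\circ\DOp)(I)$ according to the set $T\subseteq I$ covered by the vertices of valence $\geq 2$ yields a decomposition, natural in $M$,
\begin{equation*}
(M\circ\DOp)(I) \simeq \bigoplus_{T\subseteq I}\bigl(M^{I\setminus T}\circ\tilde\DOp\bigr)(T),
\end{equation*}
where $M^{J}$ denotes the $J$-ary shift of $M$, the $\Sigma_*$-object $S\mapsto M(S\amalg J)$. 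Each shift $M^{J}$ is again $\C$-cofibrant (its arity-$n$ term is the cofibrant dg-module $M(n+|J|)$), the shift construction is functorial in $M$, and $\tilde\DOp$ is $\C$-cofibrant (a retract of $\DOp$) with trivial arity-one part. Applying the recalled result to each summand and taking the direct sum gives that $\phi\circ\DOp$ is a weak-equivalence.

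Finally I would conclude by comparison of spectral sequences. The map induced by $\phi_*$ on $E^0$ is a quasi-isomorphism for the differential $d^0 = \delta$, hence an isomorphism on $E^1$; since $\DOp$ is $\C$-cofibrant and connected and $\POp$, $\QOp$ are connected, the arity filtration of $\POp\circ\DOp$ and $\QOp\circ\DOp$ is bounded in each arity, so both spectral sequences converge strongly, and the standard comparison theorem shows that $\phi_*$ induces an isomorphism $H_*(\POp\circ\DOp,\partial_\theta)\rightarrow H_*(\QOp\circ\DOp,\partial_\upsilon)$ in every arity, i.e. is a weak-equivalence. Apart from the bookkeeping in the decomposition of $M\circ\DOp$ into shifted reduced composites, every step is routine.
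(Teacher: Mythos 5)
Your proof is correct and follows the same strategy as the paper's: filter by the arity of the $\POp$-factor, identify $(E^0,d^0)$ with $\POp\circ\DOp$ equipped with its internal differential, reduce to the homotopy invariance of $-\circ\DOp$, and conclude by comparison of the locally bounded, hence strongly convergent, spectral sequences. The one place where you go beyond the paper is the treatment of the unit: the paper's proof simply applies the recalled invariance statement to $P=\DOp$ and writes that ``the conclusion follows immediately,'' even though that statement is quoted with the hypothesis $P(1)=0$ while $\DOp(1)=\kk$. Your decomposition $(M\circ\DOp)(I)\simeq\bigoplus_{T\subseteq I}\bigl(M^{I\setminus T}\circ\tilde{\DOp}\bigr)(T)$, which absorbs the unit vertices into a shift of $M$ and leaves a reduced right-hand factor with $\tilde{\DOp}(0)=\tilde{\DOp}(1)=0$, is a correct and natural way to close that gap; note only that the decomposition is not $\Sigma_I$-equivariant, which is harmless since a weak-equivalence of $\Sigma_*$-objects is just an arity-wise quasi-isomorphism of dg-modules. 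So this is the paper's argument with one implicit step made explicit rather than a genuinely different route.
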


\begin{proof}
The morphism induces a weak-equivalence
at the $E^0$-stage
of the spectral sequence:
\begin{equation*}
\underbrace{E^0(\POp\circ\DOp,\partial_{\theta})}_{= \POp\circ\DOp}
\xrightarrow{\phi_*}\underbrace{E^0(\QOp\circ\DOp,\partial_{\upsilon})}_{= \QOp\circ\DOp}.
\end{equation*}
The conclusion follows immediately (recall simply that the spectral sequence is defined by a locally bounded filtration
when the cooperad $\DOp$ is connected).
\end{proof}

The result of theorem~\ref{CobarConstruction:CobarModelAcyclicity}
is an immediate corollary of this lemma and theorem~\ref{CobarConstruction:UniversalAcyclicity}.
Hence we are done with the proof of proposition~\ref{CobarConstruction:CobarModelAcyclicity}.\qed

\subsubsection*{Remark}
In this paper,
we only use the implication of theorem~\ref{CobarConstruction:CobarModelAcyclicity}
but the converse statement is also true in the non-negatively graded setting:
under the assumption of theorem~\ref{CobarConstruction:CobarModelAcyclicity},
the morphism $\phi_\theta: B^c(\DOp)\rightarrow\POp$ associated to a twisting cochain $\theta: \DOp\rightarrow\POp$
forms a weak-equivalence
if and only if the twisted $\Sigma_*$-object $(\POp\circ\DOp,\partial_{\theta})$
is acyclic (apply the spectral sequence argument of~\cite[\S 2]{FresseKoszulDuality}).

\subsubsection{Applications of the bar duality of operads}
Dual to the cobar construction,
we have an operadic bar construction which gives a cooperad $B(\POp)$
such that
\begin{equation*}
\Mor_{\Op}(B^c(\DOp,\POp)\simeq\Tw_{\Op}(\DOp,\POp)\simeq\Hom_{\Op^c}(\DOp,B(\POp))
\end{equation*}
for any operad $\POp$ (see~\cite[\S 2.3]{GetzlerJones}).
If the operad $\POp$ is a $\C$-cofibrant augmented operad such that $\tilde{\POp}(0) = \tilde{\POp}(1) = 0$,
then the adjoint of the identity morphism $\id: B(\POp)\rightarrow B(\POp)$
defines a natural weak-equivalence $\epsilon_{\iota}: B^c(B(\POp))\xrightarrow{\sim}\POp$
(adapt the argument of~\cite[Theorem 3.2.16]{GinzburgKapranov}, see also~\cite[\S 4.8]{FresseKoszulDuality}).

Therefore
any connected $\C$-cofibrant operad $\POp$ has a model of the form $B^c(\DOp)$,
where $\DOp$ is a connected $\C$-cofibrant cooperad.
If $\POp$ is $\Sigma_*$-cofibrant,
then an easy inspection of constructions shows that the cooperad $\DOp = B(\POp)$
is $\Sigma_*$-cofibrant as well.
In general,
we can replace $\POp$ by an equivalent $\Sigma_*$-cofibrant operad $\QOp\xrightarrow{\sim}\POp$
(according to~\cite{BergerFresse}, we can take the internal tensor product of~$\POp$ with the Barratt-Eccles operad for~$\QOp$)
to form a $\Sigma_*$-cofibrant cooperad $\DOp = B(\QOp)$
such that $\phi: B^c(\DOp)\xrightarrow{\sim}\POp$.

The Koszul duality of operads replaces the cooperad $\DOp$
of the bar duality by a smaller model.
For instance:
\begin{itemize}
\item
for the operad of commutative algebras $\COp$,
we have a weak-equivalence $\phi: B^c(\Lambda^{-1}\LOp^{\vee})\xrightarrow{\sim}\COp$,
where $\Lambda^{-1}\LOp^{\vee}$ is an operadic desuspension of the cooperad of Lie coalgebras,
\item
for the operad of Lie algebras $\LOp$,
we have dually $\phi: B^c(\Lambda^{-1}\COp^{\vee})\xrightarrow{\sim}\LOp$,
where $\Lambda^{-1}\COp^{\vee}$ is the desuspension of the cooperad of cocommutative coalgebras,
\item
for the operad of associative algebras $\AOp$,
we have a weak-equivalence $\phi: B^c(\Lambda^{-1}\AOp^{\vee})\xrightarrow{\sim}\AOp$,
where $\Lambda^{-1}\AOp^{\vee}$ is the desuspension of the cooperad of coassociative coalgebras
\end{itemize}
(see~\cite{GinzburgKapranov} and~\cite{FresseKoszulDuality}
for the generalization of the results of~\cite{GinzburgKapranov} in positive characteristic).
The associative cooperad $\DOp = \Lambda^{-1}\AOp^{\vee}$ is $\Sigma_*$-cofibrant,
but we do not know how to produce a $\Sigma_*$-cofibrant cooperad
from $\Lambda^{-1}\COp^{\vee}$ and $\Lambda^{-1}\LOp^{\vee}$
(when the ground ring does not contain $\QQ$).

\section{The cobar construction\\ and\\ cofibrant models of algebras over operads}\label{CoalgebraModels}
\renewcommand{\thesubsubsection}{\thesubsection.\arabic{subsubsection}}

The purpose of this section is to review the applications of the bar duality of operads
to the definition of explicit cofibrant models
in categories of algebras over an operad.

In~\S\ref{CooperadCoalgebras},
we study quasi-cofree coalgebras over a cooperad~$\DOp$.
We check that the structure of a quasi-cofree coalgebra
is equivalent to an algebra over the cobar construction of~$\DOp$.
The construction of explicit cofibrant models
in categories of algebras over an operad
arises from this equivalence
and is addressed in~\S\ref{QuasiFreeReplacements}.

These applications of the bar duality of operads have been introduced in~\cite[\S 2, \S 4]{GetzlerJones}
for operads in non-negatively graded modules over a field of characteristic zero.
Though basic definitions have an obvious generalization in the context of unbounded dg-modules over a ring,
we have to review the theory carefully
in order to check that homotopical applications of the constructions hold properly in our setting.

In this section,
we use that a $\POp$-algebra structure on a dg-module $A$ is determined by a morphism $\phi: \POp\rightarrow\End_A$,
where $\End_A$ is the endomorphism operad of~$A$ (see~\S\ref{OperadAlgebras:EndomorphismOperads}),
and we adopt the convention to represent a $\POp$-algebra by a pair $(A,\phi)$
whenever $A$ does not come with a natural $\POp$-algebra structure.

\subsection{Coalgebras over cooperads}\label{CooperadCoalgebras}
In~\S\ref{OperadAlgebras:OperadMonad},
we recall that an operad $\POp$ determines a monad $S(\POp): \C\rightarrow\C$
on the category of dg-modules.
The category of $\POp$-algebras
is defined as the category of algebras over this monad.
This definition can be dualized for cooperads:
the augmentation $\epsilon: \DOp\rightarrow\IOp$
and the coproduct $\nu: \DOp\rightarrow\DOp\circ\DOp$ of a cooperad $\DOp$
induce an augmentation and a coproduct
\begin{equation*}
S(\DOp)\xrightarrow{S(\epsilon)} S(\IOp)\simeq\Id
\quad\text{and}
\quad S(\DOp)\xrightarrow{S(\nu)} S(\DOp\circ\DOp)\simeq S(\DOp)\circ S(\DOp),
\end{equation*}
which satisfy unit and associativity relations,
so that the functor $S(\DOp): \C\rightarrow\C$ associated to~$\DOp$
inherits a comonad structure.
Define a coalgebra over~$\DOp$
as a coalgebra over this comonad $S(\DOp)$.

Note that this notion of a $\DOp$-coalgebra does not agree with standard definitions for usual cooperads.
Firstly,
as the functor $S(\DOp)$ is given by a direct sum,
we consider $\DOp$-coalgebras which are in some sense connected.
Secondly,
as the functor $S(\DOp)$ is given by coinvariant tensors
and not invariant tensors,
we consider $\DOp$-coalgebras equipped with operations dual to divided powers.
For these matters,
we refer to~\cite{FresseSimplicialHomotopy}.

Recall that $\DOp$ is supposed to verify $\DOp(0) = 0$, $\DOp(1) = \kk$
and admits a splitting $\DOp = \IOp\oplus\tilde{\DOp}$.
In our constructions,
we identify the dg-module $C$ with the summand $C = S(\IOp,C)$
of~$S(\DOp,C)$
and the augmentation $\epsilon: S(\DOp,C)\rightarrow C$
with a projection onto this summand.

The object $\DOp(C) = S(\DOp,C)$ also represents the cofree $\DOp$-coalgebra
associated to a dg-module $C\in\C$ (again, when we say cofree $\DOp$-coalgebra,
we refer to the category of $\DOp$-coalgebras used in the paper and our definition may differ from other usual notions of cofree coalgebras).
The quasi-cofree coalgebras, of which study forms the object of this subsection,
are $\DOp$-coalgebras $\Gamma$
such that $\Gamma = (\DOp(C),\partial)$.

Our first objective is to introduce an appropriate representation
for the structure of a coalgebra.
Then we study the structure of quasi-cofree $\DOp$-coalgebras and the definition of morphisms between quasi-cofree $\DOp$-coalgebras.

\subsubsection{The graphical representation of the structure of coalgebras over cooperads}\label{CooperadCoalgebras:TreeRepresentation}
By definition,
a coalgebra over~$\DOp$ consists of a dg-module $C\in\C$
together with a coproduct $\rho: C\rightarrow S(\DOp,C)$
which satisfies the standard counit and coassociativity relations
with respect to the augmentation and coproduct of the comonad $S(\DOp)$.
In view of the graphical representation of an element of~$S(\DOp,C)$,
the coproduct of an element $c\in C$
has an expansion of the form:
\begin{equation*}
\rho(c) = \sum_{\rho(c)}
\left\{\vcenter{\xymatrix@H=6pt@W=3pt@M=2pt@!R=1pt@!C=1pt{ c_*\ar[dr]\ar@{.}[r] & \cdots\ar@{.}[r]\ar@{}[d]|{\displaystyle{\cdots}} & c_*\ar[dl] \\
& *+<8pt>[F]{\gamma_*}\ar[d] & \\ & 0 & }}\right\}.
\end{equation*}

The coproduct of a cofree coalgebra $\DOp(C) = S(\DOp,C)$
is the natural morphism
\begin{equation*}
S(\DOp,C)\xrightarrow{S(\nu)} S(\DOp\circ\DOp,C)\simeq S(\DOp,S(\DOp,C))
\end{equation*}
induced by the coproduct of~$\DOp$.
In our graphical representation,
the coproduct of~$\DOp(C)$
is given by the picture
\begin{equation*}
\rho\left\{\vcenter{\xymatrix@H=6pt@W=3pt@M=2pt@!R=1pt@!C=1pt{ c_1\ar[dr] & \cdots\ar@{}[d]|{\displaystyle{\cdots}} & c_n\ar[dl] \\
& *+<8pt>[F]{\gamma}\ar[d] & \\ & 0 & }}\right\}
= \sum_{\nu(\gamma)}
\left\{\vcenter{\xymatrix@H=6pt@W=3pt@M=2pt@!R=1pt@!C=1pt{ \ar@{.}[r] &
c_*\ar[dr]\ar@{.}[r] & \cdots\ar@{.}[r]\ar@{}[d]|{\displaystyle{\cdots}} & c_*\ar[dl]\ar@{.}[r] &
\cdots\ar@{.}[r] & c_*\ar[dr]\ar@{.}[r] & \cdots\ar@{.}[r]\ar@{}[d]|{\displaystyle{\cdots}} & c_*\ar[dl]\ar@{.}[r] & \\
*+<2pt>{1}\ar@{.}[rr] && *+<8pt>[F]{\gamma_*}\ar[drr]\ar@{.}[rr] && \cdots\ar@{.}[rr] && *+<8pt>[F]{\gamma_*}\ar[dll]\ar@{.}[rr] && \\
*+<2pt>{0}\ar@{.}[rrrr] &&&& *+<8pt>[F]{\gamma_*}\ar[d]\ar@{.}[rrrr] &&&& \\
\ar@{.}[rrrr] &&&& 0\ar@{.}[rrrr] &&&& }}\right\},
\end{equation*}
where we form the image of~$\gamma\in\DOp$
under the coproduct of~$\DOp$.
The input labels $c_1,\dots,c_n\in C$
are permuted according to the sharing of indices in the coproduct of~$\gamma$.

\subsubsection{Quasi-cofree coalgebras}\label{CooperadCoalgebras:QuasiCofreeDefinition}
For a coalgebra $C$ over a cooperad $\DOp$,
the commutation of the coproduct $\rho: C\rightarrow S(\DOp,C)$
with differentials amounts to an identity
\begin{equation}
\rho(\delta(c)) = \sum_{\rho(c)}
\left\{\vcenter{\xymatrix@H=6pt@W=3pt@M=2pt@!R=1pt@!C=1pt{ c_*\ar[dr]\ar@{.}[r] & \cdots\ar@{.}[r]\ar@{}[d]|{\displaystyle{\cdots}} & c_*\ar[dl] \\
\ar@{.}[r] & *+<8pt>[F]{\delta(\gamma_*)}\ar@{.}[r]\ar[d] & \\ \ar@{.}[r] & 0\ar@{.}[r] & }}\right\}
+  \sum_{\rho(c)}
\pm\left\{\vcenter{\xymatrix@H=6pt@W=3pt@M=2pt@!R=1pt@!C=1pt{ c_*\ar[drr]\ar@{.}[r] & \cdots & \delta(c_*)\ar@{.}[r]\ar@{.}[l]\ar[d] &
\cdots & c_*\ar[dll]\ar@{.}[l] \\
\ar@{.}[rr] && *+<8pt>[F]{\gamma_*}\ar@{.}[rr]\ar[d] && \\
\ar@{.}[rr] && 0\ar@{.}[rr] && }}\right\},
\end{equation}
for every $c\in C$.
In the second sum,
the differential $\delta$ is applied to the input label $c_i$,
where $i = 1,\dots,n$.
The signs $\pm$ are determined by the commutation of~$\delta$
with the factors of the tree tensor product.

Say that a homomorphism $\theta: C\rightarrow C$
is a $\DOp$-coderivation
if we have the identity
\begin{equation}\renewcommand{\theequation}{**}
\rho(\theta(c)) = \sum_{\rho(c)}
\pm\left\{\vcenter{\xymatrix@H=6pt@W=3pt@M=2pt@!R=1pt@!C=1pt{ c_*\ar[drr]\ar@{.}[r] & \cdots & \theta(c_*)\ar@{.}[r]\ar@{.}[l]\ar[d] &
\cdots & c_*\ar[dll]\ar@{.}[l] \\
\ar@{.}[rr] && *+<8pt>[F]{\gamma_*}\ar@{.}[rr]\ar[d] && \\ \ar@{.}[rr] && 0\ar@{.}[rr] && }}\right\},
\end{equation}
for every $c\in C$.
The differential $\delta+\partial: C\rightarrow C$
defined by the addition of a twisting homomorphism $\partial\in\Hom_{\C}(C,C)$
to the internal differential $\delta: C\rightarrow C$ of a $\DOp$-coalgebra $C$
satisfies the coderivation relation (*)
if and only if $\partial$ satisfies the coderivation relation (**).
Thus,
a twisted dg-module $(C,\partial)$ associated to a $\DOp$-coalgebra
inherits a $\DOp$-coalgebra structure
if and only if $\partial$ forms a $\DOp$-coderivation.

A quasi-cofree $\DOp$-coalgebra is a twisted coalgebra $\Gamma = (\DOp(C),\partial)$
formed from a cofree coalgebra $\DOp(C)$.
We use the following propositions to define quasi-cofree $\DOp$-coalgebras:

\begin{prop}[{See~\cite[Proposition 2.14]{GetzlerJones}}]\label{CooperadCoalgebras:CofreeCoderivations}
For a cofree coalgebra $\DOp(C)$,
we have a bijective correspondence between $\DOp$-coderivations $\partial: \DOp(C)\rightarrow\DOp(C)$
and homomorphisms $\alpha: \DOp(C)\rightarrow C$.
The homomorphism $\alpha$ associated to a coderivation $\partial$
is given by the composite of~$\partial: \DOp(C)\rightarrow\DOp(C)$
with the canonical projection $\epsilon: \DOp(C)\rightarrow C$.

Conversely,
the coderivation associated to $\alpha$,
for which we adopt the notation $\partial = \partial_{\alpha}$,
is determined by the formula
\begin{multline*}
\partial_{\alpha}
\left\{\vcenter{\xymatrix@H=6pt@W=3pt@M=2pt@!R=1pt@!C=1pt{ c_1\ar[dr]\ar@{.}[r] & \cdots\ar@{.}[r]\ar@{}[d]|{\displaystyle{\cdots}} & c_n\ar[dl] \\
\ar@{.}[r] & *+<8pt>[F]{\gamma}\ar[d]\ar@{.}[r] & \\
\ar@{.}[r] & 0\ar@{.}[r] & }}\right\}
= \sum_{i}
\pm\left\{\vcenter{\xymatrix@H=6pt@W=3pt@M=2pt@!R=1pt@!C=1pt{ c_1\ar[drr]\ar@{.}[r] & \cdots\ar@{.}[r] &
\alpha(c_i)\ar[d]\ar@{.}[r] & \cdots\ar@{.}[r] & c_n\ar[dll] \\
\ar@{.}[rr] && *+<8pt>[F]{\gamma}\ar@{.}[rr]\ar[d] && \\
\ar@{.}[rr] && 0\ar@{.}[rr] && }}\right\}
\\
+ \sum_{\substack{\tau\in\Theta_2(n)\\ \rho_{\tau}(\gamma)}}
\pm\left\{\vcenter{\xymatrix@H=6pt@W=4pt@M=2pt@R=8pt@C=4pt{ &&
\save [].[rrd]!C *+<4pt>[F-,]\frm{}*+<6pt>\frm{\{}*+<0pt>\frm{\}} \restore\ar@{}[]!L-<8pt,0pt>;[d]!L-<8pt,0pt>_{\displaystyle\alpha}
c_*\ar@{.}[r]\ar[dr] & \cdots\ar@{.}[r] & c_*\ar[dl] && \\
c_*\ar@{.}[r]\ar@/_6pt/[drrr] & \cdots && *+<6pt>[F]{\gamma_*}\ar@{.}[r]\ar@{.}[l]\ar[d] && \cdots\ar@{.}[r] & c_*\ar@/^6pt/[dlll] \\
\ar@{.}[rrr] &&& *+<6pt>[F]{\gamma_*}\ar[d]\ar@{.}[rrr] &&& \\
\ar@{.}[rrr] &&& 0\ar@{.}[rrr] &&& }}\right\},
\end{multline*}
for every element of~$\DOp(C)$,
where we use the convention of~\S\ref{CobarConstruction:TreeComonad}
for the representation of the coproduct of~$\gamma\in\DOp$.
The input labels $c_*\in C$ are permuted according to the sharing of inputs in the coproduct of~$\gamma$.
\qed
\end{prop}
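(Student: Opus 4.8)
The plan is to set up the correspondence as an instance of a ``cofreeness'' principle dual to the fact that a derivation of a free operad is determined by its restriction to the generators (compare Proposition~\ref{QuasiFreeOperads:DerivationConstruction} and Proposition~\ref{OperadAlgebras:DerivationConstruction}). One direction is immediate: given a $\DOp$-coderivation $\partial$ of $\DOp(C)$, the composite $\alpha = \epsilon\cdot\partial: \DOp(C)\rightarrow C$ with the canonical projection is a homomorphism of the same degree. It then remains to prove (a) the displayed formula defines a $\DOp$-coderivation $\partial_{\alpha}$; (b) $\epsilon\cdot\partial_{\alpha} = \alpha$; and (c) a $\DOp$-coderivation $\partial$ is uniquely determined by its corestriction $\epsilon\cdot\partial$. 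Together, (a)--(c) show that $\alpha\mapsto\partial_{\alpha}$ and $\partial\mapsto\epsilon\cdot\partial$ are mutually inverse.

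For (c), and for the shape of the formula, the key remark is that for any $y\in\DOp(C) = S(\DOp,C)$ the weight~$k$ homogeneous component of $y$ --- the part lying in $(\DOp(k)\otimes C^{\otimes k})_{\Sigma_k}$ --- is recovered from the comonad coproduct $\rho(y) = S(\nu,C)(y)$: it is the outer-arity-$k$ part of $\rho(y)$ after the $k$ inner $\DOp(C)$-labels have been pushed onto their weight~$1$ components along $\epsilon$ (this follows at once from the counit relations for $\nu$). Feeding $y = \partial(\gamma(c_1,\dots,c_n))$ into this and then using the coderivation relation~(**) of~\S\ref{CooperadCoalgebras:QuasiCofreeDefinition} --- which rewrites $\rho(\partial(-))$ as the sum, over the coproduct, of the tree tensors obtained by applying $\partial$ to one inner label at a time --- reduces each homogeneous component of $\partial(\gamma(c_1,\dots,c_n))$ to an expression in which $\partial$ occurs only through $\epsilon\cdot\partial = \alpha$, evaluated on the factors of the two-vertex coproducts $\rho_{\tau}(\gamma)$ (and, in the degenerate cases, on a single input $c_i$ or on $\gamma(c_1,\dots,c_n)$ as a whole). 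This simultaneously forces the formula and proves uniqueness, by induction on the weight $n$, with base case the observation that a coderivation necessarily carries the weight~$1$ summand $C$ into itself; the signs are exactly those produced by commuting the homomorphism $\alpha$ past the tree factors, as in~\S\ref{Background:Signs}.

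The main obstacle is (a): verifying that the explicit $\partial_{\alpha}$ of the formula actually satisfies~(**). I would compute $\rho\cdot\partial_{\alpha}$ directly from the formula and the coproduct of $\DOp$ and match it termwise against the right-hand side of~(**). The terms of $\partial_{\alpha}$ in which $\alpha$ is applied to a genuine (at least two-vertex) subtree of a coproduct of $\gamma$ are exactly what is needed for this matching to close up, and the verification comes down to the coassociativity of $\nu$ --- the dual of the threefold tree-associativity relations of figures~\ref{Fig:TreeLinearAssociativity}--\ref{Fig:TreeRamifiedAssociativity}. This is a sign-and-bookkeeping computation of the same kind as the check that $\partial_{\alpha}$ is a derivation in the free case, but with the extra layer coming from the two-level tree structure of the cofree coalgebra. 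Two minor points round things off: the sums are finite because the hypotheses $\DOp(0) = 0$ and $\DOp(1) = \kk$ make all the relevant coproduct expansions involve only finitely many trees; and (b) is immediate by inspection, since the only contribution to the weight~$1$ summand $C = S(\IOp,C)$ of $\partial_{\alpha}(\gamma(c_1,\dots,c_n))$ comes from applying $\alpha$ to the whole tree. Finally, I note that over a general ground ring one cannot shortcut (a) and (c) by the characteristic-zero device of realising a coderivation of degree $d$ as a coalgebra morphism from $\DOp(C)$ into the cofree coalgebra on the square-zero extension $C\oplus\Sigma^{d}C$, because the part of $\DOp(C\oplus\Sigma^{d}C)$ that is linear in $\Sigma^{d}C$ is not isomorphic to $\DOp(C)$; the explicit formula is what makes the argument go through integrally.
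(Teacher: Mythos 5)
The paper offers no proof of this proposition (it is stated with a \textsl{qed} and a reference to Getzler--Jones), so there is no argument of the paper's to compare yours against; what you outline is the expected argument and it is sound. The counit relations for $\nu$ do recover an element $y\in\DOp(C)$ from $\rho(y)$ by pushing the inner $\DOp(C)$-labels onto $C$ along $\epsilon$, and the coderivation relation~(**) of~\S\ref{CooperadCoalgebras:QuasiCofreeDefinition} then expresses $\partial(y)$ entirely through $\alpha=\epsilon\cdot\partial$ and the coproduct of $y$ --- this gives uniqueness and forces the formula in one stroke (the induction on weight you invoke is not actually needed). Deferring the existence check (a) to a coassociativity-of-$\nu$ sign computation is reasonable given that the paper defers the entire proof; the identification of coassociativity as the point where the computation closes up is correct.

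One point deserves emphasis, because the formula your argument produces and the formula displayed in the proposition do not quite agree. Your derivation correctly yields three families of terms, according to which inner label of $\rho(\gamma(c_1,\dots,c_n))$ the coderivation hits: a single input $c_i$ (the first sum), a proper subtree $\gamma_*(c_*,\dots,c_*)$ sitting on a two-vertex tree (the second sum), or the element $\gamma(c_1,\dots,c_n)$ as a whole --- the $\psi^{\sharp}$-component of $\nu(\gamma)$ in the sense of~\S\ref{TreeOperadStructures:ReducedCompositionStructure}, with outer factor the unit --- which contributes a summand $\pm\,\alpha(\gamma(c_1,\dots,c_n))\in C\subset\DOp(C)$. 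This last summand cannot arise from the printed sum over $\Theta_2(n)$ (a two-vertex tree whose bottom vertex has a single input gives $\tilde{\DOp}(1)=0$), yet your step (b) rests on it: without it, $\epsilon\cdot\partial_{\alpha}$ vanishes on all weights $\geq 2$ and the claimed bijection fails. It is also the term responsible, for $\DOp=\Lambda^{-1}\AOp^{\vee}$, for the quadratic part of the bar differential of an associative algebra (the $n=2$ component landing back in the cogenerators). So you should state explicitly that the coderivation you construct carries this additional summand; with it included, your (a)--(c) do establish the asserted bijection.
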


Usually,
we assume that the homomorphism $\alpha: \DOp(C)\rightarrow C$
vanishes on $C\subset\DOp(C)$.
In this case,
we obtain:

\begin{prop}\label{CooperadCoalgebras:TwistingCofreeCoderivations}
Let $\alpha: \DOp(C)\rightarrow C$
be a homomorphism of degree $-1$
such that $\alpha|_C = 0$.

A $\DOp$-coderivation of degree $-1$
\begin{equation*}
\partial_{\alpha}: \DOp(C)\rightarrow\DOp(C)
\end{equation*}
satisfies the equation of twisting homomorphisms $\delta(\partial_{\alpha}) + \partial_{\alpha}^2 = 0$,
so that the pair $(\DOp(C),\partial_{\alpha})$ defines a quasi-cofree coalgebra,
if and only if the homomorphism $\alpha: \DOp(C)\rightarrow C$
satisfies the relation
\begin{equation*}
\delta(\alpha)
\left\{\vcenter{\xymatrix@H=6pt@W=3pt@M=2pt@!R=1pt@!C=1pt{ c_1\ar[dr]\ar@{.}[r] & \cdots\ar@{.}[r]\ar@{}[d]|{\displaystyle{\cdots}} & c_n\ar[dl] \\
\ar@{.}[r] & *+<8pt>[F]{\gamma}\ar[d]\ar@{.}[r] & \\
\ar@{.}[r] & 0\ar@{.}[r] & }}\right\}
+ \sum_{\substack{\tau\in\Theta_2(n)\\ \rho_{\tau}(\gamma)}}
\pm\alpha\left\{\vcenter{\xymatrix@H=6pt@W=4pt@M=2pt@R=8pt@C=4pt{ &&
\save [].[rrd]!C *+<4pt>[F-,]\frm{}*+<6pt>\frm{\{}*+<0pt>\frm{\}} \restore\ar@{}[]!L-<8pt,0pt>;[d]!L-<8pt,0pt>_{\displaystyle\alpha}
c_*\ar[dr]\ar@{.}[r] & \cdots\ar@{.}[r] & c_*\ar[dl] && \\
c_*\ar@{.}[r]\ar@/_6pt/[drrr] & \cdots && *+<6pt>[F]{\gamma_*}\ar@{.}[r]\ar@{.}[l]\ar[d] && \cdots\ar@{.}[r] & c_*\ar@/^6pt/[dlll] \\
\ar@{.}[rrr] &&& *+<6pt>[F]{\gamma_*}\ar[d]\ar@{.}[rrr] &&& \\
\ar@{.}[rrr] &&& 0\ar@{.}[rrr] &&& }}\right\} = 0
\end{equation*}
for every element of~$\DOp(C)$.
\end{prop}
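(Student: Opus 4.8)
The statement is the comonadic dual of the twisting-homomorphism criteria for operads (Proposition~\ref{QuasiFreeOperads:TwistingDerivationConstruction}) and for algebras over an operad (Proposition~\ref{OperadAlgebras:TwistingDerivationConstruction}), and the plan is to imitate those proofs by reducing an identity between $\DOp$-coderivations of the cofree coalgebra $\DOp(C)$ to an identity between their corestrictions along the canonical projection $\epsilon\colon\DOp(C)\rightarrow C$. Recall from~\S\ref{CooperadCoalgebras:QuasiCofreeDefinition} that $(\DOp(C),\partial_\alpha)$ is a quasi-cofree $\DOp$-coalgebra precisely when the coderivation $\partial_\alpha$ makes $\delta+\partial_\alpha$ a square-zero differential, that is, when $\delta(\partial_\alpha)+\partial_\alpha^2=0$ in $\Hom_{\C}(\DOp(C),\DOp(C))$; this is the identity to be analyzed.

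First I would check that $\delta(\partial_\alpha)+\partial_\alpha^2$ is again a $\DOp$-coderivation. The term $\delta(\partial_\alpha)$ is the graded commutator of the two coderivations $\delta$ and $\partial_\alpha$, hence a coderivation. For $\partial_\alpha^2$ one cannot invoke the Lie-bracket relation $[\partial_\alpha,\partial_\alpha]=2\,\partial_\alpha^2$, since dividing by $2$ is illegitimate in the presence of $2$-torsion (compare the remark following Proposition~\ref{OperadAlgebras:TwistingDerivationConstruction}). Instead I would verify the coderivation relation for $\partial_\alpha^2$ directly, expanding the coproduct of $\partial_\alpha^2(-)$ by means of the explicit formula of Proposition~\ref{CooperadCoalgebras:CofreeCoderivations}: the ``cross'' contributions, in which the two copies of $\partial_\alpha$ act on distinct tensor factors of a tree tensor, occur in pairs differing by the sign $(-1)^{(\deg\partial_\alpha)^2}=-1$ and therefore cancel, leaving only the single-factor terms required by the coderivation relation. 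This sign bookkeeping on tensors arranged over two-vertex trees — tracking the suspension-type commutations of the degree $-1$ homomorphism $\alpha$ past the factors $\gamma_*$ — is the one place where care is needed, and I expect it to be the main obstacle.

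Granting that $\delta(\partial_\alpha)+\partial_\alpha^2$ is a coderivation, Proposition~\ref{CooperadCoalgebras:CofreeCoderivations} gives that it vanishes if and only if its composite with $\epsilon$ vanishes. I would then compute this composite. Since $\epsilon$ commutes with internal differentials and $\epsilon\circ\partial_\alpha=\alpha$, one gets $\epsilon\circ\delta(\partial_\alpha)=\delta\alpha+\alpha\delta=\delta(\alpha)$ in $\Hom_{\C}(\DOp(C),C)$ and $\epsilon\circ\partial_\alpha^2=\alpha\circ\partial_\alpha$. Evaluating $\alpha\circ\partial_\alpha$ on a generator $\gamma(c_1,\dots,c_n)\in\DOp(C)$ through the explicit formula of Proposition~\ref{CooperadCoalgebras:CofreeCoderivations}: the terms $\gamma(c_1,\dots,\alpha(c_i),\dots,c_n)$ of the first sum are killed by the hypothesis $\alpha|_C=0$ (the labels $c_i$ lie in $C$), so only the terms indexed by two-vertex trees $\tau\in\Theta_2(n)$ survive, and applying $\alpha$ to them reproduces exactly the double sum appearing in the statement. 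Since $\DOp(C)$ is spanned by such generators, this shows that $\epsilon\circ\bigl(\delta(\partial_\alpha)+\partial_\alpha^2\bigr)$ is precisely the homomorphism $\DOp(C)\rightarrow C$ given by the left-hand side of the asserted relation, and the equivalence — in both directions at once — follows.
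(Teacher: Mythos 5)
Your proof is correct. The paper leaves this proposition as an exercise, and your argument — checking that $\delta(\partial_{\alpha})+\partial_{\alpha}^2$ is itself a $\DOp$-coderivation and then reducing its vanishing, via the bijection of Proposition~\ref{CooperadCoalgebras:CofreeCoderivations}, to the vanishing of the corestriction $\epsilon\circ(\delta(\partial_{\alpha})+\partial_{\alpha}^2)=\delta(\alpha)+\alpha\circ\partial_{\alpha}$, which on generators is exactly the displayed relation once $\alpha|_C=0$ kills the single-factor terms — is precisely the intended one, dual to Propositions~\ref{QuasiFreeOperads:TwistingDerivationConstruction} and~\ref{OperadAlgebras:TwistingDerivationConstruction}. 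Your direct pairwise-cancellation argument for why the square of the odd coderivation $\partial_{\alpha}$ is again a coderivation, avoiding the division by $2$ that the Lie-bracket shortcut would require, correctly addresses the only point where the torsion hypotheses of the paper make a genuine difference.
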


\begin{proof}Exercise.\end{proof}

Then we have the following result:

\begin{prop}[{See~\cite[Proposition 2.15]{GetzlerJones}}]\label{CooperadCoalgebras:CobarOperadAlgebras}
A morphism of dg-operads $\phi: B^c(\DOp)\rightarrow\End_A$
is equivalent
to a map $\alpha: \DOp(A)\rightarrow A$
which satisfies the equation of paragraph~\ref{CooperadCoalgebras:TwistingCofreeCoderivations}
and such that the restriction $\alpha|_A$ vanishes.
\end{prop}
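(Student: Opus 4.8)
The plan is to deduce the statement from the characterization of morphisms on quasi-free operads (Proposition~\ref{QuasiFreeOperads:MorphismConstruction}), together with the hom-tensor adjunction that defines the endomorphism operad $\End_A$. By definition $B^c(\DOp) = (\FOp(\Sigma^{-1}\tilde{\DOp}),\partial_{\beta})$ is a quasi-free operad, so Proposition~\ref{QuasiFreeOperads:MorphismConstruction} — equivalently, the bijection $\Mor_{\Op}(B^c(\DOp),\End_A)\simeq\Tw_{\Op}(\DOp,\End_A)$ recalled in~\S\ref{CobarConstruction:TwistingCochains} — identifies an operad morphism $\phi\colon B^c(\DOp)\rightarrow\End_A$ with a twisting cochain: a homomorphism of $\Sigma_*$-objects of degree $-1$, $\theta\colon\tilde{\DOp}\rightarrow\End_A$, extended by $\theta(1) = 0$ on $\DOp(1) = \kk$, subject to equation~(*) of~\S\ref{CobarConstruction:TwistingCochains}.

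Next I would dualize the datum $\theta$ across the adjunction. Since $\End_A(n) = \Hom_{\C}(A^{\otimes n},A)$, the adjunction relation $\Mor_{\C}(K\otimes A^{\otimes n},A)\simeq\Mor_{\C}(K,\Hom_{\C}(A^{\otimes n},A))$, which preserves degrees and $\Sigma_n$-actions, turns a degree $-1$ collection of $\Sigma_n$-equivariant maps $\theta\colon\DOp(n)\rightarrow\End_A(n)$ into a degree $-1$ collection of $\Sigma_n$-equivariant maps $\DOp(n)\otimes A^{\otimes n}\rightarrow A$; using the coinvariant expansion $\DOp(A) = S(\DOp,A) = \bigoplus_n(\DOp(n)\otimes A^{\otimes n})_{\Sigma_n}$ and the splitting $\DOp(A) = A\oplus S(\tilde{\DOp},A)$, these assemble to a single degree $-1$ homomorphism $\alpha\colon\DOp(A)\rightarrow A$ characterized by $\alpha(\gamma(a_1,\dots,a_n)) = \theta(\gamma)(a_1,\dots,a_n)$. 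The normalization $\theta(1) = 0$ on $\DOp(1)$ is exactly the condition $\alpha|_A = 0$. So, before imposing any equation, $\theta\mapsto\alpha$ is a bijection between twisting-cochain-type data and degree $-1$ maps $\alpha\colon\DOp(A)\rightarrow A$ with $\alpha|_A = 0$.

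It then remains to check that under this bijection equation~(*) of~\S\ref{CobarConstruction:TwistingCochains} becomes the equation of paragraph~\ref{CooperadCoalgebras:TwistingCofreeCoderivations}. Equation~(*) is an identity of maps $A^{\otimes n}\rightarrow A$, hence holds if and only if it holds after evaluation on every tensor $a_1\otimes\cdots\otimes a_n$. On the left, $\delta(\theta)$ evaluated on such a tensor unwinds, via the commutator differential of $\End_A$ and the internal differential of $\tilde{\DOp}$, into $\delta(\alpha)$ evaluated on the element $\gamma(a_1,\dots,a_n)\in\DOp(A)$, which is the first term of the~\S\ref{CooperadCoalgebras:TwistingCofreeCoderivations} equation. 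On the right, $\phi_{\theta}\cdot\beta$ is by construction the composite of $\beta$ (the two-vertex cooperad coproduct $\gamma\mapsto\sum\gamma_*\otimes\gamma_*$), the application $\tau(\theta)$ of $\theta$ to the two surviving factors, and the tree composition product $\lambda_*$ of $\END_A$ — but $\lambda_*$ in an endomorphism operad is literally composition of multilinear maps, so evaluating on $a_1\otimes\cdots\otimes a_n$ produces precisely $\sum_{\tau\in\Theta_2(n),\,\rho_{\tau}(\gamma)}\pm\,\alpha\{\gamma_{\mathrm{bottom}}(\dots,\alpha(\gamma_{\mathrm{top}}(a_*,\dots)),\dots)\}$, the nested term of the~\S\ref{CooperadCoalgebras:TwistingCofreeCoderivations} equation (the unital vertices inserted to make a genuine element of $\DOp\circ\DOp$ contribute nothing once $\alpha|_A = 0$, since they act as identities on the corresponding inputs of $A$). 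Finally, the signs in the two formulas agree because in each case they are generated by commuting one and the same degree $-1$ symbol — the desuspension $\sigma$, equivalently the homomorphism $\theta$, equivalently the differential — past the tree factors preceding it.

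The main obstacle is this last step: carrying out the sign bookkeeping cleanly and making the graphical identifications (two-vertex coproduct, insertion of unital vertices, operadic composition $\lambda_*$ in $\End_A$) precise enough to see that the two displayed equations coincide term by term. Conceptually, however, the proposition is nothing more than the conjunction of Proposition~\ref{QuasiFreeOperads:MorphismConstruction} with the hom-tensor adjunction defining $\End_A$; once the dictionary ``twisting cochain $\leftrightarrow$ adjoint map $\alpha$; operadic composition in $\End_A$ $\leftrightarrow$ composition of maps'' is in place, the verification is a direct, if tedious, unwinding, and in combination with Proposition~\ref{CooperadCoalgebras:TwistingCofreeCoderivations} it yields the intended correspondence between operad morphisms $B^c(\DOp)\rightarrow\End_A$ and quasi-cofree $\DOp$-coalgebra structures on $\DOp(A)$.
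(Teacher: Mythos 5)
Your proposal is correct and follows essentially the same route as the paper's own proof: identify $\phi$ with a twisting cochain $\theta\colon\tilde{\DOp}\rightarrow\End_A$ via Proposition~\ref{QuasiFreeOperads:MorphismConstruction}, transpose $\theta$ through the hom-tensor adjunction defining $\End_A$ to obtain $\alpha\colon\DOp(A)\rightarrow A$ with $\alpha|_A=0$, and check that the twisting cochain equation of~\S\ref{CobarConstruction:TwistingCochains} transcribes into the equation of~\S\ref{CooperadCoalgebras:TwistingCofreeCoderivations}. The paper dispatches the final verification with ``we see immediately,'' whereas you spell out the sign and unital-vertex bookkeeping, but the argument is the same.
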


\begin{proof}
In~\S\ref{CobarConstruction:TwistingCochains},
we recall that an operad morphism $\phi_{\theta}: B^c(\DOp)\rightarrow\End_A$
is uniquely determined by a twisting cochain $\theta: \tilde{\DOp}\rightarrow\End_A$.
By adjunction,
the collection of $\Sigma_*$-equivariant homomorphisms of degree $-1$
\begin{equation*}
\tilde{\DOp}(r)\xrightarrow{\theta}\End_A(r) = \Hom_{\C}(A^{\otimes r},A)
\end{equation*}
underlying the twisting cochain
is equivalent to a dg-module homomorphism of degree $-1$
\begin{equation*}
\alpha: \DOp(A)\rightarrow A
\end{equation*}
such that $\alpha|_A = 0$.
Basically,
the homomorphism $\alpha$ is defined by the expression
\begin{equation*}
\alpha\left\{\vcenter{\xymatrix@H=6pt@W=3pt@M=2pt@!R=1pt@!C=1pt{ a_1\ar[dr]\ar@{.}[r] & \cdots\ar@{.}[r]\ar@{}[d]|{\displaystyle{\cdots}} & a_n\ar[dl] \\
\ar@{.}[r] & *+<8pt>[F]{\gamma}\ar@{.}[r]\ar[d] & \\
\ar@{.}[r] & 0\ar@{.}[r] & }}\right\}
= \theta(\gamma)(a_1,\dots,a_n),
\end{equation*}
for every element of~$\DOp(A)$,
where we apply the homomorphism $\theta(\gamma)\in\Hom_{\C}(A^{\otimes n},A)$
associated to $\gamma\in\DOp(n)$
to the input labels $a_1,\dots,a_n\in A$.

We see immediately that the equation of a twisting cochain,
represented in~\S\ref{CobarConstruction:TwistingCochains},
is equivalent to the equation of lemma~\ref{CooperadCoalgebras:TwistingCofreeCoderivations}.
The conclusion follows.
\end{proof}

Thus a quasi-cofree coalgebra $\Gamma_{\POp}(A) = (\DOp(A),\partial_{\alpha})$
is naturally associated to any $B^c(\DOp)$-algebra $A$.
We have further:

\begin{prop}\label{CooperadCoalgebras:CobarOperadAlgebraMorphisms}
Let $(A,\phi)$ and $(B,\psi)$
be $B^c(\DOp)$-algebras
with structure morphisms $\phi: B^c(\DOp)\rightarrow\End_A$
and $\psi: B^c(\DOp)\rightarrow\End_B$.
A morphism of dg-modules $f: A\rightarrow B$
defines a morphism of $B^c(\DOp)$-algebras $f: (A,\phi)\rightarrow(B,\psi)$
if and only if the coalgebra morphism $\DOp(f): \DOp(A)\rightarrow\DOp(B)$
induced by $f$
defines a morphism
\begin{equation*}
(\DOp(A),\partial_{\alpha})\xrightarrow{\DOp(f)}(\DOp(B),\partial_{\beta}),
\end{equation*}
between the quasi-cofree $\DOp$-coalgebras associated to $(A,\phi)$ and $(B,\psi)$.
\end{prop}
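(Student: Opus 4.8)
The plan is to reduce both implications of the stated equivalence to one and the same identity of homomorphisms $\DOp(A)\to B$, using the correspondences recalled in~\S\ref{CobarConstruction} and~\S\ref{CooperadCoalgebras} together with the universal property of cofree coalgebras.

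First I would unwind the algebra side. Since $B^c(\DOp)$ has the free operad $\FOp(\Sigma^{-1}\tilde\DOp)$ as underlying operad, the action of an arbitrary operation on an algebra is a composite of the (trivial) action of the operad unit and of the actions of generators $\sigma\otimes\gamma\in\Sigma^{-1}\tilde\DOp(n)$; hence $f\colon A\to B$ is a morphism of $B^c(\DOp)$-algebras if and only if it intertwines the action of these generators. Through the correspondence of Proposition~\ref{CooperadCoalgebras:CobarOperadAlgebras}, which identifies the structure morphism $\phi = \phi_\theta$ with the homomorphism $\alpha\colon\DOp(A)\to A$ characterized by $\alpha(\gamma(a_1,\dots,a_n)) = \theta(\gamma)(a_1,\dots,a_n)$, and using that $\DOp(f)$ carries $\gamma(a_1,\dots,a_n)$ to $\gamma(f(a_1),\dots,f(a_n))$, this condition translates into the single relation
\begin{equation*}
f\cdot\alpha = \beta\cdot\DOp(f)
\end{equation*}
between homomorphisms $\DOp(A)\to B$, where $\alpha$ and $\beta$ are the homomorphisms classifying the $B^c(\DOp)$-algebra structures of $(A,\phi)$ and $(B,\psi)$ --- that is, the data of the quasi-cofree coalgebras $(\DOp(A),\partial_\alpha)$ and $(\DOp(B),\partial_\beta)$ of the statement. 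Here the signs entering the translation between the degree~$0$ and degree~$-1$ pictures depend only on degrees, which $f$ preserves, so no sign issue arises at this stage.

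Next I would examine the coalgebra side. The morphism $\DOp(f)$ is always a morphism of cofree $\DOp$-coalgebras, by naturality of the cofree coalgebra functor, and it commutes with the internal differentials of $\DOp(A)$ and $\DOp(B)$ because $f$ is a morphism of dg-modules; hence $\DOp(f)$ defines a morphism of quasi-cofree coalgebras $(\DOp(A),\partial_\alpha)\to(\DOp(B),\partial_\beta)$ if and only if $\DOp(f)\cdot\partial_\alpha = \partial_\beta\cdot\DOp(f)$. The crucial observation is that both composites are coderivations of $\DOp(A)$ valued in the cofree coalgebra $\DOp(B)$, relative to the coalgebra morphism $\DOp(f)$: the left-hand side because $\partial_\alpha$ is a coderivation of $\DOp(A)$ and $\DOp(f)$ is a coalgebra morphism, the right-hand side because $\partial_\beta$ is a coderivation of $\DOp(B)$ and $\DOp(f)$ is a coalgebra morphism. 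A relative version of the uniqueness part of Proposition~\ref{CooperadCoalgebras:CofreeCoderivations}, obtained by viewing $\DOp(B)$ as a bicomodule over $\DOp(A)$ along $\DOp(f)$, shows that such a relative coderivation $D\colon\DOp(A)\to\DOp(B)$ is determined by its composite $\epsilon_B\cdot D\colon\DOp(A)\to B$ with the projection onto the cogenerators. Since $\epsilon_B\cdot\DOp(f) = f\cdot\epsilon_A$ and $\epsilon_A\cdot\partial_\alpha = \alpha$, $\epsilon_B\cdot\partial_\beta = \beta$ (Proposition~\ref{CooperadCoalgebras:CofreeCoderivations}), the projection of $\DOp(f)\cdot\partial_\alpha$ is $f\cdot\alpha$ and that of $\partial_\beta\cdot\DOp(f)$ is $\beta\cdot\DOp(f)$. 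Therefore $\DOp(f)\cdot\partial_\alpha = \partial_\beta\cdot\DOp(f)$ is equivalent to $f\cdot\alpha = \beta\cdot\DOp(f)$, i.e.\ to $f$ being a morphism of $B^c(\DOp)$-algebras, which is the assertion.

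The step I expect to be the main obstacle is the relative coderivation statement, together with its sign bookkeeping: one must verify that a homomorphism $\DOp(A)\to\DOp(B)$ satisfying the Leibniz relation along $\DOp(f)$ is recovered from its component on the cogenerators, with the signs inherited from the desuspension in $B^c(\DOp)$ matching on the two sides. This follows by dualizing the uniqueness argument behind Proposition~\ref{CooperadCoalgebras:CofreeCoderivations} to the relative setting; alternatively it can be sidestepped by comparing $\DOp(f)\cdot\partial_\alpha$ and $\partial_\beta\cdot\DOp(f)$ term by term from the explicit two-part formula of Proposition~\ref{CooperadCoalgebras:CofreeCoderivations}, at the price of a longer but wholly routine computation. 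Everything else in the argument is formal, resting only on naturality of the cofree coalgebra functor and on the correspondences already established in the section.
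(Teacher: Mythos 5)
Your proposal is correct, and since the paper disposes of this proposition with the single word ``Exercise,'' your argument supplies exactly the intended proof: both conditions reduce to the identity $f\cdot\alpha=\beta\cdot\DOp(f)$ of homomorphisms $\DOp(A)\rightarrow B$, the algebra side because $B^c(\DOp)$ is generated as an operad by $\Sigma^{-1}\tilde{\DOp}$ so that intertwining the generators suffices, the coalgebra side because $\DOp(f)\cdot\partial_{\alpha}$ and $\partial_{\beta}\cdot\DOp(f)$ are both coderivations along the coalgebra morphism $\DOp(f)$ with values in a cofree coalgebra and hence are determined by their corestrictions $f\cdot\alpha$ and $\beta\cdot\DOp(f)$. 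The relative-coderivation uniqueness you single out as the delicate step does follow by the same argument as Proposition~\ref{CooperadCoalgebras:CofreeCoderivations} (or, as you note, by direct comparison of the explicit formulas), so the proof is complete as written.
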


\begin{proof}Exercise.\end{proof}

Thus the category of $B^c(\DOp)$-algebras is equivalent to an explicit subcategory
of the category of $\DOp$-coalgebras.
The purpose of the next statement is to determine the set of all morphisms
between the quasi-cofree $\DOp$-coalgebras.
From these statements,
we see that the category of $B^c(\DOp)$-algebras
is not at all equivalent to the full subcategory generated by quasi-cofree $\DOp$-coalgebras,
but we aim to prove in the next subsection
that every morphism of quasi-cofree $\DOp$-coalgebras
defines a morphism in the homotopy category of $B^c(\DOp)$-algebras (whenever this notion makes sense).

First,
we have the following observation which is an extension of the universal
property of cofree coalgebras:

\begin{obsv}\label{CooperadCoalgebras:CofreeMorphisms}
The homomorphisms $\phi: \DOp(A)\rightarrow\DOp(B)$ of degree $0$
and commuting with coalgebra structures
are in bijection with homomorphisms of dg-modules $f: \DOp(A)\rightarrow B$.
The homomorphism $f$ associated to $\phi$
is given by the composite of $\phi: \DOp(A)\rightarrow\DOp(B)$
with the universal morphism $\epsilon: \DOp(B)\rightarrow B$.

Conversely,
the homomorphism associated to $f$,
for which we adopt the notation $\phi = \phi_{f}$,
is determined by the formula
\begin{equation*}\phi_f\left\{\vcenter{\xymatrix@M=3pt@H=4pt@W=6pt@R=8pt@C=6pt{ a_1\ar[dr]\ar@{.}[r] & \cdots\ar@{.}[r] & a_n\ar[dl] \\
\ar@{.}[r] & *+<4pt>[F]{\gamma}\ar@{.}[r]\ar[d] & \\
\ar@{.}[r] & 0\ar@{.}[r] & }}\right\}
= \quad\sum_{\nu(\gamma)}
\left\{\vcenter{\xymatrix@M=3pt@H=4pt@W=6pt@R=8pt@C=6pt{ &
\save [].[rrdd]!C *+<6pt>[F-,]\frm{}*+<6pt>\frm{\{}*+<0pt>\frm{\}} \restore \ar@{}[]!DL-<8pt,0pt>;[dd]!UL-<8pt,0pt>_{\displaystyle f} a_*\ar[dr]\ar@{.}[r] &
\cdots\ar@{.}[r] & a_*\ar[dl] &
&
\save [].[rrdd]!C *+<6pt>[F-,]\frm{}*+<6pt>\frm{\{}*+<0pt>\frm{\}} \restore \ar@{}[]!DL-<8pt,0pt>;[dd]!UL-<8pt,0pt>_{\displaystyle f} a_*\ar[dr]\ar@{.}[r] &
\cdots\ar@{.}[r] & a_*\ar[dl] & \\
&
& *+<4pt>[F]{\gamma_*}\ar@{.}[r]\ar@{.}[l]\ar@/_6pt/[ddrr] &
& &
& *+<4pt>[F]{\gamma_*}\ar@{.}[r]\ar@{.}[l]\ar@/^6pt/[ddll] & & \\
\ar@{.}[r] && & \ar@{.}[ll] & \cdots\ar@{.}[r]\ar@{.}[l] & \ar@{.}[rr] & && \ar@{.}[l] \\
\ar@{.}[rrrr] &&&& *+<4pt>[F]{\gamma_*}\ar@{.}[rrrr]\ar[d] &&&& \\
\ar@{.}[rrrr] &&&& 0\ar@{.}[rrrr] &&&& }}\right\},
\end{equation*}
where we form the coproduct $\nu: \DOp\rightarrow\DOp\circ\DOp$ of the element~$\gamma\in\DOp$.
\end{obsv}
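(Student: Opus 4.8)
The plan is to derive this from the universal property of the cofree $\DOp$-coalgebra $\DOp(B) = S(\DOp,B)$ over the comonad $S(\DOp)$ recalled in~\S\ref{CooperadCoalgebras}, observing that this universal property is really a statement about the underlying graded modules and therefore extends verbatim from morphisms of dg-modules to homomorphisms of degree~$0$. The general principle I would invoke is that, for any coalgebra $(\Gamma,\rho_\Gamma)$ over the comonad $S(\DOp)$, a homomorphism $\phi\colon\Gamma\to\DOp(B)$ commuting with coproducts is carried to $f := \epsilon_B\circ\phi\colon\Gamma\to B$ by post-composition with the counit $\epsilon$, while a homomorphism $f\colon\Gamma\to B$ is carried to
\begin{equation*}
\phi_f\colon\Gamma\xrightarrow{\rho_\Gamma} S(\DOp,\Gamma)\xrightarrow{S(\DOp,f)} S(\DOp,B) = \DOp(B).
\end{equation*}
Specializing $\Gamma = \DOp(A)$ with its cofree coalgebra structure, whose coproduct is the map induced by $S(\nu,A)$ under the identification $S(\DOp\circ\DOp,A)\simeq S(\DOp,S(\DOp,A))$, and unwinding the composite $S(\DOp,f)\circ S(\nu,A)$ on a tree tensor $\gamma(a_1,\dots,a_n)$ — that is, forming the coproduct $\nu(\gamma)$, applying $f$ to each of the top-level tree tensors produced, and relabelling the bottom vertex — gives exactly the displayed formula for $\phi_f$.

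It then remains to check that these two assignments are mutually inverse, which is formal. In one direction, $\epsilon_B\circ\phi_f = \epsilon_B\circ S(\DOp,f)\circ\rho_{\DOp(A)} = f\circ\epsilon_{\DOp(A)}\circ\rho_{\DOp(A)} = f$, by naturality of $\epsilon$ and the counit relation $\epsilon_{\DOp(A)}\circ\rho_{\DOp(A)} = \id$ of the comodule $\DOp(A)$; in the other, for $\phi$ commuting with coproducts and $f = \epsilon_B\circ\phi$ one has $\phi_f = S(\DOp,\epsilon_B)\circ S(\DOp,\phi)\circ\rho_{\DOp(A)} = S(\DOp,\epsilon_B)\circ\rho_{\DOp(B)}\circ\phi = \phi$, where the middle step is the hypothesis on $\phi$ and the last is the remaining counit relation $S(\DOp,\epsilon_B)\circ\rho_{\DOp(B)} = \id$ for the cofree coalgebra $\DOp(B)$. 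The same bookkeeping, using naturality of the coproduct of $S(\DOp)$ and coassociativity of $\nu$, shows that $\phi_f$ does commute with coproducts, so that it lands in the asserted target; well-definedness of the graphical formula on coinvariant tensors is automatic since $\phi_f$ is assembled from the honest maps $S(\nu,A)$ and $S(\DOp,f)$.

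None of these computations uses that $f$ or $\phi$ commutes with the internal differential: the functor $S(\DOp,-)$ is additive and sends a degree-$0$ homomorphism to a degree-$0$ homomorphism, and the counit and coproduct of the comonad $S(\DOp)$ are honest morphisms of dg-modules, so the correspondence is defined on the underlying graded modules and restricts the classical universal property for coalgebra morphisms (the case of degree-$0$ homomorphisms annihilated by $\delta$) to the sub-bijection asserted here. The only point requiring genuine care is sign bookkeeping: one must check that the Koszul signs generated when $f$ is slid past the input labels $a_*$ in the expansion of $\phi_f$ are the ones prescribed by the conventions of~\S\ref{Background:Signs}, and that they are consistent with the $\pm$ appearing in the coderivation formulas of~\S\ref{CooperadCoalgebras:QuasiCofreeDefinition}. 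I expect this to be the main — but entirely routine — obstacle, and it is handled exactly as in the dual discussion of morphisms out of free algebras over an operad in~\S\ref{OperadAlgebras:FreeAlgebras}.
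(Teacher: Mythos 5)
Your argument is correct and is exactly the route the paper intends: the observation is stated there without proof, described only as ``an extension of the universal property of cofree coalgebras'', and your derivation of the bijection from the counit and coassociativity relations of the comonad $S(\DOp)$, applied to degree-$0$ homomorphisms and then unwound on tree tensors, supplies precisely the details the paper omits. The only refinement worth noting is that your closing worry about Koszul signs is vacuous here: since $f$ has degree $0$, sliding it past the input labels produces no signs, which is consistent with the absence of any $\pm$ in the displayed formula.
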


We have then:

\begin{prop}\label{CooperadCoalgebras:QuasiFreeMorphisms}
The homomorphism of cofree coalgebras $\phi_f: \DOp(A)\rightarrow\DOp(B)$
associated to a homomorphism $f: A\rightarrow B$
defines a morphism between quasi-cofree coalgebras
\begin{equation*}
(\DOp(A),\partial_{\alpha})\rightarrow(\DOp(B),\partial_{\beta})
\end{equation*}
if and only if we have the identity
\begin{multline*}
\delta(f)\left\{\vcenter{\xymatrix@M=4pt@H=4pt@W=6pt@R=8pt@C=8pt{ a_1\ar[dr]\ar@{.}[r] & \cdots\ar@{.}[r] & a_n\ar[dl] \\
\ar@{.}[r] & *+<4pt>[F]{\gamma}\ar[d]\ar@{.}[r] & \\
\ar@{.}[r] & 0\ar@{.}[r] & }}\right\}
- \sum_{\substack{\tau\in\Theta_2(n)\\ \rho_{\tau}(\gamma)}}
\pm f\left\{\vcenter{\xymatrix@H=6pt@W=4pt@M=2pt@R=8pt@C=4pt{ &&
\save [].[rrd]!C *+<6pt>[F-,]\frm{}*+<6pt>\frm{\{}*+<0pt>\frm{\}} \restore\ar@{}[]!L-<8pt,0pt>;[d]!L-<8pt,0pt>_{\displaystyle\alpha}
a_*\ar[dr]\ar@{.}[r] & \cdots\ar@{.}[r] & a_*\ar[dl] && \\
a_*\ar@{.}[r]\ar@/_6pt/[drrr] & \cdots\ar@{.}[r] && *+<6pt>[F]{\gamma_*}\ar[d]\ar@{.}[r]\ar@{.}[l] && \cdots\ar@{.}[r]\ar@{.}[l]  & a_*\ar@/^6pt/[dlll] \\
\ar@{.}[rrr] &&& *+<6pt>[F]{\gamma_*}\ar[d]\ar@{.}[rrr] &&& \\
\ar@{.}[rrr] &&& 0\ar@{.}[rrr] &&& }}\right\} \\
+ \sum_{\nu(\gamma)}\beta\left\{\vcenter{\xymatrix@M=4pt@H=4pt@W=6pt@R=8pt@C=8pt{ &
\save [].[rrdd]!C *+<6pt>[F-,]\frm{}*+<6pt>\frm{\{}*+<0pt>\frm{\}}\restore \ar@{}[]!DL-<8pt,0pt>;[dd]!UL-<8pt,0pt>_{\displaystyle f} a_*\ar[dr]\ar@{.}[r] &
\cdots\ar@{.}[r] & a_*\ar[dl] &
&
\save [].[rrdd]!C *+<6pt>[F-,]\frm{}*+<6pt>\frm{\{}*+<0pt>\frm{\}} \restore \ar@{}[]!DL-<8pt,0pt>;[dd]!UL-<8pt,0pt>_{\displaystyle f} a_*\ar[dr]\ar@{.}[r] &
\cdots\ar@{.}[r] & a_*\ar[dl] & \\
&
& *+<4pt>[F]{\gamma_*}\ar@{.}[r]\ar@{.}[l]\ar@/_6pt/[ddrr] &
& &
& *+<4pt>[F]{\gamma_*}\ar@{.}[r]\ar@{.}[l]\ar@/^6pt/[ddll] & & \\
\ar@{.}[r] & \ar@{.}[rr] & & \ar@{.}[r] & \cdots\ar@{.}[r] && & \ar@{.}[ll] & \ar@{.}[l] \\
\ar@{.}[rrrr] &&&& *+<4pt>[F]{\gamma_*}\ar@{.}[rrrr]\ar[d] &&&& \\
\ar@{.}[rrrr] &&&& 0\ar@{.}[rrrr] &&&& }}\right\} = 0
\end{multline*}
for every element of~$\DOp(A)$.
\end{prop}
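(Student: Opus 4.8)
The plan is to reduce the equivalence to the vanishing of a single homomorphism, to recognize that homomorphism as a coderivation relative to $\phi_f$, and to test its vanishing after composition with the universal projection $\epsilon\colon\DOp(B)\to B$. By observation~\ref{CooperadCoalgebras:CofreeMorphisms} the homomorphism $\phi_f$ intertwines the coproducts of the cofree coalgebras $\DOp(A)$ and $\DOp(B)$; since passing to a quasi-cofree coalgebra modifies the differential but not the coproduct, $\phi_f$ defines a morphism of quasi-cofree coalgebras $(\DOp(A),\partial_{\alpha})\to(\DOp(B),\partial_{\beta})$ precisely when it commutes with the twisted differentials, that is, precisely when
\[
D\ :=\ (\delta+\partial_{\beta})\cdot\phi_f-\phi_f\cdot(\delta+\partial_{\alpha})\colon\ \DOp(A)\longrightarrow\DOp(B)
\]
is the zero homomorphism. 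This gives the ``only if'' implication at once, since $D=0$ forces $\epsilon\cdot D=0$, and $\epsilon\cdot D$ is exactly the left-hand side of the displayed identity once unwound (see below). So only the converse remains, and for it the key is that $D$ is a $\phi_f$-coderivation whose vanishing is detected by $\epsilon\cdot D$.

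First I would check that $D$ is a $\phi_f$-coderivation, in the sense that the composite of $D$ with the coproduct of $\DOp(B)$ equals the sum, over the factors of the cofree coproduct of a given element, of the terms obtained by applying $\phi_f$ to all factors but one and $D$ to the remaining one. This follows from a short diagram chase: the internal differential of any $\DOp$-coalgebra commutes with its coproduct, hence is a coderivation; $\partial_{\alpha}$ and $\partial_{\beta}$ are coderivations by the definition of quasi-cofree coalgebras (propositions~\ref{CooperadCoalgebras:CofreeCoderivations}--\ref{CooperadCoalgebras:TwistingCofreeCoderivations}); and $\phi_f$ intertwines coproducts. One then verifies that each of $(\delta+\partial_{\beta})\cdot\phi_f$ and $\phi_f\cdot(\delta+\partial_{\alpha})$ satisfies the coderivation identity relative to $\phi_f$, and since that identity is additive in the homomorphism, so does their difference $D$. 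Next I would invoke the rigidity statement: a $\phi_f$-coderivation $\DOp(A)\to\DOp(B)$ valued in the \emph{cofree} coalgebra $\DOp(B)$ is uniquely determined by its composite with $\epsilon\colon\DOp(B)\to B$. This is the relative analogue of the correspondence of proposition~\ref{CooperadCoalgebras:CofreeCoderivations}, proved the same way — the coderivation identity together with the knowledge of $\epsilon\cdot D$ reconstructs every homogeneous component of $D$ in the summands $(\DOp(r)\otimes B^{\otimes r})_{\Sigma_r}$ by iterating the coproduct. In particular $D=0$ if and only if $\epsilon\cdot D=0$.

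It then remains to identify $\epsilon\cdot D$. The projection $\epsilon$ commutes with internal differentials, $\epsilon\cdot\phi_f=f$ (observation~\ref{CooperadCoalgebras:CofreeMorphisms}) and $\epsilon\cdot\partial_{\beta}=\beta$ (proposition~\ref{CooperadCoalgebras:CofreeCoderivations}), so that
\[
\epsilon\cdot D\ =\ \delta(f)+\beta\cdot\phi_f-f\cdot\partial_{\alpha},
\]
where $\delta(f)=\delta\cdot f-f\cdot\delta$ is the differential of the degree~$0$ homomorphism $f$. Evaluating this on a generic tree tensor of $\DOp(A)$, expanding $\phi_f$ by the formula of observation~\ref{CooperadCoalgebras:CofreeMorphisms} and $\partial_{\alpha}$ by the formula of proposition~\ref{CooperadCoalgebras:CofreeCoderivations} — the first sum in the latter dropping out because $\alpha$ vanishes on $A\subset\DOp(A)$ — reproduces exactly the three terms of the asserted identity, with the Koszul signs. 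Hence the asserted identity is equivalent to $\epsilon\cdot D=0$, hence to $D=0$, hence to $\phi_f$ being a morphism of quasi-cofree $\DOp$-coalgebras, which proves the ``if'' implication.

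The main obstacle is the pair of structural facts used in the second step: that $D$ is a $\phi_f$-coderivation and that $\phi_f$-coderivations into a cofree coalgebra are rigid. The second is really the heart of the matter — it is the twisted form of the (cited) correspondence between coderivations of $\DOp(C)$ and homomorphisms $\DOp(C)\to C$, now with the coalgebra morphism $\phi_f$ inserted on the ``all but one'' factors — and it is the only ingredient that is not pure bookkeeping; once it is available, everything else, including the matching of $\epsilon\cdot D$ with the displayed formula, is a routine sign-tracking computation.
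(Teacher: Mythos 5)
The paper offers no proof of this statement (it is left as an ``Exercise''), so there is no argument of the author's to compare against; your proposal is correct and is, I think, the natural way to carry the exercise out. The only genuinely substantive ingredient is the rigidity step, and it is worth noting that it follows in one line from the counit identity of the cofree coalgebra: since $S(\DOp,\epsilon)\circ\rho = \id$ on $\DOp(B)$, composing the $\phi_f$-coderivation identity for $D = (\delta+\partial_{\beta})\phi_f - \phi_f(\delta+\partial_{\alpha})$ with $S(\DOp,\epsilon)$ recovers $D$ as a sum of terms each of which contains a factor $\epsilon\cdot D$ (the remaining factors being $\epsilon\cdot\phi_f = f$), so $\epsilon\cdot D = 0$ forces $D = 0$. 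Two points of bookkeeping that you handle correctly but that deserve emphasis: first, in checking that $D$ is a $\phi_f$-coderivation, the term $\delta_{\DOp}\otimes\id$ arising from the commutation of $\rho$ with the internal differentials must be seen to cancel between the two halves of $D$ (it does, because it commutes with $S(\DOp,\phi_f)$, the map $\phi_f$ having degree $0$); second, the absence from the displayed identity of the ``first sum'' of proposition~\ref{CooperadCoalgebras:CofreeCoderivations} in the expansion of $f\cdot\partial_{\alpha}$ is only legitimate under the convention $\alpha|_A = 0$ stated in~\S\ref{CooperadCoalgebras:QuasiCofreeDefinition}, which you correctly invoke and which holds for all the quasi-cofree coalgebras the paper actually uses (proposition~\ref{CooperadCoalgebras:CobarOperadAlgebras}).
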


\begin{proof}Exercise.\end{proof}

\subsection{Bar duality and quasi-free replacements}\label{QuasiFreeReplacements}
Throughout this subsection,
we suppose given an operad $\POp$
together with a weak-equivalence $\phi_{\theta}: B^c(\DOp)\xrightarrow{\sim}\POp$
from the cobar construction of a (connected) cooperad to $\POp$.
Moreover,
we assume that $\POp$ is $\Sigma_*$-cofibrant,
as well as the cooperad $\DOp$.
Accordingly,
the category of $\POp$-algebras inherits a natural semi-model structure
from dg-modules.

Our aim is two-fold.

First we associate a quasi-free $\POp$-algebra $R_{\POp}(\Gamma)$
to any $\DOp$-coalgebra $\Gamma$.
We apply this construction to the quasi-cofree $\DOp$-coalgebras $\Gamma = (\DOp(A),\partial_{\alpha})$
associated to $\POp$-algebras $A$.
We prove that the composite construction $R_A = R_{\POp}(\DOp(A),\partial_{\alpha})$
defines a cofibrant replacement functor on the category of $\C$-cofibrant $\POp$-algebras.

Then we use the construction $R_A = R_{\POp}(\DOp(A),\partial_{\alpha})$
to prove that the morphisms of quasi-cofree $\DOp$-coalgebras
represent morphisms in the homotopy category of~$\POp$-algebras.

\subsubsection{Construction of quasi-free algebras}\label{QuasiFreeReplacements:QuasiFreeAlgebraConstruction}
Let $\Gamma$ be a $\DOp$-coalgebra.
Let $\omega$ be the composite
\begin{equation*}
\Gamma\xrightarrow{\rho} S(\DOp,\Gamma)\xrightarrow{S(\theta,\Gamma)} S(\POp,\Gamma) = \DOp(\Gamma),
\end{equation*}
where
$\rho$ refers to the coproduct of $\Gamma$
and
$\theta: \DOp\rightarrow\POp$
is the twisting cochain equivalent to the morphism $\phi_{\theta}: B^c(\DOp)\rightarrow\POp$.
In view of the representation of~\S\ref{CooperadCoalgebras:TreeRepresentation}
for the coproduct of an element $c\in\Gamma$,
the image of $c$ under $\omega: \Gamma\rightarrow\POp(\Gamma)$
is given by:
\begin{equation*}
\omega(c) = \sum_{\rho(c)}
\left\{\vcenter{\xymatrix@H=6pt@W=3pt@M=2pt@!R=1pt@!C=1pt{ c_*\ar[dr]\ar@{.}[r] & \cdots\ar@{.}[r]\ar@{}[d]|{\displaystyle{\cdots}} & c_*\ar[dl] \\
\ar@{.}[r] & *+<8pt>[F]{\theta(\gamma_*)}\ar@{.}[r]\ar[d] & \\
\ar@{.}[r] & 0\ar@{.}[r] & }}\right\}.
\end{equation*}

The quasi-free $\POp$-algebra associated to $\Gamma$
is defined by the pair $R_{\POp}(\Gamma) = (\POp(\Gamma),\partial_{\gamma})$,
where $\partial_{\gamma}: \POp(\Gamma)\rightarrow\POp(\Gamma)$
is the $\POp$-algebra derivation associated to the homomorphism $\gamma: \Gamma\rightarrow\POp(\Gamma)$.
Note simply that:

\begin{claim}\label{QuasiFreeReplacements:QuasiFreeStructureVerification}
The homomorphism $\omega: \Gamma\rightarrow\POp(\Gamma)$
verifies the equation $\delta(\omega) + \partial_{\omega}\cdot\omega = 0$
of proposition~\ref{OperadAlgebras:TwistingDerivationConstruction}.
\end{claim}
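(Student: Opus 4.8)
The plan is to reduce the identity $\delta(\omega) + \partial_\omega\cdot\omega = 0$ to the two structural facts we already have at hand: that $\theta$ is an operadic twisting cochain (the relation $\delta(\theta) = \phi_\theta\cdot\beta$ of~\S\ref{CobarConstruction:TwistingCochains}, equivalently the tree-coproduct identity~(*) in that paragraph), and that the coproduct $\rho$ of the $\DOp$-coalgebra $\Gamma$ commutes with the differential, i.e.\ the identity~(*) of~\S\ref{CooperadCoalgebras:QuasiCofreeDefinition}. Both of these feed into the explicit formula for $\omega$ displayed just above, namely $\omega = S(\theta,\Gamma)\cdot\rho$, and into the formula of proposition~\ref{OperadAlgebras:DerivationConstruction} for the derivation $\partial_\omega$ extending $\omega$.

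First I would unwind $\delta(\omega)$ using the graphical expansion of $\omega(c) = \sum_{\rho(c)} \theta(\gamma_*)(c_*,\dots,c_*)$. Applying the Leibniz rule, $\delta(\omega(c))$ splits into (a) a sum where the differential hits the operation $\theta(\gamma_*)$, which by $\delta(\theta)=\phi_\theta\cdot\beta$ rewrites as a sum over two-vertex coproducts $\rho_\tau(\gamma_*)$ composed via the tree composition products of $\POp$ (here one uses $\phi_\theta = \lambda_*\cdot\tau(\theta)\cdot\beta$, see~\S\ref{CobarConstruction:TwistingCochains}); and (b) a sum where $\delta$ hits an input label $c_*$, which by the coderivation/compatibility relation~(*) of~\S\ref{CooperadCoalgebras:QuasiCofreeDefinition} — applied in the form controlling $\rho(\delta(c))$ — reorganizes into terms involving the coproduct structure of $\Gamma$ nested inside $\omega$. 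Next I would expand $\partial_\omega\cdot\omega$ directly from proposition~\ref{OperadAlgebras:DerivationConstruction}: $\partial_\omega$ applied to $\theta(\gamma_*)(c_1,\dots,c_n)$ is $\sum_i \pm\,\theta(\gamma_*)(c_1,\dots,\omega(c_i),\dots,c_n)$, and substituting the formula for $\omega(c_i)$ produces composites of the form $\theta(\gamma_*)(\dots,\theta(\gamma_{**})(c_*,\dots),\dots)$ governed by the coassociativity of $\rho$ and the associativity of the composition product of $\POp$.

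The heart of the argument is then a bookkeeping comparison: after rewriting, the "two-vertex coproduct then $\POp$-compose" terms coming from part (a) match precisely the nested-$\theta$ terms of $\partial_\omega\cdot\omega$ that arise from applying $\omega$ at a vertex lying \emph{below} the coproduct cut, while the terms from part (b) match the nested-$\theta$ terms where $\omega$ is applied at an input of $\Gamma$; and all of these cancel in pairs because $\beta$ and $\theta$ are of odd degree, so that the sign accumulated by commuting the desuspension factor (as emphasized in~\S\ref{CobarConstruction:Definition}: a permutation of the homogeneous element $\sigma$ reverses a sign, turning an associativity identity into a vanishing relation) converts the "associativity" into "squares to zero". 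Concretely this is the same mechanism by which the Claim in~\S\ref{CobarConstruction:Definition} ($\partial_\beta\cdot\beta = 0$) and the Claim in~\S\ref{CobarConstruction:OperadCoperadTwistedComplexes} ($\delta(\partial_\theta)+\partial_\theta^2=0$) were obtained, and the verification here is the $\POp$-algebra-level shadow of those operad-level identities.

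I expect the main obstacle to be purely combinatorial rather than conceptual: keeping the signs straight when matching the Koszul sign produced by $\delta$ hitting an input label of a tree tensor (the $\pm$ in~(*) of~\S\ref{CooperadCoalgebras:QuasiCofreeDefinition}) against the sign produced by the permutation of the degree $-1$ homomorphism $\theta$ past the remaining factors, and against the sign from the desuspension in $\beta$. Since the paper consistently suppresses explicit signs and invokes the principle that "any permutation of dg-symbols $xy\mapsto yx$ produces $(-1)^{\deg x\cdot\deg y}$" (\S\ref{Background:Signs}), I would organize the proof so that $\theta\colon\Sigma^{-1}\tilde\DOp\to\POp$ is treated as a \emph{degree $0$} homomorphism throughout, using the equivalence of~\S\ref{CobarConstruction:TwistingCochains} between the degree $-1$ form on $\tilde\DOp$ and the degree $0$ form on the desuspension; then the signs are forced by the symmetric monoidal structure and the cancellation is automatic. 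Given that the Claim is stated without proof in the excerpt, I anticipate the intended write-up is simply "straightforward inspection from the definitions of $\omega$, $\partial_\omega$, the twisting cochain equation, and the coalgebra compatibility relation", so the plan is to present exactly that reduction and leave the sign bookkeeping implicit as the paper does elsewhere.
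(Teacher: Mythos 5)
Your proposal is correct and follows essentially the same route as the paper: the paper's proof likewise reduces the identity to the coassociativity of $\rho$, the twisting cochain equation $\delta(\theta)=\phi_{\theta}\cdot\beta$, and the compatibility of $\rho$ with differentials, merely running the computation from $\partial_{\omega}\cdot\omega$ toward $-\delta(\omega)$ rather than in the direction you chose. The one small caveat is that the mechanism is not a pairwise odd-degree cancellation as in $\partial_{\beta}\cdot\beta=0$ but a direct substitution of the twisting cochain equation after coassociativity, which you in any case also invoke as the key step.
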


Hence, the quasi-free $\POp$-algebra $R_{\POp}(\Gamma) = (\POp(\Gamma),\partial_{\omega})$
is well defined.

\begin{proof}
On one hand,
the image of an element $c\in\Gamma$ under the composite $\partial_{\omega}\cdot\omega$
has an expansion of the form
\begin{equation}
\partial_{\omega}\cdot\omega(c) = - \sum_{\substack{\rho(c)\\ *,\rho(c_*)}}
\pm\left\{\vcenter{\xymatrix@H=6pt@W=3pt@M=2pt@!R=1pt@!C=1pt{ c_*\ar@/_4pt/[ddrr]\ar@{.}[r]|{\displaystyle{\cdots}} &
c_*\ar[dr]\ar@{.}[r] & \cdots & c_*\ar[dl]\ar@{.}[l] &
c_*\ar@/^4pt/[ddll]\ar@{.}[l]|{\displaystyle{\cdots}} \\
&& *+<3mm>[F]{\theta(\gamma_*)}\ar[d]\save[]!C.[d]!C *+<4pt>[F-,]\frm{}*+<6pt>\frm{\{}*+<0pt>\frm{\}}
\restore\ar@{}[]!L-<6pt,0pt>;[d]!L-<6pt,0pt>_(0.4){\displaystyle{\lambda_*}} && \\
\ar@{.}[rr] && *+<3mm>[F]{\theta(\gamma_*)}\ar[d] && \ar@{.}[ll] \\
\ar@{.}[rr] && 0\ar@{.}[rr] && }}\right\},
\end{equation}
where we perform the coproduct (one by one) of the factors $c_*$
in the expansion of~$\rho(c)$
and $\lambda_*$ refers to an evaluation of the tree composition product of~$\POp$.
The additional minus sign
comes from a permutation of the symbols $\theta$.

This expression can be identified with the image of the iterated coproduct
\begin{equation*}
S(\DOp,\rho)\cdot\rho(c) =
\sum_{\substack{\rho(c)\\ \rho(c_*),\dots,\rho(c_*)}}
\pm\left\{\vcenter{\xymatrix@H=6pt@W=3pt@M=2pt@!R=1pt@!C=1pt{ \ar@{.}[r] &
c_*\ar[dr]\ar@{.}[r] & \cdots\ar@{.}[r]\ar@{}[d]|{\displaystyle{\cdots}} & c_*\ar[dl]\ar@{.}[r] &
\cdots\ar@{.}[r] & c_*\ar[dr]\ar@{.}[r] & \cdots\ar@{.}[r]\ar@{}[d]|{\displaystyle{\cdots}} & c_*\ar[dl]\ar@{.}[r] & \\
*+<2pt>{1}\ar@{.}[rr] && *+<8pt>[F]{\gamma_*}\ar[drr]\ar@{.}[rr] && \cdots\ar@{.}[rr] && *+<8pt>[F]{\gamma_*}\ar[dll]\ar@{.}[rr] && \\
*+<2pt>{0}\ar@{.}[rrrr] &&&& *+<8pt>[F]{\gamma_*}\ar[d]\ar@{.}[rrrr] &&&& \\
\ar@{.}[rrrr] &&&& 0\ar@{.}[rrrr] &&&& }}\right\}
\end{equation*}
under the homomorphism
\begin{equation*}
\sum_{e = 1,\dots,n_*}
\left\{\vcenter{\xymatrix@H=8pt@W=3pt@M=2pt@R=16pt@!C=1pt{ 1\ar@{.}[r] & *+<3mm>[F]{\epsilon}\ar[drr]\ar@{.}[r] & \cdots\ar@{.}[r] &
*+<3mm>[F]{\theta}\ar[d]|(0.4){e}\ar@{.}[r] & \cdots\ar@{.}[r] &  *+<3mm>[F]{\epsilon}\ar[dll]\ar@{.}[r] & \\
0\ar@{.}[rrr] &&& *+<3mm>[F]{\theta}\ar[d]\ar@{.}[rrr] &&& \\
\ar@{.}[rrr] &&& 0\ar@{.}[rrr] &&& }}\right\},
\end{equation*}
where $\epsilon: \DOp\rightarrow\IOp$ represents the augmentation of~$\DOp$.
The sum ranges over the positions of the second level.
The coassociativity relation $S(\DOp,\rho)\cdot\rho = S(\nu,\Gamma)\cdot\rho$
implies that (*) is equivalent to a summation over $\rho(c)$ and $\rho(\gamma_*)$,
where we take the coproduct of the factor~$\gamma_*$
in~$\DOp$.

Then the equation of twisting cochains implies that (*) agrees with:
\begin{equation}\renewcommand{\theequation}{**}
- \sum_{\rho(c)}
\left\{\vcenter{\xymatrix@H=6pt@W=3pt@M=2pt@!R=1pt@!C=1pt{ c_*\ar[dr]\ar@{.}[r] & \cdots\ar@{.}[r]\ar@{}[d]|{\displaystyle{\cdots}} & c_*\ar[dl] \\
\ar@{.}[r] & *+<8pt>[F]{\delta(\theta)(\gamma_*)}\ar[d]\ar@{.}[r] & \\
\ar@{.}[r] & 0\ar@{.}[r] & }}\right\}.
\end{equation}
The coderivation relation (*) of~\S\ref{CooperadCoalgebras:QuasiCofreeDefinition}
implies immediately that (**) represents the expansion of~$- \delta(\omega)(c) = - \delta(\omega(c)) - \omega(\delta(c))$.
Hence we are done.
\end{proof}

\subsubsection{The quasi-free replacement of a $\POp$-algebra}\label{QuasiFreeReplacements:ReplacementConstruction}
Let $A$ be a $\POp$-algebra.
Observe that $A$ forms a $B^c(\DOp)$-algebra
by restriction of structure
and hence has an associated quasi-free $\DOp$-coalgebra $\Gamma_{\POp}(A) = (\DOp(A),\partial_{\alpha})$.
Recall that the homomorphism $\alpha: \DOp(A)\rightarrow A$
which determines the twisting coderivation of~$\Gamma_{\POp}(A)$
satisfies $\alpha|_A = 0$.

We form the quasi-free $\POp$-algebra $R_A = R_{\POp}(\DOp(A),\partial_{\alpha})$
associated to $\Gamma_{\POp}(A)$.
We aim to prove that $R_A$
defines a natural cofibrant replacement of~$A$
when $A$ is $\C$-cofibrant.

By definition,
the underlying free $\POp$-algebra of~$R_A$ can be identified with the composite
\begin{equation*}
\POp(\DOp(A)) = S(\POp\circ\DOp,A)
\end{equation*}
when we forget all twisting homomorphisms.
This free $\POp$-algebra is equipped with a twisting derivation $\partial_{\alpha}$
induced by the twisting coderivation of~$\Gamma_{\POp}(A) = (\DOp(A),\partial_{\alpha})$,
and with the twisting homomorphism $\partial_{\omega}$
determined by the underlying coalgebra structure of~$\Gamma_{\POp}(A)$.
The total differential of~$R_A$ is the sum $\delta+\partial_{\alpha}+\partial_{\omega}$,
where $\delta$ refers to the natural differential of~$\POp(\DOp(A))$.

We have a natural morphism of $\POp$-algebras
\begin{equation*}
\epsilon: \POp(\DOp(A))\rightarrow A
\end{equation*}
induced by the augmentation $\epsilon: \DOp(A)\rightarrow A$
of the cofree $\DOp$-coalgebra $\DOp(A)$
and a natural morphism of dg-modules
\begin{equation*}
\eta: A\rightarrow\POp(\DOp(A))
\end{equation*}
defined by the composite of the coaugmentation $\eta: A\rightarrow\DOp(A)$
with the unit morphism $\eta: \DOp(A)\rightarrow\POp(\DOp(A))$
of the free $\POp$-algebra~$\POp(\DOp(A))$.
We have in fact $\POp\circ\DOp(1) = \IOp(1) = \kk$
and the coaugmentation $\eta: A\rightarrow\POp(\DOp(A))$
can be identified with the morphism
\begin{equation*}
A = S(\IOp,A)\rightarrow S(\POp\circ\DOp,A)
\end{equation*}
yielded by this relation.

Observe first:

\begin{prop}\label{QuasiFreeReplacements:StructureMorphisms}
The morphism $\epsilon: \POp(\DOp(A))\rightarrow A$
defines a morphism of $\POp$-algebras
\begin{equation*}
R_A = R_{\POp}(\DOp(A),\partial_{\alpha})\xrightarrow{\epsilon} A.
\end{equation*}

The morphism $\eta: A\rightarrow\POp(\DOp(A))$
defines a morphism of dg-modules
\begin{equation*}
A\xrightarrow{\eta} R_{\POp}(\DOp(A),\partial_{\alpha}) = R_A
\end{equation*}
such that $\epsilon\eta = \id$.
\end{prop}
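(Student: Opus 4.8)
The plan is to verify the two claims of Proposition~\ref{QuasiFreeReplacements:StructureMorphisms} by unwinding the definitions of the twisting homomorphisms $\partial_{\alpha}$ and $\partial_{\omega}$ on $R_A = (\POp(\DOp(A)),\partial_{\alpha}+\partial_{\omega})$ and checking compatibility with $\epsilon$ and $\eta$. For the first claim, since $\epsilon\colon\POp(\DOp(A))\to A$ is induced by the augmentation $\epsilon\colon\DOp(A)\to A$ of the cofree $\DOp$-coalgebra and hence is manifestly a morphism of free $\POp$-algebras (it comes from the $\POp$-algebra map $\POp(\epsilon_{\DOp})$ postcomposed with the action map $\POp(A)\to A$), the only thing to check is that it commutes with the total differential, i.e.\ that $\epsilon\cdot(\partial_{\alpha}+\partial_{\omega}) = 0$ as homomorphisms $\POp(\DOp(A))\to A$. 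First I would note that it suffices to test this on the generating dg-module $\DOp(A)\subset\POp(\DOp(A))$, since both sides are $\POp$-algebra derivations (or compose with such) over a free $\POp$-algebra. On $\DOp(A)$, the derivation $\partial_{\alpha}$ restricts to $\alpha\colon\DOp(A)\to A\subset\POp(\DOp(A))$, whose image lands in the summand $A = S(\IOp,A)$; applying $\epsilon$ gives back $\alpha$, viewed in $A$. The homomorphism $\partial_{\omega}$ restricts to $\omega\colon\DOp(A)\to\POp(\DOp(A))$ from~\S\ref{QuasiFreeReplacements:QuasiFreeAlgebraConstruction}; composing with $\epsilon$ and using that $\epsilon$ applies the operad action together with the augmentation $\epsilon_{\DOp}\colon\DOp(A)\to A$ to each input label $c_*$, one gets exactly $\theta(\gamma_*)$ evaluated on the augmentations $\epsilon_{\DOp}(c_*)$. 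The key point is then that $\epsilon\cdot\omega$ reproduces, up to sign, the operadic evaluation encoded in $\alpha$ via Proposition~\ref{CooperadCoalgebras:CobarOperadAlgebras}, so that $\epsilon\cdot\alpha + \epsilon\cdot\omega = 0$; this is essentially the counit relation for the $\DOp$-coalgebra $\DOp(A)$ combined with the defining formula $\alpha\{\gamma(a_1,\dots,a_n)\} = \theta(\gamma)(a_1,\dots,a_n)$ from the proof of Proposition~\ref{CooperadCoalgebras:CobarOperadAlgebras}.

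For the second claim, $\eta\colon A\to\POp(\DOp(A))$ is the composite of the coaugmentation $A\to\DOp(A)$ with the unit $\DOp(A)\to\POp(\DOp(A))$, and since $\POp\circ\DOp(1) = \IOp(1) = \kk$ it is identified with the canonical inclusion of the summand $A = S(\IOp,A)\hookrightarrow S(\POp\circ\DOp,A)$. The verification that $\eta$ is a morphism of dg-modules amounts to showing $\eta\delta = (\delta+\partial_{\alpha}+\partial_{\omega})\eta$, i.e.\ that $\partial_{\alpha}$ and $\partial_{\omega}$ both vanish on the image of $\eta$. Here I would use that the coaugmentation of $A$ lands in the summand $A\subset\DOp(A)$ on which $\alpha$ vanishes (recall $\alpha|_A = 0$, from~\S\ref{CooperadCoalgebras:CobarOperadAlgebras}); since $\partial_{\alpha}$ is the $\POp$-algebra derivation generated by $\alpha$ and is applied to an element sitting in the generating submodule $\DOp(A)$ with label in $A$, it vanishes there. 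Similarly $\partial_{\omega}$ is generated by $\omega = S(\theta,\Gamma)\cdot\rho$, and the coproduct $\rho$ restricted to the coaugmentation summand of $\DOp(A)$ is, by the counit axiom, again trivial in the sense that it produces only the trivial term killed by $\theta$ (which vanishes on $1\in\DOp(1)$). The identity $\epsilon\eta = \id$ is then immediate: both morphisms are the identity on the common summand $A = S(\IOp,A)$, and $\epsilon$ projects onto it while $\eta$ includes it.

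The main obstacle I expect is bookkeeping of signs and of the interaction between the two twisting homomorphisms $\partial_{\alpha}$ and $\partial_{\omega}$: one has to be careful that ``testing on generators'' is legitimate for $\partial_{\omega}$, which is a homomorphism generated by $\omega$ in the sense of the extension of the formula~(*) of~\S\ref{OperadAlgebras:FreeAlgebras} to degree~$0$ homomorphisms, and that the cancellation $\epsilon\cdot\alpha+\epsilon\cdot\omega=0$ genuinely follows from the equation of twisting cochains rather than needing the full relation of Proposition~\ref{CooperadCoalgebras:TwistingCofreeCoderivations}. Concretely, the cleanest route is probably to observe that $\epsilon\colon R_A\to A$ is the image under the adjunction of Proposition~\ref{OperadAlgebras:QuasiFreeMorphismConstruction} of the homomorphism $f\colon\DOp(A)\to A$ equal to the augmentation $\epsilon_{\DOp}$, and that the condition $\delta(f) = \phi_f\cdot(\alpha+\omega\text{-part})$ of that proposition unwinds, using the explicit formulas for $\alpha$ and $\omega$, precisely to the counit relation of the $\DOp$-coalgebra structure of $\DOp(A)$; everything else is then formal. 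For $\eta$, there is essentially no obstacle since the relevant restrictions vanish identically on the coaugmentation summand.
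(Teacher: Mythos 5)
Your proposal is correct and follows essentially the same route as the paper's proof: reduce the commutation of $\epsilon$ with the total differential to the identity $\epsilon(\omega(c))+\alpha(c)=0$ on generators $c\in\DOp(A)$ via Proposition~\ref{OperadAlgebras:QuasiFreeMorphismConstruction}, observe that the counit relation of $\DOp$ collapses $\epsilon\cdot\omega$ to the evaluation $\theta(\gamma)(a_1,\dots,a_n)$ defining $\alpha$, and for $\eta$ note that both $\alpha$ and $\omega$ vanish on the coaugmentation summand $A\subset\DOp(A)$ (the latter because $\theta$ vanishes on $\DOp(1)$). Your explicit remark on why $\omega|_A=0$ is in fact slightly more detailed than the paper's own wording of that step.
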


\begin{proof}
We have $\delta(\epsilon) = \delta(\eta) = 0$
since $\epsilon$ and $\eta$ are defined by morphisms of dg-modules
when we forget all twisting homomorphisms.
As a consequence,
we only have to check
\begin{equation}
\epsilon\cdot(\partial_{\omega}+\partial_{\alpha}) = 0
\quad\text{and}\quad(\partial_{\omega}+\partial_{\alpha})\cdot\eta = 0
\end{equation}
in order to prove that $\epsilon$ and $\eta$
commute with total differentials.

By proposition~\ref{OperadAlgebras:QuasiFreeMorphismConstruction},
we are reduced to prove the identity $\epsilon\cdot(\partial_{\omega}+\partial_{\alpha})(c) = 0$
for a generating element
\begin{equation*}
c = \left\{\vcenter{\xymatrix@H=6pt@W=3pt@M=2pt@!R=1pt@!C=1pt{ a_1\ar[dr]\ar@{.}[r] & \cdots\ar@{.}[r]\ar@{}[d]|{\displaystyle{\cdots}} & a_n\ar[dl] \\
\ar@{.}[r] & *+<8pt>[F]{\gamma}\ar[d]\ar@{.}[r] & \\
\ar@{.}[r] & 0\ar@{.}[r] & }}\right\}\in\DOp(A).
\end{equation*}
Then the equation amounts to
\begin{equation}\renewcommand{\theequation}{**}
\epsilon(\omega(c))+\alpha(c) = 0
\end{equation}
by construction of~$\partial_{\omega}: \POp(\DOp(A))\rightarrow\POp(\DOp(A))$
and $\partial_{\alpha}: \DOp(A)\rightarrow\DOp(A)$.
But we have:
\begin{equation*}
\epsilon(\omega(c))
= \lambda_*\left\{\vcenter{\xymatrix@H=6pt@W=3pt@M=2pt@!R=1pt@!C=1pt{ a_1\ar[dr]\ar@{.}[r] & \cdots\ar@{.}[r]\ar@{}[d]|{\displaystyle{\cdots}} & a_n\ar[dl] \\
\ar@{.}[r] & *+<8pt>[F]{\theta(\gamma)}\ar[d]\ar@{.}[r] & \\
\ar@{.}[r] & 0\ar@{.}[r] & }}\right\} = \theta(\gamma)(a_1,\dots,a_n)\in A,
\end{equation*}
where we perform the evaluation of~$\theta(\gamma)\in B^c(\DOp)(n)$
on $a_1,\dots,a_n\in A$.
This operation is also the definition of~$\alpha(c)$.
Hence,
equation (**) is satisfied and this achieves the proof that $\epsilon$
defines a morphism of $\POp$-algebras:
\begin{equation*}
R_A = R_{\POp}(\DOp(A),\partial_{\alpha})\xrightarrow{\epsilon} A.
\end{equation*}

Equation (*)
is trivially satisfied for~$\eta$
because $\partial_{\omega}|_{\DOp(A)} = \omega$ and $\partial_{\alpha}$
vanishes on $A\subset\DOp(A)$.
Hence,
the morphism $\epsilon$
defines a morphism of dg-modules
\begin{equation*}
A\xrightarrow{\eta} R_{\POp}(\DOp(A),\partial_{\alpha}) = R_A.
\end{equation*}
The relation $\epsilon\eta = \Id$
is immediate from the definition of~$\epsilon$ and $\eta$.
\end{proof}

Our theorem reads:

\begin{thm}\label{QuasiFreeReplacements:Statement}
Let $\POp$ be any $\Sigma_*$-cofibrant operad.
Let $\DOp$ be any $\Sigma_*$-cofibrant (connected) cooperad together with a twisting cochain $\theta: \DOp\rightarrow\POp$
associated to a weak-equivalence $\phi_{\theta}: B^c(\DOp)\xrightarrow{\sim}\POp$.

If $A$ is a $\C$-cofibrant $\POp$-algebra,
then the augmentation $\epsilon: R_{\POp}(\DOp(A),\partial_{\alpha})\rightarrow A$ defines a weak-equivalence
and $R_A = R_{\POp}(\DOp(A),\partial_{\alpha})$
forms a cofibrant replacement of~$A$ in the category of $\POp$-algebras.
\end{thm}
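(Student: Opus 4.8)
The proof splits into two independent verifications: that $R_A = R_{\POp}(\DOp(A),\partial_\alpha)$ is cofibrant in the category of $\POp$-algebras, and that the augmentation $\epsilon$ of Proposition~\ref{QuasiFreeReplacements:StructureMorphisms} is a weak-equivalence.

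For cofibrancy I would apply Proposition~\ref{OperadAlgebras:CofibrantAlgebras} to the quasi-free $\POp$-algebra $R_A$, whose underlying free $\POp$-algebra is $\POp(\DOp(A))$ with $\DOp(A) = S(\DOp,A) = \bigoplus_{n}(\DOp(n)\otimes A^{\otimes n})_{\Sigma_n}$ and whose twisting homomorphism is $\partial_\omega+\partial_\alpha$. The required filtration is the one by the number of factors of $A$: $C_m = \bigoplus_{n\leq m}(\DOp(n)\otimes A^{\otimes n})_{\Sigma_n}$, so that $C_0 = 0$ and $C_1 = A$ because $\DOp$ is connected. Two points have to be checked. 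First, $(\partial_\omega+\partial_\alpha)(C_m)\subseteq\POp(C_{m-1})$: this is where the convention $\theta|_{\DOp(1)} = 0$ enters, for it forces every nontrivial cooperad coproduct appearing in $\omega$ or in $\alpha$ to split an element into at least two pieces, hence to drop the number of $A$-factors by at least one. Second, each inclusion $C_{m-1}\hookrightarrow C_m$ is a cofibration of dg-modules: it is a degreewise split monomorphism with cokernel $(\DOp(m)\otimes A^{\otimes m})_{\Sigma_m}$, and this quotient is a cofibrant dg-module because $\DOp$ is $\Sigma_*$-cofibrant, so $\DOp(m)$ is a cofibrant $\kk[\Sigma_m]$-object, $A$ is $\C$-cofibrant, and the functor $N\mapsto(N\otimes A^{\otimes m})_{\Sigma_m}$ is left Quillen from $\kk[\Sigma_m]$-objects to dg-modules when $A$ is cofibrant. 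Proposition~\ref{OperadAlgebras:CofibrantAlgebras} then yields that $R_A$ is cofibrant.

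For the weak-equivalence, since $\epsilon\eta = \id$ it is enough to prove that $\eta:A\to R_A$ is a weak-equivalence, and the plan is to deduce this from the acyclicity of the two-sided cobar complex $(\POp\circ\DOp,\partial_\theta)$ proved in Theorem~\ref{CobarConstruction:CobarModelAcyclicity}. The underlying dg-module of $R_A$ is $S(\POp\circ\DOp,A)$ with differential $\delta+\partial_\omega+\partial_\alpha$. I would again filter by the number of factors of $A$; the perturbation $\partial_\alpha$ — which is precisely the part of the differential built from the $\POp$-action on $A$ — strictly lowers this count, so it disappears on the associated graded, leaving the complex $(\POp(\DOp(A)),\partial_\omega)$ attached to the \emph{cofree} $\DOp$-coalgebra on $A$. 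On the latter I would use the filtration by the arity of the $\POp$-component, exactly as in Observation~\ref{CobarConstruction:TwistedComplexSpectralSequence} and the proof of Theorem~\ref{CobarConstruction:CobarModelAcyclicity}: on its associated graded the twisting homomorphism $\partial_\omega$ disappears as well, and the resulting page is obtained by applying the monoidal functor $S(-,A)$ to the corresponding page of the spectral sequence of $(\POp\circ\DOp,\partial_\theta)$. Since $\POp\circ\DOp$ is $\Sigma_*$-cofibrant (a composite of $\Sigma_*$-cofibrant $\Sigma_*$-objects), $\IOp$ is $\Sigma_*$-cofibrant, and $A$ is $\C$-cofibrant, the functor $S(-,A)$ carries the weak-equivalence $(\POp\circ\DOp,\partial_\theta)\xrightarrow{\sim}\IOp$ to a weak-equivalence $S(\POp\circ\DOp,A)\xrightarrow{\sim}S(\IOp,A) = A$ — this is the analogue for $S(-,A)$ of Lemma~\ref{CobarConstruction:TwistedComplexEquivalence}. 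Both filtrations are exhaustive and, because $\DOp$ is connected, bounded below in each homological degree, so the spectral sequences converge strongly; tracking $\eta$ (it lands in the bottom filtration layer $\POp(1)\otimes A$, where on the associated graded it is the weak-equivalence $a\mapsto 1\otimes a$ coming from the unit of $\POp$, using that $\POp(1)\simeq\kk$ since $B^c(\DOp)(1)=\kk$ and $\phi_\theta$ is an equivalence) one gets that $\eta$, and hence $\epsilon$, is a weak-equivalence.

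The main difficulty is the bookkeeping inside the second part: one must choose the filtrations so that the $\POp$-action term $\partial_\alpha$ and the cobar-type term $\partial_\omega$ are separated onto the appropriate pages and so that the associated graded of $R_A$ is genuinely $S(-,A)$ of the complex $(\POp\circ\DOp,\partial_\theta)$ — not a larger complex — and then check that $S(-,A)$ preserves the relevant weak-equivalence between $\Sigma_*$-cofibrant objects and that the (iterated) spectral sequences converge. Alternatively one may first reduce, via the weak-equivalence $\phi_\theta:B^c(\DOp)\to\POp$ and a comparison argument of the type of Lemma~\ref{CobarConstruction:TwistedComplexEquivalence}, to the case $\POp = B^c(\DOp)$ of the universal twisting cochain, and there adapt the explicit contracting homotopy from the proof of Theorem~\ref{CobarConstruction:UniversalAcyclicity} to coefficients in $A$. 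Once the filtration and the cofibrancy of its graded pieces are in place, the cofibrancy statement is routine.
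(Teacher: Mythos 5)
Your proposal follows the paper's own proof essentially verbatim: cofibrancy is obtained from Proposition~\ref{OperadAlgebras:CofibrantAlgebras} applied to the filtration of $\DOp(A)$ by the number of $A$-factors (checking that $\partial_\alpha+\partial_\omega$ drops this count into $\POp(C_{m-1})$ and that the graded pieces $(\DOp(m)\otimes A^{\otimes m})_{\Sigma_m}$ are cofibrant), and the weak-equivalence is obtained from the same filtration of $S(\POp\circ\DOp,A)$, whose $E^0$-page is $S((\POp\circ\DOp,\partial_\theta),A)$, combined with Theorem~\ref{CobarConstruction:CobarModelAcyclicity} and the homotopy invariance of $S(-,A)$ on $\Sigma_*$-cofibrant objects. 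Your extra inner filtration by the arity of the $\POp$-component is just an unpacking of how that homotopy-invariance step is proved (as in Lemma~\ref{CobarConstruction:TwistedComplexEquivalence}), so the argument is the same as the paper's.
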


This theorem gives a generalization of~\cite[Theorem 2.19]{GetzlerJones}.
The result of this reference is established for operads in non-negatively graded dg-modules
over a field of characteristic $0$.

The proof of this theorem is deferred to a series of lemmas.

\begin{lemm}\label{QuasiFreeReplacements:CofibrantStructure}
Under the assumptions of the theorem,
the quasi-cofree $\POp$-algebra $R_A = R_{\POp}(\DOp(A),\partial_{\alpha})$
forms a cofibrant $\POp$-algebra.
\end{lemm}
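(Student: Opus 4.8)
The plan is to deduce the statement from Proposition~\ref{OperadAlgebras:CofibrantAlgebras} by equipping the generating dg-module of $R_A$ with a suitable filtration. Recall from~\S\ref{QuasiFreeReplacements:ReplacementConstruction} that, once we forget all twisting homomorphisms, the $\POp$-algebra $R_A = R_{\POp}(\DOp(A),\partial_{\alpha})$ is the free $\POp$-algebra $\POp(\DOp(A)) = S(\POp\circ\DOp,A)$ on the dg-module $\DOp(A) = S(\DOp,A)$, and that its total differential is $\delta + \partial_{\alpha} + \partial_{\omega}$; restricted to the generating dg-module $\DOp(A)$, the twisting derivation $\partial_{\alpha}+\partial_{\omega}$ is the sum of the twisting coderivation $\partial_{\alpha}\colon\DOp(A)\rightarrow\DOp(A)\subset\POp(\DOp(A))$ of the coalgebra $\Gamma_{\POp}(A)$ and the homomorphism $\omega\colon\DOp(A)\rightarrow\POp(\DOp(A))$ of~\S\ref{QuasiFreeReplacements:QuasiFreeAlgebraConstruction}. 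We have checked in~\S\ref{QuasiFreeReplacements:QuasiFreeAlgebraConstruction} that $R_A$ is a well-defined quasi-free $\POp$-algebra, so it only remains to produce the filtration demanded by Proposition~\ref{OperadAlgebras:CofibrantAlgebras}.

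First I would take the filtration of $\DOp(A) = S(\DOp,A)$ by weight in $A$, namely $C_{\lambda} = \bigoplus_{n=0}^{\lambda}(\DOp(n)\otimes A^{\otimes n})_{\Sigma_n}$. Since $\DOp$ is connected we have $C_0 = 0$ and $C_1 = A$; the natural differential of $S(\DOp,A)$ preserves the weight grading, so each $C_{\lambda}$ is a sub-dg-module with $C_{\lambda} = C_{\lambda-1}\oplus(\DOp(\lambda)\otimes A^{\otimes\lambda})_{\Sigma_\lambda}$ as dg-modules, and $\colim_{\lambda} C_{\lambda} = \DOp(A)$. Consequently the embedding $C_{\lambda-1}\hookrightarrow C_{\lambda}$ is the pushout of the initial morphism $0\rightarrow(\DOp(\lambda)\otimes A^{\otimes\lambda})_{\Sigma_\lambda}$; since $\DOp$ is $\Sigma_*$-cofibrant and $A$ is $\C$-cofibrant, the weight component $(\DOp(\lambda)\otimes A^{\otimes\lambda})_{\Sigma_\lambda}$ is a cofibrant dg-module by the homotopy properties of the functor $S(-,-)$ associated to $\Sigma_*$-cofibrant $\Sigma_*$-objects (see~\cite{FresseModuleBook}), so each $C_{\lambda-1}\hookrightarrow C_{\lambda}$ is a cofibration of dg-modules.

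It then remains to check the relation $(\partial_{\alpha}+\partial_{\omega})(C_{\lambda})\subset\POp(C_{\lambda-1})$, equivalently that for a generator of the form $\gamma(a_1,\dots,a_n)$ the elements $\partial_{\alpha}(\gamma(a_1,\dots,a_n))$ and $\omega(\gamma(a_1,\dots,a_n))$ of $\POp(\DOp(A))$ only involve $\DOp(A)$-generators of weight at most $n-1$. For $\partial_{\alpha}$ this follows from the coderivation formula of Proposition~\ref{CooperadCoalgebras:CofreeCoderivations} together with $\alpha|_A = 0$: the terms applying $\alpha$ directly to an input vanish, and in the remaining terms the two-vertex coproduct $\rho_{\tau}(\gamma)$, $\tau\in\Theta_2(n)$, is nonzero only when both vertices carry elements of $\tilde{\DOp}$, so $\alpha$ is applied to a sub-composite $\gamma_*(a_*,\dots)$ absorbing at least two of the $a_i$ and the surviving generator drops in weight. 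For $\omega = S(\theta,\DOp(A))\circ\rho$ one expands $\rho(\gamma(a_1,\dots,a_n))$ along the cooperad coproduct $\nu(\gamma)$ and uses $\theta(1)=0$: the counital summand with outer cofactor $1\in\DOp(1)$ is killed, the counital summand with all inner cofactors equal to $1$ contributes $\theta(\gamma)(a_1,\dots,a_n)$ whose generators are the weight-one elements $a_i$, and in each reduced summand the outer cofactor lies in $\tilde{\DOp}(r)$ with $r\geq 2$, so its $r$ inner cofactors have weights summing to $n$ and each is therefore at most $n-1$. With the filtration so verified, Proposition~\ref{OperadAlgebras:CofibrantAlgebras} yields that $R_A$ is cofibrant. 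The one point requiring care is this last weight estimate: one must keep track of the unital and counital summands of the coproducts of $\DOp$ and exploit both the connectivity $\tilde{\DOp}(0)=\tilde{\DOp}(1)=0$ and the normalization $\theta(1)=0$ of the twisting cochain; the remaining verifications are routine.
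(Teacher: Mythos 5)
Your proof is correct and follows essentially the same route as the paper: the weight filtration $\DOp_{\leq\lambda}(A)=\bigoplus_{r\leq\lambda}(\DOp(r)\otimes A^{\otimes r})_{\Sigma_r}$, cofibrancy of its subquotients from the $\Sigma_*$-cofibrancy of $\DOp$ and $\C$-cofibrancy of $A$, the filtration-lowering property of $\partial_{\alpha}+\partial_{\omega}$ via $\alpha|_A=0$, $\theta(1)=0$ and $\tilde{\DOp}(0)=\tilde{\DOp}(1)=0$, and the appeal to Proposition~\ref{OperadAlgebras:CofibrantAlgebras}. The only (harmless) divergence is in the estimate for $\partial_{\omega}$: you establish $\partial_{\omega}(\DOp_{\leq\lambda}(A))\subset\POp(\DOp_{\leq\lambda-1}(A))$ by tracking all reduced summands of the coproduct, whereas the paper asserts the landing in $\POp(\DOp_{\leq 1}(A))$; your bound is the careful one and is all that the cited proposition requires.
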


\begin{proof}
Equip the dg-module $\DOp(A)$
with the filtration
\begin{equation*}
0 = \DOp_{\leq 0}(A)\subset\cdots\subset\DOp_{\leq\lambda}(A)\subset\cdots\subset\colim_{\lambda\in\NN}\DOp_{\leq\lambda}(A) = A
\end{equation*}
such that
\begin{equation*}
\DOp_{\leq\lambda}(A) = \bigoplus_{r\leq\lambda} (\DOp(r)\otimes A^{\otimes r})_{\Sigma_r}.
\end{equation*}
The assumptions of the theorem imply readily
that each summand $(\DOp(r)\otimes A^{\otimes r})_{\Sigma_r}$
forms a cofibrant dg-module.
Hence,
each embedding
\begin{equation*}
\DOp_{\leq\lambda-1}(A)\hookrightarrow\DOp_{\leq\lambda}(A)
\end{equation*}
forms a cofibration of dg-modules.

Recall that $\alpha|_A = 0$ by construction (see proposition~\ref{CooperadCoalgebras:CobarOperadAlgebras}).
The representation of proposition~\ref{CooperadCoalgebras:CofreeCoderivations}
shows that the twisting homomorphism $\partial_{\alpha}$
satisfies
\begin{equation*}
\partial_{\alpha}(\DOp_{\lambda}(A))\subset\DOp_{\lambda-1}(A),
\end{equation*}
because the factors $\gamma_*\in\tilde{\DOp}(I_*)$ of the coproduct of an element $\gamma\in\tilde{\DOp}(r)$
must satisfy $1<|I_*|<r$
when $\tilde{\DOp}(0) = \tilde{\DOp}(1) = 0$.
For the twisting derivation $\partial_{\omega}$,
we have trivially
\begin{equation*}
\partial_{\omega}(\DOp_{\leq 1}(A)) = 0
\quad\text{and}
\quad\partial_{\omega}(\DOp_{\leq\lambda}(A))\subset\POp(A) = \POp(\DOp_{\leq 1}(A))
\quad\text{for $\lambda\geq 2$}.
\end{equation*}

These verifications show that the quasi-free $\POp$-algebra $R_A = R_{\POp}(\DOp(A),\partial_{\alpha})$
fulfils the requirements of proposition~\ref{OperadAlgebras:CofibrantAlgebras}
from which we conclude that $R_A$ forms a cofibrant $\POp$-algebra.
\end{proof}

\begin{lemm}[{compare with~\cite[Theorem 2.19]{GetzlerJones}}]\label{QuasiFreeReplacements:Equivalences}
The morphisms
\begin{equation*}
\xymatrix{ R_A = R_{\POp}(\DOp(A),\partial_{\alpha})\ar@<+2pt>[r]^(0.7){\epsilon}\ar@<-2pt>@{<-}[r]_(0.7){\eta} & A }
\end{equation*}
are weak-equivalences
as long as the operad $\POp$ and the cooperad $\DOp$ are $\Sigma_*$-cofibrant
and the morphism $\phi_{\theta}: B^c(\DOp)\rightarrow\POp$
is a weak-equivalence.
\end{lemm}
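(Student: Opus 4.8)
The plan is to recognise the underlying complex of $R_A$ as the image of the twisted composite $\Sigma_*$-object $(\POp\circ\DOp,\partial_{\theta})$ of~\S\ref{CobarConstruction:OperadCoperadTwistedComplexes} under the functor $S(-,A)\colon\M\rightarrow\C$, and then to transport the acyclicity statement of Theorem~\ref{CobarConstruction:CobarModelAcyclicity} along $S(-,A)$. First I would make explicit that, forgetting all twisting homomorphisms, $\POp(\DOp(A))=S(\POp,S(\DOp,A))\simeq S(\POp\circ\DOp,A)$ by the monoidality of $S$ (\S\ref{OperadAlgebras:SigmaObjectFunctor}), and that under this identification the sum $\partial_{\alpha}+\partial_{\omega}$ of the two twisting homomorphisms attached to $R_A=R_{\POp}(\DOp(A),\partial_{\alpha})$ in~\S\ref{QuasiFreeReplacements:QuasiFreeAlgebraConstruction}--\ref{QuasiFreeReplacements:ReplacementConstruction} is precisely $S(\partial_{\theta},A)$: the piece $\partial_{\theta}^{1}$ (two-vertex coproduct of a factor, application of $\theta$, composition in $\POp$) together with $\partial_{\theta}^{0}$ accounts, after evaluation on tensor powers of $A$, for the derivation $\partial_{\omega}$ determined by $\omega=S(\theta,\DOp(A))\circ\rho$, while the internal coproducts of $\DOp$ reproduce the coderivation $\partial_{\alpha}$ inherited from $\Gamma_{\POp}(A)$. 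Under the same identification, the augmentation $\epsilon\colon(\POp\circ\DOp,\partial_{\theta})\rightarrow\IOp$ induces the morphism $\epsilon\colon R_A\rightarrow A$ of Proposition~\ref{QuasiFreeReplacements:StructureMorphisms}, and the canonical inclusion $\IOp\hookrightarrow(\POp\circ\DOp,\partial_{\theta})$ in arity $1$ (recall $\POp\circ\DOp(1)=\IOp(1)=\kk$) induces $\eta\colon A\rightarrow R_A$.

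Next I would feed in the abstract input. Since $\POp$ and $\DOp$ are $\Sigma_*$-cofibrant they are in particular $\C$-cofibrant (the forgetful functors $U$ and $V$ preserve cofibrations, \S\ref{OperadHomotopy:SigmaObjectModelStructure}), so Theorem~\ref{CobarConstruction:CobarModelAcyclicity} applies and gives a weak-equivalence $\epsilon\colon(\POp\circ\DOp,\partial_{\theta})\xrightarrow{\sim}\IOp$ of $\Sigma_*$-objects. I would also note that $(\POp\circ\DOp,\partial_{\theta})$ is itself $\Sigma_*$-cofibrant: the composite product $\POp\circ\DOp$ of two $\Sigma_*$-cofibrant $\Sigma_*$-objects is $\Sigma_*$-cofibrant (here $\DOp$ is reduced in arities $0$), and adjoining the twisting homomorphism $\partial_{\theta}$, which only decreases the arity filtration $F_s\POp\circ\DOp$, turns a cell $\Sigma_*$-object into another one, just as in~\S\ref{Background:DGModuleCellComplexes}. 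The unit object $\IOp$ is $\Sigma_*$-cofibrant as well.

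Then I would transport the weak-equivalence along $S(-,A)$. The functor $S(-,A)$ commutes with colimits and transfinite composites in the first variable, it is additive on coproducts of $\Sigma_*$-objects, and it sends a generating acyclic cofibration $0\rightarrow\Sigma_*\otimes(E[d]\otimes G_r)$ to $0\rightarrow E[d]\otimes A^{\otimes r}$, which is a weak-equivalence because $E[d]$ is contractible; hence, by Ken Brown's lemma, $S(-,A)$ carries any weak-equivalence between $\Sigma_*$-cofibrant $\Sigma_*$-objects to a weak-equivalence of dg-modules, with no hypothesis on $A$ (compare~\cite{FresseModuleBook}). Applying this to $\epsilon\colon(\POp\circ\DOp,\partial_{\theta})\xrightarrow{\sim}\IOp$ yields that $\epsilon\colon R_A=S((\POp\circ\DOp,\partial_{\theta}),A)\rightarrow S(\IOp,A)=A$ is a weak-equivalence. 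Finally, since $\epsilon\eta=\id_A$ by Proposition~\ref{QuasiFreeReplacements:StructureMorphisms}, the two-out-of-three property forces $\eta\colon A\rightarrow R_A$ to be a weak-equivalence too.

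The hard part will be the first step: one has to check, with the correct signs, that the two twisting homomorphisms built respectively from the coderivation of $\Gamma_{\POp}(A)$ and from the cofree-coalgebra coproduct of $\DOp(A)$ assemble, after applying $S(-,A)$, into the two pieces $\partial_{\theta}^{0},\partial_{\theta}^{1}$ of $\partial_{\theta}$ — in particular one must track how the full cooperad coproduct $\nu\colon\DOp\rightarrow\DOp\circ\DOp$ underlying $\omega$ decomposes into iterated two-vertex coproducts, the term $\partial_{\theta}^{0}$ absorbing the discrepancy. Once this bookkeeping is done, everything else is formal, and it is exactly there that the $\Sigma_*$-cofibrancy of $\POp$ and $\DOp$ is needed in order to make the homotopy-invariance of $S(-,A)$ available over an arbitrary ground ring, rather than over a field of characteristic zero as in~\cite{GetzlerJones}.
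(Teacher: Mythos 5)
Your overall strategy (identify the underlying object of $R_A$ with $S(\POp\circ\DOp,A)$, feed in Theorem~\ref{CobarConstruction:CobarModelAcyclicity}, transport along the homotopy invariance of $S(-,A)$ on weak-equivalences of $\Sigma_*$-cofibrant $\Sigma_*$-objects, and finish with $\epsilon\eta=\id$ and two-out-of-three) is the right one and is essentially the paper's. But the first step, which you yourself single out as the hard part, contains a genuine error: the sum $\partial_{\alpha}+\partial_{\omega}$ is \emph{not} equal to $S(\partial_{\theta},A)$, and no amount of sign bookkeeping or re-decomposition of $\nu$ into two-vertex coproducts can make it so. The obstruction is structural. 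Any homomorphism of the form $S(f,A)$ induced by a homomorphism of $\Sigma_*$-objects $f:\POp\circ\DOp\rightarrow\POp\circ\DOp$ preserves each summand $(\POp\circ\DOp(r)\otimes A^{\otimes r})_{\Sigma_r}$: it never changes the number of $A$-tensor factors, because it only uses the dg-module structure of $A$. The derivation $\partial_{\omega}$ is of this kind, since its generator $\omega = S(\theta,-)\cdot\rho$ is induced by the morphism of $\Sigma_*$-objects $(\theta\circ\DOp)\cdot\nu:\DOp\rightarrow\POp\circ\DOp$ (this is the term the paper identifies with $S(\partial_\theta,A)$). But $\partial_{\alpha}$ is not: the coderivation of $\Gamma_{\POp}(A)$ applies $\alpha(\gamma^{top}(a_*,\dots,a_*)) = \theta(\gamma^{top})(a_*,\dots,a_*)$, i.e.\ it \emph{evaluates} an operation on elements of $A$ through the $\POp$-algebra structure $\phi$, collapsing at least two $A$-factors into one and strictly decreasing $r$. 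So $R_A$ is not $S((\POp\circ\DOp,\partial_{\theta}),A)$ on the nose, and your ``everything else is formal'' conclusion does not follow as stated.

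The repair is exactly the extra layer in the paper's proof: filter $S(\POp\circ\DOp,A)$ by the number $r$ of $A$-factors. The internal differential and $\partial_{\omega}$ preserve this grading, while $\partial_{\alpha}$ strictly lowers it (the hypothesis $\tilde{\DOp}(0)=\tilde{\DOp}(1)=0$ forces the evaluated top factor to absorb at least two inputs). Hence $\partial_{\alpha}$ vanishes on the associated graded, where one finds precisely $(E^0,d^0) = S((\POp\circ\DOp,\partial_{\theta}),A)$. At that stage your second and third paragraphs apply verbatim: Theorem~\ref{CobarConstruction:CobarModelAcyclicity} together with the homotopy invariance of $S(-,A)$ over the $\Sigma_*$-cofibrant object $\POp\circ\DOp$ shows that $E^1$ is $H_*(A)$ concentrated in filtration degree $1$, so the spectral sequence degenerates; the filtration is exhaustive and bounded below, so it converges, and $\eta$ (hence $\epsilon$) is a weak-equivalence. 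In short, your inputs and your use of $\Sigma_*$-cofibrancy coincide with the paper's, but the missing idea is the spectral sequence that isolates $\partial_{\omega}$ from the perturbation $\partial_{\alpha}$.
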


\begin{proof}
Recall that the underlying $\POp$-algebra
of~$R_A$ can be identified with the composite object:
\begin{equation*}
\POp(\DOp(A)) = S(\POp\circ\DOp,A)
\end{equation*}
when we forget all twisting derivations.
Consider the nested sequence
of dg-modules such that
\begin{equation*}
F_{s} S(\POp\circ\DOp,A) = \bigoplus_{r\geq s} (\POp\circ\DOp(r)\otimes A^{\otimes r})_{\Sigma_r}\subset S(\POp\circ\DOp,A),
\end{equation*}
for $s\in\NN$.

The objects $F_{s} S(\POp\circ\DOp,A)$ are trivially preserved by internal differentials.
The vanishing of the twisting cochain $\theta$ on~$\DOp(1)$
implies immediately,
from the expression of~\S\ref{CooperadCoalgebras:CofreeCoderivations},
that the twisting coderivation $\partial_{\alpha}: \DOp(A)\rightarrow\DOp(A)$
satisfies
\begin{equation*}
\partial_{\alpha}(\DOp(s)\otimes A^{\otimes s})_{\Sigma_s}\subset\bigoplus_{r\geq s-1}(\DOp(r)\otimes A^{\otimes r})_{\Sigma_r},
\end{equation*}
from which we deduce the relation
\begin{equation*}
\partial_{\alpha}(F_{s} S(\POp\circ\DOp,A))\subset F_{s-1} S(\POp\circ\DOp,A)
\end{equation*}
for the derivation $\partial_{\alpha}: \POp(\DOp(A))\rightarrow\POp(\DOp(A))$
induced by $\partial_{\alpha}: \DOp(A)\rightarrow\DOp(A)$.
For the quasi-cofree coalgebra $\Gamma = (\DOp(A),\partial_{\alpha})$,
the twisting derivation $\partial_{\omega}: \POp(\DOp(A))\rightarrow\POp(\DOp(A))$ of~$R_A = R_{\POp}(\DOp(A),\partial_{\alpha})$
can immediately be identified with the homomorphism $S(\partial_\theta,A)$
induced by the twisting homomorphism $\partial_{\theta}: \POp\circ\DOp\rightarrow\POp\circ\DOp$
of~\S\ref{CobarConstruction:OperadCoperadTwistedComplexes}.
Hence we have $\partial_{\omega}(F_{s} S(\POp\circ\DOp,A))\subset F_{s} S(\POp\circ\DOp,A)$.

Hence,
the filtration
\begin{equation*}
0 = F_{0} S(\POp\circ\DOp,A)\subset\cdots\subset F_{0} S(\POp\circ\DOp,A)\subset\cdots\subset S(\POp\circ\DOp,A),
\end{equation*}
defines a filtration of the twisted dg-module $R_A$
and determines a right-hand half-plane homological spectral sequence $E^r\Rightarrow H_*(R_A)$
such that
\begin{equation*}
(E^0,d^0) = S((\POp\circ\DOp,\partial_{\theta}),A),
\end{equation*}
the dg-module associated to $A$ by the twisted $\Sigma_*$-object $(\POp\circ\DOp,\partial_{\theta})$.

Recall that $\eta: A\rightarrow R_A$ is yielded by the canonical morphism $\eta: \IOp\rightarrow\POp\circ\DOp$
given by the identity $\POp\circ\DOp(1) = \IOp(1) = \kk$.

By theorem~\ref{CobarConstruction:CobarModelAcyclicity},
the canonical morphism $\eta: \IOp\rightarrow\POp\circ\DOp$
defines a weak-equivalence $\eta: \IOp\xrightarrow{\sim}(\POp\circ\DOp,\partial_{\theta})$.
Since $\POp$ and $\DOp$ are $\Sigma_*$-cofibrant,
the composite $\Sigma_*$-object $\POp\circ\DOp$
is $\Sigma_*$-cofibrant too.
Hence,
the morphism $\eta: \IOp\rightarrow\POp\circ\DOp$
induces a weak-equivalence
\begin{equation*}
A = S(\IOp,A)\xrightarrow{S(\eta,A)} S((\POp\circ\DOp,\partial_{\theta}),A)
\end{equation*}
for every $A\in\C$.
Therefore
our spectral sequence satisfies
\begin{equation*}
E^1_{s *} = \begin{cases} A, & \text{if $s = 1$}, \\ 0, & \text{otherwise}, \end{cases}
\end{equation*}
and degenerates at the $E^1$-stage,
from which we also deduce that the spectral sequence converges toward $H_*(R_A)$.

Our arguments imply moreover that $\eta: A\rightarrow R_A$
induces an isomorphism $\eta: H_*(A)\xrightarrow{\simeq} E^1_{1 *} = H_*(R_A)$,
from which we conclude that $\eta: A\rightarrow R_A$
defines a weak-equivalence.
Since $\epsilon\eta = \id$, the morphism $\epsilon: R_A\rightarrow A$
defines a weak-equivalence as well.
Hence we are done.
\end{proof}

This lemma achieves the proof of theorem~\ref{QuasiFreeReplacements:Statement}.\qed

\medskip
By construction,
the quasi-free $\POp$-algebra $R_A = R_{\POp}(\DOp(A),\partial_{\alpha})$
associated to a $\POp$-algebra $A$
is functorial with respect to all morphisms of $\DOp$-coalgebras
\begin{equation*}
(\DOp(A),\partial_{\alpha})\xrightarrow{\phi_f}(\DOp(B),\partial_{\beta}),
\end{equation*}
and not only with respect to morphisms of the category of $\POp$-algebras.
Hence,
theorem~\ref{QuasiFreeReplacements:Statement} gives as a corollary:

\begin{prop}\label{QuasiFreeReplacements:HomotopyMorphisms}
Suppose we have a morphism of quasi-cofree coalgebras
\begin{equation*}
\phi_f: (\DOp(A),\partial_{\alpha})\rightarrow(\DOp(B),\partial_{\beta}),
\end{equation*}
where $(\DOp(A),\partial_{\alpha})$ and $(\DOp(B),\partial_{\beta})$
are quasi-cofree coalgebras associated to $\POp$-algebras $(A,\phi)$ and $(B,\psi)$.

The morphism of $\POp$-algebras induced by $\phi_f$
fits a diagram
\begin{equation*}
\xymatrix{ R_{\POp}(\DOp(A),\partial_{\alpha})\ar[d]^{\sim}\ar[r]^{\phi_f} & R_{\POp}(\DOp(B),\partial_{\beta})\ar[d]^{\sim} \\
(A,\phi) & (B,\psi) },
\end{equation*}
in which the vertical morphisms
are weak-equivalences of $\POp$-algebras.
Accordingly,
this morphism yields a morphism from $(A,\phi)$ to $(B,\psi)$
in the homotopy category of $\POp$-algebras.\qed
\end{prop}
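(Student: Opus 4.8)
The plan is to obtain this proposition as a formal corollary of Theorem~\ref{QuasiFreeReplacements:Statement}, exploiting the functoriality of the assignment $\Gamma\mapsto R_{\POp}(\Gamma)$ in morphisms of $\DOp$-coalgebras that is recorded just above the statement.

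First I would produce the top horizontal arrow. A morphism of quasi-cofree coalgebras $\phi_f\colon(\DOp(A),\partial_{\alpha})\to(\DOp(B),\partial_{\beta})$ is by definition a morphism of the $\DOp$-coalgebras underlying $R_{\POp}(\DOp(A),\partial_{\alpha})$ and $R_{\POp}(\DOp(B),\partial_{\beta})$, so the functoriality of $R_{\POp}$ supplies a morphism of $\POp$-algebras $R_{\POp}(\DOp(A),\partial_{\alpha})\to R_{\POp}(\DOp(B),\partial_{\beta})$ whose underlying map is $\POp(\phi_f)\colon\POp(\DOp(A))\to\POp(\DOp(B))$. The single point that deserves a line of checking is that $\POp(\phi_f)$ commutes with the total differentials $\delta+\partial_{\alpha}+\partial_{\omega}$ of $R_A$ and $\delta+\partial_{\beta}+\partial_{\omega'}$ of $R_B$: commutation with $\delta+\partial_{\alpha}$ and $\delta+\partial_{\beta}$ is precisely the hypothesis that $\phi_f$ is a coalgebra morphism, and since $\phi_f$ also intertwines the $\DOp$-coproducts, the formula $\omega=S(\theta,\DOp(A))\cdot\rho$ of~\S\ref{QuasiFreeReplacements:QuasiFreeAlgebraConstruction} yields the generator-level identity $\omega'\cdot\phi_f=\POp(\phi_f)\cdot\omega$ on $\DOp(A)\subset\POp(\DOp(A))$; this identity, together with the characterisation of morphisms of quasi-free $\POp$-algebras (in the spirit of Proposition~\ref{OperadAlgebras:QuasiFreeMorphismConstruction}), forces the commutation of $\POp(\phi_f)$ with the $\partial_{\omega}$-part as well.

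Next I would install the vertical arrows. By Lemma~\ref{QuasiFreeReplacements:Equivalences}, which invokes only the standing hypotheses of~\S\ref{QuasiFreeReplacements} --- namely that $\POp$ and $\DOp$ are $\Sigma_*$-cofibrant and $\phi_{\theta}\colon B^c(\DOp)\to\POp$ is a weak-equivalence --- and not the $\C$-cofibrancy of $A$ or $B$, the augmentations $\epsilon\colon R_{\POp}(\DOp(A),\partial_{\alpha})\to A$ and $\epsilon\colon R_{\POp}(\DOp(B),\partial_{\beta})\to B$ of Proposition~\ref{QuasiFreeReplacements:StructureMorphisms} are weak-equivalences of $\POp$-algebras. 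Combined with the top arrow this produces exactly the displayed diagram, which has no bottom edge and hence nothing to commute. Finally I would pass to $\Ho({}_{\POp}\C)$: the two augmentations become isomorphisms there, so the zig-zag $(A,\phi)\xleftarrow{\,\epsilon\,}R_{\POp}(\DOp(A),\partial_{\alpha})\xrightarrow{\,\phi_f\,}R_{\POp}(\DOp(B),\partial_{\beta})\xrightarrow{\,\epsilon\,}(B,\psi)$ represents a morphism $(A,\phi)\to(B,\psi)$ in $\Ho({}_{\POp}\C)$, namely $\epsilon\cdot\phi_f\cdot\epsilon^{-1}$; a zig-zag of this shape defines a morphism in the localisation irrespective of any cofibrancy, so nothing further is needed. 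There is no genuine obstacle in the argument; the only mildly technical step is the generator-level identity $\omega'\cdot\phi_f=\POp(\phi_f)\cdot\omega$, a routine unravelling of the definitions entirely parallel to the verification carried out in~\S\ref{QuasiFreeReplacements:QuasiFreeAlgebraConstruction}.
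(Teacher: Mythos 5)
Your argument is correct and is essentially the paper's own: the proposition is stated there with no separate proof, as an immediate corollary of the functoriality of $R_{\POp}(-)$ with respect to $\DOp$-coalgebra morphisms (which produces the horizontal arrow via exactly the generator-level identity $\omega'\cdot\phi_f = \POp(\phi_f)\cdot\omega$ you describe, combined with Proposition~\ref{OperadAlgebras:QuasiFreeMorphismConstruction}) and of Theorem~\ref{QuasiFreeReplacements:Statement}/Lemma~\ref{QuasiFreeReplacements:Equivalences} for the vertical weak-equivalences, followed by the standard zig-zag in $\Ho({}_{\POp}\C)$. The one caveat concerns your aside that the $\C$-cofibrancy of $A$ and $B$ is not used: although Lemma~\ref{QuasiFreeReplacements:Equivalences} is phrased without that hypothesis, its proof rests on $S(\eta,A)$ being a weak-equivalence, which over a general ground ring does rely on cofibrancy of $A$ (tensor powers of non-cofibrant dg-modules need not preserve quasi-isomorphisms), so the proposition is best read under the section's standing assumption that the algebras are $\C$-cofibrant.
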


Moreover:

\begin{prop}\label{QuasiFreeReplacements:HomotopyEquivalences}
Let $f: \DOp(A)\rightarrow B$
be the homomorphism which determines the morphism of quasi-cofree coalgebras $\phi_f$
in proposition~\ref{QuasiFreeReplacements:HomotopyMorphisms}.

If the restriction $f|_A$
defines a weak-equivalence of dg-modules $f|_A: A\xrightarrow{\sim} B$,
then $\phi_f$ induces a weak-equivalence
of $\POp$-algebras
\begin{equation*}
\phi_f: R_{\POp}(\DOp(A),\partial_{\alpha})\xrightarrow{\sim} R_{\POp}(\DOp(B),\partial_{\beta}).
\end{equation*}
\end{prop}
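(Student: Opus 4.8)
The plan is to deduce this from Lemma~\ref{QuasiFreeReplacements:Equivalences} together with the two-out-of-three axiom, by comparing $\phi_f$ with the canonical dg-module morphisms $\eta$ of Proposition~\ref{QuasiFreeReplacements:StructureMorphisms}. Recall that, on underlying dg-modules, the morphism of $\POp$-algebras induced by $\phi_f$ is nothing but $S(\POp,\phi_f)$, where $\phi_f: \DOp(A)\rightarrow\DOp(B)$ denotes the underlying morphism of cofree $\DOp$-coalgebras, and that $\eta: A\rightarrow R_{\POp}(\DOp(A),\partial_{\alpha})$ factors as the coaugmentation $A\rightarrow\DOp(A)$ followed by the unit $\DOp(A)\rightarrow\POp(\DOp(A))$ of the free $\POp$-algebra.

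First I would check that $\phi_f$ carries the summand $A\subset\DOp(A)$ into $B\subset\DOp(B)$ and restricts there to $f|_A$. This follows by evaluating the explicit formula of Observation~\ref{CooperadCoalgebras:CofreeMorphisms} on the unit element $1\in\DOp(1)$, whose coproduct is $\nu(1) = 1\otimes 1$: the formula then reduces to a single one-vertex tree and gives $\phi_f(a) = f|_A(a)\in B\subset\DOp(B)$ for $a\in A$. Combined with the naturality of the free $\POp$-algebra unit, this produces a commutative square
\begin{equation*}
\xymatrix{ A\ar[r]^{f|_A}\ar[d]_{\eta} & B\ar[d]^{\eta} \\
R_{\POp}(\DOp(A),\partial_{\alpha})\ar[r]_{\phi_f} & R_{\POp}(\DOp(B),\partial_{\beta}) }.
\end{equation*}

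Then the conclusion follows formally. By Lemma~\ref{QuasiFreeReplacements:Equivalences}, the two vertical morphisms are weak-equivalences (this uses only that $\POp$ and $\DOp$ are $\Sigma_*$-cofibrant and that $\phi_{\theta}$ is a weak-equivalence, the standing hypotheses of the subsection; no cofibrancy assumption on $A$ or $B$ is needed). Since $f|_A$ is a weak-equivalence by assumption, the composite $\eta\cdot(f|_A) = \phi_f\cdot\eta$ is a weak-equivalence, and since $\eta: A\rightarrow R_{\POp}(\DOp(A),\partial_{\alpha})$ is a weak-equivalence, two-out-of-three gives that $\phi_f$ is a weak-equivalence.

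The only step that is not purely formal is the verification of the square above, i.e.\ that $\phi_f$ is compatible with the coaugmentations; this is an unwinding of the correspondence between $\phi_f$ and its associated homomorphism $f: \DOp(A)\rightarrow B$ using the counit relations of the cooperad $\DOp$, and is the analogue for coalgebra morphisms of the identity $\epsilon\eta = \id$ from Proposition~\ref{QuasiFreeReplacements:StructureMorphisms}. One could alternatively run the filtration/spectral sequence argument of Lemma~\ref{QuasiFreeReplacements:Equivalences} directly --- checking that $\phi_f$ preserves the filtration $F_{s}S(\POp\circ\DOp,A)$ and induces $S((\POp\circ\DOp,\partial_{\theta}),f|_A)$ at the $E^0$-stage --- but this route is heavier and would additionally require $A$ and $B$ to be $\C$-cofibrant so that $S((\POp\circ\DOp,\partial_{\theta}),-)$ preserves the weak-equivalence $f|_A$.
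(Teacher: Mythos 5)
Your proof is correct and follows essentially the same route as the paper: the paper's proof consists precisely of the commutative square comparing $\phi_f$ with the two coaugmentations $\eta$, Lemma~\ref{QuasiFreeReplacements:Equivalences} for the vertical arrows, and two-out-of-three. The only difference is that you spell out why the square commutes (via the counit evaluation in Observation~\ref{CooperadCoalgebras:CofreeMorphisms}), which the paper simply asserts.
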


\begin{proof}
The diagram in the category dg-modules
\begin{equation*}
\xymatrix{ R_{\POp}(\DOp(A),\partial_{\alpha})\ar[r]^{\phi_f} & R_{\POp}(\DOp(B),\partial_{\beta}) \\
(A,\phi)\ar[u]^{\sim}_{\eta}\ar[r]^{f|_A} & (B,\psi)\ar[u]^{\sim}_{\eta} }
\end{equation*}
commutes.
By lemma~\ref{QuasiFreeReplacements:Equivalences},
the vertical morphisms of this diagram
are weak equivalences.
Hence,
if $f|_A$ is a weak-equivalence,
then so is $\phi_f$
by the two-out-of-three axiom of model categories.
\end{proof}

\subsubsection{Examples}
Recall that the associative operad $\AOp$
is endowed with a weak-equivalence $\phi_{\kappa}: B^c(\Lambda^{-1}\AOp^{\vee})\rightarrow\AOp$,
where $\DOp = \Lambda^{-1}\AOp^{\vee}$
is a desuspension of the dual cooperad of~$\AOp$.
In this example,
the homomorphisms $f: \Lambda^{-1}\AOp^{\vee}(A)\rightarrow B$
associated to morphisms of quasi-free coalgebras
$\phi_f: (\Lambda^{-1}\AOp^{\vee}(A),\partial_{\alpha})\rightarrow(\Lambda^{-1}\AOp^{\vee}(B),\partial_{\beta})$
are identified with the usual $A_\infty$-morphisms (see~\cite{Kadeishvili}).
The results of this subsection apply to this example since the associative operad $\AOp$
and the cooperad $\DOp = \Lambda^{-1}\AOp^{\vee}$
are obviously $\Sigma_*$-cofibrant.

The Lie operad $\LOp$
is endowed with a weak-equivalence $\phi_{\kappa}: B^c(\Lambda^{-1}\COp^{\vee})\rightarrow\LOp$,
where $\DOp = \Lambda^{-1}\COp^{\vee}$
is a desuspension of the dual cooperad of the commutative operad~$\COp$.
In this example,
the homomorphisms $f: \Lambda^{-1}\COp^{\vee}(A)\rightarrow B$
associated to morphisms of quasi-free coalgebras
$\phi_f: (\Lambda^{-1}\COp^{\vee}(A),\partial_{\alpha})\rightarrow(\Lambda^{-1}\COp^{\vee}(B),\partial_{\beta})$
are identified with the usual $L_\infty$-morphisms.
But the commutative operad $\COp$
and the Lie operads $\LOp$
are not $\Sigma_*$-cofibrant in positive characteristic.
Therefore we have to replace $\COp$ and $\LOp$
by $\Sigma_*$-cofibrant operads
in order to apply the results of this subsection.

\section{Cylinder objects and homotopy morphisms}\label{CylinderHomotopyMorphisms}
\renewcommand{\thesubsubsection}{\thesubsection.\arabic{subsubsection}}

The goal of this section is to define a correspondence between left homotopies in the category of operads
and certain equivalences in the homotopy category of algebras over an operad.
Throughout the section,
we consider a quasi-free operad such that $\QOp = B^c(\DOp)$.
To obtain our result,
we introduce an explicit cylinder object in the category of operads $\Cyl\QOp$
such that a left homotopy $\tilde{\psi}: \Cyl\QOp\rightarrow\End_A$
toward an endomorphism operad $\End_A$
is equivalent to a morphism of quasi-cofree $\DOp$-coalgebras
\begin{equation*}
(\DOp(A),\partial_{\alpha^0})\xrightarrow{\phi}(\DOp(A),\partial_{\alpha^1})
\end{equation*}
which reduces to the identity on~$A$.
Then we apply proposition~\ref{QuasiFreeReplacements:HomotopyEquivalences}
to produce a chain of weak-equivalences in the category of $\POp$-algebras
\begin{equation*}
\xymatrix{ R_{\POp}(\DOp(A),\partial_{\alpha^0})\ar[d]^{\sim}\ar[r]^{\sim} & R_{\POp}(\DOp(A),\partial_{\alpha^1})\ar[d]^{\sim} \\
(A,\phi^0) & (B,\phi^1) }
\end{equation*}
from that coalgebra morphism.

The cylinder object $\Cyl\QOp$ is defined in~\S\ref{CylinderOperads}.
The correspondence between left homotopies $\tilde{\psi}: \Cyl\QOp\rightarrow\End_A$
and homotopy morphisms
is addressed in~\S\ref{HomotopyMorphisms}.

\subsection{Bar duality for operads and cylinder objects}\label{CylinderOperads}
By definition,
a cylinder object associated to $\QOp$ is an operad $\Cyl\QOp$
together with morphisms
\begin{equation*}
\xymatrix{ \QOp\ar@<+2pt>[r]^(0.3){d^0}\ar@<-2pt>[r]_(0.3){d^1} & \Cyl\QOp\ar[r]^(0.7){s^0} & \QOp }
\end{equation*}
such that the morphism $(d^0,d^1): \QOp\vee\QOp\rightarrow\Cyl\QOp$ is a cofibration,
the morphism $s^0: \Cyl\QOp\rightarrow\QOp$
is a weak-equivalence
and $s^0 d^0 = s^0 d^1 = \id$.

Recall that the operadic cobar construction $\QOp = B^c(\DOp)$
is a quasi-free operad of the form
\begin{equation*}
\QOp = (\FOp(\kk\sigma\otimes\tilde{\DOp}),\partial_{\beta}),
\end{equation*}
where $\sigma$ is a homogeneous element of degree $-1$.
The idea is to use the standard cylinder object of~$\kk\sigma$
in the category of dg-modules
(the definition of this cylinder object $\Cyl(\kk\sigma)$ is reviewed in~\S\ref{CylinderOperads:Construction})
and to form an extension of the cobar construction
\begin{equation*}
\Cyl\QOp = (\FOp(\Cyl(\kk\sigma)\otimes\tilde{\DOp}),\partial_{\beta}).
\end{equation*}
Our first task is to define an appropriate twisting derivation
\begin{equation*}
\partial_{\beta}: \FOp(\Cyl(\kk\sigma)\otimes\tilde{\DOp})\rightarrow\FOp(\Cyl(\kk\sigma)\otimes\tilde{\DOp})
\end{equation*}
in order to define this quasi-free operad $\Cyl\QOp$.
Then we apply results of~\S\ref{QuasiFreeOperads}
to check the properties of cylinder objects.

\subsubsection{The construction of the cylinder object}\label{CylinderOperads:Construction}
We have
\begin{equation*}
\Cyl(\kk\sigma) = (\kk\sigma^0\oplus\kk\sigma^1\oplus\kk\sigma^{0 1},\partial),
\end{equation*}
where $\sigma^0,\sigma^1$ are homogeneous elements of degree $-1$,
the element $\sigma^{01}$ has degree $0$
and $\partial$ is the differential such that $\partial(\sigma^{01}) = \sigma^1-\sigma^0$.
For our use (except in the proof of lemma~\ref{CylinderOperads:CylinderRequirements}),
it will be more natural to put the differential
of $\Cyl(\kk\sigma)$
in the twisting derivation of~$\Cyl\QOp$.
Therefore
we consider the graded $\kk$-module $K = \kk\sigma^0\oplus\kk\sigma^1\oplus\kk\sigma^{01}$
underlying~$\Cyl(\kk\sigma)$
and we form the free operad $\FOp(K\otimes\tilde{D})$.

The operad derivation $\partial_{\beta}: \FOp(K\otimes\tilde{D})\rightarrow\FOp(K\otimes\tilde{D})$
is associated to the homomorphism $\beta: K\otimes\tilde{D}\rightarrow\FOp(K\otimes\tilde{D})$
defined by the formulas of figure~\ref{Fig:CylinderDifferential}.
In these formulas,
the signs (like the sign of the cobar construction)
are produced by the commutation of homogeneous elements $\sigma^{\epsilon}$
with factors $\gamma_*$
when we patch the tensor products $\sigma^{\epsilon}\otimes\gamma_*$.
\begin{figure}
\begin{align*}
& \begin{aligned}
\beta\left\{\vcenter{\xymatrix@H=6pt@W=4pt@M=2pt@R=8pt@C=4pt{ i_1\ar[dr] & \cdots & i_n\ar[dl] \\
& *+<6pt>[F]{\sigma^{\epsilon}\otimes\gamma}\ar[d] & \\
& 0 & }}\right\}
& = \sum_{\substack{\tau\in\Theta_2(I)\\ \rho_{\tau}(\gamma)}}
\pm\left\{\vcenter{\xymatrix@H=6pt@W=4pt@M=2pt@R=8pt@C=4pt{ & i_*\ar[dr] & \cdots & i_*\ar[dl] & \\
i_*\ar[drr] & \cdots & *+<6pt>[F]{\sigma^{\epsilon}\otimes\gamma_*}\ar[d] & \cdots & i_*\ar[dll] \\
&& *+<6pt>[F]{\sigma^{\epsilon}\otimes\gamma_*}\ar[d] && \\
&& 0 && }}\right\},
\quad\text{for $\epsilon = 0,1$}, \\
\\
\beta\left\{\vcenter{\xymatrix@H=6pt@W=4pt@M=2pt@R=8pt@C=4pt{ i_1\ar[dr] & \cdots & i_n\ar[dl] \\
& *+<6pt>[F]{\sigma^{0 1}\otimes\gamma}\ar[d] & \\
& 0 & }}\right\}
& =
\left\{\vcenter{\xymatrix@H=6pt@W=4pt@M=2pt@R=8pt@C=4pt{ i_1\ar[dr] & \cdots & i_n\ar[dl] \\
& *+<6pt>[F]{\sigma^{1}\otimes\gamma}\ar[d] & \\
& 0 & }}\right\}
-
\left\{\vcenter{\xymatrix@H=6pt@W=4pt@M=2pt@R=8pt@C=4pt{ i_1\ar[dr] & \cdots & i_n\ar[dl] \\
& *+<6pt>[F]{\sigma^{0}\otimes\gamma}\ar[d] & \\
& 0 & }}\right\}
\end{aligned}
\\
\\
& \qquad\qquad\qquad + \sum_{\substack{\tau\in\Theta_2(I)\\ \rho_{\tau}(\gamma)}}
\pm\left\{\vcenter{\xymatrix@H=6pt@W=4pt@M=2pt@R=8pt@C=4pt{ & i_*\ar[dr] & \cdots & i_*\ar[dl] & \\
i_*\ar[drr] & \cdots & *+<6pt>[F]{\sigma^{1}\otimes\gamma_*}\ar[d] & \cdots & i_*\ar[dll] \\
&& *+<6pt>[F]{\sigma^{0 1}\otimes\gamma_*}\ar[d] && \\
&& 0 && }}\right\},
\\
\\
& \qquad\qquad\qquad  - \sum_{\substack{\tau\in\Psi_2(I)\\ \rho_{\tau}(\gamma)}}
\left\{\vcenter{\xymatrix@H=6pt@W=3pt@M=2pt@!R=1pt@!C=1pt{ &
i_*\ar[dr] & \cdots\ar@{}[d]|{\displaystyle{\cdots}} & i_*\ar[dl] &
& i_*\ar[dr] & \cdots\ar@{}[d]|{\displaystyle{\cdots}} & i_*\ar[dl] & \\
i_*\ar@/_8pt/[drrrr] &
& *+<3mm>[F]{\sigma^{0 1}\otimes\gamma_*}\ar[drr] &
& \cdots &
& *+<3mm>[F]{\sigma^{0 1}\otimes\gamma_*}\ar[dll] &
& i_*\ar@/^8pt/[dllll] \\
&&&& *+<3mm>[F]{\sigma^0\otimes\gamma_*}\ar[d] &&&& \\ &&&& 0 &&&& }}\right\}.
\end{align*}
\caption{}\label{Fig:CylinderDifferential}
\end{figure}

Note that:

\begin{claim}\label{CylinderOperads:ConstructionVerification}
The homomorphism $\beta: K\otimes\tilde{\DOp}\rightarrow\FOp(K\otimes\tilde{\DOp})$
defined by the formula of figure~\ref{Fig:CylinderDifferential}
satisfies the relation $\delta(\beta) = 0$ with respect to the internal differential of~$\tilde{\DOp}$
and $\partial_{\beta}\cdot\beta = 0$.
\end{claim}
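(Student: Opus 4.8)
The plan is to verify the two stated identities $\delta(\beta) = 0$ and $\partial_\beta\cdot\beta = 0$ by a direct, componentwise inspection of the formula of figure~\ref{Fig:CylinderDifferential}, exactly as in the analogous verification for the cobar construction in~\S\ref{CobarConstruction:Definition}. First I would observe that $\delta(\beta) = 0$ is immediate: the homomorphism $\beta$ is built entirely out of two-vertex tree coproducts $\rho_\tau$ of the cooperad $\tilde{\DOp}$ (together with the purely combinatorial operations $\sigma^{01}\mapsto\sigma^1-\sigma^0$ and $\sigma^1\otimes\gamma_*\mapsto\sigma^{01}\otimes\gamma_*$, which involve no internal differential), and the tree coproducts commute with the internal differential of $\tilde{\DOp}$ by definition of a cooperad in dg-modules. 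Since the elements $\sigma^\epsilon$ carry trivial differential, the Leibniz expansion of $\delta(\beta)$ reduces termwise to $\delta(\rho_\tau) = \rho_\tau\circ\delta - (\text{sum of }\rho_\tau\text{ with }\delta\text{ on a factor})$, which vanishes; the signs match those appearing in the commutation of $\sigma^\epsilon$ through the $\gamma_*$'s.

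The substantive part is $\partial_\beta\cdot\beta = 0$. By proposition~\ref{QuasiFreeOperads:DerivationConstruction} a derivation of $\FOp(K\otimes\tilde{\DOp})$ is determined by its restriction to the generators, so it suffices to check $\partial_\beta\cdot\beta = 0$ on $K\otimes\tilde{\DOp}$. For a generator $\sigma^\epsilon\otimes\gamma$ with $\epsilon = 0,1$, the computation is formally identical to the one behind the Claim in~\S\ref{CobarConstruction:Definition}: the coassociativity of the tree coproducts of $\tilde{\DOp}$ — equivalently, the dual of the associativity relations of figures~\ref{Fig:TreeLinearAssociativity}--\ref{Fig:TreeRamifiedAssociativity} — produces a cancellation in pairs, with a sign reversal coming from the permutation of the degree $-1$ element $\sigma^\epsilon$ that turns the associativity identity into a vanishing relation. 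For the generator $\sigma^{01}\otimes\gamma$ one has to expand $\partial_\beta(\beta(\sigma^{01}\otimes\gamma))$ using all four lines of figure~\ref{Fig:CylinderDifferential}: the terms coming from the leading $\sigma^1-\sigma^0$ difference reproduce $\beta(\sigma^1\otimes\gamma) - \beta(\sigma^0\otimes\gamma)$; these must be cancelled against the contributions obtained by applying $\beta$ to the bottom factor $\sigma^{01}\otimes\gamma_*$ of the third line and to the factor $\sigma^1\otimes\gamma_*$ of the third line, and against the contributions from the fourth (height-$2$, non-linear) line where $\beta$ acts on one of the two upper $\sigma^{01}\otimes\gamma_*$ factors. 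I would organize the bookkeeping by the shape of the underlying three-vertex tree (linear chains versus ramified trees, as in figures~\ref{Fig:TreeLinearAssociativity}--\ref{Fig:TreeRamifiedAssociativity}), and within each shape by which of the three labels $\sigma^0,\sigma^1,\sigma^{01}$ lands where; coassociativity of $\rho$ then pairs up the surviving terms and the sign coming from commuting $\sigma^{01}$ (degree $0$) versus $\sigma^1,\sigma^0$ (degree $-1$) makes the two members of each pair opposite.

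The main obstacle will be precisely this sign discipline in the $\sigma^{01}$ case: unlike the homogeneous cobar case, where every suspension factor has degree $-1$ and the ``permutation reverses a sign'' slogan applies uniformly, here the mixed-degree element $\sigma^{01}$ forces one to track carefully which Koszul signs come from permuting it through the cooperad factors and which come from the $\sigma^1-\sigma^0$ difference and the $\sigma^1\otimes\gamma_*\mapsto\sigma^{01}\otimes\gamma_*$ substitution. The cleanest way to handle this is to note that $\Cyl(\kk\sigma)$ is itself a (coaugmented) coalgebra-like object and that the four-line formula for $\beta$ is exactly the ``two-vertex'' part of the cobar twisting derivation for the tensor cooperad $\Cyl(\kk\sigma)\otimes\tilde{\DOp}$ equipped with its natural coproduct; then $\partial_\beta\cdot\beta = 0$ follows from the Claim of~\S\ref{CobarConstruction:Definition} applied to that cooperad, modulo moving the internal differential of $\Cyl(\kk\sigma)$ (the $\partial(\sigma^{01}) = \sigma^1-\sigma^0$ piece) into the twisting derivation — which is exactly what the first two lines of figure~\ref{Fig:CylinderDifferential} record. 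I expect that after setting up this reinterpretation the verification becomes a routine, if lengthy, inspection, and I would simply record the conclusion as a Claim with ``straightforward inspection'' in the spirit of the analogous claims in~\S\ref{CobarConstruction}.
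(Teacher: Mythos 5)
Your proposal is correct and follows essentially the same route as the paper: $\delta(\beta)=0$ is immediate because $\beta$ is assembled from cooperad coproducts and identities, the cases $\sigma^{\epsilon}\otimes\gamma$ ($\epsilon=0,1$) reduce to the cobar-construction claim, and the $\sigma^{01}\otimes\gamma$ case is handled by expanding $\partial_{\beta}\cdot\beta$ and pairing off the resulting three-vertex tree terms via coassociativity of the tree coproducts. The paper's own proof is exactly this (stated even more tersely -- ``each tree of the expansion occurs twice''), so your additional reinterpretation of figure~\ref{Fig:CylinderDifferential} as a cobar-type derivation for $\Cyl(\kk\sigma)\otimes\tilde{\DOp}$ is a reasonable organizing device but not needed.
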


Hence,
by proposition~\ref{QuasiFreeOperads:TwistingDerivationConstruction},
we have a well-defined quasi-free operad such that $\Cyl\QOp = (\FOp(K\otimes\tilde{\DOp}),\partial_{\beta})$.

\begin{proof}
The assertion $\delta(\beta) = 0$ is immediate since $\beta$ is essentially a combination of coproducts and identity
morphisms in $\tilde{\DOp}$.

For generating elements $c = \sigma^{\epsilon}\otimes\gamma$, $\epsilon = 0,1$,
the homomorphism $\beta$
is identified with the homomorphism which determines
the derivation of the cobar construction.
Hence we already know that $\partial_{\beta}\cdot\beta(\sigma^{\epsilon}\otimes\gamma)$
vanishes for $\epsilon = 0,1$.
Recall briefly that this assertion is a consequence of the coassociativity of the cooperad coproduct,
expressed by dual versions of the commutative diagrams~(\ref{Fig:TreeLinearAssociativity}-\ref{Fig:TreeRamifiedAssociativity})
of~\S\ref{TreeOperadStructures:TreeAssociativity}.

The identity $\partial_{\beta}\cdot\beta(\sigma^{01}\otimes\gamma) = 0$ follows from a straightforward generalization of this verification:
check that each tree of the expansion of~$\partial_{\beta}\cdot\beta(\sigma^{01}\otimes\gamma)$
occurs twice and use the coassociativity of the cooperad structure
to conclude that all terms vanish.
\end{proof}

We check now:

\begin{lemm}\label{CylinderOperads:StructureMorphisms}
The morphisms of graded $\kk$-modules
\begin{equation*}
\xymatrix{ \kk\sigma\ar@<2pt>[r]^(0.3){d^0}\ar@<-2pt>[r]_(0.3){d^1} &
\kk\sigma^0\oplus\kk\sigma^1\oplus\kk\sigma^{0 1}\ar[r]^(0.7){s^0} &
\kk\sigma }
\end{equation*}
such that
$d^0(\sigma) = \sigma^0$, $d^1(\sigma) = \sigma^1$,
$s^0(\sigma^0) = s^0(\sigma^1) = \sigma$, and $s^0(\sigma^{01}) = 0$,
induce operad morphisms
\begin{equation*}
\xymatrix{ \QOp\ar@<2pt>[r]^(0.45){d^0}\ar@<-2pt>[r]_(0.45){d^1} & \Cyl\QOp\ar[r]^(0.55){s^0} & \QOp }
\end{equation*}
such that $s^0 d^0 = s^0 d^1 = \id$.
\end{lemm}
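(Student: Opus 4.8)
The plan is to recognize each of the three maps in the statement as a morphism between quasi-free operads induced, in the sense of Proposition~\ref{QuasiFreeOperads:InducedMorphisms}, by a morphism of the generating $\Sigma_*$-objects, and then to check the single compatibility identity that proposition demands. Since $\kk\sigma$ and $K = \kk\sigma^0\oplus\kk\sigma^1\oplus\kk\sigma^{01}$ carry the trivial internal differential — the differential of $\Cyl(\kk\sigma)$ having been absorbed into the twisting derivation of $\Cyl\QOp$, see~\S\ref{CylinderOperads:Construction} — the maps $d^0\otimes\tilde{\DOp},\ d^1\otimes\tilde{\DOp}: \kk\sigma\otimes\tilde{\DOp}\to K\otimes\tilde{\DOp}$ and $s^0\otimes\tilde{\DOp}: K\otimes\tilde{\DOp}\to\kk\sigma\otimes\tilde{\DOp}$ are automatically morphisms of $\Sigma_*$-objects of degree~$0$. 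By Proposition~\ref{QuasiFreeOperads:InducedMorphisms}, such a map $f$ induces a morphism between the associated quasi-free operads precisely when $\partial$-compatibility $\beta\cdot f = \FOp(f)\cdot\alpha$ holds between the twisting homomorphisms of source and target; so everything reduces to checking this identity for the three maps above.

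For $f = d^{\epsilon}\otimes\tilde{\DOp}$ with $\epsilon = 0,1$, I would use that the restriction to a generating element $\sigma^{\epsilon}\otimes\gamma$ of the homomorphism $\beta: K\otimes\tilde{\DOp}\to\FOp(K\otimes\tilde{\DOp})$ displayed in Figure~\ref{Fig:CylinderDifferential} is, by design, the homomorphism defining the twisting derivation of the cobar construction $B^c(\DOp)$ of~\S\ref{CobarConstruction:Definition}, with the desuspension symbol $\sigma$ replaced everywhere by $\sigma^{\epsilon}$ (a homogeneous symbol of the same degree $-1$, so the signs are unchanged). Since the free-operad functor applied to the degree~$0$ map $d^{\epsilon}\otimes\tilde{\DOp}$ introduces no Koszul signs and simply relabels each factor $\sigma\otimes\gamma_*$ as $\sigma^{\epsilon}\otimes\gamma_*$, the homomorphisms $\beta\cdot(d^{\epsilon}\otimes\tilde{\DOp})$ and $\FOp(d^{\epsilon}\otimes\tilde{\DOp})\cdot\beta_{B^c(\DOp)}$ agree term by term. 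Hence $d^{\epsilon}$ induces an operad morphism $d^{\epsilon}: \QOp\to\Cyl\QOp$.

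For $f = s^0\otimes\tilde{\DOp}$, I would verify $\beta_{B^c(\DOp)}\cdot(s^0\otimes\tilde{\DOp}) = \FOp(s^0\otimes\tilde{\DOp})\cdot\beta$ on each of the generators $\sigma^0\otimes\gamma$, $\sigma^1\otimes\gamma$ and $\sigma^{01}\otimes\gamma$. On $\sigma^{\epsilon}\otimes\gamma$ with $\epsilon = 0,1$ this is immediate from the previous step, since $s^0$ carries $\sigma^{\epsilon}$ to $\sigma$ and turns the $\sigma^{\epsilon}$-cobar formula into the $\sigma$-cobar formula. On $\sigma^{01}\otimes\gamma$ the left-hand side vanishes because $s^0(\sigma^{01}) = 0$; for the right-hand side, the functor $\FOp(s^0\otimes\tilde{\DOp})$ annihilates every tree tensor containing a factor $\sigma^{01}\otimes\gamma_*$, which kills the last two families of terms of $\beta(\sigma^{01}\otimes\gamma)$ in Figure~\ref{Fig:CylinderDifferential}, while the two leading terms $(\sigma^1\otimes\gamma)-(\sigma^0\otimes\gamma)$ are sent to $(\sigma\otimes\gamma)-(\sigma\otimes\gamma) = 0$. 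Thus both sides vanish and $s^0$ induces an operad morphism $s^0: \Cyl\QOp\to\QOp$.

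Finally, the relations $s^0 d^0 = s^0 d^1 = \id$ hold already on the generating $\Sigma_*$-objects, since $s^0(d^{\epsilon}(\sigma)) = s^0(\sigma^{\epsilon}) = \sigma$; because a morphism out of a free operad is determined by its restriction to generators (Proposition~\ref{TreeOperadStructures:FreeOperadStatement}), the composites $s^0 d^{\epsilon}$ coincide with the identity of $\QOp$. The only point requiring any care is the case $f = s^0$ on $\sigma^{01}\otimes\gamma$ — namely tracking which tree tensors of the expansion of $\beta(\sigma^{01}\otimes\gamma)$ survive the functor $\FOp(s^0\otimes\tilde{\DOp})$ — but this is entirely mechanical given the explicit formula of Figure~\ref{Fig:CylinderDifferential}, whose well-definedness was already recorded just before this lemma.
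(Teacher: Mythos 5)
Your proposal is correct and follows exactly the paper's route: reduce via Proposition~\ref{QuasiFreeOperads:InducedMorphisms} to checking the compatibility of the generating maps with the twisting homomorphisms on generators, which the paper dispatches as ``immediate from the definitions'' and which you carry out explicitly (including the only slightly delicate case of $s^0$ on $\sigma^{01}\otimes\gamma$, which you handle correctly). No gaps.
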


\begin{proof}
By proposition~\ref{QuasiFreeOperads:InducedMorphisms}
we are reduced to check identities
\begin{equation*}
\beta\cdot\phi(c) = \phi\cdot\beta(c)
\end{equation*}
on generating elements
in order to prove the commutation of $\phi = d^0,d^1,s^0$
with differentials.
This verification is immediate from the definition
of the homomorphisms $\beta: \kk\sigma\otimes\tilde{\DOp}\rightarrow\FOp(\kk\sigma\otimes\tilde{\DOp})$
and $\beta: K\otimes\tilde{\DOp}\rightarrow\FOp(K\otimes\tilde{\DOp})$.
\end{proof}

\begin{lemm}\label{CylinderOperads:CylinderRequirements}
If the cooperad $\DOp$ is $\Sigma_*$-cofibrant,
then the morphism $(d^0,d^1): \QOp\vee\QOp\rightarrow\Cyl\QOp$ is a cofibration
and $s^0: \Cyl\QOp\rightarrow\QOp$
is an acyclic fibration.
\end{lemm}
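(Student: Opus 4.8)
The plan is to deduce both assertions from the machinery of \S\ref{QuasiFreeOperads} together with the model-structure results of \S\ref{OperadHomotopy}. The key observation is that all three operads in play are quasi-free operads generated by desuspension-type twists of $\tilde{\DOp}$, so the morphisms $d^0,d^1,s^0$ are morphisms of quasi-free operads in the sense of proposition~\ref{QuasiFreeOperads:Cofibrations}, and cofibrancy/acyclicity can be tested at the level of the generating $\Sigma_*$-objects.

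First I would treat the claim that $(d^0,d^1)\colon\QOp\vee\QOp\rightarrow\Cyl\QOp$ is a cofibration. The generating $\Sigma_*$-object of $\QOp\vee\QOp$ is $(\kk\sigma^0\oplus\kk\sigma^1)\otimes\tilde{\DOp}$ and that of $\Cyl\QOp$ is $K\otimes\tilde{\DOp} = (\kk\sigma^0\oplus\kk\sigma^1\oplus\kk\sigma^{01})\otimes\tilde{\DOp}$, and the morphism $(d^0,d^1)$ is induced on free operads by the obvious inclusion $f\colon(\kk\sigma^0\oplus\kk\sigma^1)\otimes\tilde{\DOp}\hookrightarrow K\otimes\tilde{\DOp}$ of $\Sigma_*$-objects. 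One needs to check that this inclusion does sit inside the quasi-free framework: the twisting homomorphism $\beta$ of figure~\ref{Fig:CylinderDifferential} restricts on the subobject $(\kk\sigma^0\oplus\kk\sigma^1)\otimes\tilde{\DOp}$ to the cobar twisting homomorphism, so the hypothesis $\beta\cdot f = \FOp(f)\cdot\alpha$ of proposition~\ref{QuasiFreeOperads:InducedMorphisms} holds, and both twisting homomorphisms land in $\FOp_{\geq 2}$; this is lemma~\ref{CylinderOperads:StructureMorphisms} restated. Now $f$ is obtained from the inclusion of graded $\kk$-modules $\kk\sigma^0\oplus\kk\sigma^1\hookrightarrow K$, which is a cofibration of dg-modules (a summand inclusion into a free module with the given internal differential forgotten — here $K$ carries the trivial differential); tensoring with the $\Sigma_*$-cofibrant object $\tilde{\DOp}$ and invoking the pushout-product axiom for the tensoring of $\Sigma_*$-objects over dg-modules (\cite[Proposition 11.4.B]{FresseModuleBook}) shows $f$ is a cofibration of $\Sigma_*$-objects. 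Proposition~\ref{QuasiFreeOperads:Cofibrations} then gives that $(d^0,d^1) = \FOp(f)$ is a cofibration of operads.

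For the statement that $s^0\colon\Cyl\QOp\rightarrow\QOp$ is an acyclic fibration, I would again apply proposition~\ref{QuasiFreeOperads:Cofibrations}, but now in the direction that needs a section: the point is that $s^0 d^1 = \id$, so it suffices to show $d^1\colon\QOp\rightarrow\Cyl\QOp$ is an acyclic cofibration, and then $s^0$ is a weak-equivalence by two-out-of-three, while fibrancy of $s^0$ comes from the fact that it is degreewise surjective (visible from $s^0(\sigma^0)=s^0(\sigma^1)=\sigma$) hence a fibration in $\Op$ since the forgetful functor creates fibrations. The morphism $d^1 = \FOp(g)$ is induced by the inclusion $g\colon\kk\sigma^1\otimes\tilde{\DOp}\hookrightarrow\Cyl(\kk\sigma)\otimes\tilde{\DOp}$, where now I must use the genuine cylinder dg-module $\Cyl(\kk\sigma)$ with its differential $\partial(\sigma^{01}) = \sigma^1-\sigma^0$ — this is where the parenthetical exception ``except in the proof of lemma~\ref{CylinderOperads:CylinderRequirements}'' in \S\ref{CylinderOperads:Construction} is used: one moves the differential of $\Cyl(\kk\sigma)$ out of the twisting derivation and back into the internal differential of the generating $\Sigma_*$-object, so that the twisting homomorphism of $\Cyl\QOp$ becomes literally the cobar twisting homomorphism extended across the three summands, compatibly with $g$ in the sense of proposition~\ref{QuasiFreeOperads:InducedMorphisms}. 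The inclusion $\kk\sigma^1\hookrightarrow\Cyl(\kk\sigma)$ is the standard acyclic cofibration of dg-modules (it is a composite of pushouts of the generating acyclic cofibration $0\to E[0]$, up to a shift), so tensoring with the $\Sigma_*$-cofibrant $\tilde{\DOp}$ and using the pushout-product axiom gives that $g$ is an acyclic cofibration of $\Sigma_*$-objects; proposition~\ref{QuasiFreeOperads:Cofibrations} then yields that $d^1$ is an acyclic cofibration of operads, completing the argument.

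The main obstacle I anticipate is bookkeeping around the differential of $\Cyl(\kk\sigma)$: one has to be careful that the ``move the differential into the generating object'' device is done consistently so that the twisting derivation $\partial_\beta$ of $\Cyl\QOp$ defined in figure~\ref{Fig:CylinderDifferential} really does agree, after this reorganization, with the cobar twisting derivation on each of the three copies of $\tilde{\DOp}$ plus the extra diagonal terms coming from $\partial(\sigma^{01})$ — and that the resulting generating $\Sigma_*$-object $\Cyl(\kk\sigma)\otimes\tilde{\DOp}$ is still $\Sigma_*$-cofibrant and the hypothesis $\beta(N)\subset\FOp_{\geq 2}(N)$ of proposition~\ref{QuasiFreeOperads:Cofibrations} remains satisfied (the degree-shifting diagonal term $\sigma^{01}\mapsto\sigma^1-\sigma^0$ stays in $\FOp_1$, so this is fine, but it must be noted). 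Everything else is a routine application of the pushout-product axiom and proposition~\ref{QuasiFreeOperads:Cofibrations}.
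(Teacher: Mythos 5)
Your proposal is correct and follows essentially the same route as the paper: identify $d^0,d^1,s^0$ (and $(d^0,d^1)$) as morphisms of quasi-free operads induced by the evident morphisms of generating $\Sigma_*$-objects, deduce that these are (acyclic) cofibrations of $\Sigma_*$-objects from the pushout-product axiom and the $\Sigma_*$-cofibrancy of $\tilde{\DOp}$, apply proposition~\ref{QuasiFreeOperads:Cofibrations}, and get the acyclicity of $s^0$ from $s^0 d^1 = \id$ and two-out-of-three. Your bookkeeping remark is the right one, with the small clarification that the $\FOp_1$-component $\sigma^{01}\mapsto\sigma^1-\sigma^0$ is acceptable precisely because it is absorbed into the internal differential of $\Cyl(\kk\sigma)\otimes\tilde{\DOp}$ (so that the residual twisting homomorphism does satisfy $\beta(N)\subset\FOp_{\geq 2}(N)$), not because $\FOp_1$-terms are permitted in the twisting homomorphism itself.
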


\begin{proof}
In this proof,
we use that $d^0,d^1,s^0$ are identified with morphisms of quasi-free operads
induced by morphisms of $\Sigma_*$-objects:
\begin{equation*}
\xymatrix{ \kk\sigma\otimes\tilde{\DOp}\ar@<2pt>[r]^{d^0\otimes\id}\ar@<-2pt>[r]_{d^1\otimes\id} &
\Cyl(\kk\sigma)\otimes\tilde{\DOp}\ar[r]^{s^0\otimes\id} &
\kk\sigma\otimes\tilde{\DOp} }.
\end{equation*}
Since $d^0,d^1: \kk\sigma\rightarrow\Cyl(\kk\sigma)$
are acyclic cofibrations and $\tilde{\DOp}$
is $\Sigma_*$-cofibrant,
we obtain that
\begin{equation*}
d^0,d^1: \kk\sigma\otimes\tilde{\DOp}\rightarrow\Cyl(\kk\sigma)\otimes\tilde{\DOp}
\end{equation*}
are acyclic cofibrations of~$\Sigma_*$-objects
and proposition~\ref{QuasiFreeOperads:Cofibrations}
implies that the induced morphisms $d^0,d^1: \QOp\rightarrow\Cyl\QOp$ are acyclic cofibrations of operads.
Accordingly,
we conclude from the identities $s^0 d^0 = s^0 d^1 = \id$
that $s^0$ is a weak-equivalence.
Note that $s^0$
is obviously a fibration.

We also have
\begin{equation*}
\QOp\vee\QOp = (\FOp((\kk\sigma^0\oplus\kk\sigma^1)\otimes\tilde{\DOp}),\partial_{\beta})
\end{equation*}
and $(d^0,d^1): \QOp\vee\QOp\rightarrow\Cyl\QOp$
can be identified with the morphism of quasi-free operads
induced by the morphism of $\Sigma_*$-objects:
\begin{equation*}
(\kk\sigma^0\oplus\kk\sigma^1)\otimes\tilde{\DOp}\xrightarrow{(d^0,d^1)\otimes\tilde{\DOp}}\Cyl(\kk\sigma)\otimes\tilde{\DOp},
\end{equation*}
which forms a cofibration.
Accordingly,
proposition~\ref{QuasiFreeOperads:Cofibrations}
implies that the morphism $(d^0,d^1): \QOp\vee\QOp\rightarrow\Cyl\QOp$
forms a cofibration of operads.
\end{proof}

From these lemmas, we conclude:

\begin{thm}\label{CylinderOperads:Statement}
For any $\Sigma_*$-cofibrant cooperad~$\DOp$,
the quasi-free operad of~\S\ref{CylinderOperads:Construction}
\begin{equation*}
\Cyl\QOp = (\FOp(\Cyl(\kk\sigma)\otimes\tilde{\DOp}),\partial_{\beta})
\end{equation*}
defines a cylinder object associated to the cobar construction $\QOp = B^c(\DOp)$.
\qed
\end{thm}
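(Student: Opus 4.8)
The plan is to unwind the definition of a cylinder object recalled at the beginning of \S\ref{CylinderOperads} and to check its three requirements one at a time, feeding in the lemmas already established in this subsection. Recall that a cylinder object for $\QOp = B^c(\DOp)$ is an operad $\Cyl\QOp$ equipped with morphisms $d^0,d^1\colon\QOp\rightarrow\Cyl\QOp$ and $s^0\colon\Cyl\QOp\rightarrow\QOp$ such that $(d^0,d^1)\colon\QOp\vee\QOp\rightarrow\Cyl\QOp$ is a cofibration, $s^0$ is a weak-equivalence, and $s^0 d^0 = s^0 d^1 = \id$. So the proof is really an assembly step.

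First I would observe that $\Cyl\QOp = (\FOp(K\otimes\tilde{\DOp}),\partial_{\beta})$ is a well-defined operad: this is exactly claim~\ref{CylinderOperads:ConstructionVerification}, namely that the homomorphism $\beta$ of figure~\ref{Fig:CylinderDifferential} satisfies $\delta(\beta) = 0$ and $\partial_{\beta}\cdot\beta = 0$, so that proposition~\ref{QuasiFreeOperads:TwistingDerivationConstruction} makes $\partial_{\beta}$ a genuine twisting homomorphism. Next I would invoke lemma~\ref{CylinderOperads:StructureMorphisms}, which produces the operad morphisms $d^0,d^1,s^0$ from the evident maps on the desuspension factors and records the retraction identities $s^0 d^0 = s^0 d^1 = \id$. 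Finally, under the hypothesis that $\DOp$ is $\Sigma_*$-cofibrant, lemma~\ref{CylinderOperads:CylinderRequirements} gives the remaining two conditions: $(d^0,d^1)$ is a cofibration of operads and $s^0$ is an acyclic fibration, hence in particular a weak-equivalence. Combining these three inputs verifies all the axioms of a cylinder object and concludes the proof.

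At the level of the theorem itself there is thus no real obstacle left; for completeness I would point out where the genuine work sits. The delicate point is the sign bookkeeping behind claim~\ref{CylinderOperads:ConstructionVerification}, specifically the vanishing of $\partial_{\beta}\cdot\beta(\sigma^{01}\otimes\gamma)$: here one pairs up the trees appearing in the expansion and cancels them using the coassociativity of the cooperad coproduct — the duals of the relations of figures~\ref{Fig:TreeLinearAssociativity}--\ref{Fig:TreeRamifiedAssociativity} — together with the sign reversal produced when permuting the degree $-1$ suspension symbols $\sigma^{\epsilon}$ while patching the tensors $\sigma^{\epsilon}\otimes\gamma_*$. The other nontrivial input, inside lemma~\ref{CylinderOperads:CylinderRequirements}, is the identification of $d^0,d^1,s^0$ with morphisms of quasi-free operads induced by morphisms of $\Sigma_*$-objects $(d^0,d^1)\otimes\tilde{\DOp}$ and $s^0\otimes\tilde{\DOp}$ — using that $d^0,d^1\colon\kk\sigma\rightarrow\Cyl(\kk\sigma)$ are acyclic cofibrations of dg-modules and $\tilde{\DOp}$ is $\Sigma_*$-cofibrant, so that the pushout-product axiom and proposition~\ref{QuasiFreeOperads:Cofibrations} apply.
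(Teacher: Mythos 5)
Your proposal matches the paper exactly: the theorem is stated as an immediate consequence of claim~\ref{CylinderOperads:ConstructionVerification}, lemma~\ref{CylinderOperads:StructureMorphisms} and lemma~\ref{CylinderOperads:CylinderRequirements}, which together supply the well-definedness of $\Cyl\QOp$, the retraction identities, and the cofibration/acyclic-fibration conditions. Your additional remarks on where the real work lies (the sign cancellation in $\partial_{\beta}\cdot\beta(\sigma^{01}\otimes\gamma)$ and the use of proposition~\ref{QuasiFreeOperads:Cofibrations}) are accurate and consistent with the paper's arguments.
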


\subsection{Cylinder objects and algebra equivalences}\label{HomotopyMorphisms}
Let $\psi^0,\psi^1: \QOp\rightarrow\End_A$
be a pair of morphisms which provide a dg-module $A$
with two $\QOp$-algebra structures.
Recall that a left homotopy between $(\psi^0,\psi^1)$
is a morphism $\tilde{\psi}: \Cyl\QOp\rightarrow\End_A$
on a cylinder object associated to~$\QOp$
such that the diagram
\begin{equation*}
\xymatrix{ \QOp\vee\QOp\ar[r]^{(\psi^0,\psi^1)}\ar[d]_{(d^0,d^1)} & \End_A \\
\Cyl\QOp\ar@{.>}[ur]_{\tilde{\psi}} & }
\end{equation*}
commutes.
In a model category $\A$
the existence of a left homotopy between morphisms $(\psi^0,\psi^1)$
does not depend on the choice of a cofibrant object.
The same assertion holds in semi-model categories
provided that $(\psi^0,\psi^1)$
have a cofibrant domain.

The first purpose of this subsection is to prove:

\begin{thm}\label{HomotopyMorphisms:OperadicHomotopyCorrespondence}
Let $\QOp = B^c(\DOp)$
be the cobar construction of a $\Sigma_*$-cofibrant cooperad $\DOp$.
Let $\POp$ be any $\Sigma_*$-cofibrant operad
together with a weak-equivalence $\phi_{\theta}: \QOp\xrightarrow{\sim}\POp$.

Let $\phi^0,\phi^1: \POp\rightarrow\End_A$
be a pair of morphisms which provide a dg-module $A$
with two $\POp$-algebra structures.
Let $\Gamma_{\POp}(A,\phi^{\epsilon}) = (\DOp(A),\partial_{\alpha^{\epsilon}})$
be the quasi-cofree $\DOp$-coalgebras
associated to the $\POp$-algebras $(A,\phi^{\epsilon})$, $\epsilon = 0,1$.

We have a bijective correspondence between left homotopies
\begin{equation*}
\xymatrix{ \QOp\vee\QOp\ar[r]\ar[d]_{(d^0,d^1)} & \POp\vee\POp\ar[r]^{(\phi^0,\phi^1)} & \End_A \\
\Cyl\QOp\ar@{.>}[urr]_{\tilde{\psi}} & },
\end{equation*}
and the morphisms of quasi-cofree coalgebras
\begin{equation*}
(\DOp(A),\partial_{\alpha^1})\xrightarrow{\phi_f}(\DOp(A),\partial_{\alpha^0})
\end{equation*}
which reduce to the identity on $A$.
\end{thm}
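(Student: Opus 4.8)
\emph{Plan of proof.} The strategy is to translate everything in sight through the universal properties of quasi-free operads and cofree coalgebras. First I would use proposition~\ref{QuasiFreeOperads:MorphismConstruction} to identify the operad morphisms $\tilde{\psi}: \Cyl\QOp = (\FOp(K\otimes\tilde{\DOp}),\partial_{\beta})\rightarrow\End_A$ with the degree~$0$ homomorphisms $\tilde{\theta}: K\otimes\tilde{\DOp}\rightarrow\End_A$ such that $\delta(\tilde{\theta}) = \phi_{\tilde{\theta}}\cdot\beta$, where $K = \kk\sigma^0\oplus\kk\sigma^1\oplus\kk\sigma^{01}$. Accordingly $\tilde{\theta}$ splits into three components $\tilde{\theta} = (\theta^0,\theta^1,h)$, where $\theta^0,\theta^1: \tilde{\DOp}\rightarrow\End_A$ have degree $-1$ (coming from $\kk\sigma^0,\kk\sigma^1$, which are concentrated in degree $-1$) and $h: \tilde{\DOp}\rightarrow\End_A$ has degree~$0$ (coming from $\kk\sigma^{01}$, which is in degree~$0$). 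Since $d^{\epsilon}: \QOp\rightarrow\Cyl\QOp$ is the morphism of quasi-free operads induced by $\sigma\mapsto\sigma^{\epsilon}$ (lemma~\ref{CylinderOperads:StructureMorphisms}), the condition that $\tilde{\psi}$ be a left homotopy over $(d^0,d^1)$ between the two structure morphisms $\psi^{\epsilon} = \phi^{\epsilon}\circ\phi_{\theta}: \QOp\rightarrow\End_A$ forces $\theta^{\epsilon}$ to be precisely the twisting cochain equivalent to $\psi^{\epsilon}$; by proposition~\ref{CooperadCoalgebras:CobarOperadAlgebras} this twisting cochain is the datum equivalent to the homomorphism $\alpha^{\epsilon}: \DOp(A)\rightarrow A$ that determines $\partial_{\alpha^{\epsilon}}$. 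Hence, once the homotopy endpoints are fixed, the only remaining degree of freedom in $\tilde{\psi}$ is the degree~$0$ component $h$.

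Next I would convert $h$ into a coalgebra morphism. By the adjunction $\End_A(r) = \Hom_{\C}(A^{\otimes r},A)$ (exactly as in the proof of proposition~\ref{CooperadCoalgebras:CobarOperadAlgebras}), the degree~$0$ homomorphism $h: \tilde{\DOp}\rightarrow\End_A$ is equivalent to a degree~$0$ homomorphism $g: S(\tilde{\DOp},A)\rightarrow A$, and putting $f = \id_A\oplus g: \DOp(A) = A\oplus S(\tilde{\DOp},A)\rightarrow A$ gives, via observation~\ref{CooperadCoalgebras:CofreeMorphisms}, a homomorphism of cofree coalgebras $\phi_f: \DOp(A)\rightarrow\DOp(A)$ of degree~$0$. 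The normalization $f|_A = \id_A$ is exactly the statement that $\phi_f$ reduces to the identity on~$A$, and conversely every such $\phi_f$ arises from a unique $g$, so $h\leftrightarrow\phi_f$ is a bijection between the admissible components and the coalgebra endomorphisms of $\DOp(A)$ reducing to the identity on~$A$.

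The core computation is then to show that the remaining part of the equation $\delta(\tilde{\theta}) = \phi_{\tilde{\theta}}\cdot\beta$ — its component on the summand $\kk\sigma^{01}\otimes\tilde{\DOp}$ — is equivalent to the condition of proposition~\ref{CooperadCoalgebras:QuasiFreeMorphisms} for $\phi_f$ to be a morphism of quasi-cofree coalgebras $(\DOp(A),\partial_{\alpha^1})\rightarrow(\DOp(A),\partial_{\alpha^0})$. Here I would note that the components of the twisting equation on $\kk\sigma^0\otimes\tilde{\DOp}$ and $\kk\sigma^1\otimes\tilde{\DOp}$ reduce, by the first two lines of figure~\ref{Fig:CylinderDifferential}, to the plain cobar twisting-cochain equations for $\theta^0$ and $\theta^1$, which hold automatically since $\psi^0,\psi^1$ are honest operad morphisms; so these impose no constraint. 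For the $\sigma^{01}$-component I would expand $\beta(\sigma^{01}\otimes\gamma)$ according to figure~\ref{Fig:CylinderDifferential} (the four groups of terms: $\sigma^1\otimes\gamma - \sigma^0\otimes\gamma$; the two-vertex trees with $\sigma^1\otimes\gamma_*$ on top and $\sigma^{01}\otimes\gamma_*$ below; the height-two trees with $\sigma^{01}\otimes\gamma_*$ on top and $\sigma^0\otimes\gamma_*$ below), apply $\phi_{\tilde{\theta}}$ using the composition product of $\End_A$, and match against the three groups of terms in the identity of proposition~\ref{CooperadCoalgebras:QuasiFreeMorphisms}: the middle group $\sum\pm f\{\dots\alpha\dots\}$ receives $\alpha^1$ from the $\theta^1$-labelled top vertices, the last group $\sum\nu(\gamma)\,\beta\{\dots f\dots\}$ receives $\alpha^0$ from the $\theta^0$-labelled bottom vertex, and the bare terms $\sigma^1\otimes\gamma - \sigma^0\otimes\gamma$ account for the ``identity pieces'' of $f$ (using $f|_A = \id_A$) in those same sums; $\delta(\tilde{\theta})(\sigma^{01}\otimes\gamma)$ is $\delta(h)(\gamma) = \delta(f)\{\dots\}$. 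Bijectivity of all the preceding dictionaries, plus the fact that the $\sigma^0,\sigma^1$ parts are vacuous constraints, then yields the asserted bijection in both directions.

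I expect the main obstacle to be precisely this last matching: keeping track of the Koszul signs produced when suspension symbols $\sigma^{\epsilon}$ are commuted past factors $\gamma_*$ (the signs are governed by the sign rules of~\S\ref{Background:Signs} but must be reconciled with the sign conventions in proposition~\ref{CooperadCoalgebras:QuasiFreeMorphisms} and in figure~\ref{Fig:CylinderDifferential}), and checking that the ranges of trees $\Theta_2(I)$ versus $\Psi_2(I)$ in the two expansions correspond correctly under the insertion of unital vertices (the correspondence of~\S\ref{TreeOperadStructures:ReducedCompositionStructure}). This is the same kind of bookkeeping that is left as an exercise in the proofs of propositions~\ref{CooperadCoalgebras:QuasiFreeMorphisms} and~\ref{CooperadCoalgebras:TwistingCofreeCoderivations}, so I would carry it out by a careful but essentially mechanical term-by-term comparison. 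Finally, having established the bijection of homotopies with coalgebra morphisms $\phi_f$ reducing to the identity on~$A$, the chain of weak-equivalences of $\POp$-algebras announced at the start of the section follows by feeding $\phi_f$ into proposition~\ref{QuasiFreeReplacements:HomotopyEquivalences} (whose hypothesis $f|_A = \id_A$ is a weak-equivalence is automatic).
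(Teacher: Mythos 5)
Your proposal follows essentially the same route as the paper's proof: identify $\tilde{\psi}$ with its restriction to $K\otimes\tilde{\DOp}$ via proposition~\ref{QuasiFreeOperads:MorphismConstruction}, observe that the endpoint conditions $\tilde{\psi}d^{\epsilon}=\psi^{\epsilon}$ fix the $\sigma^0,\sigma^1$ components to be the twisting cochains encoding $\alpha^0,\alpha^1$, convert the remaining $\sigma^{01}$ component into the homomorphism $f:\DOp(A)\rightarrow A$ with $f|_A=\id_A$, and match the residual twisting equation against the condition of proposition~\ref{CooperadCoalgebras:QuasiFreeMorphisms}. Your term-by-term matching of figure~\ref{Fig:CylinderDifferential} against that proposition (including the correct assignment of $\alpha^1$ to the source and $\alpha^0$ to the target) is precisely the ``immediate inspection of formulas'' the paper leaves implicit, so the argument is correct.
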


Recall that $\phi_f$ is determined by a homomorphism of dg-modules $f: \DOp(A)\rightarrow A$
of degree $0$.
The assumption on $\phi_f$
amounts to the identity $f|_A = \id_A$.

\begin{proof}
Let $(\psi^0,\psi^1)$ be the composites $\psi^{\epsilon} = \phi^{\epsilon}\cdot\phi_{\theta}$,
where $\epsilon = 0,1$.
We have by definition $\alpha^{\epsilon}(a) = 0$ for $a\in A$.
The relation $\tilde{\psi} d^{\epsilon} = \psi^{\epsilon}$
gives
\begin{equation*}
\alpha^{\epsilon}(c) = \psi^{\epsilon}(\gamma)(a_1,\dots,a_n) \\
= f(d^{\epsilon}(\gamma))(a_1,\dots,a_n) = \tilde{\psi}(\sigma^{\epsilon}\otimes\gamma)(a_1,\dots,a_n),
\end{equation*}
for any element $c = \gamma(a_1,\dots,a_n)\in\DOp(A)$
such that $\gamma\in\tilde{\DOp}(n)$.
The homomorphism $f: \DOp(A)\rightarrow A$
associated to $\tilde{\psi}$
is determined by $f(a) = a$ for $a\in A$
and (with the same conventions)
\begin{equation*}
f(c) = \tilde{\psi}(\sigma^{01}\otimes\gamma)(a_1,\dots,a_n),
\end{equation*}
when we assume $c = \gamma(a_1,\dots,a_n)\in\DOp(A)$, $\gamma\in\tilde{\DOp}(n)$.

Recall that $\tilde{\psi}$
is uniquely determined by the homomorphism of $\Sigma_*$-object $h = \tilde{\psi}|_{K\otimes\tilde{\DOp}}$.
As a consequence,
our relations give a bijective correspondence
between the homomorphisms of degree $0$
\begin{equation*}
\DOp(A)\xrightarrow{f} A
\end{equation*}
such that $f|_A = \id_A$
and the homomorphisms of degree $0$
\begin{equation*}
\FOp(K\otimes\tilde{\DOp})\xrightarrow{\tilde{\psi}}\End_A
\end{equation*}
commuting with composition structures
and such that $\tilde{\psi} d^{\epsilon} = \psi^{\epsilon}$
for $\epsilon = 0,1$.

Thus we are reduced to prove that $f$ satisfies the equation of proposition~\ref{CooperadCoalgebras:QuasiFreeMorphisms}
if and only if $h: K\otimes\tilde{\DOp}\rightarrow\End_A$
satisfies the equation of proposition~\ref{QuasiFreeOperads:MorphismConstruction}.
The verification of this assertion reduces to an immediate inspection
of formulas.
\end{proof}

This theorem gives as a corollary:

\begin{thm}\label{HomotopyMorphisms:OperadicHomotopyEquivalences}
Let $\QOp = B^c(\DOp)$
be the cobar construction of a $\Sigma_*$-cofibrant cooperad $\DOp$.
Let $\POp$ be any $\Sigma_*$-cofibrant operad
together with a weak-equivalence $\phi_{\theta}: \QOp\xrightarrow{\sim}\POp$.

Let $\phi^0,\phi^1: \POp\rightarrow\End_A$
be morphisms which provide a dg-module $A$
with $\POp$-algebra structures.
The existence of a left homotopy
\begin{equation*}
\xymatrix{ \QOp\vee\QOp\ar[r]\ar[d]_{(d^0,d^1)} & \POp\vee\POp\ar[r]^{(\phi^0,\phi^1)} & \End_A \\
\Cyl\QOp\ar@{.>}[urr]_{\tilde{\psi}} & },
\end{equation*}
implies the existence of a fill-in weak-equivalence
in the diagram:
\begin{equation*}
\xymatrix{ R_{\POp}(\DOp(A),\partial_{\alpha^1})\ar[d]^{\sim}\ar@{.>}[r]^{\sim}_{\phi_f} & R_{\POp}(\DOp(A),\partial_{\alpha^0})\ar[d]^{\sim} \\
(A,\phi^1) & (A,\phi^0) }.
\end{equation*}
\end{thm}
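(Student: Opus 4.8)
The plan is to obtain the statement as a formal consequence of Theorem~\ref{HomotopyMorphisms:OperadicHomotopyCorrespondence} together with the functoriality and equivalence results of~\S\ref{QuasiFreeReplacements}; there is no new computation to perform.

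First I would apply Theorem~\ref{HomotopyMorphisms:OperadicHomotopyCorrespondence} to convert the given left homotopy $\tilde\psi: \Cyl\QOp\to\End_A$, with $\tilde\psi d^\epsilon = \phi^\epsilon\cdot\phi_\theta$ for $\epsilon = 0,1$, into a morphism of quasi-cofree $\DOp$-coalgebras
\begin{equation*}
\phi_f: (\DOp(A),\partial_{\alpha^1})\longrightarrow(\DOp(A),\partial_{\alpha^0})
\end{equation*}
whose underlying homomorphism $f: \DOp(A)\to A$ reduces to the identity on $A$, i.e. $f|_A = \id_A$.

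Next I would feed this $\phi_f$ into the construction $R_\POp(-)$. As recalled before Proposition~\ref{QuasiFreeReplacements:HomotopyMorphisms}, this construction is functorial with respect to \emph{all} morphisms of $\DOp$-coalgebras, not only morphisms of $\POp$-algebras, so $\phi_f$ induces a morphism of $\POp$-algebras $R_\POp(\DOp(A),\partial_{\alpha^1})\to R_\POp(\DOp(A),\partial_{\alpha^0})$, and by Proposition~\ref{QuasiFreeReplacements:StructureMorphisms} together with Lemma~\ref{QuasiFreeReplacements:Equivalences} it sits over the weak-equivalences $\epsilon: R_\POp(\DOp(A),\partial_{\alpha^\epsilon})\xrightarrow{\sim}(A,\phi^\epsilon)$ that furnish the two vertical maps of the target diagram (here we use the running hypotheses that $\POp$ and $\DOp$ are $\Sigma_*$-cofibrant and that $\phi_\theta$ is a weak-equivalence). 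Finally, since the restriction $f|_A = \id_A$ is in particular a weak-equivalence of dg-modules, Proposition~\ref{QuasiFreeReplacements:HomotopyEquivalences} applies and shows that the induced morphism $\phi_f: R_\POp(\DOp(A),\partial_{\alpha^1})\to R_\POp(\DOp(A),\partial_{\alpha^0})$ is itself a weak-equivalence of $\POp$-algebras. This is exactly the asserted fill-in.

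The argument has no genuine obstacle: the only points needing attention are verifying that the hypothesis of Proposition~\ref{QuasiFreeReplacements:HomotopyEquivalences} holds — which is trivial, as $f|_A$ is the identity — and being careful that the object produced by Theorem~\ref{HomotopyMorphisms:OperadicHomotopyCorrespondence} is only a morphism of $\DOp$-coalgebras, so that it is the extended functoriality of $R_\POp(-)$ over coalgebra morphisms, recorded in~\S\ref{QuasiFreeReplacements}, that must be invoked rather than mere functoriality on the category of $\POp$-algebras.
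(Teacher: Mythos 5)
Your proposal is correct and matches the paper's own proof, which likewise obtains $\phi_f$ from Theorem~\ref{HomotopyMorphisms:OperadicHomotopyCorrespondence} and then applies Proposition~\ref{QuasiFreeReplacements:HomotopyEquivalences} (with $f|_A = \id_A$ trivially a weak-equivalence). The extra care you take in invoking the extended functoriality of $R_{\POp}(-)$ over coalgebra morphisms and the augmentation equivalences of Lemma~\ref{QuasiFreeReplacements:Equivalences} is exactly what the paper's one-line argument implicitly relies on.
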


\begin{proof}
Apply proposition~\ref{QuasiFreeReplacements:HomotopyEquivalences}
to the morphism $\phi_f$ yielded by theorem~\ref{HomotopyMorphisms:OperadicHomotopyCorrespondence}.
\end{proof}

\subsubsection{Concluding remark: generalizations to properads}
Recall that a prop is a structure which collects operations $p: A^{\otimes m}\rightarrow A^{\otimes n}$
with a variable number of inputs $m$ and outputs $n$.
The free prop is defined by tensors arranged on graphs
and the structure of a prop $\POp$
amounts to composition products $\lambda_{\tau}: \tau(\tilde{\POp})\rightarrow\tilde{\POp}(I)$,
where $\tau$ ranges over arbitrary $I$-graphs.
The notion of a properad introduced in~\cite{ValletteThesis,Vallette}
amounts to a prop whose structure is determined by composition products
over connected graphs.

The bar duality of operads is extended to properads
in~\cite{ValletteThesis,Vallette},
but we have no duality construction on the category of algebras associated to a properad
such that the analogue of theorem~\ref{QuasiFreeReplacements:Statement}
holds.

However,
the definition of the cylinder object $\Cyl B^c(\DOp)$
can be extended to properads
and we still have an equivalence between homotopies in the category of properads
and certain morphisms of $\DOp$-costructures (which are no more coalgebras over $\DOp$).
Moreover,
according to~\cite{FressePropHomotopy},
a left homotopy between morphisms $\tilde{\phi}: \Cyl B^c(\DOp)\rightarrow\End_A$
is still associated to a morphism
\begin{equation*}
(A,\phi^0)\xrightarrow{\sim}\cdot\xleftarrow{\sim}\cdot\xrightarrow{\sim}(A,\phi^1)
\end{equation*}
in the homotopy category of $B^c(\DOp)$-algebras.

Therefore,
the construction of this subsection could be generalized to give a new model
of equivalences in the homotopy category of algebras
over a properad.

\end{document}